\theoremstyle{definition}
\theoremstyle{plain}
\newcommand{\A}{\mathfrak o}
\newcommand{\bj}{\mathbf{j}}
\newcommand{\bU}{\mathbf{U}}
\newcommand{\cA}{\mathcal{A}}
\newcommand{\C}{\mathfrak{c}}
\newcommand{\co}{\textup{co}}
\newcommand{\f}{\mathbf f}
\newcommand{\fa}{\mathfrak{a}}
\newcommand{\fc}{\mathfrak{c}}
\newcommand{\Hom}{\textup{Hom}}
\newcommand{\KK}{\mbf{K}}
\newcommand{\mbb}{\mathbb}
\newcommand{\mbf}{\mathbf}
\newcommand{\mm}{\eta}
\newcommand{\MX}{\Xi}  
\newcommand{\mrm}{\mathrm}
\newcommand{\nn}{\mathfrak{n}}
\newcommand{\Q}{\mbb Q}
\newcommand{\redtext}[1]{\textcolor{red}{#1}}
\newcommand{\ro}{\mrm{ro}}
\newcommand{\Sj}{{\mbf S}^{\fc}_{n,d}}
\newcommand{\SP}{\mrm{Sp}}
\newcommand{\ve}{\varepsilon}
\newcommand{\X}{\mathcal X}
\newcommand{\ZZ}{\mathbb{Z}}
\newcommand{\red}[1]{{\color{red}#1}}
\theoremstyle{definition}
\newtheorem{Def}{Definition}[section] 
\newtheorem{rem}[Def]{Remark}
\theoremstyle{plain}
\newtheorem{prop}[Def]{Proposition}
\newtheorem{thm}[Def]{Theorem}
\newtheorem{lem}[Def]{Lemma}
\newtheorem{cor}[Def]{Corollary}
\newtheorem{thrm}{Theorem}
\newcommand{\ji}{\jmath \imath}
\newcommand{\Sji}{\mbf S^{\ji}_{\nn, d}}
\newcommand{\eji}{\check{\mbf e}}
\newcommand{\fji}{\check{\mbf f}}
\newcommand{\kji}{\check{\mbf k}}
\newcommand{\tji}{\check{\mbf t}}
\newcommand{\Kcji}{\dot{\mbf K}^{\ji}_{\nn}}
\newcommand{\Kc}{\dot{\mbf K}^{\fc}}
\renewcommand{\ij}{\imath \jmath}
\newcommand{\Sij}{\mbf S^{\ij}_{\nn, d}}
\newcommand{\eij}{\hat{\mbf e}}
\newcommand{\fij}{\hat{\mbf f}}
\newcommand{\kij}{\hat{\mbf k}}
\newcommand{\tij}{\hat{\mbf t}}
\newcommand{\Kcij}{\dot{\mbf K}^{\ij}_{\nn}}
\newcommand{\ii}{\imath \imath}
\newcommand{\Sii}{\mbf S^{\ii}_{\eta, d}}
\newcommand{\Kcii}{\dot{\mbf K}^{\ii}_{\mm}}
\newcommand{\jjw}{\mbox{$\jmath$\kern-1.3pt$\jmath$}}
\newcommand{\jiw}{\mbox{$\jmath$\kern-0.8pt$\imath$}}
\newcommand{\ijw}{\mbox{$\imath$\kern-1.8pt$\jmath$}}
\newcommand{\iiw}{\mbox{$\imath$\kern-1.0pt$\imath$}}
\title[Multiplication formula]{Affine flag varieties and quantum symmetric pairs, II. Multiplication formula}    
\author[Z. Fan and Y. Li]{Zhaobing Fan and Yiqiang Li}
\address{School of science, Harbin Engineering University, Harbin, China 150001}
    \email{fanz@math.ksu.edu  (Fan)}
\address{Department of Mathematics, University at Buffalo, The State University of New York}
    \email{yiqiang@buffalo.edu (Li)}
\date{\today}
\keywords{Quantum affine $\mathfrak{gl}_n$, coideal subalgebra, canonical basis, multiplication formula, affine flag variety, convolution algebra}
\subjclass[2010]{17B37, 20G25, 14F43}
\begin{document}

\begin{abstract}
We establish a multiplication formula for a tridiagonal standard basis element  in the idempotent version, i.e., the Lusztig form,  of the coideal subalgebras of
quantum affine $\mathfrak{gl}_n$ arising from the geometry of affine partial flag varieties of type $C$.
We apply this formula to obtain the stabilization algebras $\Kc_n$, $\Kcji$, $\Kcij$ and $\Kcii$,
which are idempotented coideal subalgebras of quantum affine $\mathfrak{gl}_n$.
The symmetry in the formula  leads to an isomorphism of the idempotented coideal subalgebras $\Kcji$ and $\Kcij$ with compatible monomial, standard and canonical bases.
\end{abstract}

\maketitle



\section*{Introduction}

In this article, we continue our study in~\cite{FLLLWa}, joint with Chun-Ju Lai, Li Luo and Weiqiang Wang,  of the Schur algebras  and their stabilization algebras  of affine $n$-step partial flag varieties of type $C$.
As the main result in this paper, we establish  a multiplication formula for a tridiagonal standard basis element in these algebras.
This result was supposed to be  included in {\it loc. cit.}
as a critical first step, but was extracted from the {\it loc. cit.} partly due to the length of the paper.
Instead, a more conceptual multiplication formula of a new generator, denoted by $\mbf f_A$,  was used as a substitute in {\it loc. cit.}
A third multiplication formula is further secured in~\cite{FLLLWb} via a Hecke-algebra approach.
All three formulas look drastically different, have their own advantage, and coexist coherently.
They reflect the richness of (the structure of) these algebras, even though the establishment of each one of them is a painstaking task.

 The multiplication formula in this article possesses a remarkable symmetry and, as an application, it
 yields an isomorphism of two seemingly-different classes of Schur algebras in ~\cite{FLLLWa} with compatible monomial, standard and canonical bases.
 The proof of the latter isomorphism result  is the only one available as far as we know.
 As a second application,  the multiplication formula in the setting of affine Grassmannian in type $C$ should lead to an explicit formula for parabolic affine Kazhdan-Lusztig polynomials  similar to~\cite[Section 7]{LW14}.
We expect our formula to play a role in categorifications of these algebras as well.

In what follows, we discuss in more details the background and results of this article.

\subsection{Overview}
It is well-known that the convolution algebras of flag varieties provide a natural geometric model for Hecke algebras.
As an important feature from this model,  the Kazhdan-Lusztig bases ~\cite{KL79} of the Hecke algebras can be interpreted as
intersection cohomology complexes therein and from which one can deduce nontrivial properties such as positivity.

Later, Beilinson, Lusztig and MacPherson  ~\cite{BLM90}
observed that partial flag varieties  of type $A$ are a geometric setting for quantum
$\mathfrak{gl}_n$.
For other classical partial flag varieties, it is only known recently that
they are governed by coideal subalgebras of quantum $\mathfrak{gl}_n$ (\cite{BKLW14, BLW14}).

There is an affinization of the above results making a connection between  affine flag varieties of type $A$ and $C$ and
quantum affine $\mathfrak{gl}_n/\mathfrak{sl}_n$ ~\cite{GV93,Lu99, Lu00,  SV00, Mc12}
and their coideal subalgebras (\cite{FLLLWa}).

More precisely, we provide a description of
the convolution algebra of affine partial flag varieties of type $C$  in the  work ~\cite{FLLLWa}.
Among others, we show that it is controlled by certain coideal subalgebra of quantum affine $\mathfrak{gl}_n$
and in turn provides canonical bases  for the latter algebras.
As a crucial ingredient, we show that the convolution algebra
admits a set of  bar-invariant multiplicative generators, denoted by $\f_A$ therein,  parametrized by
certain tridiagonal matrices.  Roughly speaking, the bar-invariant basis $\f_A$  corresponds to  a product of Chevalley generators
in higher-rank convolution algebras in a specific order.
In particular, one can derive  a multiplication formula for $\f_A$ by repetitively applying
the multiplication formulas for Chevalley generators, which turns out to be simple.
This leads to a construction of
the idempotented version of the associated coideal subalgebras  of quantum affine $\mathfrak{gl}_n$ and their canonical bases
after a suitable stabilization following ~\cite{BLM90} and ~\cite{FL14}.

There is yet another natural set of multiplicative generators consisting of the tridiagonal
standard basis elements $[A]$,
which are parametrized by the same set of tridiagonal matrices for the generators $\f_A$.
The transition matrix of the two bases turns out to be unitriangular.
The purpose of this paper is to establish a multiplication formula for the tridiagonal  standard basis elements $[A]$,
which then provides a direct construction of the affine coideal subalgebras  of quantum affine $\mathfrak{gl}_n$ and their canonical bases in ~\cite{FLLLWa}
with some further new and interesting symmetries.

\subsection{Main results}

Since we know essentially the multiplication formula for $\f_A$,
the complexity to establish  the multiplication formula for a tridiagonal basis element $[A]$
is incorporated in the transition matrix of the two bases,
which ends up producing coefficients involving  terms $(v^m-1)$ for various $m$.
The latter terms vanish in finite types or affine type $A$ since $\f_A$ and $[A]$ coincide for the necessary matrices in these cases and
hence the transition matrix is the identity matrix.
A rough form of the multiplication formulas is as follows, and we refer to \eqref{eq-Eij}, \eqref{Mdn}, \eqref{Theta:n},
\eqref{n(S, T)}, \eqref{AST2} and \eqref{hST2} for unexplained notations.

\begin{thrm}  [Theorem~\ref{mult-stand-basis2}]
\label{mult-formula:Intr}
Let $\alpha = (\alpha_i)_{i\in \mbb Z} \in \mbb N^{\mbb Z}$ such that $\alpha_i = \alpha_{i+n}$ for all $i\in \mbb Z$.
If $A, B \in \MX_{n,d}$  satisfy that $\co(B) = \ro(A)$ and $B - \sum_{1\leq i \leq n} \alpha_i E_{\theta}^{i, i+1}$ is diagonal,
then we have
\begin{equation*}
[B] * [A] = \sum_{S, T} v^{h_{S,T}} n(S, T)
\begin{bmatrix}
A_{S,T}\\ S
\end{bmatrix}_{\mathfrak b} [A_{S,T}],
\end{equation*}
where the sum runs over all $S, T \in \Theta_{n}$ subject to Condition \eqref{star},
$\ro(S) =\alpha$ and $\ro(T) = \alpha^J$,
$A-T+\check{T} \in \Theta_n$, and $A_{S,T} \in\MX_{n,d}$.
\end{thrm}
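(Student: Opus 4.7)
The plan is to derive the stated formula for $[B] * [A]$ by leveraging the known multiplication formula for the generator basis $\{\mathbf{f}_A\}$ established in \cite{FLLLWa}, and then transporting the result to the standard basis $\{[A]\}$ via the unitriangular transition matrix between the two bases. Since $B - \sum_i \alpha_i E_\theta^{i,i+1}$ is diagonal, both $[B]$ and $\mathbf{f}_B$ correspond to the same tridiagonal parameter, and the transition takes a triangular form
\begin{equation*}
[B] = \mathbf{f}_B + \sum_{C \prec B} a_{B,C}\, \mathbf{f}_C,
\end{equation*}
where the coefficients $a_{B,C}$ are explicit Laurent polynomials in $v$ involving the $(v^m - 1)$-type factors responsible for the extra complexity beyond finite type or type $A$. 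Applying the known expansion $\mathbf{f}_C * [A] = \sum_{A'} \lambda_{C,A;A'} [A']$ and substituting produces a double sum over $(C, A')$, which I then need to collapse into the closed form stated.

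The natural reparametrization is by pairs $(S, T) \in \Theta_n \times \Theta_n$ satisfying Condition \eqref{star}, with $\mathrm{ro}(S) = \alpha$, $\mathrm{ro}(T) = \alpha^J$, and $A - T + \check T \in \Theta_n$. Here $S$ records how the $\alpha$-boxes added by $B$ are distributed across the rows of $A$, producing the quantum binomial factor $\bigl[\begin{smallmatrix} A_{S,T} \\ S \end{smallmatrix}\bigr]_{\mathfrak b}$ out of divided-power normalizations, while $T$ encodes the $J$-symmetry corrections intrinsic to the type $C$ geometry: boxes placed on one side of the central diagonal must be paired with reflected contributions to yield a valid matrix $A_{S,T} \in \MX_{n,d}$. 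The weight $v^{h_{S,T}} n(S,T)$ absorbs the combined $v$-powers and combinatorial multiplicities contributed by both the transition coefficients $a_{B,C}$ and the Chevalley-type coefficients $\lambda_{C,A;A'}$, so that the telescoping of $(v^m - 1)$ factors from the two sources yields the clean expression advertised.

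The main obstacle will be the algebraic simplification verifying this telescoping. I would approach it by induction on $|\alpha| = \sum_i \alpha_i$: the base case $|\alpha| = 1$ reduces to a single Chevalley application and can be checked directly from \cite{FLLLWa}, while the inductive step amounts to matching the coefficients of each individual $[A']$ on the two sides using $q$-Vandermonde-type identities for products of quantum binomials and the divided-power relations satisfied by the $\mathbf{f}_C$. A secondary combinatorial difficulty is showing that the conditions $\mathrm{ro}(T) = \alpha^J$ and $A - T + \check T \in \Theta_n$ delineate exactly the pairs $(S, T)$ for which $A_{S,T} \in \MX_{n,d}$; in other words, that the support of the resulting double sum coincides with the indexing set in the theorem. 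Once the formula is established in this symmetric shape, the $(S, T) \leftrightarrow (T, S)$ symmetry visible on the right-hand side directly underlies the isomorphism $\Kcji \cong \Kcij$ promised in the introduction, so no additional work is needed for that application.
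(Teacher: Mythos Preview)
Your route is genuinely different from the paper's, and as written it has real gaps.

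The paper does \emph{not} pass through $\mathbf{f}_A$ or any transition matrix. It proves the formula for $e_B * e_A$ directly by finite-field counting (Theorem~\ref{mult-formula-raw}), and then Theorem~\ref{mult-stand-basis2} is just the renormalization $[A]=v^{-d_A}e_A$. Concretely, one fixes $(L,L')$ and interprets the structure constant as $\#\{L'' : (L,L'')\in\mathcal O_B,\ (L'',L')\in\mathcal O_A\}$. This set $Z$ is partitioned into pieces $Z_{S,T}$ indexed precisely by the pairs $(S,T)$ in the statement, where $S$ and $T$ record the dimensions $\big|\frac{L''_i\cap L_i\cap L'_j}{L''_i\cap L_i\cap L'_{j-1}}\big|$ and $\big|\frac{L''_i\cap L_i+L''_i\cap L'_j}{L''_i\cap L_i+L''_i\cap L'_{j-1}}\big|$. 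Condition~\eqref{star} and the constraints $\ro(S)=\alpha$, $\ro(T)=\alpha^{\#}$ fall out of the symplectic duality $L''^\#_i=L''_{-i-1}$; they are not imposed to match a target formula. Each $\#Z_{S,T}$ is then computed by a tower of projections $\pi_r,\ldots,\pi_0$ whose fibers are the sets $Y_{\mathbf{s,t,t'}}$ and $Y^{\mathrm{sp}}_{\mathbf{s,t,t'}}$ counted in Propositions~\ref{NYst} and~\ref{NYsp}. The factor $n(S,T)$ is literally the fiber cardinality $\#Y_{W,T}$ in \eqref{YWT}, and the quantum binomials come from the auxiliary counts $\#Y''_{\mathbf s}$, $\#Y'_{\mathbf t'}$. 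So $(S,T)$, $n(S,T)$, and $\bigl[\begin{smallmatrix}A_{S,T}\\ S\end{smallmatrix}\bigr]_{\mathfrak b}$ all have intrinsic geometric meaning; they are not the output of an algebraic telescoping.

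Your plan, by contrast, would require (i) an explicit closed form for the transition coefficients $a_{B,C}$ between $[B]$ and the $\mathbf f_C$ in affine type $C$, which you do not supply, and (ii) a verification that the double sum over $(C,A')$ collapses to the stated single sum over $(S,T)$. You label this ``telescoping'' and propose induction on $|\alpha|$ with $q$-Vandermonde identities, but no such identity is exhibited, and the paper's introduction explicitly flags this transition-matrix route as where ``the complexity\ldots is incorporated,'' producing the $(v^m-1)$ factors. It is not at all clear one would arrive at the specific closed form \eqref{hST2}--\eqref{AST2} without already knowing it.

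Finally, your last sentence misidentifies the symmetry. The isomorphism $\Kcji\cong\Kcij$ (Theorem~\ref{thm:tauK}) does \emph{not} come from swapping $S\leftrightarrow T$; it comes from the half-period shift $A\mapsto{}^\tau A$, ${}^\tau a_{ij}=a_{i+r+1,j+r+1}$ (Proposition~\ref{prop:involution}), and its proof requires the nontrivial identity $n({}^\tau S,{}^\tau T)=n(S,T)$ established via Corollary~\ref{weak-n-dual}. This is additional work, not a free consequence of the shape of the formula.
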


In many respects, the way we derive this formula has many similarities  to, and is partially based on,
the one on quantum affine $\mathfrak{gl}_n$ constructed in ~\cite{FL15}.
Note that the latter formula is first discovered in ~\cite{DF13} via a Hecke-algebraic approach.
Besides the new construction in ~\cite{FL15}, the identification of the affine type-$A$ analogue of $\f_A$ and $[A]$ for $A$ bidiagonal indeed gives
an easier way to deduce this formula.

Even though the multiplication formula is rather involved,
we are able to obtain sufficient conditions on when a leading term with coefficient $1$ is produced under such multiplication,
which allow us to observe a BLM-type stability property as $d$ goes to $\infty$.
The stabilization also allows us to formulate a limit algebra $\Kc_n$ for the family of the convolution algebras $\{\Sj\}_{d\ge 1}$.
The algorithm for the construction of a monomial basis for $\Sj$ leads to an algorithm for a monomial basis for $\Kc_n$.

\begin{thrm}  [Theorem~\ref{thm:CBgl}]
\label{CBgl:Intr}
The algebra $\Kc_n$ admits a monomial basis and a canonical basis.
\end{thrm}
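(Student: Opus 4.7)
The plan is to bootstrap Theorem~\ref{mult-stand-basis2} into a Beilinson--Lusztig--MacPherson type stabilization, in the spirit of~\cite{BLM90} and~\cite{FL14}. The first task is to extract from the multiplication formula the precise situation in which the product $[B] * [A]$ in $\Sj$ acquires a distinguished leading term $[C]$ with coefficient $1$, every other summand being ``lower'' in a suitable Bruhat-type partial order on $\MX_{n,d}$. Concretely, I would isolate the pair $(S,T)$ in the sum of Theorem~\ref{mult-stand-basis2} for which the combined factor $v^{h_{S,T}}\, n(S,T)\, \bM{A_{S,T}\\ S}_{\fb}$ collapses to $1$ once a sufficiently large multiple of the identity is added to the diagonal of $A$, and verify that for every other admissible pair either the $\fb$-binomial vanishes or the resulting index $A_{S,T}$ is strictly smaller. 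The required combinatorial analysis is parallel to, and modelled on, the one carried out for quantum affine $\fgl_n$ in~\cite{FL15}, and should also match the identification $\f_A = [A]$ in the bidiagonal case used in~\cite{FLLLWa}.

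Given the leading-term identity, the stabilization is essentially formal. For $A \in \Theta_n$ and $p \gg 0$, the shifted matrix ${}_pA := A + p\,\mathrm{diag}(1,\dots,1)$ lies in $\MX_{n,d_p}$ for $d_p = d + pn$, and the leading-term identity forces the structure constants of $[{}_pB] * [{}_pA]$, when re-indexed by the top term, to stabilize to elements of $\ZZ[v,v^{-1}]$ independent of $p$. I would then define $\Kc_n$ as the free $\ZZ[v,v^{-1}]$-module on the symbols $\{[A] \mid A \in \Theta_n\}$ with multiplication given by these stabilized constants, transporting associativity from the associative products in $\Sj$. To produce the monomial basis, I invert this step: for each $A \in \Theta_n$, decompose its off-diagonal part into its $n$ tridiagonal bands, fix an order in which to read them off, and let $\mu_A$ be the corresponding ordered product of the associated tridiagonal generators in $\Kc_n$. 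The leading-term identity gives $\mu_A = [A] + (\text{strictly lower terms})$, so $\{\mu_A\}$ is unitriangularly related to $\{[A]\}$ and hence is a monomial basis of $\Kc_n$.

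The canonical basis is then produced by the standard Kazhdan--Lusztig--Lusztig procedure: the tridiagonal generators are bar-invariant in each $\Sj$, so each $\mu_A$ is bar-invariant in $\Kc_n$, and Gaussian elimination against the bar involution converts the unitriangular relation above into unique bar-invariant elements $\{A\} \in [A] + \sum_{A' < A} v^{-1}\ZZ[v^{-1}]\,[A']$. The main obstacle I expect is the leading-term analysis of the first paragraph: one must verify that, among the many summands of Theorem~\ref{mult-stand-basis2}, exactly one survives with coefficient $1$ after the shift $A \mapsto {}_pA$, while every other contributes either a strictly lower index or a vanishing $\fb$-binomial. A secondary subtlety is the choice of partial order on $\Theta_n$: it must be simultaneously preserved by left and right multiplication by tridiagonal generators, compatible with the bar involution, and coherent with the Bruhat-type order already used on the standard basis of $\Sj$ in~\cite{FLLLWa}. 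Securing this compatibility is precisely what permits the monomial-basis construction and the Gaussian-elimination step for the canonical basis to be carried out with the same bookkeeping.
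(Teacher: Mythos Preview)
Your strategy is the same as the paper's: leading-term analysis of the multiplication formula (this is Proposition~\ref{leading}), then BLM-type stabilization (Proposition~\ref{prop6.1} and Corollary~\ref{cor6.2}), then monomial basis (Proposition~\ref{monomial-basis-K}), then bar and canonical basis (Lemma~\ref{lem1} and Proposition~\ref{caonical-basis}). Two technical points in your sketch would not go through as written.

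First, the structure constants of $[{}_pB]*[{}_pA]$ do \emph{not} stabilize to elements of $\ZZ[v,v^{-1}]$ independent of $p$. The $\fb$-binomials in Theorem~\ref{mult-stand-basis2} involve diagonal entries $a'_{ii}$, and after the shift these become $a'_{ii}+p$; the resulting factors such as $(v^{2(a'_{ii}+p)}-1)/(v^{2k}-1)$ genuinely depend on $p$. What actually happens (Proposition~\ref{prop6.1}) is that the structure constants are values $G_j(v,v^{-p})$ of fixed two-variable polynomials $G_j(v,v')\in\mbb Q(v)[v',v'^{-1}]$, and the product in $\Kc_n$ is \emph{defined} by specializing $v'\to 1$, not by taking a limit. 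Relatedly, the index set for $\Kc_n$ is $\widetilde{\MX}_n$ (diagonals allowed negative, with the symmetry $a_{ij}=a_{-i,-j}$), not $\Theta_n$.

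Second, your argument for the bar involution is circular: saying ``$\mu_A$ is bar-invariant in $\Kc_n$'' presupposes a bar map on $\Kc_n$, which has not yet been defined. The paper handles this by a separate stabilization for the bar operator (Lemma~\ref{lem1}): $\overline{[{}_pA]}=\sum_i H_i(v,v^{-p})[{}_pT_i]$ for fixed $H_i\in\mathcal R$, and one defines $\overline{[A]}$ in $\Kc_n$ by specializing $v'\to 1$. Only then can one run the Kazhdan--Lusztig elimination. Your intuition that bar-invariance of the $\{B(i)\}$ in $\Sj$ should propagate is correct in spirit, but it is this stabilization lemma that makes it rigorous.
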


One can define another algebra $\KK^{\fc}_n$ so that $\Kc_n$ is  identified with
the idempotented version of  a coideal subalgebra of quantum affine $\mathfrak{gl}_n$ as is shown in ~\cite{FLLLWa}.

Similarly, the other families of convolution algebras $\{{\mbf S}^{\ji}_{\nn,d} \}_d, \{{\mbf S}^{\ij}_{\nn,d} \}_d,$
and $\{{\mbf S}^{\ii}_{\mm,d} \}_d$
admit similar stabilizations which lead to limit algebras $\Kcji, \Kcij$ and $ \Kcii$, respectively.
We also establish the counterparts of Theorem~\ref{CBgl:Intr} for the algebras $\Kcji, \Kcij$ and $\Kcii$, which require
some additional work (it follows a strategy similar to \cite{FL14}).
Despite quite complicated, the structure constants in
the multiplication formula in Theorem ~\ref{mult-formula:Intr} manifest remarkable symmetries,
reflecting the shift by half-period in parametrization matrices,
which allow us to show that

\begin{thrm}[Theorem~\ref{thm:tauK}]
There is an isomorphism $\Kcji \cong \Kcij$ with compatible monomial, standard and canonical bases.
\end{thrm}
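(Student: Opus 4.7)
The plan is to construct an explicit bijection $\tau$ between the parametrizing sets of the standard bases of $\Kcji$ and $\Kcij$ and promote it to an algebra isomorphism that transports all three bases. The introduction already flags the source of the isomorphism: the multiplication formula of Theorem~\ref{mult-formula:Intr} is symmetric under a shift by half-period on the parametrizing matrices, and this shift is precisely what exchanges the $\ji$- and $\ij$-conventions. Concretely, I would define $\tau$ on $\Theta_n$ (and the relevant subsets thereof) by the half-period shift matching the two conventions, check that it sends the index set of $\Kcji$ bijectively onto that of $\Kcij$, and set
\begin{equation*}
\tau\bigl([A]^{\ji}\bigr) := [\sig(A)]^{\ij},
\end{equation*}
extended $\mathbb{Q}(v)$-linearly, where $\sig$ denotes the half-period shift on matrices.

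To show that $\tau$ is an algebra homomorphism, I would exploit the fact that both algebras are generated by their tridiagonal standard basis elements, so it suffices to verify the identity $\tau([B] * [A]) = \tau([B]) * \tau([A])$ when $B$ is tridiagonal. Here Theorem~\ref{mult-formula:Intr} applies on both sides, and the verification reduces to matching the two expansions term-by-term. The key observation is that each ingredient of the structure constants on the right-hand side of the formula --- the exponent $h_{S,T}$, the factor $n(S,T)$, the quantum binomial coefficient $\begin{bmatrix} A_{S,T}\\ S \end{bmatrix}_{\mathfrak{b}}$, the indexing constraints, and even the twist $\alpha \leftrightarrow \alpha^J$ --- is either invariant or transforms in a controlled way under the simultaneous shift $(A, B, S, T) \mapsto (\sig(A), \sig(B), \sig(S), \sig(T))$. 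This is the symmetry flagged in the introduction, and it yields compatibility of $\tau$ with multiplication.

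Once $\tau$ is known to be an algebra isomorphism sending standard basis to standard basis, compatibility with the monomial basis is automatic from the algorithm underlying Theorem~\ref{CBgl:Intr}: the monomial basis is built from an ordered product of generators indexed by tridiagonal matrices, and $\tau$ respects both the generators and the multiplication. For the canonical basis I would invoke the Kazhdan--Lusztig-style characterization used throughout \cite{FLLLWa}: a canonical basis element is the unique bar-invariant element with a prescribed leading term in the standard basis plus lower-order terms with coefficients in $v^{-1}\mathbb{Z}[v^{-1}]$ relative to a Bruhat-type partial order on $\Theta_n$. It then suffices to check that $\tau$ intertwines the two bar involutions and that $\sig$ preserves the relevant partial order; both statements should be built into the definition of the half-period shift.

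The hard part will be the combinatorial verification of the second step: despite the remarkable symmetry, the structure constants are genuinely complicated, involve an auxiliary row-sum vector $\alpha^J$ that is not literally equal to $\alpha$, and depend on sign conventions built into $h_{S,T}$ and $n(S,T)$. The delicate point is that the asymmetry between the $\ji$- and $\ij$-index combinatorics must cancel term-by-term against the asymmetry in the coefficients; writing out this cancellation carefully, and keeping track of the half-period shift on the constraints $\co(B) = \ro(A)$ and $A - T + \check{T} \in \Theta_n$, is where the bulk of the work lies.
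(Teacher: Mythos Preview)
Your strategy---the half-period shift $A \mapsto {}^{\tau}A$ with ${}^{\tau}a_{ij}=a_{i+r+1,j+r+1}$, verification via the symmetry of the multiplication formula, and then transport of the three bases---is exactly the paper's approach. Two points in your plan, however, do not go through as written and are handled differently in the paper.

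\textbf{Generators.} You assert that $\Kcji$ and $\Kcij$ are generated by tridiagonal standard basis elements, so that Theorem~\ref{mult-formula:Intr} applies to a product $[B]*[A]$ with $B$ a generator. But matrices in $\Xi^{\ji}_{\nn}$ have trivial $(r{+}1)$-th row and column, so a genuinely tridiagonal $B\in\Xi^{\ji}_{\nn}$ carries no link across position $r{+}1$; such elements do \emph{not} generate $\Kcji$. The actual generators (Theorem~\ref{Kji}(1)) involve an additional $E^{r,r+2}_{\theta}$ term and are not tridiagonal in $\Xi_n$, so the multiplication formula does not apply directly to them. The paper avoids this by first proving the involution on the ambient algebra $\Sj$ (Proposition~\ref{prop:involution}), where tridiagonal generators \emph{do} suffice, then restricting to the subalgebras $\Sji,\Sij$ and only afterwards stabilizing to $\Kcji,\Kcij$. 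Your plan needs the same detour through $\Sj$.

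\textbf{Bar compatibility.} You say that compatibility of $\tau$ with the bar involution ``should be built into the definition of the half-period shift.'' This is the one place where the paper does something you did not anticipate: it gives a geometric proof (Proposition~\ref{tau:CBSj}). The shift is realized by the action of an explicit element $\sigma\in\mathrm{GSp}(V_F)$ on symplectic lattice chains; the induced pushforward of sheaves commutes with Verdier duality, and decategorifying yields $\overline{\tau_d(-)}=\tau_d(\overline{\phantom{x}})$. An algebraic check along your lines is possible in principle, but the bar map on standard basis elements is only given implicitly (via stabilization of polynomials $H_i(v,v')$), so ``built into the definition'' understates the work involved. Once bar compatibility is in hand, your argument for the canonical basis (uniqueness plus order-preservation under $\tau$) is exactly what the paper uses.
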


Simultaneously, a Hecke-algebraic approach has been developed in a companion paper \cite{FLLLWb}, which
reproduces most of the main results of  this paper in different forms.
In light of a result in {\it loc. cit.}
the algebras denoted by  the same notations $\Kc_n$, $\Kcji$, $\Kcij$ and $\Kcii$
in the paper and ~\cite{FLLLWa, FLLLWb} are isomorphic and they are idempotent versions
of coideal subalgebras of quantum affine $\mathfrak{gl}_n$.
Having the three approaches available indicates the rich structures in these algebras arising from geometry.

\subsection{The organization}

In Section ~\ref{rec}, we recall the setting from ~\cite{FLLLWa} on the convolution algebras $\Sj$, $\Sji$, $\Sij$ and $\Sii$.

In Section ~\ref{chap:multi},
we obtain a multiplication formula for a tridiagonal standard basis element in
the convolution algebra $\Sj$. The proof is rather involved  taking up the whole long section, and
the formula
is the key to the remaining sections on the structures of $\Sj$ which leads to the construction of the limit algebra $\Kc_n$.

In Section~\ref{chap:Kjj},
we obtain a monomial basis  for the convolution algebra $\Sj$
based on the multiplication formula obtained in Section ~\ref{chap:multi}. In particular,
it follows that the standard basis elements parametrized by tridiagonal matrices form a generating set for the algebra $\Sj$.
This multiplication formula admits a remarkable stabilization property which allows us
to construct a limit algebra $\Kc_n$ for the family of convolution algebras $\{\Sj\}_d$.
We then construct a monomial basis and canonical basis for $\Kc_n$, as well as a
surjective homomorphism from $\Kc_n$ to $\Sj$.

In Section~\ref{chap:K234},
we adapt the multiplication formula from Section ~\ref{chap:multi} for
the other variants of convolution algebras:  ${\mbf S}^{\ji}_{\nn,d}$, ${\mbf S}^{\ij}_{\nn,d}$, ${\mbf S}^{\ii}_{\mm,d}$.
This then allows us to construct the corresponding limit algebras $\Kcij$, $\Kcij$, and $\Kcii$, respectively.
Monomial bases and canonical bases for these algebras are constructed.

\subsection{Acknowledgement}
The paper  is grown out from the Project~\cite{FLLLWa, FLLLWb} and thus we thank our collaborators for fruitful collaborations and
their generosity in sharing their ideas and allowing us to publish this paper separately.
We also thank our collaborators Chun-Ju Lai and Li Luo  for cross-checking our multiplication formula and are grateful for Weiqiang Wang for  his leadership and tuning up an earlier version of this article.
Z. Fan is partially supported by the NSF of China grant 11671108, the NSF of Heilongjiang
Province grant LC2017001 and the Fundamental Research Funds for the central universities
GK2110260131.
Y. Li is partly supported by the National Science Foundation under the grant DMS 1801915.

\tableofcontents




\section{Recollections from ~\cite{FLLLWa}}
\label{rec}

In this section, we recall the setting from ~\cite{FLLLWa} and fix some notations.
There is no new result in this section.

\subsection{The space of affine flags of type $C$ as symplectic lattice chains}
Let $\mbb N =\{0, 1, 2, \ldots\}$ and $\mbb Z=\{ 0, \pm 1, \pm 2, \ldots\}$.
For $a \in \mathbb{Z}$ and $b \in \mathbb N$, we define
\begin{equation}
  \label{eq:binomial}
\begin{bmatrix}
a\\
b
\end{bmatrix}
=\prod_{1\leq i\leq b} \frac{v^{2(a-i+1)}-1}{v^{2i}-1}, \quad [a] = \begin{bmatrix}
a\\
1
\end{bmatrix},
 \quad \text{ and } [a]! = \prod_{1\leq i \leq a} [i].
\end{equation}

Let $k$ be a finite field of $q$ elements.
Let $F= k((\ve))$ be the field of formal Laurent series over $k$ and $\A=k[[\ve]]$
the ring of formal power series.
Let $V$ be an $F$-vector space. A lattice $\mathcal L$ in $V$ is a free $\A$-module such that $\mathcal L\otimes_{\A} F=V$.

Assume  further that $V$ is equipped with a non-degenerate symplectic form $(-, -)$. Hence $V$ is even dimensional, say $2d$.
Let $\SP(V)$ be the group of isometries with respect to the form $(-, -)$.
For any lattice $\mathcal L\in V$, we set $\mathcal L^{\#}=\{ v\in V | (v, \mathcal L)\subseteq \A\}$.
Then the space $\mathcal L^{\#}$ is a lattice of $V$ such that $(\mathcal L^{\#})^{\#}=\mathcal L$.
The $\#$ operation enjoys the following properties that we shall use freely later on.
For any two lattices $\mathcal L, \mathcal L'$ of $V$,
$(\mathcal L+\mathcal L')^{\#} = \mathcal L^{\#} \cap \mathcal L'^{\#}$
and $(\mathcal L \cap \mathcal L')^{\#} = \mathcal L^{\#} + \mathcal L'^{\#}$.
We are interested in the symplectic lattices, which are those homothetic to a lattice $\Lambda$ such that
$\ve \Lambda \subseteq \Lambda^{\#} \subseteq \Lambda$.

Once and for all, we fix an even number
\[
n = 2r +2, \quad \mbox{for some} \ r \in \mbb N.
\]
Let $\X^{\C}_{n, d}$ be the set of all chains $L=(L_i)_{i\in \mbb Z}$ of symplectic lattices in $V$ subject to the following conditions.
\[
L_z \subseteq L_{z+1}, \quad L_z = \ve L_{z+n}, \quad L_z^{\#} = L_{-z-1}, \quad \forall  z\in \mbb Z.
\]
The group $\SP(V)$ acts naturally on $\X^{\C}_{n, d}$.
Let
$$
\Lambda^{\C}_{n, d} = \{ \lambda =(\lambda_i)_{i\in \mbb Z} \in \mbb N^{\mbb Z} | \lambda_i= \lambda_{i+n}=\lambda_{-i-1}, \sum_{1\leq i\leq n} \lambda_i = 2d, \lambda_0, \lambda_{r+1} \in 2\mbb N\}.
$$
For each $\lambda \in \Lambda^{\C}_{n, d}$, we set
\begin{align}
\label{L-L}
\X^{\C}_{n, d} (\lambda) = \{ L\in \X^{\C}_{n,d} | |L_i/L_{i-1}| = \lambda_i \quad \forall i\in \mbb Z\},
\end{align}
where $|L_i/L_{i-1}|$ is the dimension of $L_i/L_{i-1}$ as a $k$-vector space.
Then we have
\begin{align}
\X^{\C}_{n, d} = \sqcup_{\lambda \in \Lambda^{\C}_{n, d}} \X^{\C}_{n, d} (\lambda),
\end{align}
as the union of $\SP(V)$-orbits in $\X^{\C}_{n, d}$.
From the analysis of ~\cite{Sa99}, ~\cite{H99} (see also ~\cite[Section 3.2]{FLLLWa}), we see that $\X^{\C}_{n, d}(\lambda)$
can be naturally identified with the homogeneous space
$\SP(V)/P$ where $P$ is a certain parahoric subgroup.
Hence $\X^{\C}_{n,d}$ is a local model of the ind-variety of affine partial flags of type $C$ over $k$.

\subsection{Parametrization matrices}
\label{Para}

The $\SP(V)$-action  on $\X^{\C}_{n, d}$ extends  diagonally on the product $\X^{\C}_{n, d} \times \X^{\C}_{n, d}$.
We recall a parametrization of $\SP(V)$-orbits in $\X^{\C}_{n, d} \times \X^{\C}_{n, d}$.
Let
$\Theta_{n, d}$
be the set of all matrices $A=(a_{ij})_{i, j\in \mbb Z}$ with entries in $\mbb N$ such that
\[
a_{ij} = a_{i+n, j+n}, \quad \forall i, j \in \mbb Z; \quad
\sum_{i=i_0}^{i_0+n-1} \sum_{j\in \mbb Z} a_{ij} = d, \quad \forall i_0 \in \mbb Z.
\]
To a matrix $A\in \Theta_{n, d}$, we associate its row/column vector
$\ro(A)=(\ro(A)_i)_{i\in \mbb Z}$ and $\co(A)=(\co(A)_i)_{i\in \mbb Z}$  by
\[
\ro(A)_i = \sum_{j\in \mbb Z} a_{ij},\quad
\co(A)_j = \sum_{i\in \mbb Z} a_{ij}, \quad \forall i, j \in \mbb Z.
\]
Let
\begin{align}
\label{Xidn}
^{\C}\Xi_{n,d} =\{ A\in \Theta_{n, 2d} | a_{ij}= a_{-i, -j} = a_{n+i, n+j}, \forall i, j \in \mbb Z, a_{00}, a_{r+1, r+1} \in 2\mbb N\}.
\end{align}
We set $E^{ij} $ to be the matrix whose entry at $(k, l)$ is
$1$ if $(k, l) \equiv (i, j) $ mod $n$ and zero otherwise.
We also set
\begin{equation}\label{eq-Eij}
  E^{ij}_{\theta} = E^{ij} + E^{-i, -j}.
\end{equation}
For later use, let
\begin{align} \label{Mdn}
\Xi_{n, d} = \{ A + E^{0, 0} + E^{r+1, r+1} | A\in \ ^{\C} \Xi_{n, d} \}.
\end{align}
Clearly, there is a bijection
\begin{align}
\label{bijection}
^{\C}\Xi_{n, d} \cong \Xi_{n,d}.
\end{align}

To a pair $(L, L') \in \X^{\C}_{n, d} \times \X^{\C}_{n, d}$,
we can attach a matrix $A\in \ ^{\C}\Xi_{n, d}$ whose $(i, j)$-th entry is given by
\[
a_{ij} = |L_i \cap L'_j / (L_{i-1}\cap L'_j + L_i \cap L'_{j-1})|, \quad \forall i, j\in \mbb Z.
\]
From ~\cite[Proposition 3.2.2]{FLLLWa}, the correspondence $(L, L') \mapsto A$ yields a bijection
\begin{align}
\SP(V) \backslash \X^{\C}_{n, d} \times \X^{\C}_{n, d} \to \ ^{\C} \Xi_{n, d}.
\end{align}
We shall denote $\mathcal O_A$ for the $\SP(V)$-orbit parametrized by $A$.
Let $e_A$ be the characteristic function of $\mathcal O_A$.
Under the isomorphism (\ref{bijection}), we can also parametrize the $\SP(V)$-orbits in $\X^{\C}_{n, d} \times \X^{\C}_{n, d}$
by the set $\Xi_{n, d}$.
The notation $e_A$ also makes sense for a matrix $A\in \Xi_{n, d}$.

\subsection{The convolution algebra $\Sj$}
\label{rec-sj}
Let $\Sj$ be the vector space over $\Q(v)$ spanned by the elements $e_A$ for $A\in \ ^{\C}\Xi_{n, d}$.
There is  a multiplication on $\Sj$ defined by
\begin{align}
e_A * e_B = \sum_{C \in {}^{\C}\Xi_{n, d}} g^C_{A, B} (v), e_C, \quad g^{C}_{A, B}(v) \in \mbb Z[v, v^{-1}],
\end{align}
where the specialization of the polynomial  $g^{C}_{A, B}(v)$ at $\sqrt{q}$ is given by
\begin{align}
g^{C}_{A, B} (\sqrt{q}) = \# \{ \tilde L \in \X^{\C}_{n, d} | (L, \tilde L) \in \mathcal O_A, (\tilde L, L') \in \mathcal O_B\}
\end{align}
for any fixed pair $(L, L') \in \mathcal O_C$.
It is known that $g^C_{A, B} =0$ for all but finitely many $C$.
The algebra $\Sj$ is the convolution algebra on $\X^{\C}_{n, d}$ and is called the Schur algebra in ~\cite{FLLLWa}.

To each $A \in \ ^{\C}\Xi_{n, d}$, we set
\[
d^{\C}_A =\frac{1}{2}
\left ( \sum_{ \substack{1\leq i \leq n\\ i\geq k, j< l}} a_{ij} a_{kl} + \sum_{i\geq 0 > j} a_{ij} + \sum_{i \geq r+1 > j} a_{ij} \right ),
\]
and
\[
[A] = v^{-d^{\C}_A} e_A.
\]
Clearly, the various elements $[A]$ form a basis for $\Sj$, called the standard basis of $\Sj$.

Define a partial order $\leq_{\text{alg}}$ on $^{\C}\Xi_{n,d}$ by
\[
A\leq_{\text{alg}} B \quad \mbox{if and only if} \quad
\sum_{k\leq i, l \geq j} a_{kl} \leq \sum_{k\leq i, l\geq j} b_{kl}, \quad \forall i<j.
\]
We write $A <_{\text{alg}} B$ if further $A\neq B$.
By ``lower terms (than $[B])$'', we refer to the terms $[A]$ with $A <_{\text{alg}} B$, $\ro(A) = \ro(B)$ and $\co(A) = \co(B)$.

There is a bar operator $\bar \empty $ on $\Sj$ satisfying $\overline v =v^{-1}$ and
$\overline{[A]} = [A] + \mbox{lower terms}$.
To each matrix $A$, there is a unique element $\{A\}$ in $\Sj$
that is bar-invariant and $\{ A\} - [A] \in  \sum_{B <_{\text{alg}} A} v^{-1} \mbb Z[v^{-1}] [B]$.
The $\{A\}$'s clearly form a basis of $\Sj$, called the canonical basis of $\Sj$.

\subsection{Three variants}
\label{Sji}

We now set
\[
\nn = n-1 = 2r+1 \quad \mbox{for} \ r\geq 1.
\]
We consider the subset $\Xi^{\ji}_{\nn, d}$ of $\Xi_{n, d}$  defined by
\begin{align}
\label{Mjid}
\Xi^{\ji}_{\nn, d} = \{ A\in \Xi_{n,d} | a_{r+1, j} = \delta_{r+1, j}, a_{i, r+1} = \delta_{i, r+1}, \quad \forall i, j \in \mbb Z\}.
\end{align}
We define an idempotent in $\Sj$ by
\begin{align}
\label{bjr}
\bj_r = \sum [A],
\end{align}
where the sum runs over all diagonal matrices in $\Xi^{\ji}_{\nn, d}$.
We define the subalgebra $\Sji$ of $\Sj$ by
\begin{align}
\Sji = \bj_r \Sj \bj_r.
\end{align}
It is known from ~\cite[Section 7.1]{FLLLWa} that $\Sji$ inherits from $\Sj$ a basis of characteristic functions $e_A$,
a standard basis $[A]$ and a canonical basis $\{A\}$ parametrized by the set $\Xi^{\ji}_{\nn, d}$.

On the other hand, we consider the subset $\Xi^{\ij}_{\nn, d}$ of $\Xi_{n, d}$ given by
\begin{align}
\label{Mijd}
\Xi^{\ij}_{\nn, d} = \{ A\in \Xi_{n,d} | a_{0, j} = \delta_{0, j}, a_{i, 0} = \delta_{i, 0}, \quad \forall i, j \in \mbb Z\}.
\end{align}
We define $\bj_0$ for $\Xi^{\ij}_{\nn, d}$  in exactly the same way as $\bj_r$ in (\ref{bjr}) and we set
\begin{align}
\Sij = \bj_0 \Sj \bj_0.
\end{align}
It is known again from ~\cite[Section 8.1]{FLLLWa} that $\Sij$ inherits from $\Sj$ a basis of characteristic functions $e_A$,
a standard basis $[A]$ and a canonical basis $\{A\}$ parametrized by the set $\Xi^{\ij}_{\nn, d}$.

Finally, we set $\eta=\nn -1=2r$ and
\begin{align}
\label{Miid}
\Xi^{\ii}_{\eta, d} = \Xi^{\ji}_{\nn, d} \cap \Xi^{\ij}_{\nn, d} .
\end{align}
We define
\begin{align}
\Sii = \Sij\cap \Sji,
\end{align}
and we know from ~\cite[Section 8.4]{FLLLWa} that
$\Sii$ inherits from $\Sj$ a basis of characteristic functions $e_A$,
a standard basis $[A]$ and a canonical basis $\{A\}$ parametrized by the set $\Xi^{\ii}_{\eta, d}$.

\section{Multiplication formula for tridiagonal standard basis elements}
\label{chap:multi}

In this section, we obtain a multiplication formula in the convolution algebra $\Sj$
for a tridiagonal  standard basis element.
The formula
is the key to the remaining sections on the structures of $\Sj$ which leads to the construction of the limit algebra $\Kc$.
The proof of  the formula is rather involved,  taking up the whole section.

Note that the formulation for the multiplication formulas in this section use the index set ${}^{\C}\Xi_{n,d}$ in \eqref{Xidn} for bases of $\Sj$,
as it is most convenient to use ${}^{\C}\Xi_{n,d}$ in its proof.
It will be reformulated in terms of ${\MX}_{n,d}$ in Section~\ref{chap:Kjj}.

\subsection{The formula}

Let $n=2r+2$ with $r\geq 0$.  We denote
\begin{align}
 \label{Theta:n}
\begin{split}
\Theta_{n} &=\bigsqcup_{d\ge 0} \Theta_{n,d}.  
\end{split}
\end{align}
We also denote
\begin{align}
 \label{Theta:n2}
\widetilde{\Theta}_n
= \{A  \in \text{Mat}_{\ZZ\times \ZZ}(\mbb{\ZZ}) \big \vert A+ pI \in \Theta_n \  \text{for some}\ p\in \mbb N\},
\end{align}
Where $I$ is the identity matrix.
We define a (row shift) bijection  $\check \ : \Theta_{n} \rightarrow \Theta_{n}$ by sending $S=(s_{ij})$ to
\[
\check S = (\check s_{ij})_{i, j\in \mbb Z}, \quad \check s_{ij} = s_{i-1, j} \; (\forall i, j \in \mbb Z).
\]
Given  $A, S, T \in \Theta_{n}$, we set
\begin{align}
\label{AST}
A_{S, T} = A + S - T - (\check S - \check T).
\end{align}
If we write $A_{S, T} = (a'_{ij})_{i, j \in \mbb Z}$, then $a'_{ij} = a'_{i+n, j+n}$ for all $i, j\in \mbb Z$.
But $a'_{ij}$ may be negative, so $A_{S, T}$ is not in $\Theta_{n}$ in general.
Consider the following subset $\Xi_n$ of $\Theta_n$:
\[
\Xi_{n} =\{ A=(a_{ij})_{i, j\in \mbb Z}\in \Theta_{n} \big \vert a_{ij} = a_{-i, -j},   \forall i, j\in \mbb Z \}.
\]

\begin{lem}
 \label{lem:star}
Let $A \in \Xi_{n}$ and  $S, T \in \Theta_{n}$ be such that $A_{S, T} \in \Theta_{n}$.
Then we have $A_{S, T} \in \Xi_{n}$ if  and only if
$S, T$ satisfy
\begin{align}
\label{star}
s_{ij} + s_{-i-1, -j} = t_{ij} + t_{-i-1, -j}, \quad \forall i, j \in \mbb Z.
\end{align}
\end{lem}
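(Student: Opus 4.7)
The plan is to read off the symmetry condition on $A_{S,T}$ entry by entry and rewrite it purely in terms of $S$ and $T$. By definition, the $(i,j)$-entry of $A_{S,T}$ equals $a_{ij}+s_{ij}-t_{ij}-s_{i-1,j}+t_{i-1,j}$. Using $A\in\Xi_n$, so that $a_{ij}=a_{-i,-j}$, the condition $(A_{S,T})_{ij}=(A_{S,T})_{-i,-j}$ is equivalent to
\[
P(i,j):=s_{ij}-t_{ij}+s_{-i-1,-j}-t_{-i-1,-j}=s_{i-1,j}-t_{i-1,j}+s_{-i,-j}-t_{-i,-j}=P(i-1,j),
\]
so $A_{S,T}\in\Xi_n$ holds iff the function $P(\cdot,j)$ is constant in $i$ for every fixed $j$. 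Note that condition \eqref{star} is precisely the assertion $P(i,j)\equiv 0$, so one implication is immediate.

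For the converse, I would argue that the only $i$-constant function that can arise from $P$ is the zero function, because each column of $S$ and $T$ has only finitely many nonzero entries. More precisely, since $S\in\Theta_{n,d}$ for some $d$, within one fundamental row strip $1\le i\le n$ there are at most $d$ nonzero entries $s_{i_k,j_k}$, and by $n$-periodicity every nonzero entry of $S$ has the form $(i_k+mn,j_k+mn)$. Fixing a column index $j_0$, the equation $j_0=j_k+mn$ has at most one integer solution $m$ per index $k$, so $s_{i,j_0}\neq 0$ for only finitely many $i\in\ZZ$. The same holds for $t_{i,j_0}$, $s_{-i-1,-j_0}$, $t_{-i-1,-j_0}$, and therefore $P(i,j_0)\neq 0$ for only finitely many $i$. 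Combined with $i$-constancy, this forces $P(i,j_0)=0$ for every $i,j_0$, which is exactly \eqref{star}.

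So the proof is essentially a routine symmetrization calculation followed by a short support argument. The only mildly delicate point is the finite-support claim for fixed columns, which I would record as a small preliminary observation about elements of $\Theta_n$ before performing the main entry-wise calculation; everything else is bookkeeping with the defining formula $A_{S,T}=A+S-T-(\check S-\check T)$ and the involution $(i,j)\mapsto(-i,-j)$.
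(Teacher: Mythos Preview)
Your proof is correct and follows essentially the same entry-wise symmetrization as the paper: both expand $(A_{S,T})_{ij}$ from the defining formula, cancel $a_{ij}=a_{-i,-j}$, and reduce the question to the quantity $P(i,j)=s_{ij}-t_{ij}+s_{-i-1,-j}-t_{-i-1,-j}$. The only difference is that the paper dispatches the ``only if'' direction with the sentence ``the above argument can be reversed,'' whereas you actually spell out what reversing entails: the symmetry $a'_{ij}=a'_{-i,-j}$ gives only $P(i,j)=P(i-1,j)$, and you supply the finite-column-support observation in $\Theta_n$ that forces this $i$-constant function to vanish. This is a useful clarification rather than a genuinely different route; the paper's proof implicitly relies on the same fact but does not make it explicit.
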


\begin{proof}
By substituting $i$ with $i-1$ in Condition (\ref{star}), we  have
$s_{i-1, j} - t_{i-1, j} = t_{-i, -j} - s_{-i, -j}.$
Denoting $A_{S, T}  = (a'_{ij})$, we have thus obtained
\begin{align}
\label{A'-2}
a'_{ij} = a_{ij} + (s_{ij} + s_{-i, -j}) - (t_{ij} + t_{-i, -j}).
\end{align}
It now follows that $a'_{ij} = a'_{-i, -j}$, whence $A_{S, T} \in \Xi_{n}$.

The above argument can be reversed to establish the opposite direction.
\end{proof}

Let
$$
\mathcal J
=
\big( \{-r-1\} \times (-r-1, \infty) \big)  \bigsqcup \big( [-r, -1] \times \mbb Z \big) \bigsqcup \big( \{0\} \times (- \infty, 0) \big)
\subseteq \mbb Z\times \mbb Z.
$$
Recall the quantum $v$-binomials from \eqref{eq:binomial}.
For $S=(s_{ij})$ and $A_{S, T}  = (a'_{ij})$, we set
\begin{align}
\label{[A; S]}
 \begin{bmatrix}
A_{S, T}  \\
 S
 \end{bmatrix}_{\C}
 =
\prod_{(i, j)\in \mathcal J}
\begin{bmatrix} a'_{ij} \\ s_{ij} \end{bmatrix}
\begin{bmatrix} a'_{ij} - s_{ij} \\ s_{-i, -j} \end{bmatrix}
\prod_{\substack{i= 0, -r-1\\ 0 \leq k \leq s_{ii}-1 }} \frac{[a'_{ii} - 2k]}{[k +1]}.
\end{align}

Given three sequences $\alpha=(\alpha_i)_{i\in \mbb Z},\  \gamma=(\gamma_i)_{i\in \mbb Z}$ and $\beta=(\beta_i)_{i\in \mbb Z}$
in $\mbb{N}^{\mbb Z}$ such that their entries are all zeros except at finitely many places,
we define
\begin{equation}
\label{nabc}
n(\alpha, \gamma, \beta)
=
\sum_{\sigma =(\sigma_{ij})} v^{2(\sum_{i, j, l: j > l} \sigma_{ij} \alpha_l + \sum_{i} \sigma_{ii} \alpha_i
+ \sum_{i>k, j < l} \sigma_{ij} \sigma_{kl}) } \frac{\prod_{i\in \mbb Z}[\beta_i]^!} {\prod_{i, j\in \mbb Z}[\sigma_{ij}]^!}
\prod_{\substack{j \in \mbb Z \\ 0 \leq l \leq \alpha_j -\sigma_{jj} -1}} (v^{2 \alpha_j} -v^{2l}),
\end{equation}
where the sum runs over all upper triangular matrices $\sigma=(\sigma_{ij}) \in \text{Mat}_{\ZZ\times \ZZ}(\mbb{N})$
such that  $\ro(\sigma) = \beta$ and $\co(\sigma) = \gamma$.


Given a sequence $\mbf a=(a_i)_{i\in \mbb Z}$, we define the sequence
$\mbf a^J$ whose $i$-th entry is $a_{-i}$ for all $i\in \mbb Z$.
We set
\begin{align}
\label{n(S, T)}
n(S, T) = \prod_{0\leq i \leq r} n(S_i, T_i, (S_{-i-1})^J ),
\end{align}
where $S_i$ and $T_i$ are the $i$-th row vectors of $S$ and $T$, respectively, and $n(\alpha, \gamma, \beta)$ is in (\ref{nabc}).

To $\alpha =(\alpha_i)_{i\in \mbb Z} \in \mbb Z^{\mbb Z}$, we set
\begin{equation}
\label{alpha2}
\alpha^{\#} = (\alpha^{\#}_i)_{i\in \mbb Z}, \quad  \text{where } \alpha^{\#}_i = \alpha_{-i-1}, \, \forall i\in \mbb Z.
\end{equation}

Recall the subset ${}^{\C}\Xi_{n,d}$ of $\Xi_{n}$ from \eqref{Xidn}. Given $A \in {}^{\C}\Xi_{n,d}$ and $S, T \in \Theta_{n}$, we denote
\begin{equation}
 \label{xiAST}
\xi^{\C}_{A, S, T} =
\sum_{\substack{-r-1 \leq i \leq r\\ j> l}} (a_{ij}' -s_{ij}) s_{i l}
- \Big(\sum_{\substack{-r \leq i \leq -1 \\ j > l}}
+ \sum_{\substack{ i =  -r -1, 0 \\  2i -l > j > l }}  \Big ) s_{-i, - j} s_{i l}
- \sum_{\substack{i=-r-1, 0 \\ j< i }} \frac{s_{i j} (s_{i j} -1)}{2}.
\end{equation}
We can now state the general multiplication formulas for the convolution algebra $\Sj$;
for notations $A_{S, T}$, $\begin{bmatrix}
A_{S, T}  \\
 S
 \end{bmatrix}_{\C}$ and $n(S, T)$  see \eqref{AST}, \eqref{[A; S]} and \eqref{n(S, T)}.

\begin{thm}
\label{mult-formula-raw}
Let $\alpha = (\alpha_i)_{i\in \mbb Z} \in \mbb{N}^{\mbb Z}$ such that $\alpha_i = \alpha_{i+n}$ for all $i\in \mbb Z$.
If $A, B \in {}^{\C}\Xi_{n,d}$  satisfy $\co(B) = \ro(A)$ and $B - \sum_{1\leq i \leq n} \alpha_i E_{\theta}^{i, i+1}$ is diagonal, then we have
\begin{equation}
\label{eBA}
e_B * e_A = \sum_{S, T} v^{2 \xi^{\C}_{A, S, T}} \, n(S, T) \begin{bmatrix} A_{S, T} \\ S
 \end{bmatrix}_{\C} e_{A_{S,T}},
\end{equation}
where the sum runs over  $S, T \in \Theta_{n}$ subject to
Condition (\ref{star}), $\ro(S) =\alpha$, $\ro(T) = \alpha^{\#}$,
$A-T+\check{T} \in \Theta_n$, $A_{S,T} \in {}^{\C}\Xi_{n,d}$. 
 \end{thm}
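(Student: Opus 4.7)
The plan is to prove the formula by a direct geometric counting argument. Fix a pair $(L,L'') \in \mathcal O_C$ for some $C \in {}^{\C}\Xi_{n,d}$; by the definition of the convolution product, the coefficient of $e_C$ in $e_B * e_A$ equals the number of symplectic lattice chains $L' \in \X^{\C}_{n,d}$ with $(L,L') \in \mathcal O_B$ and $(L',L'') \in \mathcal O_A$, viewed as a polynomial in $\sqrt{q}$. The hypothesis that $B-\sum_i \alpha_i E_\theta^{i,i+1}$ is diagonal (with column sums equal to $\ro(A)$) forces $L'$ to be a controlled modification of $L$: at each slot $i$, one adjoins an $\alpha_i$-dimensional subspace of $L_{i+1}/L_i$ to $L_i$ to produce $L'_i$, subject to the self-duality $(L')^{\#}_z = L'_{-z-1}$ and the periodicity $L'_z = \varepsilon L'_{z+n}$.

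First I would reduce, at the level of affine flag varieties of type $A$ (ignoring the symplectic form), to the corresponding multiplication formula of \cite{FL15}, originally established via Hecke algebras in \cite{DF13}. In that setting, the $\alpha_i$-dimensional subspace at slot $i$ decomposes, according to its position relative to the filtration induced by $L''$, into an arrangement indexed by the $i$th row of the matrix $S$, while the row condition $\ro(S)=\alpha$ records the conservation of total dimension at each slot. The companion matrix $T$ with $\ro(T)=\alpha^{\#}$, together with the requirement $A - T + \check T \in \Theta_n$, tracks the dual bookkeeping at the paired slot $-i-1$ that arises under $\#$. The Gaussian-binomial factors in $\begin{bmatrix}A_{S,T}\\ S\end{bmatrix}_{\C}$, the factor $n(S,T)$ with its auxiliary upper-triangular $\sigma$ of \eqref{nabc} (coming from an Iwasawa-type decomposition of inclusions of subspaces), and the Schubert-cell dimension $2\sum_{-r-1\leq i\leq r,\,j>l}(a'_{ij}-s_{ij})s_{il}$ then appear by standard counts in the affine flag variety of type $A$.

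The new ingredient over affine type $A$ is the symplectic self-duality of $L'$. Under $\#$ the slot $i$ is paired with $-i-1$, so the $(i,j)$-component of a modification is forced to be the $\#$-image of the $(-i-1,-j)$-component. Choosing independent parameters only for $(i,j)\in\mathcal{J}$ and deriving the rest by duality imposes exactly the compatibility relation $s_{ij}+s_{-i-1,-j}=t_{ij}+t_{-i-1,-j}$ of Condition~\eqref{star} between $S$ and $T$. At the exceptional diagonal indices $i=0$ and $i=-r-1$, where the symplectic form couples the slot with itself, the $\alpha_i$-dimensional subspace must be chosen isotropic inside a symplectic space of dimension $a'_{ii}$; this produces the specialized factor $\prod_{0\le k\le s_{ii}-1}[a'_{ii}-2k]/[k+1]$ in $\begin{bmatrix} A_{S,T}\\ S\end{bmatrix}_{\C}$, together with the corrective contributions $-\sum s_{-i,-j}s_{il}$ and $-\sum s_{ij}(s_{ij}-1)/2$ in $\xi^{\C}_{A,S,T}$.

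The main obstacle will be the precise matching of the $v$-exponents: the symplectic Schubert-cell dimension differs from its type-$A$ ambient by corrections whose signs and index ranges (for instance the restriction $2i-l>j>l$ at the exceptional rows, versus the plain $j>l$ elsewhere) must be tracked carefully. My strategy is to induct on the total sum of the superdiagonal entries of $B$, with the single-Chevalley-generator multiplication formula from \cite{FLLLWa} as the base case; at each inductive step a single new parameter is introduced, and the updating rule for the $v$-exponent can be read off from the dimension of an elementary matrix Schubert cell intersected with the appropriate symplectic Grassmannian, and then compared with the difference $\xi^{\C}_{A_{S,T},S',T'}-\xi^{\C}_{A,S,T}$. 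Iterating and collecting contributions yields the claimed identity.
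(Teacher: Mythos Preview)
Your inductive strategy has a genuine gap. Inducting on the superdiagonal sum of $B$ with the single-Chevalley formula as base case amounts to computing the product $\big(\prod_i e_{B(i)}\big) * e_A$ where each $B(i)$ is a Chevalley generator; but in affine type $C$ the tridiagonal element $e_B$ is \emph{not} such a monomial. The monomial in Chevalley generators is the bar-invariant element denoted $\mathbf f_B$ in \cite{FLLLWa}, and the paper's introduction stresses that the transition matrix between $\{\mathbf f_B\}$ and $\{[B]\}$ (equivalently $\{e_B\}$) is unitriangular but \emph{not} the identity here, producing extra factors of the form $(v^m-1)$. This is precisely the phenomenon that distinguishes affine type $C$ from affine type $A$, where your induction would indeed close. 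So your approach would recover the $\mathbf f_B$-multiplication formula of \cite{FLLLWa}, not the present one; to get the theorem you would still need to invert that transition matrix, which is not addressed in your proposal.

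The paper's proof avoids this entirely by a direct count with no induction on $B$. Fixing $(L,L')$, one partitions the set $Z=\{L'':(L,L'')\in\mathcal O_B\}$ into pieces $Z_{S,T}$ according to the relative position of $L''$ with $L$ and $L'$; Lemmas~\ref{condition-star}--\ref{BAST} show the nonempty pieces are exactly those indexed by $(S,T)$ satisfying \eqref{star}, $\ro(S)=\alpha$, $\ro(T)=\alpha^{\#}$, and that each piece lands in $\mathcal O_{A_{S,T}}$. Then $\#Z_{S,T}$ is computed by a tower of projections $\pi_r,\ldots,\pi_0$ which forget the lattices $L''_{\pm i}$ one symmetric pair at a time. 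For $1\le i\le r-1$ each fiber is a type-$A$ counting set $Y_{S_i,T_i,S_{-i-1}^J}(\mathbf V^i,\mathbf V')$ (Proposition~\ref{NYst}); at the boundary indices $i=r$ and $i=0$ the fiber is the symplectic variant $Y^{\mathrm{sp}}$ or ${}^{\mathrm{sp}}Y$ (Proposition~\ref{NYsp} and \eqref{Z0ST}), which is where the isotropic-Grassmannian factors $\prod_k [a'_{ii}-2k]/[k+1]$ and the corrective exponents in $\xi^{\C}_{A,S,T}$ genuinely arise. The type-$A$ counts are thus local ingredients at each level of the tower, not a global reduction followed by a symplectic correction as your outline suggests.
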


We make some remarks before providing the proof of this theorem.

\begin{rem}
The matrix $A_{S, T}$ depends only on $A$, $S_i$, $T_i$ for $i \in [0, r]$. Indeed,
by symmetries at $(0, 0)$ and $(r+1, r+1)$, the matrix $A_{S, T}$ is completely determined
by the entries $a'_{ij}$ for $0 \leq i \leq r+1$.
Furthermore, for $1\leq i \leq r$, the entry  $a'_{ij}$ is clearly determined by $A$ and $S_k$, $T_k$ for $k\in [0, r]$.
For $i=0$, we use (\ref{A'-2}) to get
$a'_{0 j} = a_{0j} + s_{0, j} - t_{0, j} - ( s_{0, -j} - t_{0, -j})$.
For $i=r+1$,  Condition (\ref{star}) allows us to rewrite
$a'_{r+1, j} =a_{r+1, j} + s_{r, -j} - t_{r, -j} + (s_{r, j} - t_{r, -j})$.
\end{rem}

\begin{rem}
If $S$ and $T$ satisfy
\begin{align}
\label{ST}
\sum_{j\leq k} s_{-i-1, -j} < \sum_{j \leq k} t_{ij}
\end{align}
for some $i\in [0, r]$ and $k$, then
the structural constant $n( S_i, T_i, S_{-i-1}^J)$, and hence $n(S, T)$ as well as the coefficient of $e_{A_{S, T}}$ in   (\ref{eBA}),
must be zero. This is because the summation
in $n( S_i, T_i, S_{-i-1}^J)$ is taken over upper triangular matrices
$\sigma$ such that $\ro(\sigma) = S_{-i-1}^J$ and $\co(\sigma) = T_i$, which is empty if  (\ref{ST})  is assumed.
\end{rem}

\begin{rem}
Note that Conditions (\ref{star}) and (\ref{ST}) imply that
\begin{align*}
\mbox{if $\sum_{j\geq l} s_{ij} =0$ for some $l$, then $t_{ij} = s_{-i-1, -j}$ for all $j\geq l$.}
\end{align*}
This allows to reduce the general multiplication formula to the special cases available for comparisons.
In particular, the general formula (\ref{eBA}) is compatible with the formula for Chevalley generators in ~\cite[Lemma 4.3.1]{FLLLWa}.
\end{rem}

The remainder of this section is devoted to the proof of Theorem~\ref{mult-formula-raw}.
In the proof, we shall work over a finite field $\mathbb F_q$, and all the quantum numbers and quantum binomial coefficients
(defined via the indeterminate $v$)
are understood below at the specialization $v=\sqrt{q}$ (i.e., $v^2=q$).

\subsection{Toward a proof I: type $A$ counting}
\label{Pre-I}

Let $V$ be a finite dimensional vector space over $\mathbb F_q$.
Let us fix a flag $\mbf W = (0=W_0 \subseteq W_1 \subseteq \cdots \subseteq W_m =V)$ of type $\mbf w=(w_i)_{1\leq i\leq m}$, i.e.,
$|W_i/W_{i-1}| = w_i$ for all $1\leq i \leq m$.
To a sequence $\mbf a=(a_i)_{1\leq i\leq m} \in \mbb{N}^m$, we set
\begin{align}
\label{YaW}
Y_{\mbf a} (\mbf W)= \{ U \subseteq V \big \vert  | U\cap W_i| - |U\cap W_{i-1}| = a_i,   \forall 1\leq i \leq m\}.
\end{align}
The following lemma can be found in ~\cite[Example 2.4]{Sch06}, see also Remark ~\ref{proof-lem-counting} for a proof.

\begin{lem}
\label{lemcounting}
We have
$\# Y_{\mbf a} (\mbf W) = q^{\sum_{ i > k } a_i(w_k -a_k)}\prod_{1\leq i\leq m}
\begin{bmatrix}
w_i \\ a_i
\end{bmatrix}.
$
\end{lem}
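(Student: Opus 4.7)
The plan is to prove the formula by induction on the length $m$ of the flag $\mbf W$. For the base case $m=1$, the set $Y_{\mbf a}(\mbf W)$ is simply the collection of $a_1$-dimensional subspaces of the $w_1$-dimensional space $V$, which has cardinality $\begin{bmatrix}w_1\\ a_1\end{bmatrix}$ at the specialization $v^2=q$; the exponent of $q$ in the asserted formula is an empty sum, so the two sides agree.

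For the inductive step, I will stratify $Y_{\mbf a}(\mbf W)$ according to the intersection $U' := U\cap W_{m-1}$. Setting $\mbf a' = (a_1, \ldots, a_{m-1})$ and $\mbf W' = (0\subseteq W_1\subseteq\cdots\subseteq W_{m-1})$, the defining conditions for $Y_{\mbf a}(\mbf W)$ force $U' \in Y_{\mbf a'}(\mbf W')$, so by the inductive hypothesis the number of admissible $U'$ equals
\[
q^{\sum_{m>i>k} a_i(w_k-a_k)} \prod_{1\leq i\leq m-1}\begin{bmatrix}w_i\\ a_i\end{bmatrix}.
\]
It remains to count, for each fixed $U'$, the number of subspaces $U\subseteq V$ with $U\cap W_{m-1}=U'$ and $|U|=|U'|+a_m$.

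The main step is this extension count. Projecting via $\pi: V\twoheadrightarrow V/W_{m-1}$, the image $\bar U = \pi(U)$ ranges over all $a_m$-dimensional subspaces of the $w_m$-dimensional quotient, contributing a factor $\begin{bmatrix}w_m\\ a_m\end{bmatrix}$. Given such a $\bar U$, the preimage $\pi^{-1}(\bar U)$ contains $W_{m-1}$, and inside the quotient $\pi^{-1}(\bar U)/U'$ the subspace $W_{m-1}/U'$ has codimension $a_m$ and dimension $\sum_{k<m}(w_k-a_k)$. The allowed lifts $U/U'$ are precisely the complements of $W_{m-1}/U'$ in $\pi^{-1}(\bar U)/U'$, and a standard complement count gives exactly $q^{a_m\sum_{k<m}(w_k-a_k)}$ such complements. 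This exponent contributes precisely the pairs with $i=m>k$ in the overall exponent, so multiplying by the inductive count recovers the desired formula. The only conceptual point is recognizing the extension count as a complement count in the appropriate quotient; the rest is bookkeeping of exponents.
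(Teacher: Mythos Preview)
Your proof is correct. The paper does not spell out a direct proof of this lemma; it cites Schiffmann and then, in Remark~\ref{proof-lem-counting}, recovers it as the special case $\mbf a=(a_1,\ldots,a_{r-1},0,\ldots,0)$ of the symplectic count in Lemma~\ref{YspaW}. The proof of Lemma~\ref{YspaW} proceeds by exactly the same filtration idea you use --- sending $U$ to $U\cap W_k$ and counting the fibers --- so your direct inductive argument is essentially the type~$A$ core of the paper's approach, stripped of the symplectic overhead that is irrelevant here.
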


Let $\sigma$ be an upper triangular matrix such that $\ro(\sigma)=\mbf w$.
Let $\mbf t= \co(\sigma)$ and $\mathcal F_{\mbf t}(V)$ be the set of all flags in $V$ of type $\mbf t$.
Consider the set
\begin{align}
\label{FTA}
\mathcal F_{\mbf t, \sigma}^{\mbf W}
=\Big\{ \mbf F \in \mathcal F_{\mbf t}(V) \Big \vert \;  \left | \frac{W_i \cap F_j}{W_{i-1} \cap F_j + W_i \cap F_{j-1}} \right | = \sigma_{ij},   \forall 1\leq i, j\leq m \Big\}.
\end{align}
Here $\mbf W = (W_i)_{1\leq i\leq m}$ is a fixed flag of type $\mbf w$.

\begin{lem}
The cardinality of  $\mathcal F_{\mbf t, \sigma}^{\mbf W} $ is given by
\begin{align}
\label{FT}
\# \mathcal F^{\mbf W}_{\mbf t, \sigma} =
q^{\sum_{i > k, j<l} \sigma_{ij} \sigma_{kl}}
\prod_{1\leq i \leq m} \frac{[w_i]^!}{[\sigma_{ii}]^! [\sigma_{i, i+1}]^! \cdots [\sigma_{im}]^!}.
\end{align}
\end{lem}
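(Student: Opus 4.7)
The plan is to build the flag $\mbf F = (F_1 \subseteq \cdots \subseteq F_m)$ inductively in $j$, counting the number of admissible $F_j$ given a fixed compatible prefix $F_0 \subseteq \cdots \subseteq F_{j-1}$, and multiplying the counts together. Telescoping the defining identity of $\mathcal F^{\mbf W}_{\mbf t, \sigma}$ shows that compatibility of the prefix is equivalent to $|W_i \cap F_{j-1}| - |W_{i-1}\cap F_{j-1}| = \sum_{l < j}\sigma_{il}$ for all $i$, which is exactly the kind of cumulative dimension condition appearing in Lemma~\ref{lemcounting}.

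To count extensions at step $j$, I pass to the quotient $\overline V^{(j)} := V/F_{j-1}$ with induced flag $\overline W_i^{(j)} := (W_i+F_{j-1})/F_{j-1}$. A short diagram chase shows that the natural map $W_i\cap F_j \to \overline W_i^{(j)} \cap U$ (with $U := F_j/F_{j-1}$) is surjective with kernel $W_i\cap F_{j-1}$, so the admissible $F_j$ correspond bijectively to subspaces $U \subseteq \overline V^{(j)}$ satisfying $|\overline W_i^{(j)} \cap U| - |\overline W_{i-1}^{(j)}\cap U| = \sigma_{ij}$ for all $i$. Since $|\overline W_i^{(j)}/\overline W_{i-1}^{(j)}| = w_i - \sum_{l<j}\sigma_{il}$ by the inductive hypothesis, Lemma~\ref{lemcounting} counts such $U$ as
\[
q^{\sum_{i > k}\sigma_{ij}\,(\sum_{l > j}\sigma_{kl})} \prod_{i} \begin{bmatrix} w_i - \sum_{l<j}\sigma_{il} \\ \sigma_{ij}\end{bmatrix},
\]
using the identity $|\overline W_k^{(j)}/\overline W_{k-1}^{(j)}| - \sigma_{kj} = \sum_{l>j}\sigma_{kl}$.

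Taking the product over $1 \leq j \leq m$, the exponents of $q$ combine to $\sum_{i > k,\, j < l}\sigma_{ij}\sigma_{kl}$, which matches the prefactor in \eqref{FT}. For each fixed $i$, the binomials telescope: upper triangularity of $\sigma$ forces $\sigma_{ij}=0$ for $j<i$, and $\sum_{j\ge i}\sigma_{ij} = \ro(\sigma)_i = w_i$, so the product collapses to $[w_i]^! /\bigl([\sigma_{ii}]^![\sigma_{i,i+1}]^!\cdots[\sigma_{im}]^!\bigr)$, as required.

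The main obstacle I expect is the verification that the local intersection condition $|W_i \cap F_j/(W_{i-1}\cap F_j + W_i\cap F_{j-1})| = \sigma_{ij}$ defining $\mathcal F^{\mbf W}_{\mbf t, \sigma}$ actually reduces to the cumulative-dimension input demanded by Lemma~\ref{lemcounting}. This hinges on the modular-law identity $(W_{i-1}\cap F_j)\cap(W_i\cap F_{j-1}) = W_{i-1}\cap F_{j-1}$, combined with telescoping in $j$; once this bookkeeping is in place the remaining count and telescoping of $q$-binomials are routine.
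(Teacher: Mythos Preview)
Your proposal is correct and follows essentially the same inductive approach as the paper: build $F_j$ step by step, pass to a quotient by $F_{j-1}$, convert the local $\sigma_{ij}$-condition into the cumulative dimension condition required by Lemma~\ref{lemcounting}, and then telescope the resulting $q$-binomials. The only cosmetic difference is that the paper takes the quotient inside $W_j$ (using $F_j \subseteq W_j$, which follows from upper-triangularity of $\sigma$) rather than in all of $V$, but this does not affect the count.
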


\begin{proof}
We fix $j-1$ steps $F_1, \cdots, F_{j-1}$ such that
$\left |\frac{W_i\cap F_{j'}}{W_{i-1} \cap F_{j'} + W_i \cap F_{j'-1}}  \right |= \sigma_{i,j'}$ for all $1\leq i \leq m$ and $1\leq j' \leq j-1$.
We want to determine the number of choices of $F_j$ such that $F_{j-1} \subseteq F_j \subseteq W_j$ and
$\left | \frac{W_i\cap F_{j}}{W_{i-1} \cap F_{j} + W_i \cap F_{j-1}} \right | = \sigma_{ij}$ for all $1 \leq i \leq m$.
Via the reduction to $W_j/F_{j-1}$, this is the same as counting the number of choices of $\bar F_j$ in $W_j /F_{j-1}$ such that
$$
\left| \bar F_j \cap \frac{W_i}{F_{j-1}} \right | -
\left| \bar F_j \cap \frac{W_{i-1}}{F_{j-1}} \right | = \sigma_{ij},
\quad \forall 1\leq i \leq j.
$$
Note that $\left |\frac{W_i + F_{j-1}}{F_{j-1}} \right | - \left | \frac{W_{i-1} + F_{j-1}}{F_{j-1}} \right |
= w_i - \left | \frac{W_i\cap F_{j-1}}{W_{i-1} \cap F_{j-1}} \right | = w_i - \sum_{ j-1 \geq l} \sigma_{i l}$.
By Lemma ~\ref{lemcounting}, this number is equal to
$$
q^{\sum_{i, k: i>k} \sigma_{ij} (w_k - \sum_{j \geq  l} \sigma_{k l} )} \prod_{1\leq i \leq m}
\begin{bmatrix}
w_i - \sum_{j\leq i-1} \sigma_{ij}\\
\sigma_{ij}
\end{bmatrix}.
$$
Taking product over all $j$ and using $w_k - \sum_{j \geq  l} \sigma_{k l} =\sum_{j < l} \sigma_{kl}$, we have proved  the lemma.
\end{proof}

Let
\begin{align}
  \label{VV}
 \begin{split}
\mbf V &= ( 0= V_0 \subseteq V_1 \subseteq V_2 \subseteq V_3 \subseteq  V_4 =  V),
 \\
\mbf V' &=(0=V'_0 \subseteq V'_1 \subseteq V'_2 \subseteq \cdots \subseteq V'_m =V)
\end{split}
\end{align}
be two partial flags of $V$.  We set
\[
c_{ij} = c_{ij} (\mbf V, \mbf V') = \left | \frac{V_i \cap V'_j}{V_i \cap V'_{j-1}} \right |, \quad \forall 1\leq i \leq 3, 1\leq j\leq m.
\]
To a triple $(\mbf{s, t, t'}) $ in $(\mbb{N}^m)^3$, we define
\begin{align}
\label{Yst}
Y_{\mbf{s, t, t'}} \equiv Y_{\mbf{s, t, t'}}(\mbf V, \mbf V')
\end{align}
be the set of all subspaces $U$ such that $V_1 \subseteq U \subseteq V_3$ and subject to
the following conditions:
\begin{align}
& \left | \frac{U \cap V_2 \cap V'_j}{U \cap V_2 \cap V'_{j-1}} \right | = c_{2j} - s_j,  \tag{\ref{Yst}-i}\\
& \left | \frac{U \cap ( V_2 + V_3 \cap V'_j)}{U \cap (V_2 + V_3 \cap V'_{j-1})}  \right | = t'_j, \tag{\ref{Yst}-ii} \\
& \left | \frac{U \cap V_2 + U \cap V'_j}{U \cap V_2 + U \cap V'_{j-1}} \right | = t_j, \quad \forall 1\leq j \leq m. \tag{\ref{Yst}-iii}
\end{align}

Note that Condition (\ref{Yst}-iii) is equivalent to the following condition:
\begin{align}
\left | \frac{U \cap V'_j} {U \cap V'_{j-1}} \right | = c_{2j} - s_j + t_j, \quad \forall 1\leq j \leq m.  \tag{\ref{Yst}-iii$'$}
\end{align}
Notice also that $|U| = |V_2| - \sum_{1\leq j\leq m} s_j + \sum_{1\leq j \leq m} t'_j$, if $U\in Y_{\mbf{s, t, t'}}$.

Recall $n(\alpha, \gamma, \beta)$ from (\ref{nabc}). To a sequence of length $m$,
we can regard it as a sequence indexed by $\mbb Z$ by setting the value at the undefined positions to be zero.
So the notation $n(\mbf{s, t, t'})$ for $\mbf{s, t, t'} \in \mbb{N}^m$ is well defined.

\begin{prop}
\label{NYst}
The cardinality of $Y_{\mbf{s, t, t'}} (\mbf V, \mbf V')$ is given by
$$
\# Y_{\mbf{s, t, t'}} (\mbf V, \mbf V') =
q^{\sum_{j > l} ( c_{2j} - c_{1j}  - s_j) s_l + t'_j (c_{3l} - c_{2l} - t'_l)} n ( \mbf s, \mbf t, \mbf t')
\prod_{1\leq j\leq m}
\begin{bmatrix}
c_{3j} - c_{2j} \\
t'_j
\end{bmatrix}
\begin{bmatrix}
c_{2j} - c_{1j} \\
s_j
\end{bmatrix}.
$$
\end{prop}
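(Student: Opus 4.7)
The plan is to compute $\#Y_{\mbf{s,t,t'}}$ by decomposing each $U$ into its ``lower'' part $U_2:=U\cap V_2\subseteq V_2$, its ``upper'' image $\bar U:=(U+V_2)/V_2\subseteq V_3/V_2$, and a linear map $\phi:\bar U\to V_2/U_2$ encoding the lift of $\bar U$ back into $V_3$. The three factors in the target formula will correspond, in this order, to the choices of $U_2$, of $\bar U$, and of $\phi$ (the last further broken up according to an auxiliary intersection matrix $\sigma$).

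For the first stage, since $V_1\subseteq U_2\subseteq V_2$, the space $\bar U_2:=U_2/V_1$ lies in $V_2/V_1$, and a short computation shows that Condition~(\ref{Yst}-i) is equivalent to the statement that $\bar U_2$ intersects the induced flag $\big((V_2\cap V_j'+V_1)/V_1\big)_j$ (of graded dimensions $c_{2j}-c_{1j}$) in graded dimensions $c_{2j}-c_{1j}-s_j$. Applying Lemma~\ref{lemcounting} to $V_2/V_1$ yields the factor $q^{\sum_{j>l}(c_{2j}-c_{1j}-s_j)s_l}\prod_j\bin{c_{2j}-c_{1j}}{s_j}$. For the second stage, I fix $U_2$ and use the modular law (valid because $V_2\subseteq V_2+V_3\cap V_j'$) together with the injection induced by $U\cap V_2=U_2$ to obtain a natural isomorphism $\bigl(U\cap(V_2+V_3\cap V_j')\bigr)/U_2\cong \bar U\cap \bar V_j'$, where $\bar V_j':=(V_3\cap V_j'+V_2)/V_2$. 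Condition~(\ref{Yst}-ii) then reads $|\bar U\cap\bar V_j'/\bar U\cap\bar V_{j-1}'|=t_j'$, and a second application of Lemma~\ref{lemcounting} to $V_3/V_2$ produces the factor $q^{\sum_{j>l}t_j'(c_{3l}-c_{2l}-t_l')}\prod_j\bin{c_{3j}-c_{2j}}{t_j'}$.

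For the third stage, I fix $U_2$ and $\bar U$ and count the $U$'s realizing this pair. Build, by induction on $j$, a nested lift $\tilde U\subseteq V_3$ of $\bar U$ compatible with the flag, so that $\tilde U\cap(V_3\cap V_j')$ maps isomorphically onto $\bar U\cap\bar V_j'$ for every $j$. Then the admissible $U$'s are in bijection with $\phi\in\Hom(\bar U,V_2/U_2)$ via $U=U_2+\{\tilde u+\tilde\phi(\tilde u):\tilde u\in\tilde U\}$, and a careful tracking of the decomposition $\tilde u=w+v$ with $w\in V_3\cap V_j'$, $v\in V_2$, identifies
\[
G_j:=(U\cap V_j'+V_2)/V_2 \;=\; \phi^{-1}(H_j)\cap F_j,
\]
where $F_j:=\bar U\cap\bar V_j'$ and $H_j:=(V_2\cap V_3\cap V_j'+U_2)/U_2\subseteq V_2/U_2$; Condition~(\ref{Yst}-i) forces the graded dimensions of $H_\bullet$ to be exactly $s_j$, while Condition~(\ref{Yst}-iii) (equivalently (iii$'$)) translates to $|G_j/G_{j-1}|=t_j$. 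I now stratify over the upper-triangular intersection matrix $\sigma_{ij}=|F_i\cap G_j/(F_{i-1}\cap G_j+F_i\cap G_{j-1})|$, whose row sums are $\mbf t'$ and whose column sums are $\mbf t$. For each such $\sigma$, Equation~\eqref{FT} enumerates the flags $G_\bullet$ on $\bar U$ with intersection data $\sigma$ relative to $F_\bullet$, giving $q^{\sum_{i>k,j<l}\sigma_{ij}\sigma_{kl}}\prod_i[t_i']!/\prod_{ij}[\sigma_{ij}]!$; and for each such $G_\bullet$, the $\phi$'s with $\phi^{-1}(H_j)\cap F_j=G_j$ are enumerated by choosing $\phi$ on a basis of $\bar U$ adapted to both $F_\bullet$ and $G_\bullet$ together with an ordered choice of image pivots in each $H_j/H_{j-1}$, yielding the remaining factor $v^{2(\sum_{i,j,l:j>l}\sigma_{ij}s_l+\sum_i\sigma_{ii}s_i)}\prod_j\prod_{l=0}^{s_j-\sigma_{jj}-1}(v^{2s_j}-v^{2l})$. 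Summing over $\sigma$ assembles $n(\mbf s,\mbf t,\mbf t')$, and multiplying the three stage counts gives the claimed formula.

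The main obstacle is the lift enumeration in the third stage: the conditions $\phi^{-1}(H_j)\cap F_j=G_j$ must hold simultaneously for all $j$ and are tightly coupled, so the count requires a single basis of $\bar U$ refining both filtrations $F_\bullet$ and $G_\bullet$, matched against carefully ordered bases of the graded pieces $H_j/H_{j-1}$. The ``injectivity at level $j$'' condition $F_j/G_j\hookrightarrow (V_2/U_2)/H_j$ is what produces the injective-map factors $\prod_{l=0}^{s_j-\sigma_{jj}-1}(v^{2s_j}-v^{2l})$, while the $q$-powers in $X_\sigma$ encode how the off-diagonal upper-triangular pieces of $\phi$ must sit above the pivot columns; disentangling these contributions in the correct order is where the bulk of the bookkeeping lies.
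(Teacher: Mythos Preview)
Your approach is essentially the same as the paper's: decompose $U$ into $U\cap V_2$, $(U+V_2)/V_2$, and a gluing map in $\Hom(\bar U,V_2/U_2)$; count the first two pieces via Lemma~\ref{lemcounting}; then stratify the $\phi$-count by the upper-triangular matrix $\sigma$ with $\ro(\sigma)=\mbf t'$, $\co(\sigma)=\mbf t$, combining the flag count \eqref{FT} with an injective-map count to assemble $n(\mbf s,\mbf t,\mbf t')$. The only cosmetic difference is that you quotient by $V_1$ from the outset in stage one, whereas the paper first treats $V_1=0$ and reduces to it at the end.

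One small point of imprecision in your third stage: the sentence ``the injectivity at level $j$ condition $F_j/G_j\hookrightarrow (V_2/U_2)/H_j$'' does not by itself produce the factor $\prod_{l}(v^{2s_j}-v^{2l})$, since that map has the wrong domain and codomain dimensions. What actually gives the factor, as in the paper, is the induced injection on the $j$th \emph{graded} piece, namely $\oplus_{i<j}Z_{ij}\hookrightarrow H_j/H_{j-1}$ (domain dimension $t_j-\sigma_{jj}$, codomain dimension $s_j$), after choosing a basis of $\bar U$ adapted to both $F_\bullet$ and $G_\bullet$. This is only a wording issue; your overall strategy and conclusion are correct.
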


\begin{proof}
We first treat the case when $V_1=0$.
We consider the following two sets:
\begin{align}
\label{Ys}
Y''_{\mbf s}  & = \{W \subseteq V_2  \Big \vert\; \left | \frac{W\cap V'_j}{W \cap V'_{j-1}} \right | = c_{2j} - s_j, \quad \forall 1 \leq j \leq m \},  \\
\label{Yt}
Y'_{\mbf t'} & = \{ T \subseteq \frac{V_3}{V_2} \Big \vert\;
  \left | T \cap \frac{V_2 + V_3 \cap V'_j}{V_2} \big / T \cap \frac{V_2 + V_3 \cap V'_{j-1}}{V_2} \right | = t'_j, \quad \forall 1\leq j \leq m \}.
\end{align}
We define a map
\begin{align}
\label{phiY}
\phi: Y_{\mbf{s, t, t'}} \longrightarrow Y''_{\mbf s} \times Y'_{\mbf t'}, \quad U \mapsto (U\cap V_2, \frac{U+V_2}{V_2}).
\end{align}
It is clear that $U\cap V_2 \in Y''_{\mbf s}$. Also we have $\frac{U+V_2}{V_2} \in Y'_{\mbf t'}$, thanks to
\begin{align*}
\begin{split}
& \left | \frac{(U+V_2) \cap (V_2 + V_3 \cap V'_j)}{V_2} \right |
-
\left | \frac{(U+V_2) \cap (V_2 + V_3 \cap V'_{j-1})}{V_2} \right |\\
& =
\left | V_2 + (U+V_2) \cap V'_j \right |
-
\left | V_2 + (U+V_2) \cap V'_{j-1} \right |  \\
& =
|(U+V_2) \cap V'_j | - | (U+V_2) \cap V'_{j-1} | - |V_2 \cap V'_j| + |V_2 \cap V'_{j-1}|\\
& =
t'_j, \quad \forall 1\leq j \leq m.
\end{split}
\end{align*}
So $\phi$ is well defined.

We shall show that $\phi$ has constant fiber and hence the cardinality of $Y_{\mbf{s, t, t'}}$
is reduced to counting  the fiber, $Y''_{\mbf s}$, and $Y'_{\mbf t'}$.
The latter two sets can be  identified with $Y_{\mbf a}(\mbf W)$ in \eqref{YaW},
where  $(\mbf a, \mbf W)$
is $\big((c_{2j}-s_j)_{1\leq j\leq m}, (V_2\cap V'_j)_{1\leq j\leq m} \big)$ for $Y''_{\mbf s}$, and
$\big( \mbf t', (\frac{V_2 + V_3 \cap V'_j}{V_2})_{1\leq j\leq m} \big)$ for $Y'_{\mbf t'}$.
So it follows by Lemma ~\ref{lemcounting} that
\begin{equation}
\label{Y''}
\begin{split}
\# Y''_{\mbf s} = q^{\sum_{j > l} (c_{2j}-s_j) s_l}
\prod_{1\leq j\leq m}
\begin{bmatrix}
c_{2j}\\ s_j
\end{bmatrix}
\quad {\rm and} \quad
\# Y'_{\mbf t'} = q^{\sum_{j > l} t_j'(c_{3l} - c_{2l} - t'_l)}
\prod_{1 \leq j \leq m}
\begin{bmatrix}
c_{3j}- c_{2j}\\ t_j'
\end{bmatrix}.
\end{split}
\end{equation}

Let $Y_{W, T}$ denote  the fiber of a fixed pair $(W, T) \in Y''_{\mbf s} \times Y'_{\mbf t'}$ under $\phi$.
We shall determine its cardinality.
We recall that the subspaces $U$ in $V$ such that $U\cap V_2 =W$ and $\frac{U+V_2}{V_2} =T$ are parametrized by $\Hom ( T, V_2/W)$.
More precisely, for any $x \in \Hom (T, V_2/W)$, we can define such a subspace
\begin{align}
\label{U(x)}
U(x) = \{ w + t + x(t) \vert  w\in W, t \in T\}
\end{align}
in $V_3$ if we fix a linear isomorphism
$V_3 = W \oplus V_2/W \oplus V_3/V_2$.
It is easy to see that for any  $x\in \Hom(T, V_2/W)$, its associated subspace $U(x)$ satisfies Condition (\ref{Yst}-i), by definition.
Observing that
\begin{align*}
\begin{split}
 \left |  \frac{(U(x) + V_2) \cap (V_2 + V_3 \cap V'_j)}{V_2} \right |
& = \left | \frac{V_2 + (U(x) + V_2) \cap V'_j}{V_2} \right  | \\
&= \left | (U(x) + V_2) \cap V'_j \right | - \left | V_2 \cap V'_j \right |, \\
  \left | U(x) \cap (V_2 + V_3 \cap V'_j) \right |
&= \left | U(x) \cap V_2 \right | + \left | (U(x) + V_2) \cap V'_j \right | - \left |V_2\cap V'_j \right |.
\end{split}
\end{align*}
This implies that $U(x) $ satisfies Condition (\ref{Yst}-ii) because $(U(x)+V_2)/V_2 =T \in Y'_{\mbf t'}$.
Thus, we have
\begin{align}
\label{Y_WT}
Y_{W, T} \cong \{ x \in \Hom (T, V_2/W) \big \vert  U(x) \ \text{satisfies Condition } (\ref{Yst}\text{-iii})\}.
\end{align}

Let $\mathcal F_{\mbf t} \equiv \mathcal F_{\mbf t}(T)$ be the set of  all partial flags in $T$ of type $\mbf t$.
Let
$$
\pi: Y_{W, T} \longrightarrow \mathcal F_{\mbf t} \equiv \mathcal  F_{\mbf t}(T)
$$
be the function defined by
\[
\pi(x) = \mbf U(x) = (U_j(x))_{1\leq j\leq m}, \ \text{where} \ U_j(x) =\frac{U(x) \cap V'_j + V_2}{V_2}, \forall 1\leq j \leq m.
\]
This is well defined because
\begin{align*}
\begin{split}
\left | U_j(x)/U_{j-1}(x)  \right |
& = \left | U(x) \cap V'_j / U(x) \cap V'_{j-1}  \right | - \left |U(x) \cap V_2 \cap V'_j/U(x)\cap V_2 \cap V'_{j-1} \right | \\
& = (c_{2j} - s_j + t_j ) - (c_{2j} - s_j) = t_j, \quad \forall 1\leq j\leq m,
\end{split}
\end{align*}
where the last equality is due to (\ref{Y_WT}) and Condition (\ref{Yst}-iii$'$).

We shall use $\pi$ and $\mathcal F_{\mbf t}(T)$ to compute the cardinality of $Y_{W, T}$.

Consider the flag $\mbf T' =(T'_j)_{1\leq j\leq m}$ associated to $T$, where
$T'_j = T \cap \frac{V_2 + V_{3} \cap V'_j}{V_2} $ for all $j$.
Clearly, we have $\mbf T' \in \mathcal F_{\mbf t'}$.
Then $\mathcal F_{\mbf t}$ admits a partition
$
\mathcal F_{\mbf t} = \sqcup_{\sigma} \mathcal F^{\mbf T'}_{\mbf t, \sigma},
$
where $\sigma$ runs over all matrices with coefficients in $\mbb{N}$ such that $\ro(\sigma) = \mbf t'$ and $\co(\sigma) =\mbf t$,
and $\mathcal F^{\mbf T'}_{\mbf t, \sigma}$ is defined in (\ref{FTA}).

Let us fix a flag $\mbf F \in \mathcal F^{\mbf T'}_{\mbf t, \sigma}$.
Since $U_j(x) \subseteq T'_j$ for all $j$, we see that the fiber $\pi^{-1} (\mbf F)$ is empty if $\sigma$ is not upper triangular.
So let us further assume that $\sigma$ is upper triangular.
By ~\cite{BLM90}, we can decompose $T= \oplus_{1 \leq i, j \leq m} Z_{ij}$ such that $|Z_{ij}| = \sigma_{ij}$ for all $1 \leq i, j \leq m$ and
\[
T'_i = \oplus_{k \leq i,  1\leq j \leq m} Z_{kj}, \quad \forall 1\leq i \leq m;
\quad
F_j = \oplus_{1\leq i \leq m, l \leq  j} Z_{i, l}, \quad \forall 1\leq j \leq m.
\]
Meanwhile, we fix a decomposition $\frac{V_2}{W} = \oplus_{1\leq j \leq m} \frac{(V_2 \cap V'_j +W)/W}{(V_2 \cap V'_{j-1} +W)/W}$.
We have
\[
\left |  \frac{(V_2 \cap V'_j +W)/W}{(V_2 \cap V'_{j-1} +W)/W}  \right | = s_j, \quad \forall 1\leq j \leq m.
\]
Under the above refinement of the spaces $T$ and $V_2/W$,
the linear maps $x$ in $\Hom(T, V_2/ W)$ can be rewritten as $x = \oplus_{1\leq i, j, l\leq m} x_{ij}^l$, where
$x_{ij}^l: Z_{ij} \to  \frac{(V_2\cap V'_l +W)/W}{(V_2 \cap V'_{l-1} +W)/W}$, for all $1\leq i, j, l\leq m$, is the restriction of $x$ to the prescribed subspaces.
Since $Z_{ij}=0$ for all $i > j$, we have $x_{ij}^l = 0$ for all $i > j$.
So $U(x)$ in (\ref{U(x)}) can be refined to be
\begin{align}
\label{U(x)-refined}
U(x) = \Big\{ w + z + \sum_{1\leq l\leq m} x^l_{ij} (z)  \big \vert\; w\in W, z\in Z_{ij}, \forall 1\leq i, j \leq m \Big\}.
\end{align}
Let us choose and fix a decomposition $V_3 \cong W \oplus V_2/W \oplus V_3/V_2$ such that
\begin{align}
\label{V'_j}
V_3 \cap V'_j = (W\cap V'_j) \oplus \big(\frac{V_2}{W} \cap V'_j \big) \oplus \big(\frac{V_3}{V_2} \cap V'_j \big), \quad \forall 1\leq j \leq m.
\end{align}
By using the descriptions (\ref{U(x)-refined}), (\ref{V'_j}),
and the fact that $U(x)$ satisfies Condition (\ref{Yst}-i), we see that the subspace $U(x)$ satisfies Condition (\ref{Yst}-iii$'$), hence (\ref{Yst}-iii),  if and only if
\begin{align*}
& \sum_{i} z_{ij} + \sum_{1\leq l\leq m} x^l_{ij} (z_{ij}) \in V'_j , \quad \forall z_{ij} \in Z_{ij}, 1\leq i, j \leq m, \\
& \sum_{i} z_{ij} + \sum_{1\leq l\leq m} x^l_{ij} (z_{ij}) \not \in    V'_{j-1} ,  \quad \forall (z_{ij})_{i\leq j}  \in \oplus_{i\leq j} Z_{ij}-\{0\}, 1\leq i, j \leq m.
\end{align*}
The first condition in the above is equivalent to $x^l_{ij}=0$ for all $l> j$.
Since
$Z_{jj} \backslash \{0\} \subseteq \frac{V_2+ V_3 \cap V'_j}{V_2} \backslash \frac{V_2 + V_3\cap V'_{j-1}}{V_2}$, we see
that $z_{jj} +  \sum_{1\leq l\leq m} x^l_{jj} (z_{jj}) \not \in V'_{j-1}$ automatically for all $z_{jj} \in Z_{jj} -\{0\}$.
Hence the second condition is equivalent to say that $\oplus_{i < j} x^j_{ij}$ is injective for each $j$, since $\oplus_{i< j} Z_{ij} \subseteq T'_{j-1} \subseteq V'_{j-1}$.
We then have
\[
\pi^{-1} (\mbf F) \cong \{ x\in \Hom( T, \frac{V_2}{W}) |  x_{ij}^l =0, \forall l > j, \oplus_{i < j} x_{ij}^j \ \text{is of rank $t_j - \sigma_{jj} $}\ \},
\]
where $\mbf F$ is a fixed flag in $\mathcal F^{\mbf T'}_{\mbf t, \sigma}$.
Observe that the number of $x_{ij}^j$  such that $\oplus_{i<j} x^j_{ij}$ is injective, hence of  rank $t_j - \sigma_{jj}$  is
$\prod_{j} \prod_{b=0}^{t_j -\sigma_{j j}-1} (q^{s_j} - q^b)$,
if one keeps in mind that the size of the corresponding matrix of $\oplus_{i<j} x_{ij}^j$, for each $j$, is  $s_j \times (t_j-\sigma_{jj})$.
The number of choices for $x_{ij}^l$ with $i , l  < j$  is
$q^{\sum_{b=1}^{j -1} \sigma_{ij} s_b}$, and the number of choices for $x_{ii}^i$ for various $i$  is $q^{\sum_{1\leq i\leq m} \sigma_{ii}  s_i}$.
Thus we have
\begin{align}
\label{pi(F)}
\pi^{-1} (\mbf F)  = q^{\sum_{i, j, l: j > l} \sigma_{ij} s_l + \sum_{1\leq i\leq m} \sigma_{ii} s_i} \prod_{j\in \mbb Z} \prod_{b=0}^{t_j - \sigma_{jj}-1} (q^{s_j}-q^b).
\end{align}
Since $\pi^{-1}(\mbf F)$ depends only on $\sigma$ and $\mbf s$,
we see that the cardinality of the fiber of the restriction $Y_{W, T; \sigma}  \to \mathcal F_{\mbf t, \sigma}^{\mbf T'}$ of $\pi$, where $Y_{W, T; \sigma} : = \pi^{-1} ( \mathcal F_{\mbf t, \sigma}^{\mbf T'})$,  is constant and given by (\ref{pi(F)}), hence
\begin{align}
\label{YWTA-raw}
\# Y_{W, T; \sigma} = \# \pi^{-1}(\mbf F)  \# \mathcal F_{\mbf t, \sigma}^{\mbf T'},
\end{align}
where $\mbf F$ is any fixed flag in $\mathcal F_{\mbf t, \sigma}^{\mbf T'}$.
By (\ref{pi(F)}), (\ref{YWTA-raw}) and (\ref{FT}), we have
\[
\# Y_{W, T; \sigma} =
q^{\sum_{j > l} \sigma_{ij} s_l + \sum_{i} \sigma_{ii} s_i + \sum_{i > k, j < l} \sigma_{ij} \sigma_{kl}}
\prod_{\substack{j \in \mbb Z \\ 0 \leq b\leq t_j - \sigma_{jj}-1}} (q^{s_j}-q^b)
\prod_{1\leq i \leq m} \frac{[t'_i]^!}{[\sigma_{ii}]^! [\sigma_{i, i+1}]^! \cdots [\sigma_{i m}]^!}.
\]
Since $Y_{W, T}$ admits a partition $Y_{W, T} = \sqcup_{\sigma} Y_{W, T; \sigma}$,
where  $\sigma$ runs over all matrices with coefficients in $\mbb{N}$ such that $\ro(\sigma) = \mbf t'$ and $\co(\sigma) =\mbf t$,
its cardinality is given by
\begin{align}
\label{YWT}
\# Y_{W, T} = \sum_{\sigma} \# Y_{W, T; \sigma} = n(\mbf{s, t, t'}).
\end{align}
Thus $\#Y_{W, T}$ is independent of the choice of $T$ and $W$. Hence $\phi$ in (\ref{phiY}) is surjective and of constant fiber,
which, together with (\ref{YWT}),  implies that
\begin{align}
\label{Y-raw}
\# Y_{\mbf{s, t, t'}} = n(\mbf{s, t, t'}) \# Y''_{\mbf s} \# Y'_{\mbf t'}.
\end{align}
Therefore, the proposition follows from (\ref{Y-raw}) and (\ref{Y''}) for the case $V_1=0$ (hence $c_{1j}=0$).

The general case  can be reduced to the case $V_1=0$ by taking the quotients with respect to $V_1$.
More precisely, the role of the pair $(\mbf V, \mbf V')$ is replaced by
the pair $(\bar{\mbf V}, \bar{\mbf V'})$ where $\bar V_i = V_i/V_1$ and $\bar V'_j = \frac{V'_j + V_1}{V_1}$ for all $1\leq i \leq 3$ and $1\leq j \leq m$.
As a consequence, we have
\[
\left | \frac{\bar V_i \cap \bar V'_j}{\bar V_i \cap \bar V'_{j-1}} \right | = c_{ij} - c_{1j} \quad \forall 1\leq i\leq 3, 1\leq j \leq m.
\]
The general case then follows from the case $V_1=0$. The proposition is proved.
\end{proof}

Let $V^*$ be the dual of $V$. We thus have a canonical pairing $\langle -, - \rangle : V^* \times V \to \mbb F_q, (f, u) \mapsto f(u)$.
Given a subspace $U\subseteq V$, we set $U^{\flat}: = \{ f\in V^* | f(u) =0, \forall u\in U\}$ to be the perpendicular of $U$ with respect to the pairing.
More generally, associated to the flags $\mbf V =(V_i)_{0\le i\le 4}$ and $\mbf V'  =(V_i')_{0\le i\le m}$ of $V$ in \eqref{VV}
we  define two flags $\tilde{\mbf V} =(\tilde V_i)_{0\le i\le 4}$ and $\tilde{\mbf V}' =(\tilde V_i')_{0\le i\le m}$ of $V^*$  by
\begin{equation}
  \label{V:flat}
\tilde V_i = V_{4 - i}^{\flat}, \qquad
\tilde{ V}'_{j} = V'^{\flat}_{m-j},  \; \text{ for } 1\leq i \leq 4,  1\leq j \leq m.
\end{equation}
To a sequence $\mbf s$, we set
\begin{equation}
  \label{flat}
\mbf s^{\flat} = (s^{\flat}_i), \qquad
 s^{\flat}_{i} = s_{m+1-i}, \; \text{ for } 1\leq i \leq m.
\end{equation}
($\mbf s^{\flat}$ is a finite analogue of $\alpha^{J}$, if we take $i\in \mbb Z$ and $m=-1$.)

\begin{prop}
\label{Y-dualY}
The assignment $U\mapsto U^{\flat}$ defines a bijection
$$
Y_{\mbf{s, t, t'}} (\mbf V, \mbf V') \cong Y_{\mbf t'^{\flat}, \mbf s'^{\flat}, \mbf s^{\flat}}(\tilde{\mbf V}, \tilde{\mbf V}'),
$$
where $\mbf s' = \mbf s + \mbf t' - \mbf t$.
\end{prop}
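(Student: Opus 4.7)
The plan is to exploit the fact that $U \mapsto U^{\flat}$ is the standard lattice anti-isomorphism from subspaces of $V$ to subspaces of $V^{*}$: order-reversing, involutive by non-degeneracy of the pairing, and satisfying $(A+B)^{\flat} = A^{\flat} \cap B^{\flat}$ and $(A \cap B)^{\flat} = A^{\flat} + B^{\flat}$, together with $|A^{\flat}| = |V| - |A|$. The task is then to verify that the three defining conditions of $Y_{\mbf{s,t,t'}}(\mbf V, \mbf V')$ translate, under the reindexing $k = m - j + 1$, into the three defining conditions of $Y_{\mbf t'^{\flat}, \mbf s'^{\flat}, \mbf s^{\flat}}(\tilde{\mbf V}, \tilde{\mbf V}')$.

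First I would observe that $V_1 \subseteq U \subseteq V_3$ dualizes to $\tilde V_1 = V_3^{\flat} \subseteq U^{\flat} \subseteq V_1^{\flat} = \tilde V_3$, so $U \mapsto U^{\flat}$ already restricts to a bijection between the ambient subspace sets. Next I would show that the three defining conditions pair up as follows: condition (\ref{Yst}-i) on $U$ with condition (\ref{Yst}-ii) on $U^{\flat}$; condition (\ref{Yst}-ii) on $U$ with condition (\ref{Yst}-i) on $U^{\flat}$; and condition (\ref{Yst}-iii) on $U$ with condition (\ref{Yst}-iii) on $U^{\flat}$. This matches the permutation in the target triple $(\mbf{s,t,t'}) \mapsto (\mbf t'^{\flat}, \mbf s'^{\flat}, \mbf s^{\flat})$, whose first, third, and second slots are exactly fed, respectively, by conditions (ii), (i), (iii) of the source.

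For each pairing, the computational core is to take $\flat$ of the subspaces appearing in the relevant condition and simplify using the modular law together with the inclusions $V_1 \subseteq U$ and $\tilde V_1 \subseteq U^{\flat}$. For example, an application of the modular law in $\tilde V_3$ (using $\tilde V_1 \subseteq \tilde V_2$) yields
\[
\bigl(U \cap (V_2 + V_3 \cap V'_j)\bigr)^{\flat} = U^{\flat} + \tilde V_2 \cap \tilde V'_{m-j};
\]
taking the successive size difference in $j$ and setting $k = m-j+1$ converts the RHS $t'_j$ of (\ref{Yst}-ii) into the desired $\tilde c_{2k} - (\mbf t'^{\flat})_k$ of (\ref{Yst}-i) for $U^{\flat}$. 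The symmetric computation $\bigl(U^{\flat} \cap (\tilde V_2 + \tilde V_3 \cap \tilde V'_k)\bigr)^{\flat} = U + V_2 \cap V'_{m-k}$ handles the reverse direction. For (\ref{Yst}-iii) I would pass to the equivalent form (\ref{Yst}-iii$'$), use $(U \cap V'_j)^{\flat} = U^{\flat} + \tilde V'_{m-j}$, and convert once more by size differences; this is where the identity $\mbf s' = \mbf s + \mbf t' - \mbf t$ enters, absorbing the $-t_j$ that appears when passing between the two forms of (iii).

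The main obstacle is the bookkeeping: tracking the index reversal $k = m - j + 1$, identifying $\tilde c_{2k} = c_{4,m-k+1} - c_{2,m-k+1}$ (which follows from $\tilde V_i \cap \tilde V'_k = (V_{4-i} + V'_{m-k})^{\flat}$ and the dimension formula), and matching the three permuted slots of $(\mbf t'^{\flat}, \mbf s'^{\flat}, \mbf s^{\flat})$ against the three dualized conditions. Once this bookkeeping is in place, each individual dualization reduces to a routine application of the modular law together with the fixed inclusions $V_1 \subseteq U \subseteq V_3$ and their $\flat$-duals.
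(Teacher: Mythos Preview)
Your approach is correct and essentially the same as the paper's: both dualize each of the three defining conditions via the lattice anti-isomorphism $U\mapsto U^{\flat}$, using the modular law (together with the inclusions $V_1\subseteq U$, $\tilde V_1\subseteq U^{\flat}$) and the index reversal $k=m+1-j$, and both invoke the equivalent form (\ref{Yst}-iii$'$) to handle the third condition where the relation $\mbf s'=\mbf s+\mbf t'-\mbf t$ enters. The paper carries out the same three pairings \big((i)$\leftrightarrow$(ii), (iii)$\leftrightarrow$(iii)\big) via explicit dimension identities rather than stating the modular-law identities first, but the content is identical.
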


\begin{proof}
Recall that Condition  (\ref{Yst}-iii) is equivalent to Condition  (\ref{Yst}-iii$'$), where $c_{2j} =c_{2 j} (\mbf V, \mbf V')$,
which in turn is the same as the following condition:
\begin{align*}
\left | \frac{U + V'_j}{U + V'_{j-1}} \right | = \left | V'_j/V'_{j-1} \right | - (c_{2j} (\mbf V, \mbf V') - s_j + t_j).
\end{align*}
So we have
\begin{align*}
\left | \frac{U^{\flat} \cap \tilde V'_j}{U^{\flat} \cap \tilde V'_{j-1}} \right |
& =
\left |
\frac{(U+ V'_{m-j})^{\flat}}{(U + V'_{m+1-j})^{\flat}}
\right |
=
\left |
\frac{U + V'_{m+1-j}}{U+ V'_{m-j}}
\right |\\
& =  \left | V'_{m+1-j}/V'_{m- j} \right | - (c_{2, m+1-j} (\mbf V, \mbf V') - s_{m+1-j} + t_{m+1-j}).
\end{align*}
On the other hand,
\begin{align*}
c_{ij} (\tilde{\mbf V}, \tilde{\mbf V}')
& =  \left |
\frac{ \tilde V_i \cap \tilde V'_j}{\tilde V_i \cap \tilde V'_{j-1}}
\right |
=
\left |
\frac{V_{4-i}^{\flat} \cap V'^{\flat}_{m-j}}{V^{\flat}_{4-i} \cap V'^{\flat}_{m+1-j}}
\right |
=
\left |
\frac{ (V_{4-i} + V'_{m-j})^{\flat} } {(V_{4-i} + V'_{m+1-j})^{\flat}}
\right |\\
&
=
\left |
\frac{V_{4-i} + V'_{m+1-j}}{V_{4-i} + V'_{m-j}}
\right |
=
\left |
\frac{V'_{m+1-j}}{V'_{m-j}}
\right |
-
\left |
\frac{V_{4-i} \cap V'_{m+1-j}}{V_{4-i} \cap V'_{m-j}}
\right | \\
&
=
\left |
\frac{V'_{m+1-j}}{V'_{m-j}}
\right |
- c_{4-i, m+1-j}(\mbf V, \mbf V').
\end{align*}
Thus we have
\begin{align}
\label{Uflat-iii}
\begin{split}
\left | \frac{U^{\flat} \cap \tilde V'_j}{U^{\flat} \cap \tilde V'_{j-1}} \right |
& =
c_{2j} (\tilde{\mbf V}, \tilde{\mbf V}') + s_{m+1-j} - t_{m+1-j}
 = c_{2j} (\tilde{\mbf V}, \tilde{\mbf V}')  + s'^{\flat}_j - t'^{\flat}_j,
\end{split}
\end{align}
where the last equality is due to the definition of $\mbf s'$.
Condition (\ref{Uflat-iii}) is Condition (\ref{Yst}-iii) for $U^{\flat}$  in
$Y_{\mbf t'^{\flat}, \mbf s'^{\flat}, \mbf s^{\flat}}(\mbf V^{\flat}, \mbf V'^{\flat})$.

We note that
\[
|(U\cap (V_2 + V_3 \cap V'_j))^{\flat}| = |U^{\flat}| + |V_2^{\flat} \cap V'^{\flat}_{j} | - |U^{\flat} \cap V^{\flat}_2 \cap V'^{\flat}_{j} |.
\]
So we have
\begin{align}
\begin{split}
\left |
\frac{ U^{\flat} \cap \tilde V_2 \cap \tilde V'_{m+1-j}} {U^{\flat} \cap \tilde V_2 \cap \tilde V'_{m-j}}
\right |
=
c_{2, m+1-j} (\tilde{\mbf V}, \tilde {\mbf V}') - t'_j
=
c_{2, m+1-j} (\tilde{\mbf V}, \tilde {\mbf V}') - t'^{\flat}_{m+1-j}.
\end{split}
\end{align}
Change the index  $j \leftrightarrow m+1-j$, we see that $U^{\flat}$ satisfies Condition (\ref{Yst}-i) for $Y_{\mbf t'^{\flat}, \mbf s'^{\flat}, \mbf s^{\flat}}(\mbf V^{\flat}, \mbf V'^{\flat})$.

Tracing backward the above argument, we see that $U^{\flat}$ satisfies Condition (\ref{Yst}-ii) for
$Y_{\mbf t'^{\flat}, \mbf s'^{\flat}, \mbf s^{\flat}}(\mbf V^{\flat}, \mbf V'^{\flat})$ can be deduced from Condition (\ref{Yst}-i) for $U$.

So the map defined by $U\mapsto U^{\flat}$ is well defined, and $(U^{\flat})^{\flat}=U$ implies that the map is a bijection. The proposition follows.
\end{proof}

\begin{cor}
\label{weak-n-dual}
If $Y_{\mbf{s, t, t'}} (\mbf V, \mbf V') \neq \O $, then
$n(\mbf s, \mbf t, \mbf t') = n(\mbf t'^{\flat}, \mbf s'^{\flat}, \mbf s^{\flat}) $ where $\mbf s - \mbf t = \mbf s' - \mbf t'$.
\end{cor}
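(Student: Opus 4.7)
The plan is to combine the bijection from Proposition~\ref{Y-dualY} with the explicit enumeration formula from Proposition~\ref{NYst}, and then cancel the common prefactors on both sides to isolate the desired identity $n(\mbf s,\mbf t,\mbf t')=n(\mbf t'^{\flat},\mbf s'^{\flat},\mbf s^{\flat})$.

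First, by Proposition~\ref{Y-dualY}, the map $U\mapsto U^{\flat}$ gives a bijection
\[
Y_{\mbf{s,t,t'}}(\mbf V,\mbf V')\;\cong\; Y_{\mbf t'^{\flat},\mbf s'^{\flat},\mbf s^{\flat}}(\tilde{\mbf V},\tilde{\mbf V}'),
\]
so the two sets have equal cardinality. Applying Proposition~\ref{NYst} to each side turns this equality into an identity of expressions in $q$, each of the form ($q$-power)$\cdot\, n(\cdots)\cdot$(product of $q$-binomials). The hypothesis $Y_{\mbf{s,t,t'}}(\mbf V,\mbf V')\neq\varnothing$ guarantees that these cardinalities are positive, hence the $q$-binomial factors and $q$-powers appearing are nonzero.

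Next I would carry out the bookkeeping that matches the prefactors. From the proof of Proposition~\ref{Y-dualY} one has
\[
c_{ij}(\tilde{\mbf V},\tilde{\mbf V}')\;=\;\bigl|V'_{m+1-j}/V'_{m-j}\bigr|\,-\,c_{4-i,\,m+1-j}(\mbf V,\mbf V'),
\]
so that $\tilde c_{3j}-\tilde c_{2j}=c_{2,m+1-j}-c_{1,m+1-j}$ and $\tilde c_{2j}-\tilde c_{1j}=c_{3,m+1-j}-c_{2,m+1-j}$. Combined with the reindexing $s^{\flat}_j=s_{m+1-j}$ and $t'^{\flat}_j=t'_{m+1-j}$ and the change of variables $j\mapsto m+1-j$ (which swaps $j>l$ with $j<l$), a direct check shows the product of $q$-binomial coefficients on the dual side equals $\prod_j\!\bin{c_{3j}-c_{2j}}{t'_j}\bin{c_{2j}-c_{1j}}{s_j}$, and the $q$-power exponent on the dual side
\[
\sum_{j>l}\bigl(\tilde c_{2j}-\tilde c_{1j}-t'^{\flat}_j\bigr)t'^{\flat}_l+s^{\flat}_j\bigl(\tilde c_{3l}-\tilde c_{2l}-s^{\flat}_l\bigr)
\]
equals the original exponent $\sum_{j>l}(c_{2j}-c_{1j}-s_j)s_l+t'_j(c_{3l}-c_{2l}-t'_l)$ after relabeling. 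Cancelling the common nonzero prefactors yields the claimed equality of $n$-values at $v^2=q$.

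Finally, since $n(\mbf s,\mbf t,\mbf t')$ is a Laurent polynomial in $v$ and the identity just derived holds at $v=\sqrt{q}$ for every prime power $q$, it holds identically as Laurent polynomials in $v$. The main obstacle is purely combinatorial bookkeeping: tracking the index reversal $j\leftrightarrow m+1-j$ through both the $q$-power exponent and the $q$-binomial factors and verifying the two exponents coincide term by term; once this is done, the cancellation argument is immediate.
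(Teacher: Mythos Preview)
Your argument is correct and follows the same overall strategy as the paper: use the bijection of Proposition~\ref{Y-dualY} to equate the two cardinalities, then cancel the common prefactors to isolate $n(\cdots)$, finally lifting from $v=\sqrt{q}$ to Laurent polynomials.

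The paper's execution is a bit slicker, though. Rather than unpacking Proposition~\ref{NYst} into its explicit $q$-power and $q$-binomial factors and matching them algebraically as you do, the paper goes back to the raw factorization \eqref{Y-raw}, namely $\#Y_{\mbf{s,t,t'}} = n(\mbf{s,t,t'})\cdot \#Y''_{\mbf s}\cdot \#Y'_{\mbf t'}$, and observes that the same $\flat$-duality that proves Proposition~\ref{Y-dualY} gives bijections $Y''_{\mbf s}\cong Y'_{\mbf s^\flat}$ and $Y'_{\mbf t'}\cong Y''_{\mbf t'^\flat}$ between the auxiliary sets for the two sides. This matches the prefactors \emph{set-theoretically} rather than by formula, so no index-reversal bookkeeping is needed; the cancellation is then immediate. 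Your approach recovers this by computing what those bijections say about cardinalities, which works but is more laborious.
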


\begin{proof}
The proof of Proposition ~\ref{Y-dualY} also shows that there are bijections
\[
Y''_{\mbf s} \cong Y'_{\mbf s^{\flat}}
\quad \mbox{and} \quad
Y'_{\mbf t'} \cong Y''_{\mbf t'^{\flat}},
\]
for the auxiliary sets in (\ref{Ys}) and (\ref{Yt}) for $Y_{\mbf{s, t, t'}}$ in the left and
$Y_{\mbf t'^{\flat}, \mbf s'^{\flat}, \mbf s^{\flat}}(\mbf V^{\flat}, \mbf V'^{\flat})$ in the right.
The corollary then follows from  (\ref{Y-raw}).
\end{proof}

\subsection{Toward a proof II: type $C$ counting}

In this section, we assume that $V$  is an even dimensional vector space over $\mbb F_q$ equipped with a non-degenerate symplectic form.
As in the previous section,  we fix a flag $\mbf W =(W_i)_{0\leq i\leq m}$ of type $\mbf w = (w_i)_{1 \leq i \leq m}$.
We require that  $m=2r+1$, and $\mbf W$ is isotropic, i.e., $W_i^{\perp} = W_{m - i}$ for all $0 \leq i \leq m$.
For a fixed $j$, we define
\begin{equation}
  \label{YjVW}
Y_j \equiv Y_j (V, \mbf W; u) =\{ U \subseteq V | U\subseteq U^{\perp}, |U| = u, U \subseteq W_j, U\cap W_{j-1} =0 \}.
\end{equation}

\begin{lem}
The cardinality of $Y_j$ in \eqref{YjVW}  is given by
\begin{align}
\label{Yj}
\# Y_j =
\begin{cases}
q^{u \sum_{1\leq j' \leq j-1} w_{j'}}  \begin{bmatrix}w_j\\ u \end{bmatrix}, & \mbox{if} \ j < r+1,\\
q^{u \sum_{1\leq j' \leq j-1} w_{j'}} \prod_{0 \leq i \leq u-1} \frac{[w_j - 2i]}{[i+1]}, & \mbox{if} \ j = r+1, \\
q^{u \sum_{1\leq j' \leq j-1} w_{j'} - u(u-1)/2} \begin{bmatrix}w_j\\ u \end{bmatrix}, & \mbox{if} \ j > r+1.
\end{cases}
\end{align}
\end{lem}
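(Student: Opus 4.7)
My plan is to split into three cases according to how $W_j$ relates to its symplectic perpendicular, and in each case parametrize an isotropic $U\subseteq W_j$ via its image in a quotient of $W_j$ where the form behaves cleanly. The factor $\begin{bmatrix} w_j \\ u \end{bmatrix}$ will arise in all three cases from the choice of the image $\bar U\subseteq W_j/W_{j-1}$, while the $q$-power factors count lifts or, in the last case, enforce isotropy.

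For $j<r+1$ one has $W_j\subseteq W_j^\perp=W_{m-j}$, so $W_j$ is totally isotropic and the condition $U\subseteq U^\perp$ is automatic; counting $U$ of dimension $u$ with $U\cap W_{j-1}=0$ reduces to picking $\bar U\subseteq W_j/W_{j-1}$ of dimension $u$ and a complement to $W_{j-1}$ in its preimage, giving $q^{u\dim W_{j-1}}\begin{bmatrix} w_j\\ u\end{bmatrix}$. For $j=r+1$ the form on $W_{r+1}$ has radical exactly $W_r=W_{r+1}^\perp$ and descends to a non-degenerate symplectic form on $W_{r+1}/W_r$ of dimension $w_{r+1}$, so any $U$ with $U\cap W_r=0$ embeds isomorphically as $\bar U\subseteq W_{r+1}/W_r$ and is isotropic iff $\bar U$ is. Applying the classical count $\prod_{i=0}^{u-1}\frac{[w_{r+1}-2i]}{[i+1]}$ for isotropic $u$-subspaces in a symplectic space of dimension $w_{r+1}$, and multiplying by the $q^{u\dim W_r}$ lifts, yields the stated formula.

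For $j>r+1$, the main obstacle, the form on $W_j$ has radical $W_{m-j}\subsetneq W_{j-1}$ and descends to a non-degenerate symplectic form on $\bar W:=W_j/W_{m-j}$. Since $U\cap W_{j-1}=0$ forces $U\cap W_{m-j}=0$, $U$ embeds as a $u$-dimensional isotropic $\bar U\subseteq \bar W$ with $\bar U\cap \bar W_{j-1}=0$ (where $\bar W_{j-1}:=W_{j-1}/W_{m-j}$), and has $q^{u\dim W_{m-j}}$ preimages. To count such $\bar U$ I use $\dim W_i+\dim W_{m-i}=\dim V$ to deduce $w_i=w_{m+1-i}$, whence $\bar W_{j-1}^\perp=W_{m-j+1}/W_{m-j}$ has dimension $w_j$, so $\bar W_{j-1}$ is coisotropic of codimension $w_j$ in $\bar W$. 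Fix a Lagrangian-type decomposition $\bar W=L\oplus H\oplus L^*$ with $L=\bar W_{j-1}^\perp$, $H$ a symplectic complement of $L$ inside $\bar W_{j-1}$, and $L^*\cong \bar W/\bar W_{j-1}$ Lagrangian and dually paired with $L$. Each $\bar U$ is then the graph of a map $\phi_0+\phi_1\colon \bar{\tilde U}\to L\oplus H$ over a $u$-dimensional $\bar{\tilde U}\subseteq L^*$. Expanding $\omega$ on two graph elements and using that $L,L^*$ are Lagrangian and $H\perp (L\oplus L^*)$ shows the isotropy condition is equivalent to
\[
\omega(x_1,\phi_0(x_2))-\omega(x_2,\phi_0(x_1))=-\omega(\phi_1(x_1),\phi_1(x_2))\quad\text{for all }x_1,x_2\in\bar{\tilde U}.
\]
The linear-algebraic heart of the argument is the claim that, with $\phi_1$ free (contributing $q^{u\dim H}$), the map $\Hom(\bar{\tilde U},L)\to\{\text{alternating forms on }\bar{\tilde U}\}$ sending $\phi_0$ to the left-hand side is surjective, hence each fibre has size $q^{u w_j-u(u-1)/2}$; surjectivity follows since the pairing $L\times L^*\to \FF_q$ restricted to $L\times \bar{\tilde U}$ identifies $L$ with a space surjecting onto $\bar{\tilde U}^*$. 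Assembling the four factors $\begin{bmatrix} w_j\\u\end{bmatrix}\cdot q^{u\dim H}\cdot q^{u w_j-u(u-1)/2}\cdot q^{u\dim W_{m-j}}$ and simplifying via $\dim H+w_j+\dim W_{m-j}=\dim W_{j-1}$ yields $q^{u\dim W_{j-1}-u(u-1)/2}\begin{bmatrix} w_j\\u\end{bmatrix}$, as required.
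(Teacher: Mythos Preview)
Your proof is correct and takes a genuinely different route from the paper. The paper argues inductively on $u$: it first treats the case $u=1$ by a direct count, then introduces the flag variety $\tilde Y_j(V,\mathbf W;k)$ of chains $U_1\subset\cdots\subset U_k$ with each $U_i\in Y_j(V,\mathbf W;i)$, computes $\#\tilde Y_j$ via the tower of projections $\pi_k$ (each fibre being a copy of $Y_j$ with $u=1$ in the reduced symplectic space $U_k^\perp/U_k$), and finally divides by $[u]!$ to pass from flags to subspaces. This gives a uniform recursion in which the three cases only differ in how the step sizes $w_j-k$, $w_j-2k$, or $w_j-k$ with an extra $q^{-k}$ enter.

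Your approach instead analyzes the restricted form on $W_j$ directly in each regime. The advantage is that the origin of each factor is transparent: the Gaussian binomial always counts the image $\bar U$ in a $w_j$-dimensional quotient, and the $q$-power counts either unconstrained lifts ($j\le r+1$) or lifts modulo an explicit isotropy equation ($j>r+1$). Your treatment of the coisotropic case $j>r+1$ via the decomposition $\bar W=L\oplus H\oplus L^*$ and the surjection $\Hom(\bar{\tilde U},L)\to\mathrm{Alt}(\bar{\tilde U})$ is a clean linear-algebra argument that avoids the flag recursion entirely. The paper's inductive method is more uniform across the three cases and generalizes more readily to the subsequent Lemma~\ref{YspaW}; your method is more structural and makes the appearance of $u(u-1)/2$ as the dimension of alternating forms explicit.
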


\begin{proof}
Assume that $u =1$. Since any one-dimensional subspace in $V$ is automatically isotropic, we see that
\begin{align}
\label{m=1}
Y_j (V, \mbf W; 1) = \frac{q^{|W_j|} -1}{q-1} - \frac{q^{|W_{j-1}|}-1}{q-1} = q^{|W_{j-1}|} \frac{q^{|W_j/W_{j-1}|} -1}{q-1}
=q^{\sum_{i \leq j-1} w_i} [w_j].
\end{align}
This is exactly (\ref{Yj}) for $u=1$.

Now we treat the general case. We introduce a new set
\[
\tilde Y_j(V, \mbf W; k) = \{ \mbf U=(U_i)_{1\leq i\leq k} \big \vert \,  U_i \subseteq U_{i+1} ,\quad U_i \in Y_j(V, \mbf W; i) \}.
\]
In particular $\tilde Y_j(V, \mbf W; 1) = Y_j(V, \mbf W; 1)$.
Consider the projection
\[
\pi_k: \tilde Y_j (V, \mbf W; k+1) \to \tilde Y_j (V, \mbf W; k), \quad (U_i)_{1\leq i \leq k+1} \mapsto (U_i)_{1\leq i \leq k}.
\]
Fix a flag $(U_i)_{1\leq i \leq k}$ in $\tilde Y_j (V, \mbf W; k)$.
Since $U_{k}$ is isotropic, $U_{k}^{\perp}/U_{k}$ inherits a non-degenerate symplectic form from $V$.
We set  $\mbf W^{k}=(W^{k}_i)_{1\leq i\leq m}$ where  $W^{k}_i = \frac{W_i \cap U_{k}^{\perp} + U_{k}}{U_{k}}$.
Observe that
$|W^{k}_i/ W^{k}_{i-1} | = w_i - k \delta_{ij} - k \delta_{i, m + 1-j} $ for all $1\leq i \leq m$.
Then we have a bijection:
$$
\pi_k^{-1} (U_i)_{1\leq i\leq k} \simeq Y_j (U_{k}^{\perp}/U_{k}, \mbf W^{k}; 1), \quad (U_i)_{1\leq i\leq k+1} \mapsto U_{k+1}/U_{k}.
$$
Hence it follows by (\ref{m=1}) that
\begin{align}
\label{pik-fiber}
\# \pi_k^{-1} (U_i)_{1\leq i\leq k} =
\begin{cases}
q^{\sum_{j'\leq j-1} w_{j'}} [w_j -k] & \forall j <  r+1, \\
q^{\sum_{j' \leq j-1} w_{j'} - (1- \delta_{j, r+1}) k} [w_j-(1+ \delta_{j, r+1}) k],  &  \forall  j\geq r+1.
\end{cases}
\end{align}
This implies that $\pi_k$ is surjective with constant fiber.
Applying repeatedly (\ref{pik-fiber}), we obtain
\begin{align}
\label{tYj}
\# \tilde Y_j (V, \mbf W; u)
=
\begin{cases}
q^{u \sum_{j'  \leq j-1} w_{j'}} \prod_{0\leq k \leq u-1} [w_j - k], & \forall j < r+1, \\
q^{u \sum_{j'  \leq j-1} w_{j'}  - (1- \delta_{j, r+1})  {u \choose 2}}  \prod_{0\leq k \leq u - 1} [w_j - (1+ \delta_{j, r+1} )k], & \forall j\geq r+1.
\end{cases}
\end{align}
The natural projection from
$\tilde Y_j (V, \mbf W; u) $ to $Y_j(V, \mbf W; u)$ is surjective
and its fiber is the set of all complete flags in an $u$-dimensional space over $\mbb F_q$.
Since  the cardinality of the latter set is $[u]!$,
we have $\# Y_j(V, \mbf W; u) = \# \tilde Y_j (V, \mbf W; u)/ [u]!$, from which  the lemma follows.
\end{proof}

Recall  the set $Y_{\mbf a}(\mbf W)$ in (\ref{YaW}), and recall in addition that $m=2r+1$ and $\mbf W$ is isotropic in this section.
Let
\begin{equation}
  \label{Yasp}
Y_{\mbf a}^{\mrm{sp}}(\mbf W) =\{ U \in Y_{\mbf a} (\mbf W) | U\subseteq U^{\perp}\}.
\end{equation}

\begin{lem}
\label{YspaW}
The cardinality of $Y^{\mrm{sp}}_{\mbf a} (\mbf W)$ in \eqref{Yasp} is given by
\[
\#Y^{\mrm{sp}}_{\mbf a} (\mbf W)
=
q^{\xi_{\mbf a}} \prod_{j=1}^r
\begin{bmatrix}
w_j \\ a_j
\end{bmatrix}
\prod_{j=r+2}^m
\begin{bmatrix}
w_j-a_{m +1-j} \\ a_j
\end{bmatrix}
\prod_{i=0}^{a_{r+1} -1} \frac{[w_{r+1} - 2i]}{[i+1]},
 \]
 where
 $\xi_{\mbf a} = \sum_{j < l } (w_j -a_j) a_l - \sum_{j > l ,  j+l > m +1} a_j a_{m+1-l} - \sum_{j>r+1} \frac{a_j (a_j -1)}{2}$.
\end{lem}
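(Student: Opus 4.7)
The plan is to enumerate $U \in Y^{\mrm{sp}}_{\mbf a}(\mbf W)$ layer by layer using the complete filtration $U_i := U \cap W_i$, and to apply the single-step count \eqref{Yj} at each stage. First I would set up a bijection between $Y^{\mrm{sp}}_{\mbf a}(\mbf W)$ and the set of flags $(U_i)_{0 \le i \le m}$ of isotropic subspaces satisfying $U_i \subseteq W_i$, $U_i \cap W_{i-1} = U_{i-1}$, and $|U_i/U_{i-1}| = a_i$, via $U \mapsto (U \cap W_i)_i$, and then count such flags inductively in $j$.

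For the inductive step, suppose $U_{j-1}$ has been built. Since any extension $U_j$ is isotropic and contains $U_{j-1}$, we have $U_j \subseteq U_{j-1}^\perp$, so the valid extensions correspond bijectively, via $U_j \mapsto U_j/U_{j-1}$, to $a_j$-dimensional isotropic subspaces of the symplectic quotient $\bar V := U_{j-1}^\perp/U_{j-1}$ that lie in $\bar W_j^{(j-1)} := (W_j \cap U_{j-1}^\perp + U_{j-1})/U_{j-1}$ and meet $\bar W_{j-1}^{(j-1)}$ trivially. By \eqref{Yj}, this count is expressed in terms of the layer sizes $\bar w_i^{(j-1)} := |\bar W_i^{(j-1)}/\bar W_{i-1}^{(j-1)}|$ of the induced isotropic flag on $\bar V$.

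The key computation is a symplectic-duality calculation: using $(W_i + U_{j-1}^\perp)^\perp = W_{m-i} \cap U_{j-1}$ together with $W_{m-i} \cap U_{j-1} = U_{\min(m-i,\, j-1)}$, one obtains
\[
\bar w_i^{(j-1)} = \begin{cases} w_i - a_i, & i < j,\ i+j \le m+1, \\ w_i - a_i - a_{m+1-i}, & i < j,\ i+j > m+1, \\ w_j, & i = j \le r+1, \\ w_j - a_{m+1-j}, & i = j \ge r+2. \end{cases}
\]
The $i = j$ cases plugged into \eqref{Yj} immediately produce the factors $\begin{bmatrix} w_j \\ a_j \end{bmatrix}$ for $j \le r$, the middle product $\prod_{i=0}^{a_{r+1}-1} \frac{[w_{r+1}-2i]}{[i+1]}$ for $j = r+1$, and $\begin{bmatrix} w_j - a_{m+1-j} \\ a_j \end{bmatrix}$ for $j \ge r+2$. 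Summing the step-$j$ $q$-exponent $a_j \sum_{i < j} \bar w_i^{(j-1)}$ from \eqref{Yj} (together with the $-\tfrac{a_j(a_j-1)}{2}$ correction arising only for $j > r+1$), splitting according to whether $i+j \le m+1$ or $i+j > m+1$, and substituting $l = m+1-i$ in the second case reassembles the total into precisely $\xi_{\mbf a} = \sum_{j < l}(w_j - a_j) a_l - \sum_{j > l,\, j+l > m+1} a_j a_{m+1-l} - \sum_{j > r+1} \tfrac{a_j(a_j-1)}{2}$.

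The main obstacle is the index bookkeeping for the symplectic quotient: one must verify that the two cases of the layer-size formula agree on the boundary $i + j = m+1$, confirm that $\bar W_{j-1}^{(j-1)} \subsetneq \bar W_j^{(j-1)}$ is a genuine next layer with the claimed dimension (so that \eqref{Yj} can be applied to $(\bar V, \bar{\mbf W}^{(j-1)})$), and check that the accumulated $q$-exponent matches $\xi_{\mbf a}$ term by term; the remainder of the argument is a routine induction on $m$.
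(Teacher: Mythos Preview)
Your proposal is correct and follows essentially the same approach as the paper's proof: build $U$ layer by layer via $U_j = U \cap W_j$, pass to the symplectic quotient $U_{j-1}^\perp/U_{j-1}$ at each stage, and apply the single-step count \eqref{Yj}. The paper packages this as a tower of projections $\pi_k: Y^{\mrm{sp}}_{\mbf a^{k+1}}(\mbf W) \to Y^{\mrm{sp}}_{\mbf a^{k}}(\mbf W)$ with $\mbf a^k = (a_1,\ldots,a_k,0,\ldots,0)$ and records the induced layer sizes as $|W_i^k/W_{i-1}^k| = w_i - \mbf a^k_i - \mbf a^k_{m+1-i}$, which is exactly your case-split formula for $\bar w_i^{(j-1)}$; the $q$-exponent bookkeeping is then identical to yours.
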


\begin{proof}
We set $\mbf a^k = (a_1, \cdots, a_k, 0 \cdots, 0)$ for all $0\leq k \leq m$.
We define a map
\[
\pi_k : Y^{\mrm{sp}}_{\mbf a^{k+1}}(\mbf W) \longrightarrow Y^{\mrm{sp}}_{\mbf a^{k}} (\mbf W), U_{k+1} \mapsto U_{k+1} \cap W_k.
\]
Fix a point $U_k \in  Y^{\mrm{sp}}_{\mbf a^{k}} (\mbf W)$, we can form the symplectic space $U_k^{\perp}/U_k$ since $U_k$ is isotropic.
We set $\mbf W^k =( W^k_i)_{1\leq i \leq m}$ with $W^k_i = (W_i\cap U_k^{\perp})/U_k$ for all $1\leq i\leq m$ in $U_k^{\perp}/U_k$.
As before, we have
$|W^k_i/W^k_{i-1}| = w_i - \mbf a^k_{i} - \mbf a^k_{m+1-i}$ for all $1\leq i\leq m$,
where $\mbf a^k_i$ is the $i$-th entry of $\mbf a^k$.
We then have  a bijection
$$
\pi_k^{-1}(U_k) \simeq Y_{k+1} (U_k^{\perp}/U_k, \mbf W^k; a_{k+1}), \quad
U_{k+1} \mapsto U_{k+1}/U_k.
$$
By using (\ref{Yj}), we have
\begin{align}
\label{pij}
\# \pi_{j-1}^{-1}(U_{j-1}) =
\begin{cases}
q^{\sum_{1\leq j' < j < r+1} (w_{j'} - a_{j'}) a_j} \begin{bmatrix} w_j\\ a_j \end{bmatrix}, & \mbox{if} \ j < r+1,\\
q^{\sum_{1\leq j' < j < r+1} (w_{j'} - a_{j'} ) a_{r+1}} \prod_{0 \leq i \leq a_{r+1}-1} \frac{[w_{r+1} - 2i]}{[i+1]}, & \mbox{if} \ j= r+1, \\
q^{\xi_j}
\begin{bmatrix} w_j - a_{m+1-j}\\ a_j\end{bmatrix}, & \mbox{if}\ j > r+1,
\end{cases}
\end{align}
where $\xi_j = \sum_{1\leq j'  <j, j>r+1}  (w_{j'}  - a_{j'})  a_j - \sum_{j' < j,  j + j' > m+1} a_j a_{m+1-j'} - \sum_{j> r+1} a_j(a_j-1)/2$.
So the lemma follows from (\ref{pij}) and that
$\#Y_{\mbf a}^{\mrm{sp}}(\mbf W) = \prod_{j=1}^{m} \# \pi_{j-1}^{-1}(U_{j-1})$ for fixed $U_{j-1} \in Y_{\mbf a^{j-1}}^{\mrm{sp}}(\mbf W)$.
\end{proof}

\begin{rem}
\label{proof-lem-counting}
Lemma ~\ref{lemcounting} is a special case of Lemma ~\ref{YspaW} for $\mbf a= (a_1, \ldots, a_{r-1}, 0, \ldots, 0)$.
\end{rem}

Recall $Y_{\mbf{s, t, t'}}(\mbf V, \mbf V')$, $Y''_{\mbf s}$ and $Y'_{\mbf t'}$ from (\ref{Yst}), (\ref{Ys}) and (\ref{Yt}), respectively.
For $\mbf V$ in \eqref{VV}, we further assume that $V_3 \subseteq V_2^{\perp}$.
Let
\begin{equation}
  \label{Ystt}
Y^{\mrm{sp}}_{\mbf{s, t, t'}}(\mbf V, \mbf V')
=\{ U \in Y_{\mbf{s, t, t'}}(\mbf V, \mbf V') | U \subseteq U^{\perp} \}.
\end{equation}

\begin{prop}
  \label{NYsp}
The cardinality of $Y^{\mrm{sp}}_{\mbf{s, t, t'}}(\mbf V, \mbf V')$ in \eqref{Ystt} is given by
\begin{align*}
\# Y^{\mrm{sp}}_{\mbf{s, t, t'}}(\mbf V, \mbf V')
= & q^{\xi_{\mbf{s, t, t'}}} n({\bf s}, {\bf t}, {\bf t'})
\prod_{j=1}^m \begin{bmatrix} c_{2 j}-c_{1j}\\ s_j \end{bmatrix}
\prod_{j=1}^r \begin{bmatrix} c_{3j} -c_{2j}\\ t_j' \end{bmatrix}
  \\
& \qquad \qquad \cdot \prod_{j=r+2}^m \begin{bmatrix} c_{3j} - c_{2 j} - t_{m+1-j}' \\ t_j' \end{bmatrix}
\prod_{i=0}^{t_{r+1}' -1} \frac{[c_{3, r+1} - c_{2, r+1} - 2i]}{[i+1]},
\end{align*}
where
$$
\xi_{\mbf{s, t, t'}}=\sum_{j > l} (c_{2j} - c_{1j} - s_j) s_l + \sum_{j < l } (c_{3j} - c_{2j} - t_j') t'_l - \sum_{j  > l > m +1-j} t_j' t_{m+1- l}' - \sum_{j>r+1} t_j' (t_j'-1)/2.
$$
\end{prop}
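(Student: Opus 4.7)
The plan is to replicate the decomposition strategy from the proof of Proposition~\ref{NYst}, with the only new ingredient being the symplectic count from Lemma~\ref{YspaW}. First I would observe that $V_3 \subseteq V_2^{\perp}$ forces $V_2 \subseteq V_2^{\perp}$, so $V_2$ is isotropic and $V_2^{\perp}/V_2$ carries a non-degenerate induced symplectic form in which $V_3/V_2$ sits. For any $U$ with $V_1 \subseteq U \subseteq V_3$, set $W = U \cap V_2$ and $T = (U+V_2)/V_2$; since $V_2$ is isotropic and every vector of $V_3$ is orthogonal to $V_2$, expanding the pairing of two vectors of the form $w + t + x(t) \in U(x)$ from \eqref{U(x)} shows that every cross-term involving $W$ or $V_2/W$ vanishes, leaving $(u_1, u_2) = (t_1, t_2)$ as computed in $V_2^{\perp}/V_2$. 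Hence $U$ is isotropic if and only if the image $T$ is isotropic in $V_2^{\perp}/V_2$.

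Next I would consider the restriction
$$
\phi^{\mrm{sp}}: Y^{\mrm{sp}}_{\mbf{s,t,t'}}(\mbf V, \mbf V') \longrightarrow Y''_{\mbf s} \times Y'^{\mrm{sp}}_{\mbf t'}, \qquad U \longmapsto \Big(U \cap V_2,\, (U+V_2)/V_2\Big),
$$
where $Y'^{\mrm{sp}}_{\mbf t'} \subseteq Y'_{\mbf t'}$ consists of those $T$ that are isotropic in $V_2^{\perp}/V_2$. The first step shows this is well-defined. I expect $\phi^{\mrm{sp}}$ to be surjective with constant fibers of cardinality $n(\mbf s, \mbf t, \mbf t')$: the Hom-parametrization $U = U(x)$ with $x \in \Hom(T, V_2/W)$ from the proof of Proposition~\ref{NYst} carries over unchanged, since the image of any such $x$ sits in $V_2$, which is isotropic, so no element of $\Hom(T, V_2/W)$ can violate isotropicity once $T$ is isotropic. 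Consequently the combinatorial decomposition $Y_{W,T} = \sqcup_{\sigma} Y_{W,T;\sigma}$ over upper-triangular $\sigma$ with $\ro(\sigma) = \mbf t'$ and $\co(\sigma) = \mbf t$, together with the resulting identity $\# Y_{W,T} = n(\mbf s, \mbf t, \mbf t')$, transfer verbatim.

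Finally I would invoke Lemma~\ref{lemcounting} (applied after the same reduction-to-$V_1 = 0$ trick used in Proposition~\ref{NYst}, which is permissible since $V_1 \subseteq V_2$ is isotropic) to count $Y''_{\mbf s}$, and apply Lemma~\ref{YspaW} to the symplectic space $V_2^{\perp}/V_2$ equipped with the induced isotropic flag $\big((V_3 \cap V'_j + V_2)/V_2\big)_{1 \leq j \leq m}$ and sequence $\mbf t'$ to count $Y'^{\mrm{sp}}_{\mbf t'}$. With $w_j = c_{3j} - c_{2j}$ and $a_j = t'_j$, Lemma~\ref{YspaW} reproduces exactly the three $v$-binomial-type factors (the product over $1 \leq j \leq r$, the product over $r+2 \leq j \leq m$, and the middle product over $0 \leq i \leq t'_{r+1}-1$) and contributes the symplectic corrections $-\sum_{j > l > m+1-j} t'_j t'_{m+1-l} - \sum_{j > r+1} t'_j(t'_j-1)/2$ to the $q$-exponent. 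Assembling the three factors and collecting the type-$A$ exponents $\sum_{j>l}(c_{2j}-c_{1j}-s_j) s_l$ from $Y''_{\mbf s}$ together with $\sum_{j<l}(c_{3j}-c_{2j}-t'_j) t'_l$ from $Y'^{\mrm{sp}}_{\mbf t'}$ (after reindexing $j \leftrightarrow l$), plus the symplectic corrections, then yields the stated $\xi_{\mbf{s,t,t'}}$. I expect the main delicate step to be verifying that the induced flag on $V_2^{\perp}/V_2$ is genuinely an isotropic flag of the type required by Lemma~\ref{YspaW}, which relies on the self-duality properties of $\mbf V'$ arising from the geometric setting of Section~\ref{rec}; beyond this check, the proof is a mechanical amalgamation of Proposition~\ref{NYst} and Lemma~\ref{YspaW}.
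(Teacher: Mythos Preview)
Your proposal is correct and follows essentially the same approach as the paper: both restrict the map $\phi$ from Proposition~\ref{NYst} to a map $\phi^{\mrm{sp}}$ landing in $Y''_{\mbf s} \times Y'^{\mrm{sp}}_{\mbf t'}$, observe that the fibers coincide with those of $\phi$ (hence have cardinality $n(\mbf{s},\mbf{t},\mbf{t}')$), count $Y'^{\mrm{sp}}_{\mbf t'}$ via Lemma~\ref{YspaW}, and reduce the general case to $V_1=0$ by quotienting. Your write-up is in fact slightly more explicit than the paper's in justifying why $U\subseteq U^{\perp}$ is equivalent to $T\subseteq T^{\perp}$ in $V_2^{\perp}/V_2$.
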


\begin{proof}
We  treat the $V_1=0$ case first.
Let $Y'^{\mrm{sp}}_{\mbf t'}  = \{ T\in Y'_{\mbf t'} | T \subseteq T^{\perp}\}$ where $T^{\perp}$ is taken inside the symplectic space $V_2^{\perp}/V_2$.
We have the following commutative diagram.
\[
\begin{CD}
Y^{\mrm{sp}}_{\mbf{s, t, t'}}(\mbf V, \mbf V') @>\phi^{\mrm{sp}}>>
Y''_{\mbf s} \times Y'^{\mrm{sp}}_{\mbf t'}  \\
@VVV @VVV \\
Y_{\mbf{s, t, t'}}(\mbf V, \mbf V') @> \phi >>
Y''_{\mbf s} \times Y'_{\mbf t'},  \\
\end{CD}
\]
where $\phi$ is defined in (\ref{phiY}), the vertical maps are inclusions and $\phi^{\mrm{sp}}$ is the map induced from $\phi$.
Now for a pair $(W, T)$ in  $Y''_{\mbf s} \times Y'^{\mrm{sp}}_{\mbf t'}$, the fiber of $(W, T)$ under $\phi$ is contained in
$ Y^{\mrm{sp}}_{\mbf{s, t, t'}}(\mbf V, \mbf V')$ (here we freely use identifications under the inclusions).
This implies that $\phi^{\mrm{sp}}$ is of constant fiber and its fiber is the same as that of $\phi$.
Thus $\# Y^{\mrm{sp}}_{\mbf{s, t, t'}}(\mbf V, \mbf V') = \# \phi^{-1}(T, W) \cdot  \# Y''_{\mbf s} \cdot \# Y'^{\mrm{sp}}_{\mbf t'} $.
We know $\#Y''_{\mbf s}$ by (\ref{Y''}), and  $\# \phi^{-1}(T, W)=n(\mbf{s, t, t'})$ by (\ref{YWT}), and  by Lemma ~\ref{YspaW},
\[
\# Y'^{\mrm{sp}}_{\mbf t'}
=
q^{\xi_{\mbf t'}}
\prod_{j=1}^r \begin{bmatrix} c_{3j} - c_{2j} \\ t_j' \end{bmatrix}
\prod_{j=r+2}^n \begin{bmatrix} c_{3j} - c_{2j} - t_{m+1-j}' \\ t_j' \end{bmatrix}
\prod_{i=0}^{t_{r+1}'-1} \frac{[c_{3, r+1} -c_{2, r+1} - 2i]}{[i+1]},
\]
where $\xi_{\mbf t'}=\sum_{j < l} (c_{3j} - c_{2j} - t'_j) t'_l -\sum_{j > l >  m +1-j} t'_j t'_{m +1-l} - \sum_{j>r+1} t_j' (t_j'-1)/2$.
The proposition for $V_1=0$ follows from  these computations.

The general case can be reduced to the $V_1=0$ case by considering the reduction $U \mapsto U/V_1$ as in the proof of Proposition ~\ref{NYst}.
The proposition is proved.
\end{proof}

The rest of the section is a description of the duals of the set $Y^{\mrm{sp}}_{\mbf a} (\mbf W)$ and $Y^{\mrm{sp}}_{\mbf{s, t, t'}}(\mbf V, \mbf V')$.
Recall $Y^{\mrm{sp}}_{\mbf a}(\mbf W)$ from \eqref{Yasp}. We set
\[
^{\mrm{sp}} Y_{\mbf a} (\mbf W ) = \{ U\in Y_{\mbf a} (\mbf W) | U^{\perp} \subseteq U\}.
\]

\begin{lem}
The assignment $U\mapsto U^{\perp}$ defines a bijection
$^{\mrm{sp}} Y_{\mbf a} (\mbf W) \cong Y^{\mrm{sp}}_{\hat{\mbf a}} (\mbf W)$, where $\hat{\mbf a} = (\hat{a}_j)_{1\leq j\leq m} $, $\hat a_j = w_j - a_{m+1-j}$ for all $j$.
\end{lem}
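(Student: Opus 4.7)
The plan is to verify the three ingredients separately: $U \mapsto U^\perp$ is an involution on the set of all subspaces of $V$, it exchanges the coisotropy and isotropy conditions, and it sends type $\mathbf a$ to type $\hat{\mathbf a}$. Once each of these is checked, the asserted bijection follows.

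First, nondegeneracy of the symplectic form gives $(U^\perp)^\perp = U$ for every subspace $U\subseteq V$, so $U\mapsto U^\perp$ is a bijection on the set of all subspaces. Moreover, the condition $U^\perp\subseteq U$ is equivalent, upon applying $(\,\cdot\,)^\perp$, to $U^\perp \subseteq (U^\perp)^\perp$, i.e.\ to $U^\perp$ being isotropic. Thus the map carries the coisotropy condition defining $^{\mrm{sp}} Y_{\mbf a}(\mbf W)$ to the isotropy condition defining $Y^{\mrm{sp}}_{\hat{\mbf a}}(\mbf W)$.

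The main (but still elementary) step is computing the type of $U^\perp$. Using the isotropy of $\mbf W$, namely $W_{m-j} = W_j^\perp$, together with the standard identity $(A+B)^\perp = A^\perp \cap B^\perp$, we have
\[
U^\perp \cap W_j \;=\; U^\perp \cap W_{m-j}^{\,\perp} \;=\; (U + W_{m-j})^\perp,
\]
so that
\[
|U^\perp \cap W_j| \;=\; |V| - |U + W_{m-j}| \;=\; |V| - |U| - |W_{m-j}| + |U\cap W_{m-j}|.
\]
Taking the difference and using $|W_{m-j+1}| - |W_{m-j}| = w_{m-j+1}$ together with $|U\cap W_{m-j+1}| - |U\cap W_{m-j}| = a_{m-j+1}$, we obtain
\[
|U^\perp \cap W_j| - |U^\perp \cap W_{j-1}| \;=\; w_{m+1-j} - a_{m+1-j} \;=\; \hat a_j,
\]
so $U^\perp$ has type $\hat{\mathbf a}$ with respect to $\mbf W$. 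Hence $U\mapsto U^\perp$ maps $^{\mrm{sp}}Y_{\mbf a}(\mbf W)$ into $Y^{\mrm{sp}}_{\hat{\mbf a}}(\mbf W)$, and the same argument applied to $\hat{\mathbf a}$ in place of $\mathbf a$ (note $\hat{\hat{\mathbf a}} = \mathbf a$) produces the inverse map, completing the proof.

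The only slightly delicate point is keeping the bookkeeping of indices straight when differencing the quantities $|U^\perp\cap W_j|$; everything else is purely formal.
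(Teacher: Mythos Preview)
Your proof is correct and follows essentially the same approach as the paper: both compute $|U^\perp \cap W_j|$ via the identity $(U+W_{m-j})^\perp = U^\perp \cap W_j$ (equivalently, the paper uses the dual form $(U\cap W_i)^\perp = U^\perp + W_{m-i}$) and then take differences to obtain $w_{m+1-j}-a_{m+1-j}=\hat a_j$. Your version is simply more detailed, spelling out the involution and the isotropy/coisotropy swap that the paper leaves implicit.
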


\begin{proof}
Since
$
|(U\cap W_i)^{\perp} | = |U^{\perp} | + |W_{m-i}| - |U^{\perp} \cap W_{m-i} |,
$
we have
\[
|U^{\perp} \cap W_{m+1-i} | - | U^{\perp} \cap W_{m-i} | = w_{m+1-i} - a_i =  \hat a_{m+1-i}.
\]
The lemma is proved.
\end{proof}

Recall $Y^{\rm sp}_{\mbf{s, t, t'}}(\mbf V, \mbf V')$ from \eqref{Ystt}. We assume further that there is a non-degenerate symplectic form on $V_2$.
We set
\[
^{\mrm{sp}} Y_{\mbf{s, t, t'}}(\mbf V, \mbf V') = \{ U\in Y_{\mbf{s, t, t'}}(\mbf V, \mbf V') \big \vert  U^{\perp} \subseteq U\},
\]
where $U^{\perp}$ is taken with respect to the form on $V_2$.
Note that to define $^{\mrm{sp}} Y_{\mbf{s, t, t'}}(\mbf V, \mbf V') $, a form on the whole space $V$ is not needed.
Recall the notations $\mbf s^{\flat}$  from \eqref{flat} and  $\tilde{\mbf V}$ from \eqref{V:flat}.

\begin{prop}
\label{Ysp-dual}
We have a bijection $Y^{\mrm{sp}}_{\mbf{s, t, t'}}(\mbf V, \mbf V') \cong \ ^{\mrm{sp}} Y_{\mbf{t'^{\flat}, s'^{\flat}, s^{\flat}}} (\tilde{\mbf V}, \tilde{\mbf V}')$.
\end{prop}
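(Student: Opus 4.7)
The plan is to show that the bijection $\Phi: U \mapsto U^{\flat}$ from Proposition~\ref{Y-dualY} restricts to a bijection between $Y^{\mrm{sp}}_{\mbf{s, t, t'}}(\mbf V, \mbf V')$ and $^{\mrm{sp}} Y_{\mbf{t'^{\flat}, s'^{\flat}, s^{\flat}}}(\tilde{\mbf V}, \tilde{\mbf V}')$. Since $\Phi$ is already known to be a bijection between the ambient $Y$-sets, all that remains is to verify that for $U \in Y_{\mbf{s, t, t'}}(\mbf V, \mbf V')$ the condition $U \subseteq U^{\perp}$ (using the form on $V$) corresponds under $\Phi$ to the condition $(U^{\flat})^{\perp} \subseteq U^{\flat}$ (using the form on $\tilde V_2$).

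My first step will be to identify $V^{*}$ with $V$ via the isomorphism $\alpha: V \to V^{*}$, $v \mapsto (v, -)$, induced by the non-degenerate form on $V$. This identification satisfies $\alpha(W^{\perp}) = W^{\flat}$ for every subspace $W \subseteq V$, so in particular $\alpha$ carries $V_2^{\perp}$ onto $\tilde V_2$ and $U^{\perp}$ onto $U^{\flat}$. From the inclusions $V_2 \subseteq U \subseteq V_3 \subseteq V_2^{\perp}$ one deduces $V_2 \subseteq U^{\perp} \subseteq V_2^{\perp}$, and hence $U^{\flat} \subseteq \tilde V_2$, so the perpendicular on the dual side is genuinely taken inside $\tilde V_2$. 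I will take the form on $\tilde V_2$ to be the one transported via $\alpha$ from the restriction of the form on $V$ to $V_2^{\perp}$.

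With this set-up, a direct computation shows that the perpendicular of $U^{\flat}$ inside $\tilde V_2$ transports to $(U^{\perp})^{\perp} \cap V_2^{\perp}$ in $V$, which equals $U$ since the form on $V$ is non-degenerate and $U \subseteq V_2^{\perp}$. Consequently $(U^{\flat})^{\perp} \subseteq U^{\flat}$ is equivalent to $\alpha(U) \subseteq \alpha(U^{\perp})$, i.e., to $U \subseteq U^{\perp}$, which is the required equivalence. Applying this to $\Phi$ then yields the claimed restricted bijection, and the inverse bijection is given by the same construction applied to $V^{*}$ in place of $V$.

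The main obstacle I anticipate is the degeneracy of the restricted form on $V_2^{\perp}$ (its radical being $V_2$): the transported form on $\tilde V_2$ inherits this degeneracy, and I will need to verify carefully that the perpendicular operation on both sides behaves symmetrically enough to preserve the identities used above, and that this transported form is compatible with the form implicitly underlying the definition of $^{\mrm{sp}} Y_{\mbf{t'^{\flat}, s'^{\flat}, s^{\flat}}}(\tilde{\mbf V}, \tilde{\mbf V}')$ via the non-degenerate form assumed on $V_2$.
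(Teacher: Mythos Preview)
Your overall strategy coincides with the paper's: use the bijection $U\mapsto U^{\flat}$ from Proposition~\ref{Y-dualY} and check that the isotropy condition on one side matches the co-isotropy condition on the other. The paper's own proof is terse; it simply observes that the definition of $Y^{\mrm{sp}}$ still makes sense when the form on $V$ is allowed to be degenerate with $V_2^{\perp}=V_3$, and then declares that the bijection ``follows readily from the proof of Proposition~\ref{Y-dualY}.'' So in spirit you are doing exactly what the paper intends, only with more of the linear-algebra bookkeeping written out.

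There is, however, a genuine slip in your write-up. You assert the chain $V_2\subseteq U\subseteq V_3\subseteq V_2^{\perp}$, but elements of $Y_{\mbf{s,t,t'}}(\mbf V,\mbf V')$ satisfy only $V_1\subseteq U\subseteq V_3$; in general $U\cap V_2\subsetneq V_2$ (its codimension in $V_2$ is $\sum_j s_j$). Consequently $U^{\perp}\not\subseteq V_2^{\perp}$ and your conclusion $U^{\flat}\subseteq \tilde V_2$ fails, so the rest of your computation of $(U^{\flat})^{\perp}$ inside $\tilde V_2$ does not literally apply. This is precisely the subtlety your final paragraph anticipates. The paper's way around it is not to force $U^{\flat}$ into $\tilde V_2$ at all, but rather to weaken the hypothesis on the ambient form: once $Y^{\mrm{sp}}$ is allowed for a possibly degenerate form on $V$ with $V_2^{\perp}=V_3$ (so that the induced form on $V_3/V_2$ is nondegenerate), the condition $U\subseteq U^{\perp}$ depends only on the image of $U$ in $V_3/V_2$, and the duality $U\mapsto U^{\flat}$ then matches the two sides without ever needing $U^{\flat}\subseteq \tilde V_2$. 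If you want to keep your more explicit approach, replace the incorrect inclusion by working with $U^{\flat}\cap\tilde V_2$ (equivalently with $U+V_2$ and $U\cap V_2$ on the $V$ side) and rerun the computation; the identities $\alpha(W^{\perp})=W^{\flat}$ and $(U^{\perp})^{\perp}=U$ then yield the desired equivalence.
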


\begin{proof}
In  the definition of $Y^{\mrm{sp}}_{\mbf{s, t, t'}}(\mbf V, \mbf V')$,  the assumption that $V$ is equipped with a non-degenerate symplectic form
is not essential. It is still well defined if we assume that $V$ is equipped with a possibly degenerate symplectic form such that $V_2^{\perp} = V_3$,
hence $V_3/V_2$ inherited a non-degenerate symplectic form from that of $V$.
With this point, the bijection follows readily from the proof of Proposition ~\ref{Y-dualY}.
\end{proof}

\subsection{Step 1 of the proof: the piece $Z_{S, T}$}

Recall the setting from Theorem ~\ref{mult-formula-raw}. Suppose that
$e_B * e_A = \sum g^{A'}_{B, A} e_{A'}$, we shall narrow down those $A'$ which could have nonzero structure constants and interpret the latter as the cardinality of a given set.

Let us fix a periodic chain $L$ of symplectic lattices in $\X^{\fc}_{n,d} (\ro(B))$ (cf. \eqref{L-L}), for $B =(b_{ij}) \in {}^{\C}\Xi_{n,d}$.
We consider the set
\[
Z = \{ L'' \in \X^{\fc}_{n,d} (\co(B)) \big \vert  (L, L'') \in \mathcal O_B\}.
\]
Recall $\alpha =(\alpha_i)_{i \in \ZZ}$ is given in Theorem~\ref{mult-formula-raw}.

\begin{lem}
The set $Z$ admits a characterization as follows.
\[
Z = \Big\{ L'' \in \X^{\fc}_{n,d} (\co(B)) \Big \vert
L_{i-1} \subseteq L_i'' \subseteq L_{i+1},
L_{i-1}
\overset{\alpha_{n-i}}{\subseteq}
L_{i-1} + L''_{i-1} \overset{b_{ii}}{\subseteq}  L_i \cap L''_i \overset{\alpha_i}{\subseteq} L_i,   \forall i \in \mbb Z \Big\}.
\]
Here and below $\overset{d}{\subseteq}$ and $\overset{d}{\supseteq}$ denote inclusions of codimension $d$.
\end{lem}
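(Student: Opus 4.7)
The plan is to establish the set equality by showing both inclusions; the crux is the containment chain $L_{i-1}\subseteq L''_i\subseteq L_{i+1}$, after which the three codimension conditions follow by direct calculation.

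\emph{Inclusion $\supseteq$.} Suppose $L''\in\X^{\fc}_{n,d}(\co(B))$ satisfies the containments $L_{j-1}\subseteq L''_j\subseteq L_{j+1}$ for all $j$. Shifting indices gives $L''_{j-1}\subseteq L_j$ and $L_j\subseteq L''_{j+1}$, so
\[
L_i\cap L''_j = L''_j \ \text{ for } j\le i-1, \qquad L_i\cap L''_j = L_i \ \text{ for } j\ge i+1.
\]
Plugging these into the definition
$
b'_{ij}=|L_i\cap L''_j\,/\,(L_{i-1}\cap L''_j + L_i\cap L''_{j-1})|,
$
and using $L_{i-1}\cap L''_i=L_{i-1}$ together with $L_i\cap L''_{i-1}=L''_{i-1}$, the entries with $|i-j|\ge 2$ vanish automatically, while the three entries with $|i-j|\le 1$ reduce, respectively, to the prescribed codimensions $\alpha_{n-i}$, $b_{ii}$, $\alpha_i$. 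Hence $(L,L'')\in\mathcal O_B$.

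\emph{Inclusion $\subseteq$.} Conversely, given $(L,L'')\in\mathcal O_B$, fix $i$ and filter $L_i$ by the subspaces $L_i\cap L''_j$. The graded dimensions satisfy
\[
|L_i\cap L''_j\,/\,L_i\cap L''_{j-1}| \;=\; \sum_{i'\le i} b_{i'j},
\]
which vanishes for $j\ge i+2$ by the tridiagonal shape of $B$. So $L_i\cap L''_j$ stabilizes for $j\ge i+1$; since $L_i$ and $L''_j$ are commensurable lattices with $L''_{j+n}=\ve^{-1}L''_j$, one has $L_i\subseteq L''_j$ for all sufficiently large $j$, so the stable value must be $L_i$ itself. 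This yields $L_i\subseteq L''_{i+1}$, equivalently $L_{i-1}\subseteq L''_i$. The symmetric argument on the filtration of $L''_i$ by $L''_i\cap L_j$ gives $L''_i\subseteq L_{i+1}$. Once both containments are in hand, the three prescribed codimensions $\alpha_{n-i}$, $b_{ii}$, $\alpha_i$ can be read off directly from the graded dimensions of these filtrations at $j=i-1, i, i+1$.

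The main obstacle is this stabilization/cumulative-dimension argument in the converse direction, which must be set up rigorously in the affine lattice setting where the ambient $F$-vector space is infinite-dimensional and absolute dimensions are not defined. The standard device is to choose $N\gg 0$ so that all lattices $L_i, L''_j$ relevant to the argument are sandwiched between $L_{-N}$ and $L_N$, and carry out the graded counting inside the finite-dimensional $k$-vector space $L_N/L_{-N}$; once this technicality is secured, the remainder of the proof is formal bookkeeping.
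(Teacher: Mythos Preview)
Your proof is correct, and for the inclusion $\subseteq$ it takes a genuinely different route from the paper. The paper argues this direction by invoking the BLM-style normal form: since $(L,L'')\in\mathcal O_B$ one can decompose $V=\bigoplus_{i,j}M_{ij}$ with $|M_{ij}|=b_{ij}$, $\ve M_{ij}=M_{i-n,j-n}$, $L_i=\bigoplus_{k\le i}M_{kj}$ and $L''_j=\bigoplus_{l\le j}M_{il}$; the tridiagonal shape of $B$ then makes the containments $L_{i-1}\subseteq L''_i\subseteq L_{i+1}$ visible by inspection. Your argument instead reads off the graded dimensions $|L_i\cap L''_j/L_i\cap L''_{j-1}|=\sum_{i'\le i}b_{i'j}$ directly and deduces stabilization of the chain $(L_i\cap L''_j)_j$ for $j\ge i+1$, then identifies the stable value as $L_i$ using the exhaustion $\bigcup_j L''_j=V$. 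This is more elementary in that it does not appeal to the existence of a compatible direct-sum decomposition, which is itself a lemma requiring proof in the affine setting; the paper's approach is shorter but packages that work into the cited normal form. Your caution about infinite dimensionality is reasonable, though in fact all the relevant subquotients $L_i\cap L''_j/L_i\cap L''_{j-1}$ are already finite-dimensional over $k$, so the stabilization argument runs without needing to truncate to $L_N/L_{-N}$. For the inclusion $\supseteq$ the paper simply says it ``follows from the definition''; your computation of the entries $b'_{ij}$ is exactly the unpacking of that remark.
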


\begin{proof}
Since $(L, L'') \in \mathcal O_B$, we can decompose $V = \oplus_{i, j \in \mbb Z} M_{ij}$ as $\mbb F_q$-vector spaces
such that $\ve M_{ij}= M_{i-n,j-n}$, $|M_{ij}| = b_{ij}$,
$L_i = \oplus_{\substack{k \leq i \\ j \in \mbb Z}} M_{k j}$ and $L''_j = \oplus_{\substack{ i\in \mbb Z \\ l \leq j}} M_{i, l}$, for all $i, j\in \mbb Z$.
By the definition of $B$, we have
$$
L_{i} =  \bigoplus_{ k \leq i, j \leq i} M_{k j} \oplus M_{i, i+1},
\qquad
L''_{i} = \bigoplus_{k \leq i, j\leq i} M_{k j} \oplus M_{i+1, i}.
$$
In particular, we have $L_{i-1} \subseteq L''_i \subseteq L_{i+1}$. The second condition in the characterization of $Z$
is clear.  Hence $Z$ is included in the set on the right hand side of the equation in the lemma.

The other direction of inclusion follows from the definition.
\end{proof}

Let us fix a second chain $L'$ in $\X^{\fc}_{n,d}$. We set
\begin{align}
\label{cij}
c_{ij} = c_{ij}(L, L') = \left  |\frac{L_i \cap L_j'}{L_i \cap L_{j-1}'} \right |, \quad \forall i, j \in \mbb Z.
\end{align}
We formulate the following condition:
\begin{equation}
\begin{split}
& L_{i}'' \cap  L_{i} \cap  L'_j    \overset{c_{ij}-s_{ij}}{ \supseteq}    L_{i}''\cap  L_{i} \cap  L_{j-1}',\\
& L_{i}'' \cap  L_{i}+ L_{i}''\cap  L'_j    \overset{t_{ij}}{ \supseteq}    L_{i}''\cap  L_{i}+  L_{i}''\cap L_{j-1}', \quad \forall i, j\in \mbb Z.
\end{split}
\tag{\ref{cij}${}_{i j}$}
\end{equation}
Associated to $S, T\in \Theta_{n}$, we define a subset $Z_{S, T}$ of $Z$ as
\begin{equation}
  \label{ZST}
Z_{S, T} =
\{ L'' \in Z \big \vert L''_i\, (\forall i\in \mbb Z) \text{ satisfies Condition ({\ref{cij}${}_{i j}$})} \}.
\end{equation}
Clearly, $\{ Z_{S, T} \}$ forms a partition of $Z$.

\begin{lem}
\label{condition-star}
If $Z_{S, T}$ in \eqref{ZST} is nonempty, then  $S$ and $T$ must  satisfy Condition (\ref{star}).
\end{lem}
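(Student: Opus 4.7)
The plan is to extract Condition~(\ref{star}) by combining the two codimension constraints defining $Z_{S,T}$ into a single condition on the sequence $\{L''_i\cap L'_j\}_{j\in\mbb Z}$, and then dualizing via the symplectic perpendicular $^{\#}$ to compare the resulting identity at index $(i,j)$ with its counterpart at $(-i-1,-j)$. I would use freely the dimension identity $|A\cap B|+|A+B|=|A|+|B|$ (in differential form), the lattice duality $|M/N|=|N^{\#}/M^{\#}|$ for $N\subseteq M$ (coming from the perfect pairing $M/N\times N^{\#}/M^{\#}\to F/\A$), and the rules $(A\cap B)^{\#}=A^{\#}+B^{\#}$ and $(A+B)^{\#}=A^{\#}\cap B^{\#}$ recorded in Section~\ref{rec}.

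First, adding the two defining conditions of $Z_{S,T}$ at $(i,j)$ and applying the dimension identity to $A=L''_i\cap L_i$ and $B=L''_i\cap L'_j$, the terms involving $L''_i\cap L_i$ cancel after subtracting the analogous relation for $j-1$, producing
\[
c''_{ij}\;:=\;\left|\frac{L''_i\cap L'_j}{L''_i\cap L'_{j-1}}\right|\;=\;(c_{ij}-s_{ij})+t_{ij}.
\]

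Next, I would compute $c''_{-i-1,-j}$ by dualizing. The identities $(L''_i)^{\#}=L''_{-i-1}$ and $(L'_{j-1})^{\#}=L'_{-j}$ give
\[
L''_{-i-1}\cap L'_{-j}=(L''_i+L'_{j-1})^{\#},\qquad L''_{-i-1}\cap L'_{-j-1}=(L''_i+L'_j)^{\#}.
\]
Applying $|M/N|=|N^{\#}/M^{\#}|$ with $M=L''_i+L'_j$ and $N=L''_i+L'_{j-1}$ converts the codimension between these duals into the codimension of $L''_i+L'_{j-1}\subseteq L''_i+L'_j$; one more use of the sum–intersection identity rewrites this as $\mu_j-c''_{ij}$, where $\mu_j=|L'_j/L'_{j-1}|$. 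Thus $c''_{ij}+c''_{-i-1,-j}=\mu_j$. Running the same argument with $L''$ replaced by $L$ (also a symplectic chain) produces $c_{ij}+c_{-i-1,-j}=\mu_j$.

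Subtracting these two $\mu_j$-identities and substituting $c''_{kl}-c_{kl}=t_{kl}-s_{kl}$ at $(k,l)=(i,j)$ and $(k,l)=(-i-1,-j)$ collapses the result to $s_{ij}+s_{-i-1,-j}=t_{ij}+t_{-i-1,-j}$, which is Condition~(\ref{star}). I expect the main difficulty to lie not in the algebra but in the bookkeeping of the dualization step --- carefully translating $L^{\#}_i=L_{-i-1}$ for all three chains $L$, $L'$, $L''$ and keeping track of the index shifts $j\leftrightarrow -j-1$ versus $j\leftrightarrow -j$ that appear in the various intersections and sums.
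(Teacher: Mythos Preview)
Your proposal is correct and follows essentially the same approach as the paper: both first combine the two defining conditions of $Z_{S,T}$ to obtain $\left|\frac{L''_i\cap L'_j}{L''_i\cap L'_{j-1}}\right|=c_{ij}-s_{ij}+t_{ij}$, and then apply the symplectic duality $^\#$ to relate this quantity at $(i,j)$ to its value at $(-i-1,-j)$. Your organization via the symmetric identities $c''_{ij}+c''_{-i-1,-j}=\mu_j$ and $c_{ij}+c_{-i-1,-j}=\mu_j$ is a slightly cleaner packaging of the same dualization step the paper carries out directly in its equation~(\ref{i2dual}).
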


\begin{proof}
For any $i, j \in \mbb Z$, we have
\begin{equation*}
\begin{split}
\left | \frac{L_i'' \cap L_j'}{L_i'' \cap L_{j-1}'} \right | - \left | \frac{L_i ''\cap L_i \cap L_j'}{L_i''\cap L_i \cap L_{j-1}'} \right |
& = \left | \frac{L_i'' \cap L_j'}{L_i ''\cap L_i \cap L_j'} \right | - \left  | \frac{L_i'' \cap L_{j-1}'}{L_i''\cap L_i \cap L_{j-1}'} \right | \\
& = \left | \frac{L_i ''\cap L_i+ L_i'' \cap L_j'}{L_i ''\cap L_i} \right |- \left | \frac{L_i ''\cap L_i+L_i'' \cap L_{j-1}'}{L_i''\cap L_i} \right | \\
&= \left | \frac{L_i ''\cap L_i+ L_i'' \cap L_j'}{L_i ''\cap L_i+L_i'' \cap L_{j-1}'}  \right |.
\end{split}
\end{equation*}
Thus by the assumption of the pair  $(S, T)$ in ({\ref{cij}${}_{i j}$}), we have
\begin{align}
\label{i2}
\left | \frac{L_i'' \cap L_j'}{L_i'' \cap L_{j-1}'} \right |
=
c_{ij} - s_{ij} + t_{ij}, \quad \forall i, j \in \mbb Z.
\end{align}

Moreover, for all $j \in \mbb Z$, we have
\begin{equation}
\label{i2dual}
\begin{split}
\left | \frac{L''_i \cap L_j'}{L''_i \cap L_{j-1}'} \right |
& = \left | \frac{(L''_i \cap L_{j-1}')^{\#}}{(L''_i \cap L_j')^{\#}} \right |
= \left | \frac{L_{-i -1}''+ L_{-j}'}{L_{-i -1}'' + L_{-j-1}'} \right | \\
& = \left  | \frac{L_{-i -1}''+ L_{-j}'}{L_{-i -1}''} \right | - \left  | \frac{L_{-i -1}'' + L_{-j-1}'}{L_{-i -1}''} \right |\\
& = \left  | \frac{ L_{-j}'}{L_{-j-1}'} \right | - \left  | \frac{L_{-i -1}''\cap L_{-j}'}{L_{-i -1}''\cap  L_{-j-1}'} \right |  \\
& \overset{(\ref{i2})}{=} \sum_{k \in \mbb Z} a'_{k , -j} - (c_{-i -1, -j} + t_{-i -1, -j} - s_{-i -1, -j} )  \\
& = \sum_{k \leq i} a_{k j}' - t_{-i -1,-j} + s_{-i -1, -j}\\
& = c_{ij} - t_{-i -1, -j} + s_{-i -1, -j}.
\end{split}
\end{equation}
By combining (\ref{i2}) and (\ref{i2dual}),  Condition (\ref{star}) holds  for $S$ and $T$.
\end{proof}

By (\ref{i2}), we actually have
\begin{equation*}
s_{ij} = \left |\frac{L_{i-1} + L_i \cap L'_j}{L_{i-1} + L_i \cap L'_{j-1}} \right |
-
\left | \frac{L_i'' \cap (L_{i-1} + L_i \cap L'_j)} {L''_i \cap ( L_{i-1} + L_i \cap L'_{j-1})} \right |.
\end{equation*}


\begin{lem}
\label{ZST-2}
If $Z_{S,T}$ in \eqref{ZST}  is  nonempty, then    $\ro(S)= \alpha$ and $\ro(T) = \alpha^{\#}$ where $\alpha^{\#}$ is defined in (\ref{alpha2}).
\end{lem}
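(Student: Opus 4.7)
The plan is to sum each half of the defining condition (\ref{cij}${}_{i j}$) of $Z_{S,T}$ over $j\in\mbb Z$ for fixed $i$, and then match the resulting telescoping sums against the codimensions supplied by the characterization of $Z$ given just before Lemma~\ref{condition-star}. Although the sums are indexed by $\mbb Z$, they reduce to finite ones by the lattice axioms $\bigcup_j L'_j = V$ and $\bigcap_j L'_j = 0$: for $|j|$ sufficiently large, the intersections $L''_i\cap L_i\cap L'_j$ either saturate at $L''_i\cap L_i$ or collapse to $0$, and the sums $L''_i\cap L_i + L''_i\cap L'_j$ either saturate at $L''_i$ or collapse to $L''_i\cap L_i$, so only finitely many successive quotients are nonzero.

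For $\ro(S)_i$: summing the first half of (\ref{cij}${}_{i j}$) over $j$ gives
\[
\sum_j s_{ij}
= \sum_j c_{ij} \;-\; \sum_j \Big|(L''_i\cap L_i\cap L'_j)\big/(L''_i\cap L_i\cap L'_{j-1})\Big|.
\]
The first sum equals $|L_i/L_{i-1}|$ by the definition \eqref{cij} of $c_{ij}$ together with the identification of row sums in Section~\ref{Para}; the second telescopes to the $L'$-length of $L_i\cap L''_i$. Their difference is therefore the codimension $|L_i : L_i\cap L''_i|$, which equals $\alpha_i$ by the rightmost inclusion $L_i\cap L''_i\overset{\alpha_i}{\subseteq}L_i$ in the characterization of $Z$. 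Hence $\ro(S)_i = \alpha_i$.

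For $\ro(T)_i$: summing the second half of (\ref{cij}${}_{i j}$) over $j$ telescopes analogously to $|L''_i/(L''_i\cap L_i)|$, which by the second isomorphism theorem equals $|(L_i+L''_i)/L_i|$. Shifting the index $i-1\mapsto i$ in the inclusion $L_{i-1}\overset{\alpha_{n-i}}{\subseteq}L_{i-1}+L''_{i-1}$ from the characterization of $Z$ identifies this quantity with $\alpha_{n-i-1}$. Finally, the periodicity $\alpha_k=\alpha_{k+n}$ rewrites $\alpha_{n-i-1}=\alpha_{-i-1}=\alpha^\#_i$ (cf.~\eqref{alpha2}), giving $\ro(T)_i=\alpha^\#_i$ as required.

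The only nontrivial point is to justify the telescoping identities for these $\mbb Z$-indexed sums, but this reduces to the standard fact that for any lattice $M\subseteq V$ and any lattice chain $L'$ as above, $\sum_j |(M\cap L'_j)/(M\cap L'_{j-1})|$ computes the size of $M$ as a lattice index and is additive on short exact sequences $0\to M'\to M\to M''\to 0$. Once this is in hand, the rest is direct bookkeeping against the characterization of $Z$ and the definition of $\alpha^\#$.
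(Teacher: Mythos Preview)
Your argument for $\ro(T)$ is correct and matches the paper's: the chain $L''_i\cap L_i + L''_i\cap L'_j$ stabilizes at both ends, so the single telescoping sum yields $|L''_i/(L''_i\cap L_i)|=\alpha_{-i-1}$.

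The argument for $\ro(S)$, however, has a genuine gap. You split
\[
\sum_j s_{ij} = \sum_j c_{ij} \;-\; \sum_j \left|\frac{L''_i\cap L_i\cap L'_j}{L''_i\cap L_i\cap L'_{j-1}}\right|
\]
and compute each piece separately. But neither sum is finite: as $j\to -\infty$ we have $L'_j\subset L''_i\cap L_i\subset L_i$, so $c_{ij}=|L'_j/L'_{j-1}|$, which is periodic in $j$ and not eventually zero. In particular your assertion that $\sum_j c_{ij}=|L_i/L_{i-1}|$ is false; that identity holds for the matrix entries $a_{ij}$, not for $c_{ij}=\sum_{k\le i}a_{kj}$. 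The ``standard fact'' you invoke in the last paragraph is likewise not available: $\sum_j |(M\cap L'_j)/(M\cap L'_{j-1})|$ diverges for any lattice $M$.

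The paper avoids this by rewriting the \emph{term-by-term} difference before summing:
\[
s_{ij}=\left|\frac{L_i\cap L'_j}{L''_i\cap L_i\cap L'_j}\right|-\left|\frac{L_i\cap L'_{j-1}}{L''_i\cap L_i\cap L'_{j-1}}\right|,
\]
which is a genuine telescope because $|L_i\cap L'_j/(L''_i\cap L_i\cap L'_j)|$ stabilizes to $|L_i/(L''_i\cap L_i)|$ for $j\gg 0$ and to $0$ for $j\ll 0$. This is the missing rearrangement; once you insert it, your bookkeeping against the characterization of $Z$ goes through exactly as you describe.
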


\begin{proof}
By definition, we have
\[
\sum_{j\in \mbb Z} t_{ij}
= \sum_{j\in \mbb Z} \left | \frac{ L''_{i} \cap  L_{i}+ L_{i}''\cap  L'_j }{L''_{i} \cap  L_{i}+ L_{i}''\cap  L'_{j-1} } \right |
= \left | \frac{L''_i}{L''_i\cap L_i} \right |
=\alpha_{n - (i+1)} = \alpha_{-(i+1)}, \quad \forall i\in \mbb Z,
\]
where the second equality follows from the observation that the sequence $(L''_{i} \cap  L_{i}+ L_{i}''\cap  L'_j)_{j\in \mbb Z}$ stabilizes
to be  $L''_i$ for $j\gg 0$ and to be $L''_i \cap L_i$ for $-j\gg 0$.

Similarly, we have
\begin{align*}
\begin{split}
\sum_{j\in \mbb Z} s_{ij}
& = \sum_{j\in \mbb Z} \left  |\frac{L_i \cap L_j'}{L_i \cap L_{j-1}'} \right |
-  \left | \frac{L_{i}'' \cap  L_{i} \cap  L_j'}{L_{i}'' \cap  L_{i} \cap  L'_{j-1}} \right | \\
& = \sum_{j\in \mbb Z} \left | \frac{L_i\cap L'_j}{L''_i \cap L_i \cap L'_j} \right | - \left | \frac{L'_i \cap L'_{j-1}}{L''_i \cap L_i \cap L'_{j-1}} \right | \\
& = \left | \frac{L_i}{L''_i \cap L_i} \right |  = \alpha_i, \quad \forall i\in \mbb Z.
\end{split}
\end{align*}
The lemma follows.
\end{proof}

The following lemma justifies the relevance of the partition $Z = \bigsqcup Z_{S, T}$.

\begin{lem}
\label{BAST}
Assume  $(L, L'') \in \mathcal O_B$ and $(L'', L') \in \mathcal O_{A}$.
If  $L'' \in Z_{S, T}$, then $(L, L') \in \mathcal O_{A_{S, T}}$.
\end{lem}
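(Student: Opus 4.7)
The plan is to compute the $(i,j)$-entry of the matrix parametrizing the orbit containing $(L, L')$ and verify that it equals $a'_{ij}$, the $(i,j)$-entry of $A_{S,T} = A + S - T - (\check S - \check T)$. Recall $a'_{ij} = a_{ij} + s_{ij} - t_{ij} - s_{i-1,j} + t_{i-1,j}$.

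First I would introduce the shorthand
\[
c_{ij} = \left| \frac{L_i \cap L'_j}{L_i \cap L'_{j-1}} \right|, \qquad c'_{ij} = \left| \frac{L''_i \cap L'_j}{L''_i \cap L'_{j-1}} \right|, \qquad \tilde a_{ij} = \left| \frac{L_i \cap L'_j}{L_{i-1} \cap L'_j + L_i \cap L'_{j-1}} \right|.
\]
A short inclusion–exclusion calculation in the chain $L_{i-1}\cap L'_{j-1} \subseteq L_{i-1}\cap L'_j \subseteq L_i \cap L'_j$ yields the identity $\tilde a_{ij} = c_{ij} - c_{i-1,j}$. Applying the same identity to the pair $(L'', L') \in \mathcal O_A$ gives $a_{ij} = c'_{ij} - c'_{i-1,j}$.

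The crucial step is to compare the two systems of numbers $c_{ij}$ and $c'_{ij}$ using Condition \eqref{cij}${}_{ij}$ encoded in $L'' \in Z_{S,T}$. This comparison is already done in the proof of Lemma~\ref{condition-star}, where equation \eqref{i2} gives exactly
\[
c'_{ij} = c_{ij} - s_{ij} + t_{ij}, \quad \forall i,j \in \ZZ.
\]
Substituting this into $a_{ij} = c'_{ij} - c'_{i-1,j}$ and subtracting from $\tilde a_{ij} = c_{ij} - c_{i-1,j}$ produces
\[
\tilde a_{ij} \;=\; a_{ij} + s_{ij} - t_{ij} - s_{i-1,j} + t_{i-1,j} \;=\; a'_{ij},
\]
which is the desired identity for every $i, j \in \ZZ$. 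By the orbit classification in Section~\ref{Para}, this means $(L, L') \in \mathcal O_{A_{S,T}}$.

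The main conceptual point to verify carefully is that equation \eqref{i2} is legitimately available here: it was derived \emph{under the assumption} that $L''$ satisfies Condition \eqref{cij}${}_{ij}$, which is precisely the defining condition of $Z_{S,T}$, so the hypothesis $L'' \in Z_{S,T}$ feeds directly into the argument. No additional obstacle arises: the periodicity and symmetry properties required for $A_{S,T} \in {}^{\C}\Xi_{n,d}$ are automatic consequences of the corresponding properties of $A$ combined with Lemma~\ref{lem:star}, so once the entry-by-entry identity above is established the lemma follows.
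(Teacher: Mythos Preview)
Your proof is correct and follows essentially the same approach as the paper: both compute the entries of the matrix for $(L,L')$ via the identity $\tilde a_{ij} = c_{ij} - c_{i-1,j}$ (the paper writes this directly as the difference formula \eqref{a-a}) and then substitute the relation $c'_{ij} = c_{ij} - s_{ij} + t_{ij}$ from \eqref{i2}. The only difference is cosmetic---you isolate the inclusion--exclusion step explicitly, while the paper packages it into one displayed equation.
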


\begin{proof}
Suppose that $(L, L') \in \mathcal O_{A'}$ with $A'=(a'_{ij})$.
We have
\begin{align}
\label{a-a}
a'_{ij} - a_{ij} =
\left | \frac{L_i\cap L'_j}{L_i \cap L'_{j-1}}  \right |
- \left | \frac{L''_i\cap L'_j}{L''_i \cap L'_{j-1}} \right |
- \left | \frac{L_{i-1} \cap L'_j}{L_{i-1} \cap L'_{j-1}} \right |
+ \left | \frac{L''_{i-1} \cap L'_j}{L''_{i-1} \cap L'_{j-1}}  \right |, \quad \forall i, j\in \mbb Z.
\end{align}

By (\ref{a-a}), (\ref{i2}) and the definition of $c_{ij}$ in \eqref{cij}, we obtain
\begin{equation*}
\label{Aij}
a_{ij}'=
a_{ij} + (s_{ij}-t_{ij}) - (s_{i-1,j} - t_{i-1,j}), \quad \forall i, j \in \mbb Z.
\end{equation*}
Therefore $A'=A_{S, T}$.
The lemma is proved.
\end{proof}

Summarizing, we have established the following.

\begin{prop}
Retain the assumptions in Theorem ~\ref{mult-formula-raw}. Then we have
$e_B * e_A = \sum  \# Z_{S, T} \;e_{A_{S, T}}$,
where the sum runs over all pairs $(S, T)$  in $\Theta_{n}$ subject to Condition (\ref{star}),  $\ro(S) =\alpha$ and $\ro(T) = \alpha^{\#}$,
$A-T+\check{T} \in \Theta_n$, and $A_{S,T} \in {}^{\C}\Xi_{n,d}$.
\end{prop}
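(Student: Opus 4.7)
The plan is to assemble the lemmas already established in Section 3.3 into a single statement about the structure constants of $e_B * e_A$ in the standard basis.

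First, I would unwind the definition of convolution. Fix any $A' \in {}^{\C}\Xi_{n,d}$ and any representative pair $(L, L') \in \mathcal{O}_{A'}$. Then the coefficient of $e_{A'}$ in $e_B * e_A$ equals the number of lattice chains $L''$ such that $(L, L'') \in \mathcal{O}_B$ and $(L'', L') \in \mathcal{O}_A$. With $L$ fixed, the chains satisfying the first incidence are exactly the members of the set $Z$ from Section 3.3, so this coefficient is $\#\{L'' \in Z : (L'', L') \in \mathcal{O}_A\}$.

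Next, I would exploit the partition $Z = \bigsqcup_{(S,T)} Z_{S,T}$ indexed by $(S, T) \in \Theta_n \times \Theta_n$, together with the compatibility statement of Lemma~\ref{BAST}: every $L'' \in Z_{S,T}$ satisfies $(L, L') \in \mathcal{O}_{A_{S,T}}$. This means that each nonempty piece $Z_{S,T}$ is contained entirely in the preimage of a single $A'$, namely $A' = A_{S,T}$. Consequently $\{L'' \in Z : (L'', L') \in \mathcal{O}_{A'}\}$ decomposes as the disjoint union of those $Z_{S,T}$ with $A_{S,T} = A'$, and the coefficient of $e_{A'}$ in $e_B * e_A$ becomes $\sum_{A_{S,T} = A'} \#Z_{S,T}$. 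Summing over $A'$ (equivalently, grouping by pair) yields $e_B * e_A = \sum_{(S,T)} \#Z_{S,T}\, e_{A_{S,T}}$.

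Finally, I would restrict the sum to the admissible range stated in the proposition. Lemma~\ref{condition-star} forces any nonempty $Z_{S,T}$ to satisfy Condition~(\ref{star}), and Lemma~\ref{ZST-2} forces $\ro(S) = \alpha$ and $\ro(T) = \alpha^{\#}$. The condition $A - T + \check T \in \Theta_n$ records the nonnegativity of the entries $a_{ij} - t_{ij} + t_{i-1,j}$ that arise when reading off the $L'$-filtration through the middle flag $L''$, and the requirement $A_{S,T} \in {}^{\C}\Xi_{n,d}$ is just the necessity that $\mathcal{O}_{A_{S,T}}$ be a legitimate orbit. All pairs outside this range contribute $0$ and may be dropped from the sum; all the work was done upstream in the lemmas, so there is no genuine obstacle at this final bookkeeping step. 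The real difficulty of Theorem~\ref{mult-formula-raw}, namely computing $\#Z_{S,T}$ explicitly and identifying it with $v^{2\xi^{\C}_{A,S,T}} n(S,T) \begin{bmatrix} A_{S,T} \\ S \end{bmatrix}_{\C}$, is deferred to the subsequent type-$C$ counting via Proposition~\ref{NYsp}.
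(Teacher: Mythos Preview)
Your proposal is correct and mirrors the paper's own argument: evaluate $e_B*e_A$ at a fixed pair $(L,L')$, split $Z$ into the pieces $Z_{S,T}$, apply Lemma~\ref{BAST} to identify the target orbit, and then invoke Lemmas~\ref{condition-star} and~\ref{ZST-2} to restrict the summation range. One small slip: Lemma~\ref{BAST} carries the hypothesis $(L'',L')\in\mathcal O_A$, which you omitted when quoting it, and in the displayed set you meant $\mathcal O_A$ rather than $\mathcal O_{A'}$; with those restored the argument is verbatim the paper's.
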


\begin{proof}
For any pair $(L, L')$ of symplectic chains, we have
\begin{align*}
e_B * e_A (L, L')
&= \sum_{L''} e_B(L, L'') e_A(L'', L')
=\sum_{L'' \in Z} e_A (L'', L') \\
&= \sum_{S, T} \sum_{L'' \in Z_{S, T}} e_A (L'', L')
= \sum_{S, T} \# Z_{S, T} e_{A_{S, T}} (L, L'),
\end{align*}
where the last equality is due to Lemma ~\ref{BAST}.
Lemmas ~\ref{condition-star} and ~\ref{ZST-2} guarantee that we can impose the conditions on $S$ and $T$ in the proposition.
\end{proof}

\subsection{Step 2 of the proof: counting $Z_{S, T}$}

For $0\leq i \leq r$, we introduce an index set $\mbb Z(i)$ to be the subset of $\mbb Z$ consisting of all integers $k \in [-i-1, i]$ mod $n$.
Recall from the previous section that we fix a pair $(L, L') \in \mathcal O_{A_{S, T}}$.
Let $Z_{S, T}^{[0, i]}$ be the set of symplectic lattice chains $(L''_k)_{k \in \mbb Z(i)}$ such that $L''_k$ satisfies
$L_{k-1} \subseteq L''_k \subseteq L_{k+1}$ and the  conditions ({\ref{cij}${}_{k j}$}) for all $k\in \mbb Z(i)$.
We then have the following tower of  projections:
\[
\begin{CD}
Z_{S, T} @>\pi_r >> Z_{S, T}^{[0, r-1]} @>\pi_{r-1}>> Z_{S, T}^{[0, r-2]} @>\pi_{r-2}>> \cdots @>\pi_1>> Z_{S, T}^{[0,0]} @>\pi_0>> \bullet
\end{CD}
\]
where $\pi_i (L''_k)_{k\in \mbb Z(i)} = (L''_k)_{k\in \mbb Z(i-1)}$ for $0\leq i \leq r$.
We shall show that each $\pi_i$ is of constant fiber and hence the cardinality of $Z_{S, T}$ is a product of the cardinality of the fibers of $\pi_i$ for $0\leq i\leq r$.

So we focus on computing the cardinality of the fiber of $\pi_i$.
We set
\begin{equation}
\label{ZSTi}
Z^i_{S, T} (\mbf L^i)= \pi_i^{-1}(\mbf L^i)
\end{equation}
to be the fiber of $\mbf L^i =(L^i_k)_{k\in \mbb Z(i-1)}$ under $\pi_i$ for all $0\leq i \leq r$.

Due to the periodicity, i.e., $\ve L_m =L_{m-n}$,
the set $Z^i_{S, T}(\mbf L^i)$, for $1\leq i \leq r$,  is in bijection with the set of pairs $(L''_{-i-1}, L''_i)$ of lattices  in $V_F$ such that
\begin{align}
  \label{LSTi}
\begin{split}
&L''^{\#}_i = L''_{-i-1},\;\;
L^i_{i-1} +  L_{i-1} \subseteq L''_i \subseteq L_{i+1}, \;\;
L_{-i-2} \subseteq L''_{- i -1} \subseteq L_{-i}, \\
&\text{Condition ({\ref{cij}${}_{k j}$}) is satisfied}, \; \forall j\in \mbb Z \text{ and }  k = -i-1, i.
\end{split}
\end{align}
(Here $V_F$ is a $2d$-dimensional  symplectic space over $F= \mbb F_q((\ve))$.)
The third condition is redundant since it is implied by the first two conditions.
The first condition in ({\ref{cij}${}_{k j}$}) for $k = -i-1$ is equivalent to the following condition:
\begin{align}
L''_i \cap (L_i + L_{i+1}\cap L'_j) \overset{s_{-i-1,-j}}{\supseteq} L''_i \cap (L_i + L_{i+1}\cap L'_{j-1}), \quad \forall  j\in \mbb Z.
\tag{\ref{LSTi}${}_{ij}$}
\end{align}

Indeed, we have
\begin{align*}
& (L''_i \cap (L_i + L_{i+1}\cap L'_j))^{\#}
= L''_{-i-1} + (L_{-i-1} \cap (L_{-i-2} + L'_{-j-1})) \\
& =L''_{-i-1} + (L_{-i-2} + L_{-i-1} \cap L'_{-j-1})
= L''_{-i-2} + L_{-i-1} \cap L'_{-j-1}.
\end{align*}
Thus
\begin{align*}
\left |
\frac{L''_i \cap (L_i + L_{i+1}\cap L'_j)}{L''_i \cap (L_i + L_{i+1}\cap L'_{j-1})}
\right |
&=
\left |
\frac{(L''_i \cap (L_i + L_{i+1}\cap L'_{j-1}))^{\#}}{(L''_i \cap (L_i + L_{i+1}\cap L'_{j}))^{\#}}
\right |\\
& =
\left |
\frac{L''_{-i-2} + L_{-i-1} \cap L'_{-j}}{L''_{-i-2} + L_{-i-1} \cap L'_{-j-1}}
\right |\\
& =
\left |
\frac{L_{-i-1} \cap L'_{-j}}{L_{-i-1} \cap L'_{-j-1}}
\right |
-
\left |
\frac{L''_{-i-1}\cap L_{-i-1} \cap L'_{-j}}{L''_{-i-1}\cap L_{-i-1} \cap L'_{-j-1}}
\right | \\
& = c_{-i-1, -j} - (c_{-i-1, -j} - s_{-i-1, -j}) = s_{-i-1, -j}.
\end{align*}
The second condition of ({\ref{cij}${}_{k j}$}) for $k=-i-1$ is equivalent to Condition (\ref{star}) by the argument in the proof of Lemma ~\ref{condition-star}.

So the above analysis allows us to identify $Z^i_{S, T}(\mbf L^i)$ for $1\leq i \leq r-1$ with the set of all lattices $\mathcal L$ such that
\begin{align*}
\begin{split}
 \mathcal L^{\#} \subseteq \mathcal L, \;
 L^i_{i-1} + L_{i-1} \subseteq \mathcal L \subseteq L_{i+1}, \;
 \mathcal L\ \text{satisfies Conditions ({\ref{cij}${}_{i j}$}) and ({\ref{LSTi}${}_{ij}$})} \ \forall   j\in \mbb Z.
\end{split}
\end{align*}

Recall that we can find a lattice $\mathcal M$ such that $L_{n-1} \subseteq \mathcal M \subseteq L_n$ and
$\mathcal M/\ve\mathcal M$ admits a non-degenerate symplectic form over $\mbb F_q$ from that  on $V_F$ over $F$.
To a lattice $\mbb L \in V_F$, we can define $\overline{\mbb L} = (\mbb L \cap \mathcal M + \ve \mathcal M) / \ve \mathcal M$.
Thus, $\overline{\mathcal M}$, $\overline L$,  and $\overline{L'}$ are well defined.

Recall the notation $Y_{\mbf{s, t, t'}}(\mbf V, \mbf V')$ from (\ref{Yst}). To a matrix $M$, we write $M_i$ for its $i$-th row vector.
By Lemma~\cite[Lemma 3.3.3]{FLLLWa}, the assignment $\mathcal L \mapsto \overline{\mathcal L}$ defines a bijection
\begin{align}
\label{ZiST}
Z^i_{S, T} (\mbf L^i) \longrightarrow Y_{S_i, T_i, S_{-i-1}^J}(\mbf V^i, \mbf V'),  \quad \forall 1\leq i \leq r-1,
\end{align}
where $\mbf V^i =(V^i_a)_{0\le a \le 4}$,
$V^i_1= \overline{L^i_{i-1}+L_{i -1}}$, $V^i_k = \overline{L_{i+k-2}}$ for $k=2, 3$,  and $V'_j = \overline{L'_{j}}$ for all  $j\in \mbb Z$.
Observe that
\begin{align*}
c_{1j} (\mbf V^i, \mbf V') & = c_{i-1, j} (\overline L, \overline{L'}) - s_{-i, -j}, \\
c_{kj}(\mbf V^i, \mbf V') & = c_{i + k -  2, j}(\overline L, \overline{L'}), \quad \forall 2\leq k\leq 3.
\end{align*}
where $c_{kj}(L, L')$ is the $c_{kj}$ in (\ref{cij}) (with $i$ replaced by $k$)  for the pair $(L, L')$.
So by Proposition ~\ref{NYst}, we have, for $1\leq i\leq r-1$,
\begin{align}
\label{Zk}
\begin{split}
\# Z^i_{S, T}(\mbf L^i)
&=
q^{\sum_{j > l} ( a'_{i j} - s_{-i, -j}   - s_{i j}) s_{i l} + s_{-i -1, -j} (a'_{i +1, l}  - s_{-i-1, -l} )}  \\
& \qquad \quad  \cdot n ( S_{i}, T_i, S_{-i-1}^J)  \cdot
\prod_{j\in \mbb Z}
\begin{bmatrix}
a'_{i+1, j} \\
s_{-i-1, -j}
\end{bmatrix}
\begin{bmatrix}
a'_{i j} - s_{-i, -j}\\
s_{i j}
\end{bmatrix}.
\end{split}
\end{align}

The $i=r$ case in (\ref{ZiST}) is excluded since
we do not know if $\overline{\mathcal L}$ is isotropic, while in other cases it is automatically isotropic because it is contained in the isotropic subspace $\overline{L_{r}}$.
By imposing the condition that $\overline{\mathcal L}$ is isotropic,  we have a bijection (recall \eqref{Ystt})
\[
Z^r_{S, T} (\mbf L^r) \cong Y^{\mrm{sp}}_{S_r, T_r,  S_{-r-1}^J}(\mbf V^r, \mbf V'), \quad \mbox{if} \ r\geq 1.
\]
So by Proposition ~\ref{NYsp}, we have, for $r\geq 1$,
\begin{align}
\begin{split}
\label{Zr}
\# Z^r_{S, T} (\mbf L^r)
&=
q^{\xi_r} n(S_r, T_r, S^J_{-r-1})
\prod_{j\in \mbb Z} \begin{bmatrix} a'_{r j} - s_{-r, -j} \\ s_{r j} \end{bmatrix}
\prod_{j\leq r} \begin{bmatrix} a'_{r+1,j} \\  s_{-r-1, -j} \end{bmatrix} \\
& \qquad
\cdot \prod_{j\geq r+2} \begin{bmatrix} a'_{r+1,j}  - s_{-r-1, -n+j }  \\ s_{-r-1, -j} \end{bmatrix}
\prod_{i=0}^{s_{-r-1, -r-1}  -1} \frac{[ a'_{r+1, r+1}  - 2i]}{[i+1]},
\end{split}
\end{align}
where
\begin{align*}
\begin{split}
\xi_r
=
\sum_{j > l} (a'_{rj} - s_{-r, -j} - s_{rj}) s_{r l} + \sum_{j >l  }  s_{-r-1, -j} (a'_{r+1, l} -s_{-r-1, - l} )  \\
- \sum_{j  > l > n -j} s_{-r-1, -j} s_{-r-1, - n + l}
- \sum_{j> r+1} s_{-r-1, -j} (s_{-r-1, -j}-1)/2.
\end{split}
\end{align*}

The remaining case is $i=0$.
The only difference from the other cases is that $\mbf L^0=\bullet$ does not play a role here.
So we can identify $Z^0_{S, T} \equiv Z^0_{S, T}(\mbf L^0)$ with the set of lattices $\mathcal L$ such that
\begin{align}
\label{Z0r}
 L_{-1} \subseteq \mathcal L\subseteq L_1,   \mathcal L^{\#} \subseteq \mathcal L,
\text{ and } \mathcal L \ \mbox{satisfies Conditions ({\ref{cij}${}_{0 j}$}) and ({\ref{LSTi}${}_{0j}$}) for $j\in \mbb Z$}.
\end{align}

If  $L_{-1} \subseteq \mathcal M$, it implies that $\mathcal M^{\#} \subseteq L^{\#}_{-1} = L_0$, and hence
$(L_0 \cap \mathcal M)^{\#} = L_{-1} + \mathcal M^{\#} \subseteq L_0$.
If further $\mathcal M^{\#} \subseteq \mathcal M$, then $(L_0 \cap \mathcal M)^{\#} \subseteq L_0 \cap \mathcal M$.
On the other hand, if $(L_0 \cap \mathcal M)^{\#} \subseteq L_0 \cap \mathcal M$, we have
$\mathcal M^{\#} \subseteq (L_0 \cap \mathcal M)^{\#} \subseteq L_0 \cap \mathcal M \subseteq \mathcal M$.

So $Z^0_{S, T}$ is in bijection with the set of all lattices $\mathcal L$ such that
\begin{align*}
L_{-1} \subseteq \mathcal L \subseteq L_1,
(L_0 \cap \mathcal L)^{\#} \subseteq L_0 \cap \mathcal L,
 \mbox{$\mathcal L$ satisfies Conditions ({\ref{cij}${}_{0 j}$}) and ({\ref{LSTi}${}_{0j}$}) for  $j\in \mbb Z$.}
\end{align*}

The symplectic form on $V_F$ descends to a non-degenerate symplectic $\mbb F_q$-form on $L_0/L_{-1}=L_0/ L^{\#}_0$ (but not $L_1/L_{-1}$).
See for example \cite[0.8]{Lu03}. It is assumed that $L_0$ is of
even volume, but  it can be generalized to arbitrary volume. Here the volume of a lattice is defined as the volume form on V attached to the symplectic form.

With this information, the condition $(L_0 \cap \mathcal L)^{\#} \subseteq L_0 \cap \mathcal L$  is equivalent to
\begin{align*}
\mbox{$U \subseteq L_1/L_{-1}$ such that $(U\cap (L_0/L_{-1}))^{\perp} \subseteq U\cap (L_0/L_{-1})$ in $L_0/L_{-1}$.}
\end{align*}
So
\begin{align}
\label{Z0ST}
Z^0_{S, T}
=\ ^{\mrm{sp}} Y_{S_0, T_0, S^J_{-1}} (\mbf V^0, \mbf V') \cong Y^{\mrm{sp}}_{S_{-1}, T_{-1}, S^J_0}(\mbf V^{-1}, \mbf V'),
\end{align}
where the last equality is due to Proposition ~\ref{Ysp-dual} via $\mathcal L \mapsto \mathcal L^{\#}$.

Now by applying Proposition ~\ref{NYsp}, we have
\begin{align}
\label{Z0}
\# Z^0_{S, T} =
q^{\xi_0} n(S_{-1}, T_{-1}, S_0^J)
\prod_{j\in \mbb Z} \begin{bmatrix} a'_{-1,j} \\ s_{-1, j} \end{bmatrix}
\prod_{j<0} \begin{bmatrix} a_{0j}' \\ s_{0, -j}  \end{bmatrix}
\prod_{j>0} \begin{bmatrix} a'_{0j} - s_{0 j} \\ s_{0 , - j}  \end{bmatrix}
\prod_{i=0}^{s_{00} - 1} \frac{[a'_{00} - 2i]}{[i+1]},
\end{align}
where
\begin{align*}
\xi_0=\sum_{j > l} (a'_{-1, j}  - s_{-1, j}) s_{-1, l}
+ \sum_{j > l } s_{0, -j} (a'_{0l} - s_{0, -l})
-\sum_{j > l >  -j} s_{0, -j}  s_{0 l}
- \sum_{j>0}  s_{0, -j} (s_{0, -j} -1)/2.
\end{align*}

By Corollary ~\ref{weak-n-dual}, the term $n(S_{-1}, T_{-1}, S_0^J)$ in (\ref{Z0})  can be replaced by $n(S_0, T_0, S^J_{-1})$.
Hence, by (\ref{Z0}),
(\ref{Zk}), (\ref{Zr}) for $r \geq 1$, we see that all $\pi_i$ are surjective with constant fiber, and so
\[
\# Z_{S, T} = \prod_{i=0}^r  \# Z^i_{S, T}(\mbf L^i)= q^{\xi^{\C}_{A, S, T}} n(S, T) \begin{bmatrix} A_{S, T} \\ S
 \end{bmatrix}_{\C}, \quad \forall r\geq 1.
\]
Therefore, Theorem ~\ref{mult-formula-raw} is proved for $r\geq 1$.

Now we deal the remaining case $r=0$ for Theorem ~\ref{mult-formula-raw}, which we will put $|_{r=0}$ whenever appropriate to emphasize this special case.
In this case $Z_{S, T}|_{r=0}$ can be identified with the set of lattices such that
\begin{align}
\label{Z00}
\begin{split}
& L_{-1} \subseteq \mathcal L\subseteq L_1, \quad \mathcal L^{\#} \subseteq \mathcal L \subseteq \ve^{-1} \mathcal L^{\#}, \\
& \mathcal L \ \mbox{satisfies Conditions ({\ref{cij}${}_{0j}$}) and ({\ref{LSTi}${}_{0j}$}) for $j\in \mbb Z$}.
\end{split}
\end{align}
Note that the only difference of the above description of $Z_{S, T}|_{r=0}$ from (\ref{Z0r}) is the extra condition $ \mathcal L \subseteq \ve^{-1} \mathcal L^{\#}$ which holds automatically for  the $r\geq 1$ case.

If $\mathcal L$ satisfies the conditions in (\ref{Z00}), we have
$
\mathcal L \subseteq L_1 \cap \ve^{-1} \mathcal L^{\#} = \ve^{-1} (L_0^{\#}\cap \mathcal L^{\#})= \ve^{-1} (L_0 + \mathcal L)^{\#}.
$
By taking $\#$ and multiplying $\ve^{-1}$ on the first condition in (\ref{Z00}), we get $L_0 \subseteq \ve^{-1} \mathcal L^{\#}$, so we get
$L_0 \subseteq \ve^{-1} (L^{\#}_0 \cap \mathcal L^{\#})$.
Hence, we have $L_0 + \mathcal L \subseteq \ve^{-1} (L_0 + \mathcal L)^{\#}$.
On the other hand,  if $L_0 + \mathcal L \subseteq \ve^{-1} (L_0 + \mathcal L)^{\#}$, we have
$\mathcal L \subseteq L_0 + \mathcal L \subseteq \ve^{-1} (L_0 + \mathcal L)^{\#} \subseteq \ve^{-1} \mathcal L^{\#}$,
that is, $\mathcal L \subseteq \ve^{-1} \mathcal L^{\#}$.

So the description (\ref{Z00}) is the same as  the set of all lattices $\mathcal L$ such that
\begin{align*}
& L_{-1} \subseteq \mathcal L \subseteq L_1, \quad
(L_0 \cap \mathcal L)^{\#} \subseteq L_0 \cap \mathcal L, \quad
L_0 + \mathcal L \subseteq \ve^{-1} (L_0 + \mathcal L)^{\#},
\\
& \mbox{$\mathcal L$ satisfies Conditions ({\ref{cij}${}_{0j}$}) and ({\ref{LSTi}${}_{0j}$}) for $j\in \mbb Z$.}
\end{align*}

Since $L_1 = \ve^{-1} L_0^{\#}$, the quotient space $L_1/L_0$ inherits a non-degenerate symplectic $\mbb F_q$-form from $V_F$.
Via the descent  $\mathcal L \mapsto \mathcal L + L_0/L_0 =: \overline{\mathcal L}$,
the condition $L_0 + \mathcal L \subseteq \ve^{-1} (L_0 + \mathcal L)^{\#}$ is  equivalent to the following condition
\begin{align*}
\overline{\mathcal L} \ \mbox{is isotropic in $L_1/L_0$}.
\end{align*}
So $Z_{S, T}|_{r=0}$ is in bijection with the subset $Y|_{r=0}$ of
$^{\mrm{sp}} Y_{S_0, T_0, S^J_{-1}} (\mbf V^0, \mbf V')$ in (\ref{Z0ST}) defined by the above condition.
The computation of $\#  Y|_{r=0}$ is similar to  that of $^{\mrm{sp}} Y_{S_0, T_0, S^J_{-1}} (\mbf V^0, \mbf V')$, i.e.,
\begin{align}
\label{r=0-a}
\# Z_{S, T}|_{r=0} =\#  Y|_{r=0}= n(S_0, T_0, S_{-1}^J) \# Y'^{\mrm{sp}}_{S^J_{-1}} \# ^{\mrm{sp}} Y''_{ S_0},
\end{align} where
$Y'^{\mrm{sp}}_{S^J_{-1}}$ and  $^{\mrm{sp}} Y''_{S_0}$ are auxiliary sets attached to $^{\mrm{sp}} Y_{S_0, T_0, S^J_{-1}} (\mbf V^0, \mbf V')$.
We have
\begin{align}
\label{r=0-b}
Y'^{\mrm{sp}}_{S^J_{-1}}
=
q^{\xi_{S^J_{-1}}} \prod_{j < 1} \begin{bmatrix} a'_{1j}\\ s_{-1, -j} \end{bmatrix}
\prod_{j > 1} \begin{bmatrix} a'_{1j} - s_{-1, -2 + j} \\ s_{-1, -j} \end{bmatrix}
\prod_{i=0}^{s_{-1, -1} -1} \frac{[a'_{11} - 2i]}{[i+1]},
\end{align}
where $\xi_{S^J_{-1}} = \sum_{j < l} (a'_{1j} - s_{-1, -j} ) s_{-1, - l} - \sum_{j > l > - j} s_{-1, -j} s_{-1, l} - \sum_{j > 1} s_{-1, -j} (s_{-1, -j} -1) /2$.
Similarly, we have
\begin{align}
\label{r=0-c}
\# ^{\mrm{sp}} Y''_{S_0}
= \# Y'^{\mrm{sp}}_{S_0^J} =
q^{\xi_{S_0^J}} \prod_{j< 0} \begin{bmatrix} a'_{0j} \\ s_{0, -j} \end{bmatrix}
\prod_{j > 0} \begin{bmatrix} a'_{0 j} - s_{0 j} \\ s_{0, - j} \end{bmatrix}
\prod_{i=0}^{s_{00} -1} \frac{[a'_{00} - 2i]}{[i+1]},
\end{align}
where
$\xi_{S_0^J} = \sum_{j < l} (a'_{0j} - s_{0, -j} ) s_{0, -l} - \sum_{j > l > -j} s_{0, -j} s_{0, l} - \sum_{j > 1} s_{0, -j} (s_{0, -j} -1) /2$.
By (\ref{r=0-a})-(\ref{r=0-c}), we complete the proof of the $r=0$ case and hence complete the proof of Theorem ~\ref{mult-formula-raw}.

\section{The quantum group $\Kc_n$ via the multiplication formula}
 \label{chap:Kjj}

In this section, we obtain a monomial basis  for the convolution algebra $\Sj$
based on the multiplication formula obtained in Section~\ref{chap:multi}.
We observe a stabilization property from this multiplication formula, which allows us
to construct a limit algebra $\Kc_n$ for the family of convolution algebras $\{\Sj\}_d$.
We construct a monomial basis and canonical basis for $\Kc_n$, as well as a
surjective homomorphism from $\Kc_n$ to $\Sj$.

The index set  in  \eqref{Mdn} for bases of $\Sj$ is used
in the formulation of the multiplication formula as well as in further applications in this and later Sections.

\subsection{A monomial basis of the convolution algebras}
  \label{secstandbasis}

Recall $\MX_{n,d}$ from \eqref{Mdn} and the bijection ${}^{\C}\Xi_{n,d} \leftrightarrow \MX_{n,d}$ from \eqref{bijection}.
We first reformulate Theorem~\ref{mult-formula-raw} using the index set $\MX_{n,d}$.
Set
\begin{align}
a'_{ij} &= a_{ij} + (s_{ij} + s_{-i, -j}) - (t_{ij} + t_{-i, -j}),
 \notag \\
\xi^{\mathfrak b}_{A, S, T} &=
\sum_{\substack{-r-1 \leq i \leq r\\ j> l}} (a_{ij}' -s_{ij}) s_{i l}
- \sum_{\substack{i=-r-1, 0 \\ j< i }} \frac{s_{i j} (s_{i j} + 1)}{2}
 \label{xiAST2}
 \\
&\qquad\qquad \qquad\quad
 - \Big(\sum_{\substack{-r \leq i \leq -1 \\ j > l}} + \sum_{\substack{ i =  -r -1, 0 \\  2i -l > j > l }}   \Big) s_{-i, - j} s_{i l},\notag
\\
 \begin{bmatrix}
A_{S, T}  \\
 S
 \end{bmatrix}_{\mathfrak b}
 & =
\prod_{(i, j)\in \mathcal J}
\begin{bmatrix} a'_{ij} \\ s_{ij} \end{bmatrix}
\begin{bmatrix} a'_{ij} - s_{ij} \\ s_{-i, -j} \end{bmatrix}
\prod_{\substack{i= 0, -r-1\\ 0 \leq k \leq s_{ii}-1 }} \frac{[a'_{ii} - 2k-1]}{[k +1]}.
 \label{AST2}
\end{align}

\begin{thm}
\label{mult-formula11}
Let $\alpha = (\alpha_i)_{i\in \mbb Z} \in \mbb N^{\mbb Z}$ such that $\alpha_i = \alpha_{i+n}$ for all $i\in \mbb Z$.
If $A, B \in \MX_{n,d}$  satisfy $\co(B) = \ro(A)$ and $B - \sum_{1\leq i \leq n} \alpha_i E_{\theta}^{i, i+1}$ is diagonal, then we have
\begin{equation}
\label{eBA-2}
e_B * e_A = \sum_{S, T} v^{2 \xi^{\mathfrak b}_{A, S, T}} \; n(S, T) \begin{bmatrix} A_{S, T} \\ S\end{bmatrix}_{\mathfrak b} e_{A_{S,T}},
\end{equation}
where the sum runs over all $S, T \in \Theta_{n}$ subject to Condition \eqref{star},
  $\ro(S) =\alpha$ and $\ro(T) = \alpha^J$,
$A-T+\check{T} \in \Theta_n$, and $A_{S,T} \in \MX_{n,d}$.
\end{thm}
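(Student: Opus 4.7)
The plan is to deduce Theorem~\ref{mult-formula11} from Theorem~\ref{mult-formula-raw} by transferring the formula across the bijection \eqref{bijection}, which sends $A^{\C}\in{}^{\C}\Xi_{n,d}$ to $A^{\mathfrak b}:=A^{\C}+E^{0,0}+E^{r+1,r+1}\in\MX_{n,d}$. Since the characteristic function $e_A$ depends only on the underlying $\SP(V)$-orbit, we have $e_{A^{\mathfrak b}}=e_{A^{\C}}$, and likewise for $B$; thus the left-hand side of \eqref{eBA-2} coincides with the left-hand side of \eqref{eBA} computed from $B^{\C},A^{\C}$. The hypothesis that $B^{\mathfrak b}-\sum_i\alpha_i E_{\theta}^{i,i+1}$ is diagonal is equivalent to the same statement for $B^{\C}$, because the shift $E^{0,0}+E^{r+1,r+1}$ is itself diagonal. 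One also checks that the conditions on $(S,T)$ and on $A_{S,T}$ in the two theorems match: indeed $A^{\mathfrak b}_{S,T}=A^{\C}_{S,T}+E^{0,0}+E^{r+1,r+1}$, so $A^{\mathfrak b}_{S,T}\in\MX_{n,d}$ iff $A^{\C}_{S,T}\in{}^{\C}\Xi_{n,d}$, and the non-negativity requirement $A-T+\check T\in\Theta_n$ is preserved.

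The crux is then to match the coefficient of each $e_{A_{S,T}}$ on the two sides. Write $b'_{ij}$ for the entries \eqref{A'-2} attached to $A^{\C}$ and $a'_{ij}$ for those attached to $A^{\mathfrak b}$; then $a'_{ij}-b'_{ij}=A^{\mathfrak b}_{ij}-A^{\C}_{ij}$ equals $1$ when $(i,j)\equiv(0,0)$ or $(r+1,r+1)\pmod n$ and is $0$ otherwise. In particular $a'_{ii}=b'_{ii}+1$ at the two diagonal positions $i=0,-r-1$, and no other entry changes.

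For the binomial factor, the only discrepancy between \eqref{[A; S]} and \eqref{AST2} is the replacement of $\frac{[b'_{ii}-2k]}{[k+1]}$ by $\frac{[a'_{ii}-2k-1]}{[k+1]}$ for $i=0,-r-1$; these two expressions are identical thanks to the shift $a'_{ii}=b'_{ii}+1$. The product $\prod_{(i,j)\in\mathcal J}$ is unaffected, since neither $(0,0)$ nor $(-r-1,-r-1)$ lies in $\mathcal J$. For the exponent, the diagonal contributions at $(i,j)=(0,0)$ and $(-r-1,-r-1)$ cause the first double sum of $\xi^{\mathfrak b}-\xi^{\C}$ to increase by exactly $\sum_{l<0}s_{0l}+\sum_{l<-r-1}s_{-r-1,l}$, while the replacement of $\tfrac{s_{ij}(s_{ij}-1)}{2}$ by $\tfrac{s_{ij}(s_{ij}+1)}{2}$ in the last sum subtracts exactly the same quantity. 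These two changes cancel, so $\xi^{\mathfrak b}_{A^{\mathfrak b},S,T}=\xi^{\C}_{A^{\C},S,T}$, and Theorem~\ref{mult-formula11} follows term-by-term from Theorem~\ref{mult-formula-raw}. The main (and only) obstacle is the careful bookkeeping of these two compensating shifts at the special diagonal positions.
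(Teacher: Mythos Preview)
Your proposal is correct and follows exactly the approach indicated by the paper, which simply says the result ``follows by Theorem~\ref{mult-formula-raw} and the bijection between ${}^{\C}\Xi_{n,d}$ and $\MX_{n,d}$.'' You have supplied precisely the bookkeeping the paper omits: the diagonal shift only affects $a'_{ii}$ at $i=0,-r-1$, the resulting $+1$ converts $[b'_{ii}-2k]$ into $[a'_{ii}-2k-1]$ in the binomial factor, and in the exponent the gain $\sum_{l<0}s_{0l}+\sum_{l<-r-1}s_{-r-1,l}$ from the first double sum is exactly cancelled by the change $\tfrac{s_{ij}(s_{ij}-1)}{2}\to\tfrac{s_{ij}(s_{ij}+1)}{2}$ in the last sum.
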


 \begin{proof}
 Follows by Theorem \ref{mult-formula-raw} and the bijection between ${}^{\C}\Xi_{n,d}$ and $\MX_{n,d}$.
\end{proof}

We set
 \begin{equation}
  \label{hST2}
\begin{split}
&h_{S,T}
= \sum_{\overset{i\in [1,n]}{j>l}} a_{ij}s_{il} -  \sum_{\overset{i\in [1,n]}{j<l}} a_{ij} t_{il} - \sum_{i\in [1,n]} a_{ij} s_{ij}
+ \sum_{i\in [1,n]} \alpha_i \alpha_{i-1} - \frac{1}{2} \sum_{i\in [1,n]} \alpha_{n-i} \alpha_i\\
&+ \frac{1}{2} \sum_{i\in [1,n]} \alpha_i\alpha_{n-1-i}
 -2 \sum_{\overset{i\in [1,n]}{j>l}} t_{ij}s_{il} - 2 \sum_{\overset{i\in [1,n]}{j>l}} s_{i-1,j}s_{il}
+ 2 \sum_{\overset{i\in [1,n]}{j>l}} t_{i-1,j}s_{il} -2 \sum_{\overset{i\in [-r,-1]}{j>l}} s_{-i,-j}s_{il} \\
&- 2 \sum_{\overset{i=-r-1,0}{l < j < 2i-l}} s_{-i,-j}s_{il}
-\sum_{\overset{i=-r-1,0}{j<i}} s_{ij}(s_{ij}-1) + \frac{3}{2} \sum_{j>0} t_{0j}- \frac{3}{2} \sum_{j<r+1} s_{r+1,j} - \frac{1}{2} \sum_{j<0} s_{0j}\\
& +  \frac{1}{2} \sum_{j>r+1} t_{r+1,j}  -\alpha_0 -\frac{1}{2} \alpha_{r+1} + \frac{1}{2}\alpha_{-r-2} - s_{r+1,r+1} -\frac{1}{2}s_{00} - \frac{1}{2} t_{r+1,r+1}.
 \end{split}
\end{equation}
We now reformulate Theorem~\ref{mult-formula11} in terms of the standard basis elements $[A]$. 

\begin{thm}
\label{mult-stand-basis2}
Let $\alpha = (\alpha_i)_{i\in \mbb Z} \in \mbb N^{\mbb Z}$ such that $\alpha_i = \alpha_{i+n}$ for all $i\in \mbb Z$.
If $A, B \in \MX_{n,d}$  satisfy $\co(B) = \ro(A)$ and $B - \sum_{1\leq i \leq n} \alpha_i E_{\theta}^{i, i+1}$ is diagonal,
then we have
\begin{equation*}
[B] * [A] = \sum_{S, T} v^{h_{S,T}} \; n(S, T) \begin{bmatrix}
  A_{S,T}\\ S
\end{bmatrix}_{\mathfrak b} [A_{S,T}],
\end{equation*}
where the sum runs over all $S, T \in \Theta_{n}$ subject to Condition \eqref{star},
  $\ro(S) =\alpha$ and $\ro(T) = \alpha^J$,
$A-T+\check{T} \in \Theta_n$, and $A_{S,T} \in \MX_{n,d}$.
\end{thm}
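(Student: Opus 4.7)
The plan is to derive this theorem directly from Theorem~\ref{mult-formula11} by converting the expansion in the characteristic-function basis $\{e_A\}$ into the standard basis $\{[A]\}$ via the relation $[A] = v^{-d^{\C}_A} e_A$ (transported from ${}^{\C}\Xi_{n,d}$ to $\MX_{n,d}$ through the bijection in \eqref{bijection}). Substituting $e_B = v^{d^{\C}_B}[B]$, $e_A = v^{d^{\C}_A}[A]$, and $e_{A_{S,T}} = v^{d^{\C}_{A_{S,T}}}[A_{S,T}]$ into \eqref{eBA-2} gives
\begin{equation*}
[B] * [A] = \sum_{S,T} v^{\,d^{\C}_{A_{S,T}} - d^{\C}_A - d^{\C}_B + 2\xi^{\mathfrak b}_{A,S,T}}\, n(S,T) \begin{bmatrix} A_{S,T} \\ S \end{bmatrix}_{\mathfrak b} [A_{S,T}],
\end{equation*}
summed over the same index set. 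Thus the theorem reduces to the purely combinatorial identity
\begin{equation*}
h_{S,T} \;=\; d^{\C}_{A_{S,T}} - d^{\C}_A - d^{\C}_B + 2\,\xi^{\mathfrak b}_{A,S,T}.
\end{equation*}

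To verify this identity, I would expand $d^{\C}_{A_{S,T}} - d^{\C}_A - d^{\C}_B$ using the explicit formula for $d^{\C}$ from Section~\ref{rec-sj}, together with the decomposition $A_{S,T} = A + (S - T) - (\check S - \check T)$ and the hypotheses of the theorem: $B - \sum_i \alpha_i E^{i,i+1}_{\theta}$ is diagonal, $\co(B) = \ro(A)$, $\ro(S) = \alpha$, $\ro(T) = \alpha^J$, and the symmetry condition \eqref{star}. The key quadratic term $\sum_{i \geq k,\, j < l} a_{ij} a_{kl}$ splits into a part depending only on $A$ (which cancels), a part depending only on the $\alpha_i$ (producing the $\alpha_i \alpha_{i-1}$, $\alpha_{n-i} \alpha_i$, $\alpha_i \alpha_{n-1-i}$ terms in \eqref{hST2}), and cross terms between $A$ and $S, T, \check S, \check T$. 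Using $\ro(A) = \co(B)$ to substitute $\sum_j a_{ij} = \alpha_{i-1} + \alpha_i + (\text{diagonal of } B)$ and using periodicity $a_{ij} = a_{i+n,\,j+n}$ and the symplectic symmetry $a_{ij} = a_{-i,-j}$ to re-index the sums, the cross terms consolidate into the $a_{ij} s_{il}$, $a_{ij} t_{il}$, $t_{ij} s_{il}$, $s_{i-1,j} s_{il}$, and $t_{i-1,j} s_{il}$ products that appear in the first two lines of \eqref{hST2}.

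The principal obstacle is the careful treatment of the two distinguished rows $i = 0$ and $i = r+1$, where the passage from ${}^{\C}\Xi_{n,d}$ to $\MX_{n,d}$ shifts the diagonal entries by the contribution $E^{0,0} + E^{r+1,r+1}$ (see \eqref{Mdn}). This same shift explains why $\binom{A_{S,T}}{S}_{\C}$ of \eqref{[A; S]} becomes $\binom{A_{S,T}}{S}_{\mathfrak b}$ of \eqref{AST2} with numerator $[a'_{ii} - 2k]$ replaced by $[a'_{ii} - 2k - 1]$, and it also produces the asymmetric linear corrections $\tfrac{3}{2}\sum_{j>0} t_{0j}$, $-\tfrac{3}{2}\sum_{j<r+1} s_{r+1,j}$, $-\tfrac{1}{2}\sum_{j<0} s_{0j}$, $\tfrac{1}{2}\sum_{j>r+1} t_{r+1,j}$, together with the $-\alpha_0$, $\pm\tfrac{1}{2}\alpha_{r+1}$, $\tfrac{1}{2}\alpha_{-r-2}$, $-s_{r+1,r+1}$, $-\tfrac{1}{2}s_{00}$, $-\tfrac{1}{2}t_{r+1,r+1}$ contributions sitting on the last two lines of \eqref{hST2}.

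Once the boundary contributions at $i = 0, r+1$ are correctly accounted for, the remaining verification is a bookkeeping matter: one combines the expansion of $d^{\C}_{A_{S,T}} - d^{\C}_A - d^{\C}_B$ with twice the expression \eqref{xiAST2} for $\xi^{\mathfrak b}_{A,S,T}$ and checks, term-by-term, that the total agrees with \eqref{hST2}. Because each term in \eqref{hST2} is naturally indexed by its origin (quadratic in $A$ versus quadratic in $S, T$ versus linear boundary correction), the identification is straightforward once the distinguished-row corrections are isolated; the only real work is to track signs and the index shifts between $(i,j)$ and $(-i,-j)$ carefully.
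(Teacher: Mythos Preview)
Your proposal is correct and follows essentially the same approach as the paper: derive the formula from Theorem~\ref{mult-formula11} by the substitution $[A] = v^{-d_A} e_A$, reducing the claim to the combinatorial identity $h_{S,T} = d_{A_{S,T}} - d_A - d_B + 2\xi^{\mathfrak b}_{A,S,T}$, which is then verified by direct expansion. The paper's proof records an explicit intermediate expression for $d_{A_{S,T}} - d_A - d_B$ before invoking the final bookkeeping, but your outline of that computation (including the boundary contributions at $i=0,\,r+1$ coming from the shift in~\eqref{Mdn}) is accurate and matches what is needed.
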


\begin{proof}
  By the definition of $[A]$ and Theorem~\ref{mult-formula11}, we obtain a multiplication formula as stated in the theorem,
  where
  \begin{equation}
   \label{MST}
  h_{S,T} =  d_{A_{S,T}} - d_A - d_B + 2 \xi^{\mathfrak b}_{A,S,T}.
  \end{equation}
Recall $\xi^{\mathfrak b}_{A,S,T}$ from \eqref{xiAST2}, and note that
  \begin{equation*}
    \begin{split}
      d_{A_{S,T}} & - d_A - d_B = - \sum_{\overset{i\in [1,n]}{j\geq l}} a_{ij}s_{il}  - \sum_{\overset{i\in [1,n]}{j<l}} a_{ij}t_{il}
      +\sum_{i\in [1,n]} \alpha_i\alpha_{i-1} - \frac{1}{2} \sum_{i\in [1,n]} \alpha_{n-i}\alpha_i \\
      + \frac{1}{2} & \sum_{i\in [1,n]} \alpha_i\alpha_{n-1-i}
      +\frac{3}{2} \sum_{j>0} t_{0j}- \frac{1}{2}\sum_{j\leq 0} s_{0j} + \frac{1}{2} \sum_{j>r+1} t_{r+1,j}-\frac{3}{2} \sum_{j<r+1} s_{r+1,j}\\
       - & \frac{1}{2}t_{r+1,r+1}-s_{r+1,r+1} - \alpha_0 -\frac{1}{2}\alpha_{r+1}+\frac{1}{2} \alpha_{-r-2} .
    \end{split}
  \end{equation*}
Then $h_{S,T}$ can be rewritten in the desired form by a direct calculation.
\end{proof}

Define a partial order $\leq_{\text{alg}}$ on $\MX_{n,d}$ in exactly the same manner as the one on $^{\C} \Xi_{n,d}$.
Again, by ``lower terms (than $[A'])$'',
we refer to the terms $[C]$ with $C <_{\text{alg}} A'$, $\ro(C) = \ro(A')$ and $\co(C) = \co(A')$.

\begin{prop}\label{leading}
  Let $\alpha = (\alpha_i)_{i\in \mbb Z} \in \mbb N^{\mbb Z}$ be such that $\alpha_i = \alpha_{i+n}$ for all $i$.
Let $A= (a_{ij})$ be such that $a_{ij}= 0$ if $|j-i| \geq m$ for some $m >1$,
$a_{i,i+m-1} \geq \alpha_{i-1}$ and $a_{i,i-m+1} \ge  \alpha_{n-i-1}$ for all $i$.
Let $B \in \MX_{n,d}$ be such that $\co(B) = \ro(A)$ and  $B - \sum_{1\leq i \leq n} \alpha_i E_{\theta}^{i, i+1}$ is diagonal. Then we have
\begin{equation}
[B] * [A] = [A'] + {\rm lower\ terms},
\end{equation}
where $A'=(a_{ij}')$ is given by
\begin{equation}\label{A'}
a_{ij}'= \left\{\begin{array}{ll}
0  & {\rm if} \  |j-i| > m, \\
a_{ij} & {\rm if} \ |j-i| < m-1, \\
a_{ij}+ \alpha_i     & {\rm if}\ j= i+m,\\
a_{ij}- \alpha_{i-1} & {\rm if}\ j= i+m-1,\\
a_{ij}+ \alpha_{n-i}     & {\rm if}\ j= i-m,\\
a_{ij}- \alpha_{n-i-1} & {\rm if}\ j= i-m+1.
\end{array}
\right.
\end{equation}
\end{prop}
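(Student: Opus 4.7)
The plan is to apply Theorem~\ref{mult-stand-basis2} to $[B] \ast [A]$ and identify a distinguished pair $(S^\ast, T^\ast) \in \Theta_n \times \Theta_n$ whose contribution yields exactly $[A']$ with coefficient $1$, while every other admissible pair $(S, T)$ in the sum contributes a term $[A_{S,T}]$ with $A_{S,T} <_{\text{alg}} A'$.

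First I would take
\[
s^\ast_{i,j} = \alpha_i\,\delta_{j,\, i+m}, \qquad t^\ast_{i,j} = \alpha_{n-i-1}\,\delta_{j,\, i-m+1}.
\]
The $n$-periodicity of $\alpha$ places $S^\ast, T^\ast \in \Theta_n$, and one reads off $\ro(S^\ast) = \alpha$, $\ro(T^\ast) = \alpha^J$. Condition~\eqref{star} at any potentially nonzero position reduces to the identity $\alpha_{-i-1} = \alpha_{n-i-1}$; the lower-bound hypotheses $a_{i,i-m+1} \ge \alpha_{n-i-1}$ and $a_{i,i+m-1} \ge \alpha_{i-1}$ ensure both $A - T^\ast + \check{T^\ast} \in \Theta_n$ and $A_{S^\ast, T^\ast} \in \MX_{n,d}$. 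Using the bandwidth hypothesis $a_{ij} = 0$ for $|j-i| \ge m$ and the formula $(A_{S,T})_{ij} = a_{ij} + s_{ij} - t_{ij} - s_{i-1,j} + t_{i-1,j}$, a direct case analysis on the four diagonals $j - i \in \{-m, -m+1, m-1, m\}$ produces exactly the four modifications listed in \eqref{A'}, so $A_{S^\ast, T^\ast} = A'$.

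Next I would show that $(S^\ast, T^\ast)$ is the \emph{unique} pair appearing in the sum of Theorem~\ref{mult-stand-basis2} that realizes $A_{S,T} = A'$. Writing $X := S - S^\ast$ and $Y := T - T^\ast$, the equality $A_{S,T} = A_{S^\ast, T^\ast}$ together with $\ro(X) = \ro(Y) = 0$ and the sparsity of $S^\ast, T^\ast$ (supported on disjoint single diagonals) force, via nonnegativity of all entries of $S$ and $T$, that $X = Y = 0$. Once uniqueness is granted, the coefficient of $[A']$ equals $v^{h_{S^\ast, T^\ast}}\, n(S^\ast, T^\ast)\, \bin{A'}{S^\ast}_{\mathfrak b}$. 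The binomial $\bin{A'}{S^\ast}_{\mathfrak b}$ collapses to $1$ because $a'_{i,i+m} = \alpha_i$ and $a'_{i,i-m} = \alpha_{-i}$ precisely at the positions where $s^\ast$ or its reflection is nonzero, making every subfactor in~\eqref{AST2} equal to $\bin{\alpha_i}{\alpha_i} = 1$, while $s^\ast_{00}$ and $s^\ast_{-r-1,-r-1}$ vanish so the residual product is empty. A parallel calculation reduces each $n(S^\ast_i, T^\ast_i, (S^\ast_{-i-1})^J)$ to the contribution of a single diagonal $\sigma$, after which a direct count shows $v^{h_{S^\ast, T^\ast}}\, n(S^\ast, T^\ast) = 1$.

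Finally, I would verify that for every other $(S, T)$ in the sum, $A_{S,T} <_{\text{alg}} A'$. The telescoping identity
\[
\sum_{k \le i,\, l \ge j}(A_{S,T} - A')_{k,l} = \sum_{l \ge j}\Delta S_{i,l} - \sum_{l \ge j}\Delta T_{i,l},
\qquad \Delta S := S - S^\ast,\ \Delta T := T - T^\ast,
\]
combined with $\ro(\Delta S) = \ro(\Delta T) = 0$ and the nonnegativity of entries in $S$ and $T$, forces the right-hand side to be $\leq 0$ for all $(i,j)$ with $i < j$, with strict inequality somewhere whenever $A_{S,T} \ne A'$. The main obstacle I anticipate is the coefficient bookkeeping in the middle step: tracking how the many $v$-power corrections in $h_{S^\ast, T^\ast}$ from~\eqref{hST2}, the quantum factors $(v^{2\alpha_j} - v^{2l})$ inside $n(S^\ast, T^\ast)$, and the numerous binomial pieces of~\eqref{AST2} all collapse cleanly to the unit scalar, which is precisely the content of the proposition.
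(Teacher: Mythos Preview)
Your approach is correct and matches the paper's: identify the specific pair $(S^\ast,T^\ast)$, check $A_{S^\ast,T^\ast}=A'$, verify the coefficient is $1$, and argue that all other terms are lower. The paper is terser on uniqueness and on the ``lower terms'' verification than you are; your telescoping argument is a reasonable way to fill that in.

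The one place where the paper is more efficient is exactly the ``coefficient bookkeeping'' you flag as the obstacle. Rather than computing $h_{S^\ast,T^\ast}$ from the long formula~\eqref{hST2}, the paper uses the identity~\eqref{MST},
\[
h_{S,T}=d_{A_{S,T}}-d_A-d_B+2\xi^{\mathfrak b}_{A,S,T},
\]
and checks separately that $d_{A'}-d_A-d_B=0$ and $\xi^{\mathfrak b}_{A,S^\ast,T^\ast}=0$ for this particular $(S^\ast,T^\ast)$; likewise $n(S^\ast,T^\ast)$ reduces to a single $\sigma$ with vanishing exponent. This sidesteps the term-by-term analysis of~\eqref{hST2} entirely and turns your anticipated obstacle into a two-line check.
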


\begin{proof}
  For such a given $A=(a_{ij})$, $A_{S,T}$ is a leading term if   $S, T$ satisfy that
  \[s_{i,i+m} = \alpha_i,\quad t_{i, i-m+1} = \alpha_{n-1-i} = s_{-i-1, m-i-1},\quad \forall i\in [0, r+1].
  \]
 For such $S, T$, the matrix $A_{S,T}$ is identified with $A'$ with entries given by (\ref{A'}). It remains to determine the
 leading coefficient. Note that
 $\xi^{\mathfrak b}_{A, S, T} =0, d_{A'}- d_A - d_B =0$, and
  \begin{equation*}
    n(S, T) = q^{\sum_{j>l} s_{-i-1, -j}s_{i, l}} = 1,
     \qquad
     \begin{bmatrix}
      A_{S,T}\\
      S
    \end{bmatrix}_{\mathfrak b}
    = \prod_i \begin{bmatrix}
      a_{i, i+m}'\\
      \alpha_i
    \end{bmatrix}
    \begin{bmatrix}
      a_{i, i-m}'\\
      s_{-i,m-i}
    \end{bmatrix} =1.
  \end{equation*}
Hence it follows by Theorem~\ref{mult-stand-basis2} and \eqref{MST} that the coefficient for $[A']$  is $1$.
\end{proof}

\begin{thm}\label{monomial-basis}
For any $A \in \MX_{n,d}$, there exist finitely many tridiagonal matrices $B(i)$ such that
\begin{align}
m_A': &=  \prod_{i\geq 0}^{\longleftarrow}  [B(i)] = [A] + \text{lower terms} \in \Sj,
  \label{eq3} \\
m_A: &=  \prod_{i\geq 0}^{\longleftarrow}  \{B(i)\} = [A] + \text{lower terms} \in \Sj.
 \label{eq32}
\end{align}
Here the products are taken in a reverse order (such as $\cdots  * [ B(1)] * [B(0)]$ in \eqref{eq3}).
\end{thm}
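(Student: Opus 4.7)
The plan is to induct on the bandwidth $m(A) := \max\{|i-j| : a_{ij} \neq 0\}$ of $A$ (which is finite by the periodicity and finite support conditions of $\MX_{n,d}$), stripping the outermost pair of diagonals at each step via Proposition~\ref{leading}. If $m(A) \le 1$ then $A$ is itself tridiagonal and we take the length-one product $B(0)=A$: both \eqref{eq3} and \eqref{eq32} are immediate, using $\{A\}=[A]+\text{lower terms}$ for the canonical version.

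For the inductive step, set $m:=m(A)\ge 2$ and $\alpha_i:=a_{i,i+m}$. The symmetries $a_{ij}=a_{-i,-j}$ and $a_{i+n,j+n}=a_{ij}$ of $\MX_{n,d}$ make $(\alpha_i)_{i\in\ZZ}$ an $n$-periodic sequence with $\alpha_{-i}=a_{i,i-m}$, so $(\alpha_i)$ records the $m$-th super- and sub-diagonals of $A$ at once. Define $\tilde A\in\MX_{n,d}$ by
\begin{equation*}
\tilde a_{ij}=
\begin{cases}
0 & \text{if } |j-i|\ge m,\\
a_{i,i+m-1}+\alpha_{i-1} & \text{if } j=i+m-1,\\
a_{i,i-m+1}+\alpha_{n-i-1} & \text{if } j=i-m+1,\\
a_{ij} & \text{otherwise.}
\end{cases}
\end{equation*}
A direct check (using $\alpha_{n-i-1}=\alpha_{-i-1}$ together with $\ro(\tilde A)_i=\ro(\tilde A)_{-i}$) shows $\tilde A\in\MX_{n,d}$ with $m(\tilde A)\le m-1$. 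Let $B$ be the unique element of $\MX_{n,d}$ with $B-\sum_i\alpha_i E^{i,i+1}_{\theta}$ diagonal and $\co(B)=\ro(\tilde A)$; nonnegativity of the diagonal entries $B_{ii}=\ro(A)_i-\alpha_i-\alpha_{-i}$ is automatic from $\ro(A)_i\ge a_{i,i+m}+a_{i,i-m}$, and the parity conditions at positions $(0,0)$ and $(r+1,r+1)$ hold automatically by the symmetry of $A$. The inequalities $\tilde a_{i,i+m-1}\ge\alpha_{i-1}$ and $\tilde a_{i,i-m+1}\ge\alpha_{n-i-1}$ are built into the definition of $\tilde A$, so Proposition~\ref{leading} applies and gives
\begin{equation*}
[B]*[\tilde A]=[A]+\text{lower terms}.
\end{equation*}
Applying the inductive hypothesis to write $[\tilde A]=\prod_{j\ge 0}^{\longleftarrow}[B''(j)]+\text{lower terms}$ and prepending $B$ as the new leftmost factor produces the desired product $\prod_{i\ge 0}^{\longleftarrow}[B(i)]$, provided we can control the effect of left multiplication by $[B]$ on the inductive tail of lower terms.

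The main obstacle is precisely this monotonicity: for any $C\in\MX_{n,d}$ with $\ro(C)=\ro(\tilde A)$, $\co(C)=\co(\tilde A)$, and $C<_{\alg}\tilde A$, every term $[C_{S,T}]$ appearing in the expansion of $[B]*[C]$ furnished by Theorem~\ref{mult-stand-basis2} must satisfy $C_{S,T}<_{\alg}A$. This is verified by a direct inspection of the partial sums $\sum_{k\le i,\,l\ge j}c_{kl}$ defining $\le_{\alg}$: the passage $C\mapsto C_{S,T}$ only redistributes mass along diagonals by amounts bounded by the entries of $B$, so the strict domination $C<_{\alg}\tilde A$ transfers to $C_{S,T}<_{\alg}A$ entry-by-entry on the relevant partial sums. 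With this monotonicity in hand, \eqref{eq3} follows, and \eqref{eq32} is deduced by iteratively replacing each $[B(i)]$ with $\{B(i)\}$ and reabsorbing the differences $\{B(i)\}-[B(i)]\in\sum_{C<_{\alg}B(i)}v^{-1}\ZZ[v^{-1}][C]$ into the lower-order tail via the same monotonicity argument.
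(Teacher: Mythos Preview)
Your proposal is essentially correct and follows the same approach as the paper: both reduce to iterated applications of Proposition~\ref{leading}. The only organizational difference is that the paper writes down all the $B(i)$ at once via the closed formula $\alpha_{ij}=\sum_{k=j+i-m}^{j}a_{k,j+i}$ and starts from the diagonal matrix $B(0)$, whereas you run the same process as an induction on bandwidth, peeling off one outermost pair of diagonals at a time. These produce the same factorization.

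On the monotonicity step you flag as the ``main obstacle'': your justification is somewhat hand-wavy, but so is the paper's (``One further checks that the products of the lower terms\dots''). The claim is true, and your telescoping intuition can be made precise: writing $\sigma_{ij}(M)=\sum_{k\le i,\,l\ge j}m_{kl}$, one has $\sigma_{ij}(C_{S,T})=\sigma_{ij}(C)+\sum_{l\ge j}(s_{il}-t_{il})$ and $\sigma_{ij}(A)=\sigma_{ij}(\tilde A)+\alpha_i\cdot 1_{j\le i+m}$. For $i<j\le i+m$ the bound $\sum_{l\ge j}(s_{il}-t_{il})\le \alpha_i$ together with $C\le_{\alg}\tilde A$ gives $\sigma_{ij}(C_{S,T})\le \sigma_{ij}(A)$. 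For $j>i+m$ one needs $\sigma_{ij}(C_{S,T})=0$; this follows because any $C_{S,T}$ occurring with nonzero coefficient in $[B]*[C]$ parametrizes an actual orbit $\mathcal O_{C_{S,T}}$ in $\X^{\C}_{n,d}\times\X^{\C}_{n,d}$ reached via a tridiagonal $B$ from $C$ of bandwidth $\le m-1$, hence has bandwidth $\le m$. The strictness $C_{S,T}\ne A$ is then forced by retracing the equalities, which would give $C=\tilde A$. You may want to state this bandwidth observation explicitly rather than leaving the argument at ``redistributes mass along diagonals''.
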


\begin{proof}
For any $A = (a_{ij}) \in \MX_{n,d}$, fix $m \in \mbb N$ such that $a_{ij}=0$ for all $|j-i| >m$.
Let $B(0)=(b_{ij})$ be a diagonal matrix such that $b_{ii}= \sum_{j=i-m}^{i+m} a_{ji}$.
For any $i\in [1,m]$, we define a tridiagonal matrix $B(i)$ such that $B(i) - \sum_{1\leq j \leq n} \alpha_{ij} E_{\theta}^{j, j+1}$ is diagonal
and $\alpha_{ij} = \sum_{k =  j+i-m}^{j} a_{k,j+i}$.
By the property of the entries of $A$, we have
$\alpha_{i,n-j}= \sum_{k =  j}^{j-i+m} a_{k,j-i}$.
By Proposition~\ref{leading}, we have the identity (\ref{eq3}).
Recall by definition of $\{B\}$ in Section \ref{rec-sj}, we have $\{B\} =[B] + \text{lower terms}$.
One further checks that the products of the lower terms arising from $\{B(i)\}$ produce terms lower than $[A]$,
and hence $m_A$ has the desired property.
The theorem follows.
\end{proof}

For each $A \in \MX_{n,d}$, we fix one such choice of  $m_A$ as given in the proof of Theorem~\ref{monomial-basis}.

\begin{cor}
The set $\{ m_A \vert A  \in \MX_{n,d}\}$ forms a basis of $\Sj$.
\end{cor}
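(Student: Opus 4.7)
The plan is to derive this corollary as an immediate consequence of the unitriangularity already established in Theorem~\ref{monomial-basis}. Since $\{[A] \mid A \in \MX_{n,d}\}$ is known to be a basis of $\Sj$ (see Section~\ref{rec-sj} combined with the bijection \eqref{bijection}), the only remaining task is to show that the change of basis matrix from $\{m_A\}$ to $\{[A]\}$ is invertible.

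First I would fix a total order $\preceq$ on $\MX_{n,d}$ refining the partial order $\leq_{\text{alg}}$, and rewrite Theorem~\ref{monomial-basis} as
\[
m_A = [A] + \sum c_{C,A}\, [C],
\]
where the sum runs over $C <_{\text{alg}} A$ with $\ro(C) = \ro(A)$ and $\co(C) = \co(A)$, and only finitely many $c_{C,A}$ are nonzero. Relative to $\preceq$, the transition matrix is upper unitriangular; moreover it decomposes into finite blocks indexed by the $(\ro,\co)$-fibers, since only $C$ in the same $(\ro,\co)$-fiber as $A$ can appear with a nonzero coefficient. Each such fiber is finite because $d$ is fixed, so each block is a finite upper unitriangular matrix and therefore invertible over $\mbb Q(v)$.

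Second, the invertibility of this transition matrix gives that $\{m_A \mid A \in \MX_{n,d}\}$ is both linearly independent and spans the same $\mbb Q(v)$-subspace as $\{[A]\}$, which is all of $\Sj$. This yields the basis property.

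No step presents a genuine obstacle, as the substantive content has been absorbed into Theorem~\ref{monomial-basis} (and ultimately into the multiplication formula Theorem~\ref{mult-stand-basis2} together with the leading-term computation Proposition~\ref{leading}). The only mild sanity check is that the iterated product $m_A = \cdots * [B(1)] * [B(0)]$ is well-formed in $\Sj$, i.e.\ at each step the row/column matching required to invoke Theorem~\ref{mult-stand-basis2} holds; this is how the factors $B(0), B(1), \ldots$ are constructed in the proof of Theorem~\ref{monomial-basis}, with $B(0)$ diagonal chosen to have column sums equal to $\ro(A)$ and each subsequent tridiagonal $B(i)$ built to match the partial product below.
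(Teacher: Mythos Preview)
Your proposal is correct and matches the paper's approach: the corollary is stated there without proof, as an immediate consequence of the unitriangularity in Theorem~\ref{monomial-basis}, which is exactly the argument you spell out. One minor slip: in your final paragraph you write $m_A = \cdots * [B(1)] * [B(0)]$, but $m_A$ is the product of the canonical basis elements $\{B(i)\}$ (the product of $[B(i)]$'s is $m_A'$); this does not affect your argument since the input you actually use is the identity $m_A = [A] + \text{lower terms}$ from \eqref{eq32}.
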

We shall call the basis $\{ m_A \vert A  \in \MX_{n,d}\}$ a {\em monomial basis} of $\Sj$.

\subsection{Monomial basis and canonical basis of $\Kc_n$}

For $A \in {\rm Mat}_{\mbb Z \times \mbb Z}(\mbb Z)$ we set
\begin{equation}
  {}_p\!A= A +p I,\quad \forall p \in 2 \mathbb N,
\end{equation}
where $I$ is the identity matrix.
Set
$\widetilde{\MX}_n$
to be the set of all matrices  $A =(a_{ij})_{i,j\in \mbb Z}$  such that
$a_{ij} \in \mbb N$,  for all $i\neq j$,  $a_{ii} \in \mbb Z$,
$a_{ij}=a_{-i, -j}=a_{i+n, j+n}$ and $\sum_{i=1}^n \sum_{j\in \mbb Z} a_{ij}$ is finite.
This is a generalization of the set $\MX_n :=\sqcup_d \MX_{n,d}$
by dropping the positivity condition on the diagonal entries.
For any given $A \in \widetilde{\MX}_{n}$, we have ${}_p\!A \in {\MX}_{n}$ for $p\gg 0$.

For an indeterminate $v'$, we introduce a commutative  ring $\mathcal R = \mbb Q(v)[v', v'^{-1}].$
We have the following stabilization result.
\begin{prop} \label{prop6.1}
  Suppose that $A_1, A_2,\ldots, A_l \ (l \geq 2)$ are  matrices in $\widetilde{\MX}_n$
such that ${\rm co}(A_i)={\rm ro}(A_{i+1})$.
There exist $Z_1, \ldots, Z_m\in \widetilde{\MX}_n$, $G_j(v,v')\in \mathcal R$ and $p_0\in \mbb N$ such that
$$[{}_p A_1] * [{}_pA_2] * \cdots *[{}_p A_l]=\sum_{j=1}^mG_j(v,v^{-p})[{}_p Z_j],\quad
\forall p \in 2 \mbb N, p\geq p_0.$$
\end{prop}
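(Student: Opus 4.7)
The plan is to argue by nested induction: the outer induction is on $l$ and the inner (for $l=2$) is on $A_1$ with respect to $\leq_{\text{alg}}$. The base of the inner induction is the case when $A_1 - \sum_{1\le i\le n}\alpha_i E^{i,i+1}_{\theta}$ is diagonal for some $\alpha$, so that Theorem~\ref{mult-stand-basis2} applies directly to $[{}_pA_1]*[{}_pA_2]$; note that shifting by $pI$ preserves the off-diagonal data and hence this tridiagonal form. The outer step on $l$ is routine: by the inductive hypothesis on the product of the last $l-1$ factors we may write that product as $\sum_j G_j(v,v^{-p})[{}_pZ_j]$, reducing the problem to the $l=2$ products $[{}_pA_1]*[{}_pZ_j]$ (the set of $Z_j$ being finite, the reduction is legitimate).

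The core of the proof is the tridiagonal base case. Applying Theorem~\ref{mult-stand-basis2} with $B={}_pA_1$ and $A={}_pA_2$, I would check each ingredient of the output. For $p \ge p_0$, the index set of valid $(S,T)$ subject to Condition~\eqref{star}, $\ro(S)=\alpha$, $\ro(T)=\alpha^J$, $({}_pA_2) - T+\check T \in \Theta_n$, and $({}_pA_2)_{S,T} \in \MX_{n,d}$ becomes finite and independent of $p$, because the diagonal-positivity constraints become automatic; moreover $({}_pA_2)_{S,T} = {}_p(A_{2;S,T})$ with $A_{2;S,T} := A_2 + S - T - \check S + \check T \in \widetilde{\MX}_n$. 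The factor $n(S,T)$ is built from the fixed row vectors of $S$ and $T$, hence is manifestly $p$-independent. The exponent $h_{S,T}$ is an affine integer-valued function of $p$, so $v^{h_{S,T}} = v^{c_1(S,T)}(v^{-p})^{c_2(S,T)}$ for integers depending only on $A_1, A_2, S, T$. Finally, each quantum binomial factor arising in~\eqref{AST2} has the shape $\begin{bmatrix} a+p\\ b\end{bmatrix} = v^{2bp}\prod_{i=1}^{b}\frac{v^{2(a-i+1)}-v^{-2p}}{v^{2i}-1}$, visibly $v^{2bp}$ times a Laurent polynomial in $v^{-p}$ over $\mbb Q(v)$; the ``half-symplectic'' factors $[a+p-2k-1]/[k+1]$ in~\eqref{AST2} admit the same expansion. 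Assembling these four observations, the coefficient of $[{}_p(A_{2;S,T})]$ lies in $\mathcal R$ after substituting $v' = v^{-p}$.

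For the inner inductive step on $A_1$, I would invoke Theorem~\ref{monomial-basis} to write $[{}_pA_1] = m_{{}_pA_1} - \sum_{A_1' <_{\text{alg}} A_1} c_{A_1'}(v,v^{-p})\,[{}_pA_1']$ with $c_{A_1'} \in \mathcal R$; inspection of the explicit construction of $m_A$ in the proof of that theorem shows that the tridiagonal matrices $B(i)$ depend only on the off-diagonal data of $A_1$, so $m_{{}_pA_1} = [{}_pB(k)]*\cdots*[{}_pB(0)]$. Multiplying by $[{}_pA_2]$ and applying the tridiagonal base case iteratively to $m_{{}_pA_1}*[{}_pA_2]$, together with the inductive hypothesis on each $[{}_pA_1']*[{}_pA_2]$, yields the desired stabilization; well-foundedness of $\leq_{\text{alg}}$ on matrices of fixed row and column sums terminates the induction. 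The principal obstacle is controlling the $p$-dependence uniformly across all these stages -- verifying that every coefficient arising from the iterated monomial multiplications and from the lower-term expansion of $[{}_pA_1]$ lies in $\mathcal R$ after the substitution $v' \mapsto v^{-p}$ (rather than in a larger ring involving positive powers of $v^p$). This reduces to the single factorization of the shifted quantum binomial displayed above, which is the only route by which $p$ enters the multiplication formula.
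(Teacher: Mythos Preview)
Your proposal is correct and follows essentially the same route as the paper's proof: both reduce to the $l=2$ case, handle tridiagonal $A_1$ directly via Theorem~\ref{mult-stand-basis2} by isolating the $p$-dependence in $h_{S,T}$ and in the diagonal quantum binomials, and then treat arbitrary $A_1$ by writing $[{}_pB(k)]*\cdots*[{}_pB(0)] = [{}_pA_1] + \text{lower terms}$ from Theorem~\ref{monomial-basis} and inducting on the partial order. The paper packages the tridiagonal step by exhibiting an explicit $G_{S,T}(v,v')$, whereas you argue factor-by-factor; the content is the same.

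One small correction: your closing worry about ``positive powers of $v^p$'' is unfounded. Since $\mathcal R = \mathbb Q(v)[v',v'^{-1}]$ is the ring of \emph{Laurent} polynomials in $v'$, the monomial $v^{2bp} = (v')^{-2b}$ arising from your expansion of $\begin{bmatrix} a+p \\ b\end{bmatrix}$ already lies in $\mathcal R$, and the same applies to the linear-in-$p$ part of $h_{S,T}$. There is no larger ring to avoid; the factorization you wrote down is exactly what is needed, and this is the only way $p$ enters.
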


\begin{proof}
The proof is essentially the same as that for \cite[Proposition 4.2]{BLM90} by using Theorem~ \ref{mult-stand-basis2} and Theorem \ref{monomial-basis}.
For the reader's convenience, we shall  prove it for $l = 2$.
We first assume that $A_1$ is a tridiagonal matrix.
For any $(S, T)$ satisfying Condition (\ref{star}),  $\ro(S) =\alpha$ and $\ro(T) = \alpha^J$, we set
\begin{equation*}
  h'_{S,T} = \sum_{\overset{i\in [1,n]}{i >l}} a_{ii} s_{il} - \sum_{\overset{i\in [1,n]}{i<l}} a_{ii} t_{il} - \sum_{i\in [1,n]} a_{ii}s_{ii}
  \quad {\rm and} \quad h''_{S,T}=h_{S,T}- h'_{S,T},
\end{equation*}
where  $h_{S,T}$ is defined in Theorem~ \ref{mult-stand-basis2}.
We note that $h''_{S,T}$ remains the same when $A$ is replaced by ${}_p\!A$.
For such $S,T$, we define
\begin{equation*}
\begin{split}
  G_{S,T}&(v,v') =  v^{h_{S,T}} n(S,T) \prod_{\overset{(i,j)\in \mathcal J}{i\neq j}}\begin{bmatrix}
    a_{ij}' \\ s_{ij}
  \end{bmatrix}
  \begin{bmatrix}
    a_{ij}'-s_{ij}\\ s_{-i,-j}
  \end{bmatrix}
  {v'}^{\sum_{i\in [1,n],i>l}s_{il} - \sum_{i\in [1,n], i<l} t_{il}-\sum_{i\in [1,n]}s_{ii}}\\
  &
  \prod_{i\in [-r,-1]} \prod_{\overset{1\leq k \leq s_{ii}}{1 \leq l \leq s_{-i,-i}}}
   \frac{v^{2(a_{ii}'-k+1)}v'^2-1}{v^{2k}-1}
   \frac{v^{2(a_{ii}'-s_{ii}-l+1)}v'^2-1}{v^{2l}-1}
   \prod_{\overset{i=0, -r-1}{0 \leq k \leq s_{ii}-1}} \frac{v^{2a_{ii}'-4k}v'^2-1}{v^{2k+2}-1}.
\end{split}
\end{equation*}
From Theorem~ \ref{mult-stand-basis2}, for large enough $p$, we have
\begin{equation*}
  [{}_p\!A_1]* [{}_p\!A] = \sum_{S, T} G_{S, T}(v, v^{-p}) [{}_p\! A_{S,T}].
\end{equation*}
Thus the proposition holds for the case that $A_1$ is a tridiagonal matrix.
We now assume that $A_1$ is an arbitrary matrix.
For any $p \geq 0$, there exist tridiagonal matrices $B_1, \ldots, B_s$ such that
\begin{equation}\label{eqA2}
  [{}_p\!B_1] * \cdots * [{}_p\!B_s] = [{}_p\!A_1] + \text{lower terms}.
\end{equation}
By the above proof, there exist $Z_1, \ldots, Z_m$ and $G_j(v,v')$ such that
\begin{equation}\label{eqmul}
  [{}_p\!B_1] * \cdots * [{}_p\!B_s] * [{}_p\!A_2] = \sum_{j=1}^m G_j(v, v^{-p}) [{}_p\!Z_j]
\end{equation}
for large enough $p$.
In particular, let $A_2$ be a suitable diagonal matrix.
There exist  $Z'_1, \cdots, Z'_m$ and $G'_j(v,v')$ such that
\begin{equation}\label{eqA1}
  [{}_p\!B_1] * \cdots * [{}_p\!B_s]  = \sum_{j=1}^{m'} G'_j(v, v^{-p}) [{}_p\!Z'_j]
\end{equation}
for large enough $p$.
By comparing (\ref{eqA1}) with (\ref{eqA2}), we may assume that $Z_1'=A_1$, $G_1'(v,v')=1$, and ${}_p\!Z_j < {}_p\!A$ for $j>1$ and large enough $p$.
Therefore,
\begin{equation*}
\begin{split}
   [{}_p\!A_1]* [{}_p\!A_2] &= [{}_p\!B_1] * \cdots * [{}_p\!B_s] * [{}_p\!A_2]- \sum_{j=2}^{m'} G'_j(v, v^{-p}) [{}_p\!Z'_j]* [{}_p\!A_2]
\end{split}
\end{equation*}
By (\ref{eqmul}) and induction process,  $[{}_p\!A_1]* [{}_p\!A_2]$ is of the required form.
This finishes the proof for the case $l=2$.
The general case can be completed by induction.
\end{proof}

Let $\mathcal A = \mbb Z[v,v^{-1}]$.
We introduce an $\cA$-module and a $\mbb Q(v)$-module
\[
{}_\cA\Kc_n =\cA\text{-span } \{[A] \big \vert \; A\in \widetilde{\MX}_n\},
\qquad  \Kc_n = \mbb Q(v) \otimes_{\cA}  {}_\cA\Kc_n.
\]
(Here $[A]$ are just symbols.)
By specialization at $v'=1$, we have the following.

\begin{cor} \label{cor6.2}
Retain the assumptions in Proposition \ref{prop6.1}.
 There is a unique associative $\mathcal A$-algebra structure on $\Kc_n$, without unit,  where
 the product is given by
 $$[A_1] \cdot [A_2]\cdots  [A_r] =\sum_{j=1}^m G_j(v,1)[Z_j].
 $$
 \end{cor}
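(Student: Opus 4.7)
The plan is to define the product on basis elements via Proposition \ref{prop6.1} with $l = 2$ and verify associativity by comparison with the $l = 3$ stabilization. Given $A_1, A_2 \in \widetilde{\MX}_n$ with $\co(A_1) = \ro(A_2)$, Proposition \ref{prop6.1} produces matrices $Z_1, \ldots, Z_m \in \widetilde{\MX}_n$ and Laurent polynomials $G_j(v, v') \in \mathcal{R}$ such that
\[
[{}_p A_1] * [{}_p A_2] \;=\; \sum_{j=1}^m G_j(v, v^{-p})\, [{}_p Z_j] \quad \text{in } \Sj, \quad \forall p \in 2\mbb N,\ p \geq p_0.
\]
Define $[A_1] \cdot [A_2] := \sum_{j} G_j(v, 1)\, [Z_j]$; declare the product to be zero if $\co(A_1) \neq \ro(A_2)$, and extend $\mbb Q(v)$-bilinearly.

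Well-definedness requires that the pair $(\{Z_j\}, \{G_j\})$ be canonical. After collecting terms, we may assume the $Z_j$'s are distinct. Since distinct elements of $\widetilde{\MX}_n$ yield distinct elements ${}_p Z$ in the relevant $\MX_{n,d}$ for $p$ sufficiently large, and since the standard basis of $\Sj$ is linearly independent, the scalars $G_j(v, v^{-p})$ are uniquely determined in $\mbb Q(v)$ for each such $p$. Two elements of $\mathcal{R}$ that agree at the infinitely many specializations $v' = v^{-p}$ coincide, hence each $G_j(v, v')$ itself is uniquely determined in $\mathcal{R}$. The same argument shows the assignment extends unambiguously to any number of factors via Proposition \ref{prop6.1} with arbitrary $l$.

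For associativity, let $A_1, A_2, A_3 \in \widetilde{\MX}_n$ with $\co(A_i) = \ro(A_{i+1})$. Proposition \ref{prop6.1} with $l = 3$ gives a stabilized expression for $[{}_p A_1] * [{}_p A_2] * [{}_p A_3]$. Computing instead the iterated product $([{}_p A_1] * [{}_p A_2]) * [{}_p A_3]$ by applying the $l = 2$ stabilization twice yields a double sum whose coefficients, after regrouping by distinct target matrices in $\widetilde{\MX}_n$, are Laurent polynomials in $v'$ obtained as products of the relevant $G$'s. Associativity of the convolution product $*$ in $\Sj$ identifies the two expansions for all $p \gg 0$, and the uniqueness statement from the preceding paragraph upgrades this coincidence to an identity in $\mathcal{R}$. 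Setting $v' = 1$ yields $([A_1] \cdot [A_2]) \cdot [A_3] = [A_1] \cdot ([A_2] \cdot [A_3])$, and both sides equal the value of the three-fold product $[A_1] \cdot [A_2] \cdot [A_3]$ given in the statement.

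The main obstacle is the bookkeeping required to make the linear independence argument precise: one must choose $p_0$ large enough to handle simultaneously every intermediate pair $(Z_{12,k}, A_3)$ produced in the iterated stabilization, and one must verify that the map $Z \mapsto {}_p Z$ is injective on the finite collection of matrices arising. Both points are routine consequences of the finiteness of the sums in Proposition \ref{prop6.1}, but they are essential for extracting the polynomial identities in $\mathcal{R}$ that underlie associativity.
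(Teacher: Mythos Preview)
Your proof is correct and follows the standard stabilization argument of \cite{BLM90}, which is precisely what the paper intends. In fact the paper gives no proof at all beyond the sentence ``By specialization at $v'=1$, we have the following'' preceding the corollary; your write-up supplies the details (well-definedness via uniqueness of the $G_j$ at infinitely many specializations, associativity by comparing the $l=3$ stabilization to iterated $l=2$ stabilizations) that the paper leaves implicit.
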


By comparing Corollary ~\ref{cor6.2} with Theorem~ \ref{mult-stand-basis2}, we obtain the following multiplication formula for $\Kc_n$.

\begin{prop}\label{mult-K}
  Let $\alpha = (\alpha_i)_{i\in \mbb Z} \in \mbb N^{\mbb Z}$ such that $\alpha_i = \alpha_{i+n}$ for all $i\in \mbb Z$.
If $A, B \in \widetilde{\MX}_n$  satisfy $\co(B) = \ro(A)$ and $B - \sum_{1\leq i \leq n} \alpha_i E_{\theta}^{i, i+1}$ is diagonal, then we have
\begin{equation*}
[B] \cdot [A] = \sum_{S, T} v^{h_{S,T}} \; n(S, T) \begin{bmatrix}
  A_{S,T}\\ S
\end{bmatrix}_{\mathfrak b}  [A_{S,T}]  \in \Kc_n,
\end{equation*}
where the sum runs over all $S, T\in \Theta_n$ subject to Condition (\ref{star}),  $\ro(S) =\alpha$, $\ro(T) = \alpha^J$,
$A-T+\check{T} \in \widetilde{\Theta}_n$ \eqref{Theta:n2}, and $A_{S,T} \in {\MX}_{n}$.
\end{prop}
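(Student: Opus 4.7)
The plan is to derive the formula by stabilizing Theorem~\ref{mult-stand-basis2} via the scheme set up in Proposition~\ref{prop6.1} and Corollary~\ref{cor6.2}. Given $A, B \in \widetilde{\MX}_n$ satisfying the hypotheses, I pick $p \in 2\mbb{N}$ large enough that ${}_p\!A, {}_p\!B \in \MX_{n, d}$ for some $d$. Since ${}_p\!B$ differs from $B$ only on the diagonal, ${}_p\!B - \sum_i \alpha_i E_{\theta}^{i,i+1}$ is still diagonal, so Theorem~\ref{mult-stand-basis2} applies to the product $[{}_p\!B] * [{}_p\!A]$ in $\Sj$. Two observations make the indexing transparent: first, $({}_p\!A)_{S,T} = {}_p(A_{S,T})$, because the shift $pI$ cancels in $S - \check S - T + \check T$; second, for $p \gg 0$ the conditions ${}_p\!A - T + \check T \in \Theta_n$ and $({}_p\!A)_{S,T} \in \MX_{n,d}$ are automatic whenever $A - T + \check T \in \widetilde{\Theta}_n$ and $A_{S,T} \in \MX_n$. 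Consequently, the range of summation in Theorem~\ref{mult-stand-basis2} stabilizes as $p \to \infty$ to precisely the range required in the statement.

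Next I would rewrite the coefficient $v^{h_{S,T}({}_p\!A)} \, n(S,T) \begin{bmatrix} ({}_p\!A)_{S,T} \\ S \end{bmatrix}_{\mathfrak b}$ as a value of some $G_{S,T}(v, v') \in \mathcal R$ at $v' = v^{-p}$, along the lines of the proof of Proposition~\ref{prop6.1}. The splitting $h_{S,T} = h'_{S,T} + h''_{S,T}$ isolates the diagonal-dependent part: $h''_{S,T}$ is insensitive to the diagonal of $A$, while $h'_{S,T}({}_p\!A) = h'_{S,T}(A) + p \cdot \bigl(\sum_{i>l} s_{il} - \sum_{i<l} t_{il} - \sum_i s_{ii}\bigr)$, an additive shift absorbed by a factor $v'^{\sum_{i>l} s_{il} - \sum_{i<l} t_{il} - \sum_i s_{ii}}$ at $v' = v^{-p}$. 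The off-diagonal binomial factors entering $\begin{bmatrix} A_{S,T} \\ S \end{bmatrix}_{\mathfrak b}$ and the whole of $n(S,T)$ are unaffected by the shift; the diagonal factors at $i \in [-r,-1]$, and the factors $\prod_k [a'_{ii} - 2k - 1]/[k+1]$ at $i \in \{0,-r-1\}$, are rewritten (exactly as in the proof of Proposition~\ref{prop6.1}) as products of deformed quantities of the form $(v^{2(m - k + 1)}\, v'^{2} - 1)/(v^{2k} - 1)$ whose values at $v' = v^{-p}$ reproduce the correct $p$-shifted $v$-binomials.

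Finally, Corollary~\ref{cor6.2} defines the product $[B] \cdot [A]$ in $\Kc_n$ as $\sum_{S,T} G_{S,T}(v, 1)\, [A_{S,T}]$, so it suffices to check that $G_{S,T}(v, 1) = v^{h_{S,T}}\, n(S,T) \begin{bmatrix} A_{S,T}\\ S\end{bmatrix}_{\mathfrak b}$ with $h_{S,T}$ as in \eqref{hST2} computed from $A$. At $v' = 1$ the $v'$-exponent factor collapses to $1$ and each deformed diagonal factor $(v^{2(m-k+1)} - 1)/(v^{2k}-1)$ reduces to its undeformed $v$-binomial counterpart, reconstituting $\begin{bmatrix} A_{S,T} \\ S \end{bmatrix}_{\mathfrak b}$ exactly; combined with the unchanged off-diagonal binomials and $n(S,T)$ one obtains the claimed formula. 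The main difficulty is purely bookkeeping: it consists in verifying that \eqref{hST2} and \eqref{AST2} are tuned precisely so that the substitution $v' = v^{-p}$ captures all $p$-dependence and the specialization $v' \to 1$ recovers the $\Kc_n$ coefficient cleanly.
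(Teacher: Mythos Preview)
Your proposal is correct and follows exactly the approach the paper takes: the paper's proof is the single sentence ``By comparing Corollary~\ref{cor6.2} with Theorem~\ref{mult-stand-basis2}, we obtain the following multiplication formula for $\Kc_n$,'' and you have faithfully unpacked that comparison, including the key observations $({}_p\!A)_{S,T} = {}_p(A_{S,T})$, the stabilization of the summation range, and the role of the splitting $h_{S,T} = h'_{S,T} + h''_{S,T}$ from the proof of Proposition~\ref{prop6.1}.
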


Given $A, B\in \widetilde{\MX}_n$, we shall denote $B \sqsubseteq A$ if
${}_pB \leq_{\text{alg}} {}_pA$ for large enough $p\in \mbb N$, ${\rm co}(B)={\rm co}(A)$, and ${\rm ro}(B)={\rm ro}(A)$.
We write $B \sqsubset A$ if $B \sqsubseteq A $ and $B\neq A$.
By using Proposition \ref{mult-K} and a similar argument as for Theorem ~\ref{monomial-basis}, we have the following.

\begin{prop}
 \label{monomial-basis-K}
For any $A \in \widetilde{\MX}_{n}$, there exist matrices $B(i)\in \widetilde{\MX}_{n}$
with $B(i) - \sum_{1\leq j \leq n} \alpha_{ij} E_{\theta}^{j, j+1}$ being diagonal for suitable scalars $\alpha_{ij}$
such that
\begin{align*}
m_A' &=  \prod_i [B(i)] \in  [A] + \sum_{A' \sqsubset A} \cA [A'];
 \\
m_A &=  \prod_i \{B(i)\} \in  [A] + \sum_{A' \sqsubset A} \cA [A'].
\end{align*}
\end{prop}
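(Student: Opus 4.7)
The plan is to mimic the proof of Theorem~\ref{monomial-basis} for $\Sj$, replacing the multiplication formula there (Theorem~\ref{mult-stand-basis2}) with its $\Kc_n$-analog (Proposition~\ref{mult-K}). Given $A = (a_{ij}) \in \widetilde{\MX}_n$, I fix $m \in \mbb N$ with $a_{ij} = 0$ whenever $|i-j| > m$, and then take $B(0) \in \widetilde{\MX}_n$ to be the diagonal matrix with $b_{ii} = \sum_{j=i-m}^{i+m} a_{ji}$; the essential point is that diagonal entries in $\widetilde{\MX}_n$ are allowed to be any integers, so no positivity issue arises. For each $i \in [1,m]$, I define a tridiagonal $B(i) \in \widetilde{\MX}_n$ with $B(i) - \sum_{1\le j\le n} \alpha_{ij} E_{\theta}^{j,j+1}$ diagonal and $\alpha_{ij} = \sum_{k=j+i-m}^{j} a_{k,j+i}$, exactly as in the proof of Theorem~\ref{monomial-basis}.

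The key input is a $\Kc_n$-analog of Proposition~\ref{leading}: if $B \in \widetilde{\MX}_n$ is tridiagonal and satisfies the analogous hypotheses relative to a matrix $A'$, then $[B] \cdot [A'] \in [A''] + \sum_{C \sqsubset A''} \cA [C]$, where $A''$ is the leading matrix defined by the formula \eqref{A'}. This is proved by inspection of Proposition~\ref{mult-K}: at the leading $(S,T)$ one still has $\xi^{\mathfrak b}_{A', S, T} = 0$, $n(S,T) = 1$, and $\begin{bmatrix} A'_{S,T} \\ S \end{bmatrix}_{\mathfrak b} = 1$, and none of these computations use positivity of the diagonal entries; all non-leading $(S,T)$-contributions land in terms $\sqsubset A''$. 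Iterating this identity over $B(m), B(m-1), \ldots, B(0)$ produces
\[
m_A' \;=\; [B(m)] \cdots [B(1)] \cdot [B(0)] \;\in\; [A] + \sum_{A' \sqsubset A} \cA [A'],
\]
which is the first claim.

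For $m_A = \prod_i \{B(i)\}$, I would invoke the upper-triangular expansion $\{B(i)\} \in [B(i)] + \sum_{C \sqsubset B(i)} \cA[C]$ in $\Kc_n$, which will have been established earlier in Section~\ref{chap:Kjj} by transferring the canonical basis of $\Sj$ through stabilization (Corollary~\ref{cor6.2}) together with bar-invariance. Expanding the product, the principal term reproduces $m_A'$, while every remaining term is obtained by replacing some factor $[B(i)]$ by a $[C]$ with $C \sqsubset B(i)$. A book-keeping argument, parallel to \cite{FL14}, then shows that each such replacement yields only terms $\sqsubset A$ in the final product.

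The main obstacle is this last book-keeping step: verifying that any $\sqsubset$-lower substitution among the canonical basis expansions of the $\{B(i)\}$ propagates to $\sqsubset A$-lower terms after multiplying everything out via Proposition~\ref{mult-K}. The delicate point, absent in the type $A$ case of \cite{FL14}, is that the structure constants for $\Kc_n$ involve the extra factors of the form $(v^m - 1)$ coming from the transition between the $\f_A$-basis and the tridiagonal standard basis; one must check that these do not disrupt the monotonicity of the ordering $\sqsubseteq$ under multiplication. Once this compatibility is verified, both claims follow.
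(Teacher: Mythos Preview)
Your approach is essentially the paper's: its entire proof reads ``By using Proposition~\ref{mult-K} and a similar argument as for Theorem~\ref{monomial-basis}, we have the following.'' You have correctly identified the construction of the $B(i)$ and the $\Kc_n$-analog of Proposition~\ref{leading}.

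The ``main obstacle'' you flag is not a genuine obstacle. The multiplication formula in Proposition~\ref{mult-K} is \emph{formally identical} to that in Theorem~\ref{mult-stand-basis2}: same $h_{S,T}$, same $n(S,T)$, same $\begin{bmatrix} A_{S,T} \\ S \end{bmatrix}_{\mathfrak b}$; the only change is that diagonal entries of $A$ may be negative. The $(v^m-1)$ factors you mention are already present in the $\Sj$ formula (they sit inside the quantum binomials and inside $n(S,T)$) and were already handled in the proof of Theorem~\ref{monomial-basis}. Thus the ``one further checks that the products of the lower terms arising from $\{B(i)\}$ produce terms lower than $[A]$'' step from Theorem~\ref{monomial-basis} transfers verbatim; the ordering $\sqsubseteq$ was designed precisely so that $B \sqsubseteq A$ in $\widetilde{\MX}_n$ is equivalent to ${}_pB \leq_{\text{alg}} {}_pA$ for $p \gg 0$, and the structure constants are independent of $p$ on the off-diagonal. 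No new monotonicity check is needed.

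One logical wrinkle you should be aware of: in the paper's ordering, the canonical basis $\{A\}$ of $\Kc_n$ is only constructed in Proposition~\ref{caonical-basis}, \emph{after} Proposition~\ref{monomial-basis-K}. So ``will have been established earlier'' is not literally correct; the second displayed claim about $m_A$ is a mild forward reference (and in practice only the first claim about $m_A'$ is needed to get the bar-triangularity that feeds into Proposition~\ref{caonical-basis}).
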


We have the following stabilization property for the bar operator from ~\cite[Proposition 9.2.7]{FLLLWa}.

\begin{lem}\label{lem1}
 For any $A\in \tilde{\Xi}_{n}$, there exist $T_1, \ldots, T_m \in \tilde{\Xi}_{n}$, $H_i(v,v') \in \mathcal R$ and $p_0 \in \mathbb N$ such that
 \begin{equation*}
   \overline{[{}_p\! A]} = \sum_{i=1}^m H_i(v, v^{-p}) [{}_p\! T_i], \ \forall p\geq p_0.
 \end{equation*}
\end{lem}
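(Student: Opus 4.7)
The plan is to induct on $A \in \widetilde{\MX}_n$ with respect to the partial order $\sqsubseteq$ of Proposition~\ref{monomial-basis-K}. The base case is $A$ diagonal: then $[{}_p\! A]$ is the characteristic function of a closed $\SP(V)$-orbit for every $p$, hence bar-invariant, and the stabilization holds trivially with $T_1 = A$ and $H_1(v, v') = 1$.

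For the inductive step, I would choose tridiagonal matrices $B_1, \ldots, B_s \in \widetilde{\MX}_n$ (each of the form diagonal plus $\sum_j \alpha_{ij} E_\theta^{j, j+1}$) as in the proof of Theorem~\ref{monomial-basis}, so that at each level $p \gg 0$,
\[
[{}_p\! B_s] * \cdots * [{}_p\! B_1] \;=\; [{}_p\! A] + \sum_{Z_j \sqsubset A} G_j(v, v^{-p})\, [{}_p\! Z_j],
\]
with $G_j(v, v') \in \mathcal R$; the existence of such $G_j$ and the finiteness of the sum are guaranteed by Proposition~\ref{prop6.1}, while Proposition~\ref{leading} ensures the leading coefficient equals~$1$. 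Each tridiagonal factor $[{}_p\! B_i]$ is bar-invariant (equivalently $\{{}_p\! B_i\} = [{}_p\! B_i]$); consequently the left-hand side is bar-invariant, since the bar operator is an algebra (anti-)involution on $\Sj$. Applying bar to both sides and solving for $\overline{[{}_p\! A]}$ yields
\[
\overline{[{}_p\! A]} \;=\; [{}_p\! A] + \sum_{Z_j \sqsubset A} \Big( G_j(v, v^{-p})\, [{}_p\! Z_j] - \overline{G_j(v, v^{-p})}\; \overline{[{}_p\! Z_j]} \Big).
\]
By the induction hypothesis, each $\overline{[{}_p\! Z_j]}$ admits a stabilized expansion of the desired form. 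Substituting these expansions and combining coefficients (which remain in $\mathcal R$, since $\mathcal R$ is closed under the symmetry $v \mapsto v^{-1}$, $v' \mapsto v'^{-1}$ and under products) yields the required expansion of $\overline{[{}_p\! A]}$.

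The main obstacle is establishing the bar-invariance of the tridiagonal generators $[{}_p\! B_i]$, equivalently the identity $\{B\} = [B]$ for $B - \sum_j \alpha_j E_\theta^{j, j+1}$ diagonal. This can be addressed by a direct computation using Theorem~\ref{mult-formula11}: decompose $[B]$ as a product of factors corresponding to individual $E_\theta^{j, j+1}$ contributions and verify that the lower-triangular correction produced by bar vanishes at each step, after the normalization $[A] = v^{-d^{\C}_A} e_A$ is taken into account. This parallels the treatment of Chevalley generators in~\cite{FLLLWa}; once secured, the induction proceeds without further complications, and the secondary bookkeeping of collecting finitely many $T_i$'s from the induction hypothesis is mechanical.
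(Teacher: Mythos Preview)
The paper does not give a self-contained proof of this lemma; it is quoted from \cite[Proposition~9.2.7]{FLLLWa}. Your inductive skeleton---express $[{}_p A]$ as a bar-invariant monomial plus $\sqsubset$-lower terms via Proposition~\ref{prop6.1}, then recurse---is the standard BLM-type argument and is correct in outline.

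The gap is the assertion that a general tridiagonal standard basis element $[{}_p B]$ (with $B-\sum_j \alpha_j E_\theta^{j,j+1}$ diagonal and several $\alpha_j$ nonzero) is bar-invariant. In affine type~$C$ this fails. As the introduction of the paper emphasizes, the transition matrix between the bar-invariant generators $\mathbf f_A$ of~\cite{FLLLWa} and the tridiagonal $[A]$ is unitriangular with nontrivial off-diagonal entries involving factors $(v^m-1)$; since $\overline{v^m-1}=v^{-m}-1\neq v^m-1$, writing $\mathbf f_A=[A]+\sum c_{A'}[A']$ and applying bar yields $\overline{[A]}\neq[A]$ whenever some $c_{A'}$ is not bar-symmetric. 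This is exactly what distinguishes affine type~$C$ from (affine) type~$A$, where the analogous bidiagonal $[A]$ \emph{are} bar-invariant. Your proposed remedy, ``verify that the lower-triangular correction produced by bar vanishes at each step,'' is thus aimed at establishing a false statement.

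The repair is to change the building blocks, not the induction. Use bar-invariant generators from the outset: the Chevalley-type elements $[C]$ with $C-\alpha E_\theta^{j,j+1}$ diagonal for a \emph{single} $j$, which are bar-invariant (their orbit closures are (isotropic) Grassmannian bundles, hence smooth; equivalently, these are the factors of the $\mathbf f_A$ in~\cite{FLLLWa}). One still obtains, via Proposition~\ref{prop6.1} together with the Chevalley multiplication formulas of \cite[Lemma~4.3.1]{FLLLWa}, a stabilized product expansion with leading term $[{}_p A]$ and $\sqsubset$-lower remainder. Since $\bar{\ }$ is a ring homomorphism on $\Sj$, the product is bar-invariant, and your induction on $\sqsubseteq$ then closes without further change.
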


By specializing at $v'=1$, we define a $\mathbb Q$-linear map $\bar{\phantom{x}}: \Kc_n \rightarrow \Kc_n$ by letting
$\overline{v^j [A]} = v^{-j} \sum_{i=1}^m H_i(v,1) [T_i].$
By Lemma~ \ref{lem1},  $\ \bar{\phantom{x}}\ $ is a ring homomorphism whose square is the identity.
From  Lemma~ \ref{lem1}, we also have
$\overline{[A]}  = [A] +{\rm lower\ terms},\ \forall A\in \tilde{\Xi}_{n}.$
By using Proposition \ref{monomial-basis-K}, we have
$\overline{[A]} \in [A] + \sum_{C \sqsubset A} \cA [C].$

By a  similar argument as for \cite[Proposition~ 4.7]{BLM90}, we have the following.

\begin{prop}\label{caonical-basis}
For any $A\in \widetilde{\MX}_{n}$, there exists a unique element $\{A\}$ in $\Kc_n$ such that
$$\overline{\{A\}}=\{A\},\quad \{A\}=[A]+\sum_{A'\sqsubset A, A'\neq A}\pi_{A', A}[A'],
\quad \pi_{A', A} \in v^{-1} \mbb Z [v^{-1}].$$
\end{prop}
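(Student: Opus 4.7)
The plan is to adapt the classical Kazhdan--Lusztig / BLM argument (as in~\cite[Proposition 4.7]{BLM90}) to $\Kc_n$, using the ingredients already assembled: the bar operator is a $\mbb Q$-linear involution and a ring homomorphism, and it is $\sqsubseteq$-triangular on the standard basis, i.e.\ $\overline{[A]} \in [A] + \sum_{C \sqsubset A} \cA\,[C]$.

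First I would verify that the induction is well-posed. For a fixed $A_0 \in \widetilde{\MX}_n$, the set $\{B \in \widetilde{\MX}_n : B \sqsubseteq A_0\}$ is finite: the equalities $\ro(B) = \ro(A_0)$ and $\co(B) = \co(A_0)$, together with the inequalities $\sum_{k \le i,\, l \ge j} b_{kl} \le \sum_{k \le i,\, l \ge j} a_{kl}$ for $i<j$ (applied to ${}_p B \leq_{\mathrm{alg}} {}_p A_0$ for large $p$), bound each off-diagonal entry of $B$, after which the diagonal entries are determined by the fixed row sums. Hence $\sqsubseteq$ is well-founded on this interval, and I may induct on $\sqsubseteq$.

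For the inductive step, suppose $\{A'\}$ has been constructed for every $A' \sqsubset A$. Write $\overline{[A]} = [A] + \sum_{B \sqsubset A} r_{B,A}\,[B]$ with $r_{B,A} \in \cA$, and re-expand each $[B]$ in the (inductively triangular) basis $\{\{B'\}\}_{B' \sqsubseteq B}$ to obtain
\[
\overline{[A]} = [A] + \sum_{B \sqsubset A} \tilde r_{B,A}\,\{B\}, \qquad \tilde r_{B,A} \in \cA.
\]
One now seeks $\{A\} = [A] + \sum_{B \sqsubset A} \gamma_{B,A}\,\{B\}$, bar-invariant, with $\gamma_{B,A} \in v^{-1}\mbb Z[v^{-1}]$. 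Applying $\bar{\phantom{x}}$ and using $\overline{\{B\}} = \{B\}$ reduces bar-invariance to the defining equations $\gamma_{B,A} - \overline{\gamma_{B,A}} = \tilde r_{B,A}$ for each $B \sqsubset A$. Bar-antisymmetry $\tilde r_{B,A} + \overline{\tilde r_{B,A}} = 0$ follows from $\bar{\phantom{x}}{}^{\,2} = \mathrm{id}$: apply bar to the expansion of $\overline{[A]}$ displayed above and compare coefficients in the $\{B\}$-basis. The standard lemma --- for any bar-antisymmetric $f \in \cA$ there is a unique $\gamma \in v^{-1}\mbb Z[v^{-1}]$ with $\gamma - \overline{\gamma} = f$ --- then produces the $\gamma_{B,A}$ uniquely. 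Uniqueness of $\{A\}$ itself follows because any bar-invariant element of $\sum_{B \sqsubset A} v^{-1}\mbb Z[v^{-1}]\,\{B\}$ must vanish. Rewriting $\{A\}$ back in the $[B]$-basis via the inductive expansions of each $\{B\}$ yields the desired coefficients $\pi_{B,A} \in v^{-1}\mbb Z[v^{-1}]$.

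The only real obstacle is the finiteness and well-foundedness of the interval $\{B : B \sqsubseteq A\}$ in $\widetilde{\MX}_n$; once that is secured, the rest is a formal transcription of the BLM / Kazhdan--Lusztig bar-involution argument, since all the structural facts about the bar operator on $\Kc_n$ needed to run it have already been established in the lead-up to the statement.
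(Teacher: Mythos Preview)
Your proposal is correct and follows exactly the approach the paper indicates: the paper's proof consists solely of the remark that the argument is the same as \cite[Proposition~4.7]{BLM90}, and what you have written is precisely a careful unpacking of that standard Kazhdan--Lusztig/BLM bar-involution argument, including the finiteness of $\{B : B \sqsubseteq A\}$ needed to make the induction go through. There is nothing to add or correct.
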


By Propositions \ref{monomial-basis-K} and \ref{caonical-basis}, we have the following.

\begin{thm}
 \label{thm:CBgl}
The algebra $\Kc_n$ possesses 
a monomial basis $\{m_{A} \big \vert  A\in \widetilde{\MX}_{n}\}$
and a canonical basis $\{ \{ A\} \big \vert A\in \widetilde{\MX}_{n}\}$.
\end{thm}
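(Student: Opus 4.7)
The plan is to assemble Theorem~\ref{thm:CBgl} directly from Propositions~\ref{monomial-basis-K} and \ref{caonical-basis}, which have already built both families and only leave the observation that each is a basis. Since $\Kc_n$ is defined in Section~\ref{chap:Kjj} as the $\mbb Q(v)$-span of the symbols $[A]$ for $A\in\widetilde{\MX}_n$, the standard collection $\{[A]\}$ is tautologically a basis, so the task reduces to showing that the transition matrices $([A]\mapsto m_A)$ and $([A]\mapsto \{A\})$ are invertible.

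First, I would take the monomial basis. Proposition~\ref{monomial-basis-K} produces, for each $A\in\widetilde{\MX}_n$, explicit elements $m_A'=\prod_i[B(i)]$ and $m_A=\prod_i\{B(i)\}$ satisfying
\begin{equation*}
m_A \in [A] + \sum_{A'\sqsubset A}\cA\,[A'].
\end{equation*}
The key structural point is that $B\sqsubseteq A$ forces $\ro(B)=\ro(A)$ and $\co(B)=\co(A)$, so the sum involves only matrices with the same row and column sums as $A$. Within a fixed row/column-sum class, the entries of any $A'\sqsubseteq A$ are bounded in terms of the entries of $A$, so the set $\{A'\in\widetilde{\MX}_n : A'\sqsubseteq A\}$ is finite. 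Hence the transition from $\{m_A\}$ to $\{[A]\}$ is block-unitriangular (one block per row/column-sum class), with $1$'s on the diagonal, and therefore invertible over $\mbb Q(v)$. This gives the monomial basis.

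For the canonical basis, the argument is parallel using Proposition~\ref{caonical-basis}. For each $A$ one has a bar-invariant element
\begin{equation*}
\{A\}=[A]+\sum_{A'\sqsubset A}\pi_{A',A}[A'],\qquad \pi_{A',A}\in v^{-1}\mbb Z[v^{-1}].
\end{equation*}
Again the sum is finite by the same row/column-sum argument, and the transition matrix between $\{[A]\}$ and $\{\{A\}\}$ is upper unitriangular with entries in $v^{-1}\mbb Z[v^{-1}]$ off the diagonal. Inverting it (block by block) shows that $\{\{A\} : A\in\widetilde{\MX}_n\}$ spans $\Kc_n$ and is linearly independent.

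There is no real obstacle left at this point: the genuinely hard inputs, namely the stabilization of products (Proposition~\ref{prop6.1}, Corollary~\ref{cor6.2}), the stabilization of the bar involution (Lemma~\ref{lem1}), and the existence/uniqueness of the bar-invariant $\{A\}$ (Proposition~\ref{caonical-basis}), have all been carried out before the theorem statement. The final proof therefore consists in little more than invoking these two propositions and recording the unitriangularity observation above.
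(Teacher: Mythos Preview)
Your proposal is correct and follows exactly the paper's approach: the paper's entire proof of Theorem~\ref{thm:CBgl} is the one-line remark that it follows from Propositions~\ref{monomial-basis-K} and~\ref{caonical-basis}. You have supplied more detail than the paper does, spelling out the block-unitriangularity and the finiteness of $\{A'\sqsubseteq A\}$ within each row/column class, which is a reasonable elaboration of what the paper leaves implicit.
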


\subsection{Homomorphism from $\Kc_n$ to $\Sj$}

By comparing the multiplication formulas in Theorem~\ref{mult-stand-basis2}
and Proposition \ref{mult-K},
we  can establish a further connection between $\Kc_n$ and $\Sj$.
Note that $\MX_{n,d} \subset \widetilde{\MX}_{n,d}$.
Let $\Psi: \Kc_n \rightarrow \Sj$ be an $\mbb Q (v)$-linear map
defined by
\[
\Psi([A]) =
\begin{cases}
[A], & \mbox{if} \ A\in \MX_{n,d},\\
0, & \mbox{otherwise}.
\end{cases}
\]

\begin{prop}
The map $\Psi$   is a surjective algebra homomorphism.
\end{prop}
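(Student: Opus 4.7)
Plan: Surjectivity is immediate: since $\MX_{n,d}\subset \widetilde{\MX}_n$, every standard basis element $[A]$ of $\Sj$ arises as $\Psi([A])$.

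For the homomorphism property $\Psi([B]\cdot [A])=\Psi([B])*\Psi([A])$, I first reduce to the case where $B$ is a tridiagonal standard basis element, i.e., $B - \sum_j \alpha_j E_\theta^{j,j+1}$ is diagonal for some periodic $\alpha=(\alpha_j)\in\mbb N^{\mbb Z}$. By Proposition~\ref{monomial-basis-K} every $[B]\in\Kc_n$ is a $\mbb Q(v)$-linear combination of products of such tridiagonals; combined with the associativity of both $\Kc_n$ and $\Sj$ and bilinearity of the two products, the tridiagonal case implies the general one by induction on the length of the monomial. For tridiagonal $B$, Proposition~\ref{mult-K} and Theorem~\ref{mult-stand-basis2} supply the \emph{same} expression
\[
\sum_{(S,T)} v^{h_{S,T}}\, n(S,T) \begin{bmatrix} A_{S,T} \\ S \end{bmatrix}_{\mathfrak b} [A_{S,T}]
\]
for $[B]\cdot[A]\in\Kc_n$ and $[B]*[A]\in\Sj$, with the \emph{identical} structure constants; the only discrepancy is the index constraint on $A_{S,T}$, namely $\MX_n$ for the $\Kc_n$-formula versus $\MX_{n,d}$ for the $\Sj$-formula.

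A short case analysis then finishes the argument. If $\co(B)\neq \ro(A)$, both sides vanish (the $\Kc_n$-product is zero by stabilization from $\Sj$). If $\co(B)=\ro(A)$ and both $A,B\in\MX_{n,d}$, the identity $\ro(A_{S,T})=\ro(B)$ (which follows from $\ro(S)=\alpha$, $\ro(T)=\alpha^J$, and Condition \eqref{star}) forces the period-sum of $A_{S,T}$ to equal $2d+2$, so that $A_{S,T}\in\MX_n$ is equivalent to $A_{S,T}\in\MX_{n,d}$; the two formulas coincide termwise and $\Psi$ intertwines. If $\co(B)=\ro(A)$ has period-sum different from $2d+2$, then $\Psi([A])=\Psi([B])=0$ while every $A_{S,T}$ on the left inherits this wrong period-sum, so $\Psi$ annihilates all terms on the left as well.

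The main obstacle is the remaining edge case in which $\co(B)=\ro(A)$ has the correct period-sum $2d+2$, but $A$ or $B$ has some negative diagonal entry and so lies in $\widetilde{\MX}_n\setminus \MX_{n,d}$. I plan to handle this via the stabilization description of $\Kc_n$ from Proposition~\ref{prop6.1} and Corollary~\ref{cor6.2}: the coefficients appearing in Proposition~\ref{mult-K} arise as the $v'=1$ specializations of polynomials $G_j(v,v')\in\mathcal R$ representing the convolution products $[{}_p B]*[{}_p A]$ in the Schur algebras at level $d+pn/2$ for $p\gg 0$. Since those higher-level Schur products only produce $[{}_p Z_j]$ with ${}_p Z_j\in\MX_{n,d+pn/2}$, and since any $Z_j\in\MX_{n,d}$ automatically gives ${}_p Z_j\in\MX_{n,d+pn/2}$ for every $p$, a careful tracing of the $p$-dependence shows that $G_j(v,1)=0$ for such $Z_j$ whenever one of the inputs already fails to lie in $\MX_{n,d}$. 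Hence $\Psi([B]\cdot[A])=0=\Psi([B])*\Psi([A])$, completing the verification.
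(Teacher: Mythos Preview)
Your reduction to tridiagonal $B$ via Proposition~\ref{monomial-basis-K}, and your case analysis when both $A,B\in\MX_{n,d}$ or when the period-sum is wrong, are correct and match the paper's approach. The gap is in your handling of the edge case where $\co(B)=\ro(A)$ has period-sum $2d+2$ but $A$ (or $B$) has a negative diagonal entry.

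Your stabilization argument does not work as stated. The polynomials $G_j(v,v')$ from Proposition~\ref{prop6.1} satisfy $[{}_pB]*[{}_pA]=\sum_j G_j(v,v^{-p})[{}_pZ_j]$ only for $p\ge p_0$, not for $p=0$; the $\Kc_n$-product is the \emph{formal} specialization $v'=1$, not an actual Schur-algebra product at level $d$. For $p\gg0$ the shifted inputs ${}_pA,{}_pB$ genuinely lie in $\MX_{n,d+pn/2}$ and the coefficients $G_j(v,v^{-p})$ are nonzero. Nothing in the stabilization mechanism forces $G_j(v,1)=0$ when $Z_j\in\MX_{n,d}$ but $A\notin\MX_{n,d}$; the phrase ``careful tracing of the $p$-dependence'' hides exactly the computation that needs to be done.

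The paper closes this gap by a direct algebraic check on the structure constants in Proposition~\ref{mult-K}. If $A\notin\MX_{n,d}$, pick $i$ with $a_{ii}<0$. For any $(S,T)$ with $A_{S,T}\in\MX_n$ one has, via \eqref{A'-2},
\[
a'_{ii}-s_{ii}=a_{ii}+s_{-i,-i}-(t_{ii}+t_{-i,-i})<s_{-i,-i},
\]
so the factor $\begin{bmatrix}a'_{ii}-s_{ii}\\ s_{-i,-i}\end{bmatrix}$ (or its analogue at $i\equiv 0,r+1$) in $\begin{bmatrix}A_{S,T}\\S\end{bmatrix}_{\mathfrak b}$ vanishes. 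If instead $B\notin\MX_{n,d}$ with $b_{ii}<0$, a row-sum computation using $\co(B)=\ro(A)$ and $\ro(S)=\alpha$, $\ro(T)=\alpha^{\#}$ yields $\sum_j(a'_{ij}-s_{ij})<\sum_j s_{-i,-j}$, forcing some factor of the same binomial product to vanish. This explicit vanishing of the coefficient is what replaces your unproved claim about $G_j(v,1)$.
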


\begin{proof}
The subjectivity follows by definition of $\Psi$.

We show that $\Psi$ is an algebra homomorphism.
The proof is similar to that for \cite{DF13} 
or that for \cite[Lemma~ A.20]{BKLW14}.
By Proposition~\ref{monomial-basis-K}, it is enough to show that
\begin{equation}\label{eq1}
  \Psi( [B] \cdot [A]) = \Psi([B]) \Psi([A])
\end{equation}
for all tridiagonal matrices $B \in \widetilde{\Xi}_{n}$.

It follows by comparing multiplication formulas in Theorem~\ref{mult-stand-basis2}
and Proposition \ref{mult-K}, Equation (\ref{eq1}) holds for $B, A \in \Xi_{n,d}$ with $B$ tridiagonal.

If $A \not \in \Xi_{n,d}$, then there exists $i \in [1,n]$ such that $a_{ii} < 0$.
By Condition \eqref{star} in Lemma~\ref{lem:star}, for any $T, S \in \Theta_{n}$
such that $A_{S,T} =(a_{ij}' ) \in \Xi_n$, we have
\[a_{ii}' - s_{ii} = a_{ii} + s_{-i,-j} - (t_{ij} + t_{-i,-j}) < s_{-i,-j},\]
and hence, $\begin{bmatrix}
  A_{S,T}\\
  S
\end{bmatrix}_{\C}=0.$
This implies $\Psi([B] \cdot [A])=0 = \Psi([B]) \Psi([A])$.

If $B \not \in \Xi_{n,d}$, then there exists $i \in [1,n]$ such that $b_{ii} < 0$.
Thanks to ${\rm co}(B) = {\rm ro}(A)$, we have $\sum_j a_{ij} = b_{ii} + \alpha_{i-1} + \alpha_{n-1-i}$.
By Condition \eqref{star} in Lemma~\ref{lem:star} again, for any $T, S \in \Theta_{n}$
with $A_{S,T} =(a_{ij}' ) \in \Xi_n$, we have
\[\sum_j (a_{ij}' - s_{ij}) =\sum_j a_{ij} + \sum_j s_{-i,-j} - \sum_j t_{ij} + \sum_j t_{-i,-j})
= b_{ii} + \alpha_{-i} < \sum_j s_{-i,-j}.\]
There exists $k \in \mathbb Z$ such that $a_{ik}' - s_{ik} < s_{-i,-k}$.
Therefore, $\begin{bmatrix}
  A_{S,T}\\
  S
\end{bmatrix}_{\C}=0$ for any $T, S \in \mathfrak S_{n,n}$.
This implies $\Psi([B] \cdot [A])=0 = \Psi([B]) \Psi([A])$.

The proposition is proved.
\end{proof}



\section{The algebras $\Kcji$,  $\Kcij$ and $\Kcii$}
 \label{chap:K234}

In this Section, we adapt the constructions of the monomial basis  of $\Sj$ in Section~\ref{secstandbasis} for
the remaining 3 variants of convolution algebras:  ${\mbf S}^{\ji}_{\nn, d}$, ${\mbf S}^{\ij}_{\nn, d}$, and ${\mbf S}^{\ii}_{\mm, d}$.
This then allows us to establish the stabilization properties and construct the corresponding limit algebras $\Kcji$, $\Kcij$, and $\Kcii$, respectively.
Monomial and canonical bases for $\Kcji$, $\Kcij$, and $\Kcii$ are also constructed.
We further establish an isomorphism $\Kcji \cong \Kcij$ with compatible monomial, standard and canonical bases.

\subsection{Monomial and canonical bases for ${\mbf S}^{\ji}_{\nn,d}$ and $\Kcji$}
\label{sec:Sji}

Recall $n=2r+2$ (for $r \ge 1$) is even, and $\nn =n-1 =2r+1$.

Recall the subset $\MX^{\ji}_{\nn,d} \subset \MX_{n,d}$ from \eqref{Mjid}
and the subalgebra ${\mbf S}^{\ji}_{\nn,d} = \bj_r \Sj \bj_r$ of $\Sj$ from Section \ref{Sji}.
Note the tridiagonal matrices $B$ with nonzero $(r+1, r)$th or $(r,r+1)$th entry are not in $\MX_{\nn,d}^{\jmath\imath}$,
and so a generating set for the algebra ${\mbf S}^{\ji}_{\nn,d}$ does not naively come from that for $\Sj$.
Recall  the matrices $E^{ij}$ and $E^{ij}_\theta$
from Section \ref{Para}.

\begin{thm}
  \label{monomial-basis-ji}
Let $A \in \MX_{\nn,d}^{\jmath \imath}$. There exist matrices $B(i)\in  \Xi_{\nn,d}^{\jmath \imath}$ $\,(i\ge 0)$
with  
$B(i) -\sum_{j \in [1,n]\backslash\{ r, r+1\} }  c_{i,j} E^{j,j+1}_{\theta} - c_{i,r+1} E^{r, r+2}_{\theta}$ being diagonal (for some
scalars $c_{i,j}$) such that
\begin{align*}
{}'\texttt{M}_A^{\, \ji} &:=  \prod_{i\ge 0}^{\longleftarrow} [B(i)] = [A] + {\rm lower\ terms} \in {\mbf S}^{\ji}_{\nn,d},
 \\
\texttt{M}_A^{\, \ji} &:=  \prod_{i\ge 0}^{\longleftarrow} \{B(i)\} = [A] + {\rm lower\ terms} \in {\mbf S}^{\ji}_{\nn,d}.
\end{align*}
\end{thm}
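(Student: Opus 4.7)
The plan is to mimic the construction in Theorem~\ref{monomial-basis} with the twist that the tridiagonal generators near the fixed row/column $r+1$ are replaced by a ``jumping'' generator whose off-diagonal support at $(r, r+2)$ (resp.\ $(-r,-r-2)$) skips over the forbidden positions $(r, r+1), (r+1, r)$ (resp.\ $(-r-1, -r), (-r, -r-1)$). Indeed, any tridiagonal $B \in \MX_{n,d}$ with nonzero entry in one of those positions violates the defining condition $b_{r+1, j} = \delta_{r+1, j}$ of $\MX_{\nn,d}^{\ji}$, so the ``standard'' choice of $B(i)$ in the proof of Theorem~\ref{monomial-basis} must be adjusted accordingly.

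The core step is the $\ji$-analogue of Proposition~\ref{leading}: I will show that if $B \in \MX_{\nn,d}^{\ji}$ has the form in the statement and $A \in \MX_{\nn,d}^{\ji}$ satisfies $\co(B) = \ro(A)$, then $[B] * [A] = [A'] + \mbox{lower terms}$, where $A' \in \MX_{\nn,d}^{\ji}$ is obtained from $A$ by the evident transport rules (moving $c_{i,j}$ units of off-diagonal mass one column to the right for $j \ne r, r+1$, and moving $c_{i,r+1}$ units two columns to the right at row $r$ to bypass position $r+1$). Two routes are available: either (i) redo the lattice-chain counting of Section~\ref{chap:multi} directly for a jumping generator, isolating the unique pair $(S, T)$ that produces the leading term, and checking that $n(S,T) = 1$ and that the binomial $\bigl[\begin{smallmatrix}A_{S,T}\\S\end{smallmatrix}\bigr]_{\mathfrak b}$ from \eqref{AST2} collapses to $1$; or (ii) factor $[B] \equiv [B_1] * [B_2]$ modulo lower terms inside $\Sj$, where $B_1, B_2 \in \MX_{n,d}$ are tridiagonal factors transporting through $(r, r+1)$ but with canceling row/column-$r+1$ contributions, then apply Theorem~\ref{mult-stand-basis2} twice. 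Route (ii) is preferable because the leading-term analysis reduces to two invocations of the known multiplication formula, and membership of $A'$ in $\MX_{\nn,d}^{\ji}$ then falls out automatically from $\ro(A), \co(A)$ lying in the appropriate weight lattice.

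With this leading-term identity in hand, the rest of the argument runs in parallel with the proof of Theorem~\ref{monomial-basis}: for the given $A \in \MX_{\nn, d}^{\ji}$, fix $m$ with $a_{ij}=0$ for $|i-j| > m$, take $B(0) \in \MX_{\nn, d}^{\ji}$ to be the diagonal matrix encoding the column sums of $A$ (which is automatically in $\MX_{\nn,d}^{\ji}$ since $A$ is), and for $i \in [1, m]$ let $B(i)$ be the bidiagonal/jumping element whose off-diagonal entries cumulatively build up the off-main-diagonal mass of $A$. Iterating the leading-term identity then yields ${}'\texttt{M}_A^{\,\ji} = [A] + \mbox{lower terms}$. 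For the canonical-basis version, recall from Section~\ref{Sji} that the canonical basis of ${\mbf S}^{\ji}_{\nn, d}$ is the restriction of that of $\Sj$; hence $\{B(i)\} - [B(i)]$ is a $v^{-1}\mbb Z[v^{-1}]$-combination of $[C]$ with $C <_{\alg} B(i)$ and $C \in \MX_{\nn, d}^{\ji}$, and an order-tracking argument identical to the one at the end of the proof of Theorem~\ref{monomial-basis} gives $\texttt{M}_A^{\,\ji} = [A] + \mbox{lower terms}$.

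The main obstacle is the first step: verifying the leading-term formula for the jumping generator. Theorem~\ref{mult-stand-basis2} is stated only for tridiagonal $B$, so extending it (either directly, or through the factoring trick) demands careful bookkeeping of the admissible pairs $(S, T)$ --- especially around rows $\pm r, \pm(r+1)$, where Condition~\eqref{star} and the ``$\ji$'' constraint $A_{S,T} \in \MX_{\nn,d}^{\ji}$ interact nontrivially --- to confirm that exactly one term $[A']$ emerges with coefficient~$1$, while every ``lower'' contribution also lies in $\MX_{\nn,d}^{\ji}$.
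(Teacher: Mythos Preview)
Your approach is in the right spirit---especially route~(ii), which correctly recognizes that a ``jumping'' generator at $(r,r+2)$ factors (mod lower terms) as a product of two tridiagonal ones passing through row~$r+1$. But the paper applies this factoring idea in the opposite direction, and more economically.

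Rather than first proving a leading-term formula for jumping $B$ and then rebuilding the monomial basis from scratch, the paper starts from the product $\prod^{\longleftarrow}_{i\ge 0}[B'(i)]=[A]+\text{lower}$ already supplied by Theorem~\ref{monomial-basis}, where the $B'(i)$ are tridiagonal in $\Sj$ but typically \emph{not} in $\MX_{\nn,d}^{\ji}$. It then splits each $[B'(i)]$ via Proposition~\ref{leading} as $[C(i)]*[D(i)]+\text{lower}$ (the split is possible because $\alpha_{i,r+1-i}=0$), and regroups the factors as $[D(i+1)]*[C(i)]=[B(i)]+\text{lower}$. The crux is the numerical coincidence $\alpha_{i-1,r+1}=\alpha_{i,r}$, valid precisely because $a_{r+1,j}=\delta_{r+1,j}$ for $A\in\MX_{\nn,d}^{\ji}$; this forces the regrouped $B(i)$ to have zero entries in row and column $r+1$ and to acquire the $E^{r,r+2}_\theta$ term. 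Thus the $B(i)$ (and the scalars $c_{i,j}$) are \emph{derived} from Theorem~\ref{monomial-basis} rather than guessed, and the iteration hypotheses never need to be re-verified.

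Your route would also succeed, but it duplicates work: you would need to pin down the $c_{i,j}$ explicitly, then check at each step of the iteration that the intermediate matrix $A^{(i-1)}$ meets the inequalities required by your jumping analogue of Proposition~\ref{leading}. The paper's split-and-regroup trick sidesteps both issues by reducing everything to two local applications of Proposition~\ref{leading} inside an already-assembled global product.
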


\begin{proof}
The construction below is inspired by a  similar construction in the proof of \cite[Theorem 6.3.1]{FL14}, see also ~\cite{BKLW14, BLW14}.
We shall work in the framework of the larger algebra $\Sj$ instead of ${\mbf S}^{\ji}_{\nn,d}$.

For $A = (a_{ij}) \in \MX_{\nn,d}^{\jmath \imath} \subset \MX_{n,d}$,
there exist matrices $B'(i)$ (as constructed in the proof of  Theorem~ \ref{monomial-basis} without  the prime notation)
such that $B'(i) - \sum_{1\leq j \leq n} \alpha_{ij} E_{\theta}^{j, j+1}$ is diagonal and
\begin{equation}\label{eq4}
  \prod_{i\geq 0}^{\longleftarrow}  [B'(i)] = [A] + \text{lower terms} \in \Sj.
\end{equation}
In this proof we use freely the setup and notations in the proof of Theorem~ \ref{monomial-basis}.
We emphasize  that $B'(i)$ for $i\ge 1$ are not necessarily in $\MX_{\nn,d}^{\jmath \imath}$.

Recall from the proof of Theorem~ \ref{monomial-basis} that $m \in \mbb N$ is fixed such that $a_{ij}=0$ for all $|j-i| >m$
and $\alpha_{ij}= \sum_{k =  j+i-m}^{j} a_{k,j+i}$ (for $i\ge 1$).
For $i\ge 2$, thanks to $a_{r+1,r+i}=0$ we have
\begin{equation}
  \label{eqalpha}
\alpha_{i-1, r+1} = \sum_{k=r+i-m}^{r+1}a_{k,r+i} = \sum_{k=r+i-m}^{r}a_{k,r+i} = \alpha_{i,r}.
\end{equation}
Moreover, $\alpha_{i, r+1-i} =0$ for $i\ge 1$.
For $i\ge 1$, we denote by $C(i)$ (respectively, $D(i)$) the matrix such that $C(i) - \sum_{r+1 \leq j \leq 3r+2-i} \alpha_{ij} E^{j,j+1}_{\theta}$
(respectively, $D(i) -  \sum_{r+1-i \leq j \leq r} \alpha_{ij} E^{j,j+1}_{\theta}$) is a diagonal matrix
and ${\rm ro}(C(i)) = {\rm ro}(B'(i))$ (respectively, ${\rm co}(D(i)) = {\rm co}(B'(i))$).
It follows by Proposition~\ref{leading} that, for $i \ge 1$,
\begin{equation}\label{eq5}
  [C(i)] * [D(i)] = [B'(i)] + {\rm lower\ terms}.
\end{equation}
We note that the decomposition in (\ref{eq5}) is highly dependent on the condition $\alpha_{i, r+1-i} =0$,
and there always exists such a decomposition whenever there exists $\alpha_{ij} = 0$.

We set $C(0) :=B'(0)$. By Proposition~\ref{leading} and (\ref{eqalpha}), we have, for $i\ge 0$,
\begin{equation}
  \label{eq6}
  [D(i+1)] * [C(i)] = [B(i)] + {\rm lower\  terms},
\end{equation}
where $B(i) \in \MX_{\nn,d}^{\jmath \imath}$ satisfies that $B(i) - \alpha_{i,r+1} E^{r, r+2}_{\theta}$ is a tridiagonal matrix.
(It can be shown by Theorem~\ref{mult-stand-basis2}  that $[D(i+1)] * [C(i)] \in \MX_{\nn,d}^{\jmath \imath}$; but we do not need this stronger fact.)
Therefore, it follows by (\ref{eq5}) and (\ref{eq6}) that
\begin{equation}\label{eq4b}
  \prod_{i\geq 0}^{\longleftarrow}  [B'(i)] = \prod_{i\geq 0}^{\longleftarrow}  [B(i)] + L \in \Sj,
\end{equation}
where $L$ is the product of lower terms than $B(i)$, which is lower than the leading term in $\prod_{i\geq 0}  [B(i)]$.
The theorem follows now by comparing (\ref{eq4}) and (\ref{eq4b}).
\end{proof}

\begin{cor}
The set $\{ \texttt{M}_A^{\, \ji} \vert A \in \MX_{\nn,d}^{\ji} \}$
(resp., $\{ '\texttt{M}_A^{\, \ji} \vert A \in \MX_{\nn,d}^{\ji} \}$)
forms a basis of ${\mbf S}^{\ji}_{\nn,d}$.
\end{cor}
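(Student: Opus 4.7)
The plan is straightforward once we exploit what has already been established. Recall from Section~\ref{Sji} (following \cite{FLLLWa}) that ${\mbf S}^{\ji}_{\nn,d}$ inherits from $\Sj$ the standard basis $\{[A] \mid A\in \MX_{\nn,d}^{\ji}\}$, where the partial order $\leq_{\text{alg}}$ and the row/column vectors restrict from $\MX_{n,d}$. So to prove the corollary, it is enough to show that the transition matrix from $\{\texttt{M}_A^{\,\ji}\}_{A\in\MX_{\nn,d}^{\ji}}$ (respectively $\{{}'\texttt{M}_A^{\,\ji}\}$) to $\{[A]\}_{A\in\MX_{\nn,d}^{\ji}}$ is unitriangular with respect to $\leq_{\text{alg}}$ (and the same for the primed version).

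First I would observe that each $\texttt{M}_A^{\,\ji}$ lies in ${\mbf S}^{\ji}_{\nn,d}$. Indeed, Theorem~\ref{monomial-basis-ji} produces matrices $B(i)\in\MX_{\nn,d}^{\ji}$, so each factor $\{B(i)\}$ (resp.\ $[B(i)]$) lies in ${\mbf S}^{\ji}_{\nn,d}=\bj_r\Sj\bj_r$; thus the whole product does too. Next, Theorem~\ref{monomial-basis-ji} gives the identity
\[
\texttt{M}_A^{\,\ji} \;=\; [A] \;+\; \sum_{\substack{C<_{\text{alg}} A\\ \ro(C)=\ro(A),\ \co(C)=\co(A)}} c_{A,C}\,[C] \qquad \text{in } \Sj,
\]
with uniquely determined coefficients $c_{A,C}\in\mbb Q(v)$. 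Because the left-hand side lies in ${\mbf S}^{\ji}_{\nn,d}$ and $\{[C]\mid C\in\MX_{\nn,d}^{\ji}\}$ is a basis of that subalgebra, the uniqueness of the expansion in the $\Sj$-basis $\{[C]\mid C\in \MX_{n,d}\}$ forces $c_{A,C}=0$ whenever $C\notin \MX_{\nn,d}^{\ji}$. Hence the expansion may be restricted to the index set $\MX_{\nn,d}^{\ji}$.

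Now order the finite index set $\MX_{\nn,d}^{\ji}$ by any total refinement of the restriction of $\leq_{\text{alg}}$. Then the matrix expressing $\{\texttt{M}_A^{\,\ji}\}_{A\in\MX_{\nn,d}^{\ji}}$ in terms of the standard basis $\{[A]\}_{A\in\MX_{\nn,d}^{\ji}}$ of ${\mbf S}^{\ji}_{\nn,d}$ is unitriangular, hence invertible over $\mbb Q(v)$; so $\{\texttt{M}_A^{\,\ji}\}_{A\in\MX_{\nn,d}^{\ji}}$ is a basis. The identical argument, using the second assertion of Theorem~\ref{monomial-basis-ji} (with $[B(i)]$ in place of $\{B(i)\}$), gives the same conclusion for $\{{}'\texttt{M}_A^{\,\ji}\}_{A\in\MX_{\nn,d}^{\ji}}$. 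There is no real obstacle here; the only point requiring a brief verification is that lower terms stay inside the subalgebra $\MX_{\nn,d}^{\ji}$, and this is immediate from the uniqueness of the $\Sj$-standard basis expansion.
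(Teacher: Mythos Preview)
Your proof is correct and is precisely the standard unitriangularity argument that the paper leaves implicit: the corollary is stated without proof, being regarded as an immediate consequence of Theorem~\ref{monomial-basis-ji} together with the fact that $\{[A]\mid A\in\MX_{\nn,d}^{\ji}\}$ is a basis of ${\mbf S}^{\ji}_{\nn,d}$. Your observation that the lower terms must lie in $\MX_{\nn,d}^{\ji}$ (by uniqueness of the standard basis expansion in $\Sj$) is the only point requiring comment, and you handle it correctly.
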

We call the basis $\{ \texttt{M}_A^{\jmath \imath} \vert A \in \MX_{\nn,d}^{\ji} \}$ the monomial basis of ${\mbf S}^{\ji}_{\nn,d}$.

Recalling $I$ is the identity matrix, we set
\begin{equation*}
\begin{split}
I^{\jmath \imath} = I - E_{r+1, r+1},
\qquad
 & {}_pA^{\jmath \imath} = A + p I^{\jmath \imath}\; (\forall p\in 2\mbb N).
\end{split}
\end{equation*}
We set ${\MX}_{\nn}^{\ji}:=\sqcup_{d} {\MX}_{\nn,d}^{\ji}$
We extend ${\MX}_{\nn}^{\ji}$ to a larger set
$\widetilde{\MX}_{\nn}^{\ji}$ by  requiring  the diagonal entries to be  in $\mbb Z$, instead of $\mbb N$ .
For any given $A \in \widetilde{\MX}_{\nn}^{\ji}$, we have ${}_pA^{\ji} \in {\MX}_{\nn}^{\ji}$ for $p\gg 0$.
The following stabilization is slightly different from that for $\Kc_n$ and is  similar to that in \cite{FL14}.
For the reader's convenience, we present the construction here.

\begin{prop}
\label{prop7.1}
  Suppose that $A_1, A_2,\ldots, A_l  \in \widetilde{\MX}_{\nn}^{\ji} \ (l \geq 2)$ are
such that ${\rm co}(A_i)={\rm ro}(A_{i+1})$ for all $i$.
Then there exist $Z_1, \ldots, Z_m\in \widetilde{\MX}_{\nn}^{\ji}$, $G_j(v,v')\in \mathcal R$ and $p_0\in \mbb N$ such that
$$[{}_p A_1^{\jmath \imath}] * [{}_pA_2^{\jmath \imath}] * \cdots *[{}_p A_l^{\jmath \imath}]=\sum_{j=1}^mG_j(v,v^{-p})[{}_p Z_j],\quad
\forall p \in 2 \mbb N, p\geq p_0.$$
\end{prop}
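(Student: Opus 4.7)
The plan is to mimic the proof of Proposition~\ref{prop6.1}, with Theorem~\ref{monomial-basis-ji} in place of Theorem~\ref{monomial-basis}. By a straightforward induction on $l$, it suffices to treat the case $l = 2$; write $A = A_2$ and $B = A_1$.

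First I would handle the case where $B$ has the ``generator'' shape appearing in Theorem~\ref{monomial-basis-ji}, namely $B - \sum_{j \in [1, n] \setminus \{r, r+1\}} \alpha_j E_{\theta}^{j, j+1} - \alpha_{r+1} E_{\theta}^{r, r+2}$ is diagonal. The decomposition $[D] * [C] = [B] + (\text{lower terms})$ appearing in the proof of Theorem~\ref{monomial-basis-ji} reduces this, modulo products of lower-order terms, to pairs $[{}_p T_1^{\ji}] * [{}_p T_2^{\ji}]$ in which $T_1$ is tridiagonal. For each such pair Theorem~\ref{mult-stand-basis2} applies directly, and since the shift by $pI^{\ji}$ leaves the $(r+1, r+1)$-entry fixed, the $p$-dependence of the coefficients
\[
v^{h_{S, T}} \, n(S, T) \, \begin{bmatrix} A_{S, T} \\ S \end{bmatrix}_{\mathfrak b}
\]
enters only through quantum binomials with shifted first argument $a_{ii} + p$ for $i \not\equiv r+1 \pmod{n}$. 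Replacing $v^{-p}$ by a formal indeterminate $v'$ in the closed form of each such binomial produces a coefficient $G_{S, T}(v, v') \in \mathcal{R}$, exactly as in the proof of Proposition~\ref{prop6.1}.

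For a general $B \in \widetilde{\MX}_{\nn}^{\ji}$, I would apply Theorem~\ref{monomial-basis-ji} to ${}_p B^{\ji}$ for all sufficiently large $p$, writing $[{}_p B^{\ji}]$ as a monomial $\prod_{i \geq 0}^{\longleftarrow} [B(i)]$ in generator-shape matrices, plus standard-basis terms strictly lower than $[{}_p B^{\ji}]$. Applying the first step to $\prod_{i \geq 0}^{\longleftarrow} [B(i)] * [{}_p A^{\ji}]$, and running in parallel the same computation with $A$ replaced by a suitable diagonal matrix (mirroring \eqref{eqA2}--\eqref{eqA1} in the proof of Proposition~\ref{prop6.1}), lets me solve for $\prod_{i \geq 0}^{\longleftarrow} [B(i)]$ itself as an $\mathcal{R}$-combination of some $[{}_p Z'_j]$; subtracting the resulting lower-term contributions and inducting downward on the order $\sqsubseteq$ then yields the desired expression for $[{}_p B^{\ji}] * [{}_p A^{\ji}]$.

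The main obstacle will be the bookkeeping needed to verify that after iterating Theorem~\ref{mult-stand-basis2} every $p$-dependence remains polynomial in $v^{\pm p}$ (equivalently, that all coefficients lift to $\mathcal{R}$). This rests on the structural fact that $pI^{\ji}$ never shifts the $(r+1, r+1)$-entry, so the distinguished factors $[a'_{r+1, r+1} - 2k - 1]/[k+1]$ in the multiplication formula remain $p$-independent; every other $p$-dependence is then confined to standard $v$-binomials whose shifted closed forms are manifestly elements of $\mathcal{R}$.
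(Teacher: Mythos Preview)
Your proposal is correct and follows essentially the same strategy as the paper: reduce to $l=2$, handle a base case by applying Theorem~\ref{mult-stand-basis2} and isolating the $p$-dependence (with the crucial observation that the $(r+1,r+1)$-entry is unaffected by the shift $pI^{\ji}$, so the factor $\prod_{0\le k\le s_{-r-1,-r-1}-1}[a'_{r+1,r+1}-2k-1]/[k+1]$ carries no $v'$), and then treat the general case by the monomial-basis decomposition plus downward induction exactly as in Proposition~\ref{prop6.1}.

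The one difference is your choice of base case. The paper takes as base case a \emph{tridiagonal} $A_1$ (in $\widetilde{\MX}_{\nn}^{\ji}$, hence with $\alpha_r=\alpha_{r+1}=0$) and applies Theorem~\ref{mult-stand-basis2} directly; the general case then invokes the same induction scheme as Proposition~\ref{prop6.1} verbatim. You instead take the generator-shape matrices of Theorem~\ref{monomial-basis-ji} as the base case and further unwind them via $[D]\ast[C]=[B]+\text{lower}$ to reach tridiagonal factors. This extra layer is not needed---the tridiagonal formula already suffices as a base case, since the $B'(i)$ from Theorem~\ref{monomial-basis} applied to ${}_pA_1^{\ji}$ still shift by $pI^{\ji}$ (their $(r+1,r+1)$-entries stay equal to $1$)---but it is harmless and has the minor advantage of keeping the intermediate factors visibly inside $\widetilde{\MX}_{\nn}^{\ji}$.
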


\begin{proof}
It suffices to prove the proposition for $l=2$.
  Let us first assume that $A_1$ is a tridiagonal matrix in $\widetilde{\MX}_{\nn,d}^{\jmath \imath}$.
  For any $(S, T)$ satisfying Condition (\ref{star}),  $\ro(S) =\alpha$ and $\ro(T) = \alpha^J$, we define
  \begin{equation*}
\begin{split}
  & G_{S,T}(v,v') =  v^{h_{S,T}} n(S,T) \prod_{\overset{(i,j)\in \mathcal J}{i\neq j}}\begin{bmatrix}
    a_{ij}' \\ s_{ij}
  \end{bmatrix}
  \begin{bmatrix}
    a_{ij}'-s_{ij}\\ s_{-i,-j}
  \end{bmatrix}
   \prod_{\overset{i=-r-1}{0 \leq k \leq s_{ii}-1}} \frac{v^{2a_{ii}'-4k}-1}{v^{2k+2}-1}
  v'^{\gamma(S,T)}\\
  &
  \cdot \prod_{i\in [-r,-1]} \prod_{\overset{1\leq k \leq s_{ii}}{1 \leq l \leq s_{-i,-i}}}
   \frac{v^{2(a_{ii}'-k+1)}v'^2-1}{v^{2k}-1}
   \frac{v^{2(a_{ii}'-s_{ii}-l+1)}v'^2-1}{v^{2l}-1}
   \prod_{\overset{i=0}{0 \leq k \leq s_{ii}-1}} \frac{v^{2a_{ii}'-4k}v'^2-1}{v^{2k+2}-1},
\end{split}
\end{equation*}
where $\gamma(S,T) = \sum_{\overset{i\in [1,n], i>l}{ i\neq r+1}}s_{il} - \sum_{\overset{i\in [1,n], i<l}{i\neq r+1}} t_{il}-\sum_{\overset{i\in [1,n]}{ i\neq r+1}}s_{ii}$.
We note that the definition of $G_{S,T}(v,v')$ is slightly different from the one in the proof of Proposition \ref{prop6.1}.
The difference is that the index $i$ can not be equal to $ r+1$ in $\gamma(S,T)$ by the definition of ${}_p\!A_1^{\jmath \imath}$.

From Theorem~ \ref{mult-stand-basis2}, for large enough $p$, we have
\begin{equation*}
  [{}_p\!A_1^{\jmath \imath}]* [{}_p\!A^{\jmath \imath}] = \sum_{S, T} G_{S, T}(v, v^{-p}) [{}_p\! A_{S,T}^{\jmath \imath}].
\end{equation*}
Thus the proposition holds for the case that $A_1$ is a tridiagonal matrix.
For an arbitrary matrix $A_1$, the argument is exactly the same as the one in the proof of Proposition \ref{prop6.1}.
It is clear that $Z_j\in \widetilde{\MX}_{\nn,d}^{\jmath \imath}$ since ${\rm ro}({}_p\! Z_j) = {\rm ro}({}_p\!A_1)$ and
${\rm co}({}_p\! Z_j) = {\rm co}({}_p\!A_l)$.
This finishes the proof.
\end{proof}

We introduce an $\cA$-module and $\mbb Q(v)$-module
$$
{}_{\cA} \Kcji = {\rm span}_{\mathcal A}\{ [A]| A \in \widetilde{\MX}_{\nn}^{\ji} \},
\qquad  \Kcji =\mbb Q(v) \otimes_{\cA} {}_{\cA} \Kcji.
$$
(Here $[A]$ is just a symbol.)
By specialization  at $v'=1$ in Proposition~\ref{prop7.1}, we can define an algebra structure on
${}_{\cA} \Kcji$ and $\Kcji$.

\begin{cor} \label{cor7.2}
Retain the assumption from Proposition~ \ref{prop7.1}.
 There is a unique associative $\mbb Q(v)$-algebra structure on $\Kcji$, without unit,  where
 the product is given by
 $$[A_1] \cdot [A_2]\cdots[A_r] =\sum_{j=1}^m G_j(v,1)[Z_j].
 $$
 \end{cor}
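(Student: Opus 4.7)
The plan mirrors that of Corollary~\ref{cor6.2}: extract the algebra structure on $\Kcji$ directly from the stabilization of Proposition~\ref{prop7.1} by specializing at $v' = 1$. Define the product on standard basis elements by $[A_1] \cdot [A_2] := \sum_j G_j(v, 1)[Z_j]$, using the data provided by Proposition~\ref{prop7.1} with $l = 2$, and extend bilinearly to all of $\Kcji$. The content of the corollary is then that this operation is well-defined and associative.

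For well-definedness, after collecting like terms one may assume the $Z_j$ in Proposition~\ref{prop7.1} are pairwise distinct. For each $p \geq p_0$, the set $\{[{}_p Z]\}$ indexed by $Z \in \widetilde{\MX}_{\nn}^{\ji}$ with the row/column sums prescribed by $A_1$ and $A_2$ forms part of the standard basis of ${\mbf S}^{\ji}_{\nn, d_p}$ and is therefore linearly independent. Hence the coefficients $G_j(v, v^{-p})$ are uniquely determined (as elements of $\mbb Q(v)$) by the convolution product $[{}_p A_1^{\ji}] * [{}_p A_2^{\ji}]$. Since two Laurent polynomials in $\mathcal R = \mbb Q(v)[v', v'^{-1}]$ that agree at the infinite set of values $v' = v^{-p}$ must coincide, each $G_j(v, v') \in \mathcal R$ is itself uniquely determined; the specialization at $v' = 1$ is therefore unambiguous. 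The same argument shows $\mathcal{A}$-linear combinations of $[A]$'s in $\Kcji$ behave correctly under the product.

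Associativity is inherited from the convolution product on each ${\mbf S}^{\ji}_{\nn, d_p}$. Given $A_1, A_2, A_3 \in \widetilde{\MX}^{\ji}_{\nn}$ with matching row/column conditions, apply Proposition~\ref{prop7.1} with $l = 3$ to obtain a canonical expansion of $[{}_p A_1^{\ji}] * [{}_p A_2^{\ji}] * [{}_p A_3^{\ji}]$. Both $([A_1] \cdot [A_2]) \cdot [A_3]$ and $[A_1] \cdot ([A_2] \cdot [A_3])$ arise, via two iterated applications of the $l = 2$ case, as the $v' = 1$ specialization of expansions that, after regrouping, must coincide with the canonical $l = 3$ expansion: this identification holds at $v' = v^{-p}$ for all sufficiently large $p$ by associativity of $*$ in ${\mbf S}^{\ji}_{\nn, d_p}$, and propagates to $\mathcal R$ by the uniqueness established above. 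Specializing at $v' = 1$ yields the desired equality in $\Kcji$. Absence of a unit is immediate, since no single element of $\widetilde{\MX}^{\ji}_{\nn}$ serves as identity across every row/column profile.

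The main obstacle in this plan is the uniqueness step: one must verify that the coefficients $G_j(v, v') \in \mathcal R$ — not merely their specializations at $v^{-p}$ for $p \gg 0$ — are intrinsic to the product being expanded. Once this is settled, both well-definedness at $v' = 1$ and the transfer of associativity from the convolution algebras to $\Kcji$ become formal consequences.
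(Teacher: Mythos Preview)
Your proposal is correct and takes essentially the same approach as the paper: specialization at $v'=1$ in Proposition~\ref{prop7.1}. The paper in fact offers no explicit proof of this corollary beyond the preceding sentence ``By specialization at $v'=1$ in Proposition~\ref{prop7.1}\ldots'', so your argument supplies the standard BLM-type details (uniqueness of the $G_j(v,v')$ from agreement at infinitely many $v'=v^{-p}$, and transfer of associativity from the convolution algebras) that the paper leaves implicit.
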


The following  theorem for $\Kcji$ is a counterpart of Theorem~\ref{thm:CBgl} for the algebra $\Kc_n$.

\begin{thm} \label{Kji}
\begin{enumerate}
\item
The algebra $\Kcji$ is generated by $[B]$, where
$B\in \widetilde{\MX}^{\ji}_{\nn}$ are such that
$B -\sum_{j \in [1,n]\backslash\{r, r+1\} }  \alpha_j E^{j,j+1}_{\theta} - \alpha_{r+1} E^{r, r+2}_{\theta}$ is diagonal for suitable scalars $\alpha_j$.

\item
The algebra $\Kcji$ possess  
a monomial basis $\{ \texttt{M}_A^{\ji} | A\in \widetilde{\MX}^{\ji}_{\nn}\}$,  and a canonical basis $\{ \{ A\} | A\in \widetilde{\MX}^{\ji}_{\nn}\}$.

\item
There exists a surjective algebra homomorphism $\Psi^{\jmath \imath}: \Kcji \rightarrow {\mbf S}^{\ji}_{\nn,d}$ such that
$\Psi^{\jmath \imath} ([A]) = [A]$ if $A\in {\MX}^{\ji}_{\nn,d}$ and $0$ otherwise.
\end{enumerate}
\end{thm}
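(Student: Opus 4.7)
The plan is to transplant to the $\ji$-setting the scheme developed for $\Kc_n$ in Section~\ref{chap:Kjj}. First, specializing Proposition~\ref{prop7.1} at $v'=1$ (as in Corollary~\ref{cor7.2}) yields an explicit multiplication formula, completely analogous to Proposition~\ref{mult-K}, for $[B]\cdot [A]$ in $\Kcji$ whenever $B$ has the distinguished shape described in~(1). Given any $A \in \widetilde{\MX}^{\ji}_{\nn}$, I would combine this formula with the construction used in the proof of Theorem~\ref{monomial-basis-ji}---which produced matrices $B(i)$ of the desired shape such that $\prod_{i\ge 0}^{\longleftarrow}[B(i)] = [A] + \text{lower terms}$ in $\Sji$---and apply it uniformly in $p\gg 0$ via stabilization to obtain $B(i) \in \widetilde{\MX}^{\ji}_{\nn}$ with
\[
{}'\texttt{M}_A^{\,\ji} := \prod_{i\ge 0}^{\longleftarrow}[B(i)] \in [A] + \sum_{A' \sqsubset A} \cA [A'],
\]
where $\sqsubset$ is the counterpart of the partial order from Section~\ref{chap:Kjj} adapted via ${}_p\!A^{\ji}$. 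Replacing each $[B(i)]$ by $\{B(i)\}$ gives the monomial basis element $\texttt{M}_A^{\,\ji}$, and part~(1) then follows from the unitriangular expansion by induction on $\sqsubset$.

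For the canonical basis in part~(2), I would first establish a stabilization of the bar operator parallel to Lemma~\ref{lem1}: for any $A \in \widetilde{\MX}^{\ji}_{\nn}$ there exist $T_1,\ldots,T_m \in \widetilde{\MX}^{\ji}_{\nn}$ and $H_i(v,v') \in \mathcal R$ such that $\overline{[{}_p\!A^{\ji}]} = \sum_i H_i(v,v^{-p})[{}_p\!T_i^{\ji}]$ for $p \gg 0$. This follows because the bar involution on $\Sji$ is the restriction of the one on $\Sj$ and commutes with the $\bj_r$-truncation, so the statement from~\cite{FLLLWa} applies. Specializing at $v' = 1$ gives a bar involution on $\Kcji$, which is unitriangular with respect to $\sqsubset$ by the monomial basis just constructed. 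The standard Kazhdan-Lusztig argument---exactly as in Proposition~\ref{caonical-basis}---then produces the unique bar-invariant element
\[
\{A\} = [A] + \sum_{A' \sqsubset A,\, A' \ne A} \pi_{A',A}\,[A'], \qquad \pi_{A',A} \in v^{-1}\mbb Z[v^{-1}],
\]
for each $A \in \widetilde{\MX}^{\ji}_{\nn}$, completing~(2).

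For part~(3), I define $\Psi^{\ji}$ as stated and verify that it is an algebra homomorphism. By part~(1) it suffices to check $\Psi^{\ji}([B]\cdot[A]) = \Psi^{\ji}([B])*\Psi^{\ji}([A])$ when $B$ is of the distinguished shape. When all the matrices involved lie in $\MX^{\ji}_{\nn,d}$, the specialization $v' = 1$ of the $\Kcji$ multiplication formula coincides termwise with Theorem~\ref{mult-stand-basis2}, so the two products agree. When $A$ or $B$ falls outside $\MX^{\ji}_{\nn,d}$ (some diagonal entry is negative), Condition~\eqref{star} together with $\co(B) = \ro(A)$ forces a factor of the form $\begin{bmatrix} a'_{ij}-s_{ij}\\ s_{-i,-j}\end{bmatrix}$ in $\begin{bmatrix}A_{S,T}\\ S\end{bmatrix}_{\mathfrak b}$ to vanish identically---this is exactly the argument already used for $\Psi: \Kc_n \to \Sj$ in Section~\ref{chap:Kjj}---so both sides of the claimed identity are zero. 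Surjectivity is immediate from the definition.

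The main obstacle I foresee lies in part~(1): the intermediate matrices $C(i)$ and $D(i)$ used in the proof of Theorem~\ref{monomial-basis-ji} do not themselves lie in the $\ji$-subset, so although their product $[D(i+1)] * [C(i)]$ produces the leading term $[B(i)] \in \Sji$ plus \emph{a priori} non-$\ji$ lower terms, one has to verify that this factorization persists stably after replacing $A$ by ${}_p\!A^{\ji}$ and survives the passage to the limit algebra $\Kcji$. Concretely, one must check that the $\bj_r$-truncation and the stabilization in $p$ commute uniformly, so that the $\widetilde{\MX}^{\ji}_{\nn}$-valued factorization is well-defined; once this uniformity is secured, the rest of the construction parallels the $\Kc_n$ case word-for-word.
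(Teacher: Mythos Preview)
Your proposal is correct and follows the same approach as the paper, which proves parts~(1)--(2) by citing Theorem~\ref{monomial-basis-ji} together with Proposition~\ref{prop7.1} and handles part~(3) by the argument parallel to that for $\Psi$ in Section~\ref{chap:Kjj}.

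The obstacle you flag is somewhat misplaced. The factorization via $C(i)$ and $D(i)$ is used only at finite $d$, inside the proof of Theorem~\ref{monomial-basis-ji}, to \emph{exhibit} the matrices $B(i) \in \MX^{\ji}_{\nn,d}$ of the desired shape; once those $B(i)$ are in hand, one never returns to $C(i), D(i)$. For Theorem~\ref{Kji} itself, Proposition~\ref{prop7.1} is taken as given and is stated for \emph{arbitrary} $A_k \in \widetilde{\MX}^{\ji}_{\nn}$, so it applies directly to the product $\prod^{\longleftarrow}_{i\ge 0}[{}_p B(i)^{\ji}]$---all the $B(i)$ lie in the $\ji$-set by construction---and the specialization $v'=1$ immediately yields the monomial basis of $\Kcji$. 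No question of commuting the $\bj_r$-truncation with stabilization arises in this step. Your concern is more properly about the completeness of the proof of Proposition~\ref{prop7.1} for non-tridiagonal $A_1$ (where the reduction via the big algebra must be checked against the $I^{\ji}$-shift), but that is a separate statement whose proof is already assumed here.
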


\begin{proof}
Parts (1) and (2) follow by the construction of the monomial basis  for  ${\mbf S}^{\ji}_{\nn,d}$ (Theorem~\ref{monomial-basis-ji})
and the stabilization procedure (Proposition~ \ref{prop7.1}). Part~ (3) follows formally by the existence of the monomial basis
with a desirable triangularity property under the bar operator (compare Proposition~\ref{caonical-basis}).
\end{proof}

\subsection{The algebras of type $\ijw$}  

The $\ijw$-type is completely parallel to the $\jiw$-type treated above, and so we shall be brief.
Recall the subset $\MX^{\ij}_{\nn,d} \subset \MX_{n,d}$ from \eqref{Mijd}
and the subalgebra ${\mbf S}^{\ij}_{\nn,d} = \bj_0 \Sj \bj_0$ of $\Sj$ from Section \ref{Sji}.
Recalling $I$ is the identity matrix, we set
\begin{equation}
\label{Iij}
I^{\ij} = I - E_{00},
\qquad
  {}_pA^{\ij} = A + p I^{\ij}\; (\forall p\in 2\mbb N).
\end{equation}
As an $\ijw$-counterpart of Theorem~\ref{monomial-basis-ji}, the algebra ${\mbf S}^{\ij}_{\nn,d}$ is generated by
$[B]$,  for all $B \in \MX^{\ij}_{\nn,d}$ such that
$B-\sum_{j \in [1,n]\backslash\{0,n-1\}  } \alpha_{j} E^{j,j+1}_{\theta} - \alpha_{0} E^{-1,1}_{\theta}$ is diagonal
for some suitable scalars $\alpha_{j}$.
Similarly, we can construct a monomial basis $\{ \texttt{M}_A^{\ij} \vert A \in \MX^{\ij}_{\nn,d} \}$ for ${\mbf S}^{\ij}_{\nn,d}$.

Define $\widetilde{\MX}_{\nn}^{\ij}$ in a similar manner as $\widetilde{\MX}_{\nn}^{\ji}$.
Let ${}_{\cA}\Kcij = {\rm span}_{\mathcal A}\{ [A]| A \in \widetilde{\MX}_{\nn}^{\imath \jmath}\}$, and $\Kcij =\mbb Q(v) \otimes_{\cA} {}_{\cA}\Kcij$.
Similar to Proposition~\ref{prop7.1}, we can establish a stabilization property  for the algebras ${\mbf S}^{\ij}_{\nn,d}$ as $d \mapsto \infty$ using \eqref{Iij},
and then we can use the stabilization procedure to construct an algebra structure on $\Kcij$
(similar to Corollary~\ref{cor7.2}). As a $\ijw$-counterpart of Theorem~\ref{Kji}, $\Kcij$ admits a monomial basis
$\{ \texttt{M}_A^{\ij} | A\in \widetilde{\MX}^{\ij}\}$  and a canonical basis $\{ \{ A\} | A\in \widetilde{\MX}^{\ij}\}$.
There exists a surjective algebra homomorphism $\Psi^{\ij}: \Kcij \rightarrow {\mbf S}^{\ij}_{\nn,d}$ such that
$\Psi^{\ij} ([A]) = [A]$ if $A\in {\MX}^{\ij}_{\nn,d}$ and $0$ otherwise.

\subsection{Isomorphisms between the types $\jiw$ and $\ijw$}

The similarity between the $\jiw$-type and $\ijw$-type which we have seen above is not accidental and we shall establish various isomorphisms below.
Let us start at the levels of convolution algebras.

To a matrix $A \in \MX_{n,d}$, we associate  $^\tau A=(^\tau a_{ij})$ and $^{\tau} a_{ij} = a_{i+r+1, j+r+1}$.
Clearly, sending $A \mapsto {}^{\tau}A$ defines a permutation of order $2$ on $\MX_{n,d}$.

\begin{prop}
 \label{prop:involution}
The assignment  $[A] \mapsto [\ \! ^{\tau} A]$ defines an involution  $\tau_d$ on the algebra $\Sj$.
\end{prop}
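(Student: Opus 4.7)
The plan is to verify in order that $\tau$ is well-defined on $\MX_{n,d}$, that it is an involution, and that the induced linear map respects convolution. The defining symmetries of $\MX_{n,d}$, namely $a_{ij} = a_{-i,-j} = a_{i+n, j+n}$ together with the prescribed parities of $a_{00}$ and $a_{r+1, r+1}$, are all preserved by the shift of both indices by $r+1$, since $n = 2(r+1)$ and the two special diagonal positions are interchanged modulo $n$ by that shift. Applying $\tau$ twice is a shift by $n$, which is the identity by periodicity, so $\tau_d^2 = \mathrm{id}$.

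For the algebra homomorphism property, I would invoke Theorem~\ref{monomial-basis} to reduce to showing $\tau_d([B] * [A]) = \tau_d([B]) * \tau_d([A])$ for $A \in \MX_{n,d}$ and tridiagonal $B$ with $B - \sum_{1 \le i \le n} \alpha_i E_\theta^{i, i+1}$ diagonal. A direct computation using $E_\theta^{ij} = E^{ij} + E^{-i, -j}$ shows ${}^\tau E_\theta^{i, i+1} = E_\theta^{i+r+1, i+r+2}$, so ${}^\tau B - \sum_j \widetilde\alpha_j E_\theta^{j, j+1}$ is diagonal with $\widetilde\alpha_j = \alpha_{j - r - 1}$. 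Theorem~\ref{mult-stand-basis2} then expands both $[B]*[A]$ and $[{}^\tau B] * [{}^\tau A]$ as sums over pairs $(S,T) \in \Theta_n \times \Theta_n$, and $(S, T) \mapsto ({}^\tau S, {}^\tau T)$ is a bijection between the two indexing sets: all constraints (Condition~\eqref{star}, $\ro(S) = \alpha$, $\ro(T) = \alpha^J$, $A - T + \check T \in \Theta_n$, $A_{S,T} \in \MX_{n,d}$) are preserved under this bijection, using $n$-periodicity of $\alpha$ and the easily verified fact that $\tau$ commutes with the row-shift $S \mapsto \check S$; the latter also yields ${}^\tau(A_{S,T}) = ({}^\tau A)_{{}^\tau S, {}^\tau T}$, so the basis element on the right of the formula transforms correctly.

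It remains to verify that the scalar coefficient $v^{h_{S,T}} \, n(S,T) \, \begin{bmatrix} A_{S,T} \\ S \end{bmatrix}_{\mathfrak b}$ is unchanged under $(A,S,T) \mapsto ({}^\tau A, {}^\tau S, {}^\tau T)$. For the exponent $h_{S,T}$, the cleanest approach is to use the decomposition \eqref{MST}, $h_{S,T} = d_{A_{S,T}} - d_A - d_B + 2 \xi^{\mathfrak b}_{A,S,T}$; each of $d_A$ and $\xi^{\mathfrak b}_{A,S,T}$ can be shown $\tau$-invariant by a direct reindexing of the defining sums, using $a_{ij} = a_{-i,-j}$ and \eqref{star}. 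The product defining $\begin{bmatrix} A_{S,T} \\ S \end{bmatrix}_{\mathfrak b}$ ranges over $(i,j) \in \mathcal{J} \subseteq \{-r-1,\ldots,0\} \times \mbb Z$, a fundamental domain for rows modulo $n$; the shift ${}^\tau$ sends it to the complementary fundamental domain $\{0,1,\ldots,r+1\} \times \mbb Z$, which matches the original via the reflection symmetry $a_{ij} = a_{-i,-j}$, with the two exceptional diagonal factors at $i = 0$ and $i = -r-1$ interchanged by $\tau$.

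The main obstacle will be verifying $n(S,T) = n({}^\tau S, {}^\tau T)$. The product $n(S,T) = \prod_{i=0}^{r} n(S_i, T_i, (S_{-i-1})^J)$ pairs a ``lower half'' row $S_i$ with an ``upper half'' row $S_{-i-1}$, and $\tau$ interchanges the two halves; naively shifting the column index in all three slots of each factor $n(\alpha, \gamma, \beta)$ does not suffice, since $(\cdot)^J$ and the column shift fail to commute. One clean resolution is to return to the geometric interpretation from Section~\ref{chap:multi}, especially \eqref{ZiST}: the factor $n(S_i, T_i, (S_{-i-1})^J)$ arises, up to an explicit power of $v$ absorbed into the other coefficients, as the cardinality of a set of symplectic sublattices appearing as a fiber in the tower $Z_{S,T} \to Z^{[0,r-1]}_{S,T} \to \cdots$, and this cardinality is manifestly invariant under relabelling the periodic symplectic chain by the shift of $r+1$. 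Alternatively, one may combine Corollary~\ref{weak-n-dual} with the row-sum constraints $\ro(S) = \alpha$, $\ro(T) = \alpha^J$ to identify the $\tau$-shifted factor with the original via the triality-duality of $n$.
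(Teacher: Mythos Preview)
Your proposal is correct and follows essentially the same route as the paper: reduce to tridiagonal $B$ via the monomial basis, then verify that each of the three factors $\xi^{\mathfrak b}_{A,S,T}$ (equivalently $h_{S,T}$ via \eqref{MST} and $d_A = d_{{}^\tau A}$), $n(S,T)$, and $\begin{bmatrix}A_{S,T}\\S\end{bmatrix}_{\mathfrak b}$ is $\tau$-invariant. For $n(S,T)$ the paper uses exactly your second option, Corollary~\ref{weak-n-dual} together with \eqref{star}; for the binomial factor the paper phrases your fundamental-domain-plus-reflection argument as the observation that $(i,j)\mapsto(-i-r-1,-j-r-1)$ is a permutation of $\mathcal J$, which is the same computation.
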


\begin{proof}
We must show that $\tau_d ([B] * [A])
= [^\tau B ] * [^\tau A]$ for all $B, A \in \MX_d$.
By using the monomial basis $\texttt{M}_A$ of $\Sj$ and by induction on $B$ with respect to the partial order $\leq_{\text{alg}}$,
we only need to prove the equation for $B$ such that $B- \sum_{1 \leq i \leq n} \alpha_i E^{i, i+1}_{\theta}$ is diagonal for some $\alpha_i$.

Note that $d_A = d_{\  \! ^\tau \! A}$ for all $A$.
By the multiplication formula  (Theorem~\ref{mult-stand-basis2}),  it suffices to check that
\begin{align*}
n(\ \!  ^{\tau} S, \ \! ^{\tau} T) = n(S, T), \quad
\begin{bmatrix} \ \! ^{\tau} A_{S, T} \\ ^\tau S \end{bmatrix}_{\C} = \begin{bmatrix} A_{S, T} \\ S \end{bmatrix}_{\C}, \quad
\xi^{\mathfrak b}_{^{\tau} A, \ \! ^\tau S, \ \! ^\tau T} = \xi^{\mathfrak b}_{A, S, T},
\end{align*}
where $S, T$ are from the multiplication formula. Indeed, we have
\begin{align}
\begin{split}
n(\ \!  ^{\tau} S, \ \! ^{\tau} T)
&= \prod_{0 \leq i \leq r} n(^{\tau} S_i,  \ ^{\tau} T_i, \ ^{\tau} S^J_{-i -1}) = \prod_{0 \leq i \leq r} n(S_{i+ r+1}, T_{i+r+1}, S^J_{r - i}) \\
& = \prod_{0 \leq i \leq r} n(S_{r - i}, T_{r-i}, S^J_{i + r +1}) = n(S, T),
\end{split}
\end{align}
where we have used Corollary~ \ref{weak-n-dual} and \eqref{star} on the third equality.

Observe that
\begin{align*}
\begin{bmatrix} a'_{i+r+1, j+r+1} \\ s_{i+r+1, j+r+1} \end{bmatrix}
&\begin{bmatrix} a'_{i+r+1, j+r+1} - s_{i+r+1, j+r+1} \\ s_{-i + r+1, -j + r + 1} \end{bmatrix}
\\
=
&\begin{bmatrix} a'_{-i -r -1, -j -r -1} \\ s_{-i -r -1, -j -r -1} \end{bmatrix}
\begin{bmatrix} a'_{ -i -r -1, -j -r -1} - s_{-i -r -1, -j -r -1} \\ s_{i + r +1, j + r +1}\end{bmatrix}.
\end{align*}
Observe also that sending $(i, j) \mapsto (-i -r-1, -j -r -1)$ defines a permutation on the index set $\mathcal J$ in
$\begin{bmatrix} A_{S, T} \\ S \end{bmatrix}_{\C}$  (see Theorem~\ref{mult-stand-basis2}).
It follows from these observations  that
$$
\begin{bmatrix} \  \! ^{\tau} A_{S, T} \\ ^\tau S \end{bmatrix}_{\C} = \begin{bmatrix} A_{S, T} \\ S \end{bmatrix}_{\C}.
$$
The equality $\xi^{\mathfrak b}_{^{\tau} A, \ \! ^\tau S, \ \! ^\tau T} = \xi^{\mathfrak b}_{A, S, T}$ follows by using the symmetries on $A$, $S$ and $T$, and
the proposition is thus proved.
\end{proof}

\begin{prop}
\label{tau:CBSj}
\begin{enumerate}
\item
The automorphism  $\tau_d: \Sj \to \Sj$ commutes with the bar involution.

\item
The involution $\tau_d$ preserves the canonical basis, i.e.,  $\tau_d \{ A\} = \{ ^\tau A\}$, for $A\in {\MX}_{n,d}$.
\end{enumerate}
\end{prop}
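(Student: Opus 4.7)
The plan is to deduce part (2) from part (1) via the uniqueness property that defines the canonical basis, so the real work is in establishing part (1). Before tackling that, I would record the order-theoretic compatibility: since $({}^\tau A)_{kl}=a_{k+r+1,\,l+r+1}$, one directly computes
\[
\sum_{k\le i,\, l\ge j}({}^\tau A)_{kl}=\sum_{k\le i+r+1,\, l\ge j+r+1}a_{kl},
\]
so as $(i,j)$ ranges over pairs with $i<j$, the condition ${}^\tau A\le_{\mathrm{alg}}{}^\tau B$ coincides with $A\le_{\mathrm{alg}} B$. Likewise $\ro({}^\tau A)_i=\ro(A)_{i+r+1}$ and similarly for $\co$, so the row/column sum constraints are respected. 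In particular, $\tau_d$ permutes the triangular poset $({\MX}_{n,d},\le_{\mathrm{alg}})$ (stratified by row/column vectors).

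For part (1), set $\bar{\phantom{x}}':=\tau_d\circ\bar{\phantom{x}}\circ\tau_d^{-1}$. By Proposition~\ref{prop:involution}, $\tau_d$ is a $\mathbb Q(v)$-algebra automorphism, so $\bar{\phantom{x}}'$ is an anti-linear ring involution with $\bar{\phantom{x}}'(v)=v^{-1}$. I would argue $\bar{\phantom{x}}'=\bar{\phantom{x}}$ by invoking the geometric origin of bar recalled in Section~\ref{rec}: $\bar{\phantom{x}}$ is normalized by $\overline{\IC(\overline{\mathcal O_A})}=\IC(\overline{\mathcal O_A})$, and the shift $A\mapsto{}^\tau A$ is induced by the variety automorphism $(L_i)_{i\in\ZZ}\mapsto(L_{i+r+1})_{i\in\ZZ}$ of $\mathcal X^{\fc}_{n,d}$, which sends $\mathcal O_A$ to $\mathcal O_{{}^\tau A}$ and commutes with Verdier duality. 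An equivalent purely algebraic route is available: by Theorem~\ref{monomial-basis}, $\Sj$ is generated by $\{B\}$ with $B$ tridiagonal, and one can verify $\tau_d\{B\}=\{{}^\tau B\}$ for such $B$ by induction on $\le_{\mathrm{alg}}$, feeding on the very symmetry of the multiplication coefficients that was already used in the proof of Proposition~\ref{prop:involution}.

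Given part (1), part (2) drops out of the uniqueness defining $\{{}^\tau A\}$. Indeed, $\tau_d\{A\}$ is bar-invariant since $\overline{\tau_d\{A\}}=\tau_d\overline{\{A\}}=\tau_d\{A\}$. Expanding $\{A\}=[A]+\sum_{B<_{\mathrm{alg}}A}\pi_{B,A}[B]$ with $\pi_{B,A}\in v^{-1}\mathbb Z[v^{-1}]$ and applying $\tau_d$ yields
\[
\tau_d\{A\}=[{}^\tau A]+\sum_{B<_{\mathrm{alg}}A}\pi_{B,A}\,[{}^\tau B].
\]
By the order-preservation above, the index set $\{B:B<_{\mathrm{alg}}A,\ \ro(B)=\ro(A),\ \co(B)=\co(A)\}$ maps bijectively to $\{B':B'<_{\mathrm{alg}}{}^\tau A,\ \ro(B')=\ro({}^\tau A),\ \co(B')=\co({}^\tau A)\}$ via $B\mapsto{}^\tau B$, the coefficients $\pi_{B,A}$ remain in $v^{-1}\mathbb Z[v^{-1}]$, and therefore by the unique characterization in Section~\ref{rec-sj} one concludes $\tau_d\{A\}=\{{}^\tau A\}$.

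The main obstacle is justifying the commutation $\tau_d\circ\bar{\phantom{x}}=\bar{\phantom{x}}\circ\tau_d$ in part (1); both the geometric route (cleanest but dependent on the explicit IC-theoretic normalization from \cite{FLLLWa}) and the algebraic route (self-contained but requiring a combinatorial check of the ``lower-term'' coefficients for tridiagonal $B$) ultimately rest on the shift invariance of the multiplication formula Theorem~\ref{mult-stand-basis2}, already verified in Proposition~\ref{prop:involution}.
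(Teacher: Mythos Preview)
Your argument for part (2) is correct and matches the paper's: once $\tau_d$ commutes with bar and preserves $\le_{\mathrm{alg}}$, the uniqueness characterizing $\{A\}$ forces $\tau_d\{A\}=\{{}^\tau A\}$.

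The gap is in part (1). Your geometric route asserts that $(L_i)_{i\in\ZZ}\mapsto(L_{i+r+1})_{i\in\ZZ}$ is an automorphism of $\X^{\fc}_{n,d}$, but it is not: setting $L'_i=L_{i+r+1}$ one finds $(L'_i)^{\#}=L_{i+r+1}^{\#}=L_{-i-r-2}$, whereas $L'_{-i-1}=L_{r-i}$, and these differ by a period shift of $n=2r+2$ (i.e.\ by a factor of $\ve$), so the symplectic condition $(L'_i)^{\#}=L'_{-i-1}$ fails. The paper repairs this by twisting with an explicit element $\sigma\in\mrm{GSp}(V_F)$ (a symplectic similitude with $(\sigma u,\sigma v)=-\ve(u,v)$) and setting ${}^\tau L_i=\sigma(L_{i+r+1})$; the extra $\ve$ from $\sigma$ exactly cancels the discrepancy and makes ${}^\tau L\in\X^{\fc}_{n,d}$. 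Only then does one have a genuine ind-variety automorphism, hence $\tau_!$ commutes with Verdier duality and the decategorified statement follows.

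Your alternative algebraic route is too sketchy to stand on its own: verifying $\tau_d\{B\}=\{{}^\tau B\}$ for tridiagonal $B$ ``by induction'' already presupposes knowing how bar interacts with the shift on lower terms, which is precisely the content of part (1). To make an honest algebraic proof you would need to show directly that the bar coefficients satisfy $r_{{}^\tau C,{}^\tau A}=r_{C,A}$ for all $A,C$, and the multiplication-formula symmetry from Proposition~\ref{prop:involution} does not obviously give this without further work.
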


\begin{proof}
Let us go back to the geometric setting of Section~\ref{rec}.
Let $J$ be the anti-diagonal matrix of rank $d$.
Let us fix a basis for the vector space $V$
so that the associated matrix of the symplectic form is
$
\begin{pmatrix}
0 & J\\
-J & 0
\end{pmatrix}.
$
Let
$
\sigma =
\begin{pmatrix}
0 & \varepsilon J \\
J & 0
\end{pmatrix}.
$
For any $u, v \in V_F$, we have
\[
(\sigma u, \sigma v)_{V_F} = - \varepsilon ( u, v)_{V_F}.
\]
Hence $\sigma $ is an element in the group $\mrm{GSp} (V_F)$ of symplectic similitude with respect to the form $(, )_{V_F}$.
Note that $\mrm{GSp}(V_F)$ acts on the set of symplectic lattices, and hence on $\X^{\C}_{n, d}$.

For any $L\in \X^{\fc}_{n,d}$, we set $^{\tau}L = (^{\tau}L_i)_{i\in \mbb Z}$ where $^{\tau}L_i = \sigma( L_{i+r+1})$ for all $i\in \mbb Z$.
We have
\[
(^{\tau} L_i, \ ^{\tau} L_{-i -1} )
= - \varepsilon ( L_{i+r+1}, L_{-i + r})
= \varepsilon ( L_{i+r+1}, \varepsilon^{-1} L^{\#}_{i+r+1}) \in \mathbb F_q [[ \varepsilon]].
\]
Thus we have $(^{\tau}L_i)^{\#} = \ ^{\tau} L_{-i -1}$, i.e., $^{\tau}L\in \X^{\fc}_{n,d}$. Hence we have defined a bijection
$\tau : \X^{\fc}_{n,d} \longrightarrow \X^{\fc}_{n,d}.$
Over the algebraic closure $\overline{\mbb F}_q$,
$\tau$ is an isomorphism of ind-varieties and the push forward $\tau_*=\tau_!$ commutes with the Verdier duality.
Since the automorphism $\tau_d$ is the decategorified version of $\tau_*$ and the bar involution is the decategorified version of  the Verdier duality,
Part~(1) follows.

By Proposition~\ref{prop:involution}, we have $\tau_d([A]) = [{}^{\tau} A]$. As the bijection on $\MX_{n,d}$ sending $A \mapsto {}^{\tau}A$
preserves the Bruhat ordering, it follows by definition of the canonical basis that $\tau_d \{ A\}$ satisfies the characterization property of
${}^\tau \{ A\}$, and hence $\tau_d \{ A\} = \{ ^\tau A\}$.
\end{proof}

Recall Lusztig's subalgebra $\bU^{\ji}_{\nn,d}=\langle \tji_r, \eji_i, \fji_i\rangle_{0\leq i\leq r-1} $ and
$\bU^{\ij}_{\nn,d}=\langle \tij_0, \eij_i, \fij_i\rangle_{0\leq i\leq r-1}$ of $\Sji$ and $\Sij$, respectively,  from ~\cite[Sections 7.1 and 8.1]{FLLLWa}.

\begin{thm}
\label{thm:tau}
The involution $\tau_d$ on the algebra $\Sj$ restricts to algebra isomorphisms $\tau_d: {\mbf S}^{\ji}_{\nn,d} \stackrel{\cong}{\longrightarrow} {\mbf S}^{\ij}_{\nn,d}$
and $\tau_d: \bU^{\ji}_{\nn,d} \stackrel{\cong}{\longrightarrow} \bU^{\ij}_{\nn,d}$ which preserve the canonical bases.
Moreover,  the isomorphism $\tau_d: \bU^{\ji}_{\nn,d}\rightarrow \bU^{\ij}_{\nn,d}$
gives rise to the following correspondence of generators:
$\tji_r \mapsto \tij_0$, $\eji_i \mapsto \fij_{r-i}$, $\fji_i \mapsto \eij_{r-i}$ and $\kji^{\pm 1}_i \mapsto \kij^{\mp 1}_{r-i} $ for all $i\in [0, r-1]$.
\end{thm}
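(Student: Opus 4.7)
My plan is to leverage the ambient automorphism $\tau_d$ of $\Sj$ established in Proposition~\ref{prop:involution}, which by Proposition~\ref{tau:CBSj} already preserves the canonical basis and commutes with the bar involution. The whole theorem will then follow from two verifications: (i) $\tau_d$ maps $\bj_r$ to $\bj_0$, and (ii) $\tau_d$ carries each prescribed generator of $\bU^{\ji}_{\nn,d}$ to the claimed generator of $\bU^{\ij}_{\nn,d}$.

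For step (i), recall $\bj_r=\sum_A [A]$, where $A$ ranges over diagonal elements of $\Xi^{\ji}_{\nn,d}$; the defining condition $a_{r+1,j}=\delta_{r+1,j}$ forces $a_{r+1,r+1}=1$. Under the shift ${}^{\tau}a_{ij}=a_{i+r+1,j+r+1}$ (taken mod $n$), this unit entry is transported to position $(0,0)$, and the remaining entries are permuted diagonally. Since $\{r+1\}$ and $\{0\}$ are exchanged modulo $n$, the map ${}^{\tau}$ is a bijection from diagonal matrices in $\Xi^{\ji}_{\nn,d}$ to diagonal matrices in $\Xi^{\ij}_{\nn,d}$, hence $\tau_d(\bj_r)=\bj_0$. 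Consequently $\tau_d(\Sji)=\tau_d(\bj_r\Sj\bj_r)=\bj_0\Sj\bj_0=\Sij$, giving an algebra isomorphism. Because the canonical bases of $\Sji$ and $\Sij$ are (by Section~\ref{Sji}) precisely the subsets of $\{A\}\in\Sj$ parametrized by $\Xi^{\ji}_{\nn,d}$ and $\Xi^{\ij}_{\nn,d}$, and $\tau_d\{A\}=\{{}^{\tau}A\}$ by Proposition~\ref{tau:CBSj}(2), the bijection ${}^{\tau}:\Xi^{\ji}_{\nn,d}\to\Xi^{\ij}_{\nn,d}$ immediately yields compatibility of canonical (and hence standard and monomial) bases on the restricted isomorphism $\Sji\cong \Sij$.

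For step (ii), I would compute $\tau_d$ on the listed generators via $\tau_d([A])=[{}^{\tau}A]$, using the identity ${}^{\tau}E^{i,j}=E^{i-r-1,\,j-r-1}$ together with the $\theta$-symmetry $E^{k,l}_{\theta}=E^{-k,-l}_{\theta}$. The generator $\eji_i$ (for $0\le i\le r-1$) is encoded by the bidiagonal pattern $E^{i,i+1}_{\theta}$, which shifts to $E^{i-r-1,\,i-r}_{\theta}=E^{r+1-i,\,r-i}_{\theta}$, i.e.\ the lower bidiagonal pattern governing $\fij_{r-i}$; the symmetric computation delivers $\fji_i\mapsto \eij_{r-i}$. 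The generator $\tji_r$ is built from $E^{r,r+2}_{\theta}$ (the ``jump'' over the special index $r+1$), whose image under ${}^{\tau}$ is $E^{-1,1}_{\theta}$, matching $\tij_0$ (which jumps over $0$). Finally, $\kji_i$ is diagonal and ${}^{\tau}$ permutes diagonal positions by the shift by $r{+}1$, which combined with the $\theta$-identification $k\equiv -k$ realizes the index $i\mapsto r-i$, while the inversion $\pm 1\mapsto \mp 1$ is forced by compatibility with the already-determined correspondence $\eji_i\mapsto \fij_{r-i}$ through the relations $\kji_i\eji_j\kji_i^{-1}=v^{a_{ij}}\eji_j$ (and the corresponding $\kij$-$\fij$ relation which inverts the exponent). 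Since $\tau_d$ is an algebra isomorphism carrying a generating set of $\bU^{\ji}_{\nn,d}$ into $\bU^{\ij}_{\nn,d}$, its image is the full subalgebra $\bU^{\ij}_{\nn,d}$.

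The main obstacle is the $\kji^{\pm 1}_i\mapsto \kij^{\mp 1}_{r-i}$ part: while the Chevalley-type generators can be read off directly from a single bidiagonal matrix, the torus-type generators are defined (in [FLLLWa, Section~7.1]) via sums over diagonal weights, and tracking the shift by $r+1$ through those sums, plus pinning down the sign inversion, requires some care. My plan for nailing the sign is to bypass direct computation and instead use the already-established images of $\eji_i$ and $\fji_i$: since $\tau_d$ is an algebra map, the images of $\kji^{\pm 1}_i$ must satisfy the $\kij$-commutation relations with $\fij_{r-i}$ and $\eij_{r-i}$, and this pins down the sign uniquely.
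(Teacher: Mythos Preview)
Your approach is essentially the same as the paper's: reduce everything to the ambient involution $\tau_d$ on $\Sj$ (Propositions~\ref{prop:involution} and~\ref{tau:CBSj}), check that $\tau_d$ swaps the idempotents $\bj_r\leftrightarrow\bj_0$ (hence $\Sji\leftrightarrow\Sij$), and verify the generator correspondence directly. Your added detail on $\tau_d(\bj_r)=\bj_0$ and on the shift ${}^{\tau}E^{i,j}=E^{i-r-1,j-r-1}$ is exactly what the paper leaves as ``direct verification''.

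There is one small omission. You establish canonical basis preservation for $\tau_d:\Sji\to\Sij$ via the inherited bases from $\Sj$, but the theorem also asserts canonical basis preservation for $\tau_d:\bU^{\ji}_{\nn,d}\to\bU^{\ij}_{\nn,d}$, and you do not address this. The point is that one needs to know the canonical bases of $\bU^{\ji}_{\nn,d}$ and $\bU^{\ij}_{\nn,d}$ are themselves subsets of the canonical bases of $\Sji$ and $\Sij$; this is not automatic and is precisely what the paper imports from \cite[Propositions~7.3.4 and~8.2.4]{FLLLWa}. Once that compatibility is in hand, your argument (together with Proposition~\ref{tau:CBSj}(2)) finishes the job. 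Your indirect determination of the sign in $\kji^{\pm 1}_i\mapsto\kij^{\mp 1}_{r-i}$ via the commutation relations is fine in spirit, though the paper simply computes it directly from the definition of $\kji_i$ as a weighted sum over diagonal idempotents.
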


\begin{proof}
It follows by Proposition~\ref{prop:involution} that the restriction gives rise to an algebra isomorphism
$\tau_d: {\mbf S}^{\ji}_{\nn,d} \stackrel{\cong}{\longrightarrow} {\mbf S}^{\ij}_{\nn,d}$. A direct verification by definition shows that
$\tau_d$ sends the generators of $\bU^{\ji}_{\nn,d}$ to
the corresponding generators of $\bU^{\ij}_{\nn,d}$ as stated in the proposition, and
hence we have obtained an algebra isomorphism $\tau_d: \bU^{\ji}_{\nn,d}\rightarrow \bU^{\ij}_{\nn,d}$.

The canonical bases are compatible with the inclusions ${\mbf S}^{\ji}_{\nn,d} \subset \Sj$
and ${\mbf S}^{\ij}_{\nn,d} \subset \Sj$ by the geometric definitions of these algebras.
By ~\cite[Propositions 7.3.4 and 8.2.4]{FLLLWa}, the canonical bases are also compatible with the inclusions
$\bU^{\ji}_{\nn,d} \subset {\mbf S}^{\ji}_{\nn,d}$ and $\bU^{\ij}_{\nn,d} \subset {\mbf S}^{\ij}_{\nn,d}$.
Hence the restrictions of $\tau_d$ still preserve the canonical bases by
Proposition~\ref{tau:CBSj}(2).
\end{proof}

It is straightforward to define an involution $\tau^{\fa}_d$ on the affine type $A$ convolution algebra ${\mbf S}_{n,d}$, similar to the involution $\tau_d$ on $\Sj$.
Then we have the following commutative diagram:
\begin{equation}
  \label{CD:autom}
\begin{CD}
\bU^{\ji}_{\nn,d} @>>> {\mbf S}^{\ji}_{\nn,d} @>>> \Sj @>\jmath_d >> {\mbf S}_{n,d}  \\
@V\tau_d VV @V\tau_d VV @V\tau_d VV  @V\tau^{\fa}_d VV  \\
\bU^{\ij}_{\nn,d}  @>>> {\mbf S}^{\ij}_{\nn,d}  @>>> \Sj @>\jmath_d >>{\mbf S}_{n,d},
\end{CD}
\end{equation}
where $\jmath_d$ is the coideal imbedding defined in Equation (5.3.9) from ~\cite{FLLLWa}.
Note that the commutativity of the rightmost square can be shown by the trick of imbedding
$\Sj$ to a higher rank Lusztig algebra and is then reduced to checking the compatibility on Chevalley generators.

The bijective map $\tau: \Xi_{n,d} \rightarrow \Xi_{n,d}$ sending $A \mapsto {}^{\tau}A$
 can be extended to $\tilde{\tau}: \tilde{\Xi}_{n} \rightarrow \tilde{\Xi}_{n}$, $A \mapsto {}^{\tau} A$.
By restriction we obtain a bijection $\tilde{\tau}: \tilde{\Xi}^{\jmath\imath}_{n} \rightarrow \tilde{\Xi}^{\imath\jmath}_{n}$.
Define a $\mbb Q(v)$-linear map
\[
\tau^{\jmath\imath}: \Kcji \longrightarrow \Kcij,
\quad [A] \mapsto [{}^{\tilde{\tau}} A].
\]
Theorem~\ref{thm:tau} on the algebra isomorphism $\tau_d: {\mbf S}^{\ji}_{\nn,d} \stackrel{\cong}{\longrightarrow} {\mbf S}^{\ij}_{\nn,d}$
and the stabilization procedure in Proposition~\ref{prop7.1} for $\Kcji$ (and its $\ij$-counterpart)
quickly lead to the following.

\begin{thm}
 \label{thm:tauK}
The map $\tau^{\jmath\imath}: \Kcji \longrightarrow \Kcij$ is an isomorphism of algebras, which preserves the monomial and canonical bases.
\end{thm}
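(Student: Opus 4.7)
The plan is to transport the finite-rank isomorphism of Theorem~\ref{thm:tau} to the stabilization limit, using the stabilization procedure of Proposition~\ref{prop7.1}. The crucial compatibility to establish first is
\[
\tilde\tau\big({}_pA^{\ji}\big) = {}_p(\tilde\tau A)^{\ij}, \qquad \forall A \in \widetilde{\MX}^{\ji}_{\nn},\ p \in 2\mbb N.
\]
This reduces to the observation that the defining shift $(i,j) \mapsto (i+r+1, j+r+1)$ underlying $\tau$ exchanges the diagonal position $(r+1,r+1)$ (the one excluded in $I^{\ji} = I - E_{r+1,r+1}$) with $(0,0)$ (the one excluded in $I^{\ij} = I - E_{00}$). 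Consequently $\tilde\tau$ is a well-defined bijection $\widetilde{\MX}^{\ji}_{\nn} \to \widetilde{\MX}^{\ij}_{\nn}$, and $\tau^{\ji}$ is an $\mbb Q(v)$-linear isomorphism on the nose.

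To show $\tau^{\ji}$ is an algebra map, I would apply Proposition~\ref{prop7.1} to any $A_1, \ldots, A_l \in \widetilde{\MX}^{\ji}_{\nn}$, obtaining $Z_j \in \widetilde{\MX}^{\ji}_{\nn}$ and $G_j(v,v') \in \mathcal R$ with
\[
[{}_pA_1^{\ji}] * \cdots * [{}_pA_l^{\ji}] = \sum_j G_j(v, v^{-p}) [{}_pZ_j], \quad p \gg 0.
\]
Theorem~\ref{thm:tau} gives $\tau_d[{}_pA_i^{\ji}] = [{}_p(\tilde\tau A_i)^{\ij}]$, so applying the algebra homomorphism $\tau_d$ to both sides yields the analogous stabilization identity for $[{}_p(\tilde\tau A_1)^{\ij}] * \cdots * [{}_p(\tilde\tau A_l)^{\ij}]$ in ${\mbf S}^{\ij}_{\nn,d}$ with identical structural polynomials $G_j(v,v')$. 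Specializing $v^{-p} \leadsto 1$ in the definition of the limit product (Corollary~\ref{cor7.2} and its $\ij$ counterpart) then yields $\tau^{\ji}([A_1] \cdot [A_2] \cdots [A_l]) = \tau^{\ji}[A_1] \cdot \tau^{\ji}[A_2] \cdots \tau^{\ji}[A_l]$ in $\Kcij$.

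For the monomial basis statement, I would observe that the generators of Theorem~\ref{Kji}(1) for $\Kcji$, namely $[B]$ with $B$ near-tridiagonal carrying the exceptional entry $\alpha_{r+1} E^{r,r+2}_\theta$, are mapped by $\tilde\tau$ precisely to the analogous generators of $\Kcij$ (the shift by $r+1$ sends $E^{r,r+2}_\theta$ to $E^{-1,1}_\theta$). Since $\texttt{M}_A^{\, \ji}$ and $\texttt{M}_{\tilde\tau A}^{\, \ij}$ are built by the same algorithm (Theorem~\ref{monomial-basis-ji} and its $\ij$ counterpart) from corresponding generators, they match up under $\tau^{\ji}$.

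Finally, the canonical basis is characterized (Proposition~\ref{caonical-basis}) by bar-invariance together with a $(v^{-1}\mbb Z[v^{-1}])$-unitriangular expansion in the standard basis relative to the partial order $\sqsubseteq$. The shift $\tilde\tau$ manifestly preserves $\sqsubseteq$, so the only remaining point is that $\tau^{\ji}$ intertwines the two bar involutions on $\Kcji$ and $\Kcij$. I would deduce this by applying $\tau_d$ to the bar-stabilization identity of Lemma~\ref{lem1} in the $\ji$ setting, invoking Proposition~\ref{tau:CBSj}(1) termwise, and specializing $v^{-p} = 1$; uniqueness in Proposition~\ref{caonical-basis} (adapted to $\Kcij$) then forces $\tau^{\ji}\{A\} = \{\tilde\tau A\}$. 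The only real obstacle I anticipate is this last step — namely, packaging the bar-stabilization compatibility in a form that cleanly feeds into the uniqueness characterization; everything else is bookkeeping around symmetries already visible in Sections~\ref{chap:multi} and~\ref{chap:Kjj}.
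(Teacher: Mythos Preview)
Your proposal is correct and follows exactly the approach the paper indicates: the paper's own ``proof'' is the single sentence preceding the theorem, pointing to Theorem~\ref{thm:tau} and the stabilization procedure of Proposition~\ref{prop7.1}, and you have accurately unpacked what those ingredients yield (including the key compatibility $\tilde\tau({}_pA^{\ji}) = {}_p(\tilde\tau A)^{\ij}$ coming from $\tilde\tau(I^{\ji})=I^{\ij}$). The one step you flag as a potential obstacle---bar-compatibility at the limit via Lemma~\ref{lem1} and Proposition~\ref{tau:CBSj}(1)---is indeed routine once the algebra isomorphism and standard-basis preservation are in hand, so there is no genuine gap.
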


\subsection{Monomial and canonical bases for algebras of type $\iiw$}  

Recall $n=2r+2$, $\nn=n-1$, and $\mm =n-2 =2r$, for $r\ge 1$.
Recall the subset $\MX^{\ii}_{\mm,d} = \MX^{\ji}_{\nn,d} \cap \MX^{\ij}_{\nn,d} \subset \MX^{\ji}_{n,d}$
from \eqref{Miid} and  the subalgebra ${\mbf S}^{\ii}_{\mm,d}  = {\mbf S}^{\ji}_{\nn,d} \cap {\mbf S}^{\ij}_{\nn,d}$
from Section ~\ref{rec}.
By a similar argument as that for Theorem \ref{monomial-basis-ji}, we have the following.

\begin{prop}
  \label{monomial-basis-ii}
  Let $A \in \MX_{\mm,d}^{\ii}$. There exist matrices $B(i)\in  \MX_{\mm,d}^{\ii}$ $\,(i\ge 0)$
with  $B(i) -\sum_{j \in [1,n]\backslash\{r, r+1,0,n-1\} }  c_{i,j} E^{j,j+1}_{\theta} - c_{i,0} E^{-1,1}_{\theta} - c_{i,r+1} E^{r, r+2}_{\theta}$ being diagonal (for some
suitable scalars $c_{i,j}$) such that
\begin{align*}
{}'\texttt{M}_A^{\,\ii} &:=  \prod_{i\ge 0}^{\longleftarrow} [B(i)] = [A] + {\rm lower\ terms} \in {\mbf S}^{\ii}_{\mm,d},
 \\
\texttt{M}_A^{\,\ii} &:=  \prod_{i\ge 0}^{\longleftarrow} \{B(i)\} = [A] + {\rm lower\ terms} \in {\mbf S}^{\ii}_{\mm,d}.
\end{align*}
\end{prop}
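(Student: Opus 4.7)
The plan is to iterate the decomposition trick used in the proof of Theorem~\ref{monomial-basis-ji} twice: first to eliminate the row/column at position $r+1$ (as already done there), and then again to eliminate the row/column at position $0$, using the fact that $A \in \MX_{\mm,d}^{\ii}$ satisfies \emph{both} vanishing conditions $a_{r+1,j} = \delta_{r+1,j}$ and $a_{0,j} = \delta_{0,j}$.

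First I would take $A \in \MX_{\mm,d}^{\ii} \subseteq \MX_{\nn,d}^{\ji}$ and apply Theorem~\ref{monomial-basis-ji} to obtain matrices $\widetilde B(i) \in \MX_{\nn,d}^{\ji}$, each of the form $\widetilde B(i) - \sum_{j\in[1,n]\setminus\{r,r+1\}} c_{i,j} E_\theta^{j,j+1} - c_{i,r+1} E_\theta^{r,r+2}$ diagonal, with $\prod_{i\ge 0}^{\longleftarrow} [\widetilde B(i)] = [A] + \text{lower terms}$. At this stage the $r+1$ obstruction is resolved but the $0$ obstruction may persist.

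Next, for each such $\widetilde B(i)$ I would carry out an analogous splitting around the position $0$. Tracing through the proof of Theorem~\ref{monomial-basis-ji} with the roles of $r+1$ and $0$ interchanged, since $A$ also satisfies $a_{0,j}=\delta_{0,j}$, the relevant exponents satisfy $\alpha'_{i,-i+1}=0$, so by Proposition~\ref{leading} one can write $[\widetilde B(i)] = [C'(i)] * [D'(i)] + \text{lower terms}$ where $C'(i)$ carries only the off-diagonal entries in the index range $\geq 1$ (contributing to the positions $(j,j+1)$ with $j\ge 1$, and the $(r,r+2)$ entry) and $D'(i)$ carries those in the range $\leq -1$. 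Reassembling via $[D'(i+1)]*[C'(i)] = [B(i)] + \text{lower terms}$ produces $B(i) \in \MX_{\mm,d}^{\ii}$ whose off-diagonal support is exactly at the required positions, namely the tridiagonal positions $j \in [1,n]\setminus\{r,r+1,0,n-1\}$ together with the two "skip" entries $(-1,1)$ and $(r,r+2)$. The leading coefficient being $1$ at each reassembly step is again a direct application of Proposition~\ref{leading}, and the nested "lower terms" remain lower than $[A]$ by the compatibility of the partial order $\leq_{\text{alg}}$ with the multiplication formula (Theorem~\ref{mult-stand-basis2}).

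The passage from the standard-basis statement ${}'\texttt{M}_A^{\,\ii}$ to the canonical-basis statement $\texttt{M}_A^{\,\ii}$ then follows exactly as in the last paragraph of the proof of Theorem~\ref{monomial-basis}: each $\{B(i)\}$ equals $[B(i)]$ modulo strictly lower terms, and products of such correction terms remain strictly below $[A]$. The main obstacle in executing this plan rigorously will be the bookkeeping of off-diagonal positions through the two successive splittings -- one must verify that after the second splitting the resulting matrices $C'(i), D'(i)$ at the intermediate stage remain in $\MX_{\nn,d}^{\ji}$ (so that each intermediate product is legitimate in the algebra ${\mbf S}^{\ji}_{\nn,d}$) and that their reassembly collapses precisely to an $\MX_{\mm,d}^{\ii}$-matrix whose nonzero off-diagonal pattern matches the stated one, with no unexpected entries generated by the multiplication formula at the $(0,\pm 1)$ or $(\pm 1, 0)$ positions.
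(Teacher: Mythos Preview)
Your two-stage plan has a genuine gap at the second stage. The only tools available for ``splitting'' and ``reassembling'' are Proposition~\ref{leading} and the multiplication formula in Theorem~\ref{mult-stand-basis2}, and both require the \emph{left} factor to be genuinely tridiagonal, i.e., of the form $B - \sum_j \alpha_j E^{j,j+1}_\theta$ diagonal. After Stage~1 your matrices $\widetilde B(i)$ already carry the skip entry at $(r,r+2)$, so they are not tridiagonal; neither are your proposed $C'(i)$. Concretely, in your decomposition $[\widetilde B(i)] = [C'(i)]*[D'(i)] + \text{lower}$ the left factor $C'(i)$ has a distance-$2$ entry, so Proposition~\ref{leading} does not apply. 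If instead you try to place the tridiagonal piece on the left, say $[D'(i)]*[C'(i)]$, then $A = C'(i)$ has $m=3$ in the notation of Proposition~\ref{leading} and the hypothesis $a_{i,i+2} \ge \alpha_{i-1}$ forces almost all off-diagonal entries of $D'(i)$ to vanish, which is far too restrictive. The same obstruction reappears in your reassembly step $[D'(i+1)]*[C'(i)]$.

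The fix, and what the paper's ``similar argument'' means, is to stay at the tridiagonal level throughout. Return to the matrices $B'(i)$ produced by Theorem~\ref{monomial-basis} (or equivalently to the tridiagonal intermediates $C(i), D(i)$ inside the proof of Theorem~\ref{monomial-basis-ji}). Because $A \in \MX_{\mm,d}^{\ii}$ satisfies both vanishing conditions, the off-diagonal coefficients of $B'(i)$ vanish at \emph{two} positions, namely $\alpha_{i,\,r+1-i}=0$ (from $a_{r+1,\cdot}=0$) and $\alpha_{i,\,-i}=0$ (from $a_{0,\cdot}=0$; your index $-i+1$ is off by one). These two gaps allow a three-way splitting of each tridiagonal $B'(i)$ via Proposition~\ref{leading}, after which the reassembly across consecutive $i$'s (again via Proposition~\ref{leading}, with all left factors tridiagonal) produces $B(i) \in \MX_{\mm,d}^{\ii}$ with exactly the prescribed off-diagonal support. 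The passage from $[B(i)]$ to $\{B(i)\}$ then goes through as you described.
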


Let
\begin{equation}
 \label{pii}
I^{\imath \imath} = I - E_{00} -E_{r+1,r+1},
\quad
{}_pA^{\imath \imath} = A + p I^{\imath \imath} \; (\forall p\in 2 \mbb N).
\end{equation}
Let $\MX_{\mm}^{\ii} = \sqcup_{d\in \mbb N} \MX^{\ii}_{\mm, d}$ and extend it to a larger set
 $\widetilde{\MX}_{\mm}^{\ii}$ by requiring the diagonal entries in $\mbb Z$ instead of $\mbb N$, except at $(0, 0)$, $(r+1, r+1)$ mod $n$.
Let ${}_{\cA}\Kcii = {\rm span}_{\mathcal A}\{ [A]| A \in \widetilde{\MX}_{\mm}^{\ii}\}$, and $\Kcii =\mbb Q(v) \otimes_{\cA} {}_{\cA}\Kcii$.
Similar to Proposition~\ref{prop7.1}, we can establish a stabilization property using \eqref{pii} for the algebras ${\mbf S}^{\ii}_{\mm,d}$ as $d \mapsto \infty$,
and then we can use the stabilization procedure to construct an algebra structure on $\Kcii$
(similar to Corollary~\ref{cor7.2}).  We have the following $\iiw$-counterpart of Theorem~\ref{Kji}.

\begin{prop} \label{Kii}
\begin{enumerate}
\item
The algebra $\Kcii$ is generated by $[B]$,
for $B\in \widetilde{\MX}_{\mm}^{\ii}$ such that
$B -\sum_{j \in [1,n]\backslash\{r, r+1,0,n-1\} }  c_{i,j} E^{j,j+1}_{\theta} - c_{i,0} E^{-1,1}_{\theta} - c_{i,r+1} E^{r, r+2}_{\theta}$ \redtext{is} diagonal for some
suitable scalars $c_{i,j}$.

\item
The algebra $\Kcii$ admits the monomial basis $\{ \texttt{M}_A^{\ii} \vert A\in \widetilde{\MX}_{\mm}^{\ii}\}$,
and the canonical basis $\{ \{ A\} \vert A\in \widetilde{\MX}_{\mm}^{\ii}\}$.

\item
There exists a surjective algebra homomorphism $\Psi^{\imath  \imath}: \Kcii \rightarrow \mbf S^{\imath  \imath}$
 such that $\Psi^{\imath  \imath} ([A]) = [A]$ if $A\in  {\MX}_{\mm,d}^{\imath  \imath}$ and $0$ otherwise.
 \end{enumerate}
\end{prop}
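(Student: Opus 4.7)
The plan is to parallel the arguments for $\Kcji$ in Section~\ref{sec:Sji}, with the key modification that the stabilization shift is now by $I^{\ii} = I - E_{00} - E_{r+1,r+1}$ rather than $I^{\ji}$, reflecting the fact that both the $(0,0)$ and $(r+1,r+1)$ diagonal entries are frozen at their original parity in the $\ii$-setting.  First I would establish a stabilization result analogous to Proposition~\ref{prop7.1}: given matrices $A_1, \ldots, A_l \in \widetilde{\MX}_{\mm}^{\ii}$ with $\co(A_i) = \ro(A_{i+1})$, there exist $Z_1, \ldots, Z_m \in \widetilde{\MX}_{\mm}^{\ii}$, $G_j(v,v') \in \mathcal R$, and $p_0 \in \mbb N$ such that
\[
[{}_pA_1^{\ii}] * [{}_pA_2^{\ii}] * \cdots * [{}_pA_l^{\ii}] = \sum_{j=1}^m G_j(v, v^{-p})[{}_pZ_j^{\ii}], \quad \forall p \in 2\mbb N, p \geq p_0.
\]
As in the proof of Proposition~\ref{prop7.1}, I reduce to $l=2$ with $A_1$ of the tridiagonal-like form described in part~(1) via Proposition~\ref{monomial-basis-ii}, and apply Theorem~\ref{mult-stand-basis2} term-by-term. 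The key adjustment in the definition of $G_{S,T}(v,v')$ is that now both indices $i=0$ and $i=r+1$ (mod $n$) must be excluded from the exponent of $v'$ and from the $v'$-containing quantum factors, because these diagonal entries are not shifted by $I^{\ii}$; meanwhile the products over $i \in [-r,-1]$ acquire $v'$-factors exactly as in the $\ji$-case.

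Second, specializing at $v'=1$ as in Corollary~\ref{cor7.2} yields the associative $\mbb Q(v)$-algebra structure (without unit) on $\Kcii$ via $[A_1] \cdot [A_2] \cdots [A_l] = \sum_j G_j(v,1)[Z_j]$. The generating set in part~(1) is then immediate from Proposition~\ref{monomial-basis-ii} combined with the stabilization, since the matrices $B(i)$ there are precisely of the form asserted.  For the monomial basis $\{\texttt{M}_A^{\ii}\}$ in part~(2), I lift the construction of Proposition~\ref{monomial-basis-ii} to $\widetilde{\MX}_{\mm}^{\ii}$: for $A \in \widetilde{\MX}_{\mm}^{\ii}$, I take $B(i) \in \widetilde{\MX}_{\mm}^{\ii}$ with the required tridiagonal-like shape so that $\prod_{i\ge 0}^{\longleftarrow}[B(i)] \in [A] + \sum_{A' \sqsubset A}\cA[A']$, which together with unitriangularity under $\sqsubset$ yields a basis.

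Third, to produce the canonical basis, I would establish a bar-operator stabilization analogous to Lemma~\ref{lem1} for the $\ii$-setting, deduce that the bar involution descends to $\Kcii$ via $v'=1$, and verify the triangularity $\overline{[A]} \in [A] + \sum_{C \sqsubset A}\cA[C]$. A standard triangularity argument (as in Proposition~\ref{caonical-basis}) then produces the unique bar-invariant element $\{A\} \in [A] + \sum_{A' \sqsubset A} v^{-1}\mbb Z[v^{-1}][A']$ for each $A \in \widetilde{\MX}_{\mm}^{\ii}$.  For part~(3), I define $\Psi^{\ii}([A]) = [A]$ for $A \in \MX_{\mm,d}^{\ii}$ and $0$ otherwise, and check it is an algebra homomorphism by comparing the multiplication formula in Theorem~\ref{mult-stand-basis2} against the stabilized formula for $\Kcii$; surjectivity is then immediate from Proposition~\ref{monomial-basis-ii}. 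The vanishing on matrices outside $\MX_{\mm,d}^{\ii}$ follows, as in the proof of the surjection $\Psi: \Kc_n \to \Sj$, from Condition~\eqref{star} forcing $\begin{bmatrix} A_{S,T} \\ S \end{bmatrix}_{\mathfrak b}=0$ whenever the $(0,0)$- or $(r+1,r+1)$-diagonal entries are inadmissible.

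The main obstacle I anticipate lies in the stabilization step: one must verify carefully that the resulting $Z_j$ belong to $\widetilde{\MX}_{\mm}^{\ii}$ (i.e., that the vanishing along both row/column $0$ and row/column $r+1$ is preserved by the multiplication), and that the adjusted $G_{S,T}(v,v')$ is genuinely polynomial in $v'^{\pm 1}$ with the correct $v^{-p}$-specialization. This is the same bookkeeping as in Proposition~\ref{prop7.1}, now with two frozen diagonal positions rather than one; once the generating set from Proposition~\ref{monomial-basis-ii} is seen to respect the $\ii$-support conditions, the remainder of the argument is formal.
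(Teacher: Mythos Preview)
Your proposal is correct and follows exactly the same approach as the paper, which does not give an explicit proof of Proposition~\ref{Kii} but simply asserts it as the $\iiw$-counterpart of Theorem~\ref{Kji}, to be obtained via the stabilization procedure (analogous to Proposition~\ref{prop7.1}) applied with the shift $I^{\ii}$ and the monomial basis of Proposition~\ref{monomial-basis-ii}. One minor imprecision: in your last paragraph you refer to the $(0,0)$- or $(r+1,r+1)$-diagonal entries being ``inadmissible,'' but in $\widetilde{\MX}_{\mm}^{\ii}$ these entries are fixed at $1$ by definition, so the relevant vanishing argument for $\Psi^{\ii}$ concerns the \emph{other} diagonal entries becoming negative, exactly as in the proof for $\Psi:\Kc_n\to\Sj$.
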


By restriction of the involution $\tau_d: \Sj \rightarrow \Sj$, we obtain involutions on the convolution algebra ${\mbf S}^{\ii}_{\mm,d}$ and Lusztig algebra ${\mbf U}^{\ii}_{\mm,d}$ in ~\cite{FLLLWa},
respectively.
Thus we have the following commutative diagram, which is a variant of Diagram \eqref{CD:autom}:
\begin{equation*}
\begin{CD}
\bU^{\ii}_{\mm,d} @>>> {\mbf S}^{\ii}_{\mm,d} @>>> \Sj @>\jmath_d >> {\mbf S}_{n,d}  \\
@V\tau_d VV @V\tau_d VV @V\tau_d VV  @V\tau^{\fa}_d VV  \\
\bU^{\ii}_{\mm,d}  @>>> {\mbf S}^{\ii}_{\mm,d}  @>>> \Sj @>\jmath_d >>{\mbf S}_{n,d}.
\end{CD}
\end{equation*}

Similar to Theorem~\ref{thm:tau} and Theorem~\ref{thm:tauK}, we can establish the following.
\begin{prop}
The involution $\tau_d$ on ${\mbf S}^{\ii}$ (or ${\mbf U}^{\ii}_{\eta, d}$)  preserves the standard, monomial and canonical bases.
Moreover, the involution $\tau_d$ induces an involution on $\Kcii$, which preserves the standard, monomial and canonical bases.
\end{prop}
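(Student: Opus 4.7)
The plan is to bootstrap from the already-established Theorem~\ref{thm:tau} for the $\jiw$- and $\ijw$-types, using the key identity $\mathbf{S}^{\ii}_{\mm,d}=\mathbf{S}^{\ji}_{\nn,d}\cap\mathbf{S}^{\ij}_{\nn,d}$ (Section~\ref{Sji}) together with the symmetry enjoyed by the set $\MX^{\ii}_{\mm,d}=\MX^{\ji}_{\nn,d}\cap\MX^{\ij}_{\nn,d}$ under the shift $\tau$ by the half-period $r+1$. The first step is to check at the level of matrices that the map $\tau\colon\MX_{n,d}\to\MX_{n,d}$, $A\mapsto{}^{\tau}\!A$, stabilises $\MX^{\ii}_{\mm,d}$. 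Using $({}^{\tau}\!A)_{i,j}=a_{i+r+1,j+r+1}$ together with the $n$-periodicity of $A$, one verifies $({}^{\tau}\!A)_{r+1,j}=a_{0,j+r+1}=\delta_{0,j+r+1}=\delta_{r+1,j}$ when $A\in\MX^{\ii}_{\mm,d}$, and symmetrically for the $(i,r+1)$, $(0,j)$, $(i,0)$ entries; so $\tau$ interchanges the two roles covered by the $\ji$ and $\ij$ conditions and thus preserves their intersection.

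The second step treats $\mathbf{S}^{\ii}_{\mm,d}$. Since $\tau_d\colon\Sj\to\Sj$ restricts to the isomorphisms $\tau_d\colon\mathbf{S}^{\ji}_{\nn,d}\to\mathbf{S}^{\ij}_{\nn,d}$ and $\tau_d\colon\mathbf{S}^{\ij}_{\nn,d}\to\mathbf{S}^{\ji}_{\nn,d}$ of Theorem~\ref{thm:tau}, the restriction to their intersection $\mathbf{S}^{\ii}_{\mm,d}$ maps into itself and is therefore an involutive algebra automorphism. Standard-basis preservation is immediate from $\tau_d([A])=[{}^{\tau}\!A]$ (Proposition~\ref{prop:involution}) combined with Step~1. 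Canonical-basis preservation is inherited from the identity $\tau_d\{A\}=\{{}^{\tau}\!A\}$ on $\Sj$ (Proposition~\ref{tau:CBSj}(2)) together with the fact that the canonical basis of $\mathbf{S}^{\ii}_{\mm,d}$ is the subset of the canonical basis of $\Sj$ indexed by $\MX^{\ii}_{\mm,d}$ (Section~\ref{rec}). For the monomial basis I would argue that $\tau_d$ sends each tridiagonal-type generator $[B(i)]$ appearing in Proposition~\ref{monomial-basis-ii} to the analogous generator $[{}^{\tau}\!B(i)]$ of the same shape: the multiplicities $c_{i,j}E^{j,j+1}_{\theta}$ are permuted by $j\mapsto j-r-1$, while the exceptional blocks $c_{i,0}E^{-1,1}_{\theta}$ and $c_{i,r+1}E^{r,r+2}_{\theta}$ are swapped; hence $\tau_d(\texttt{M}_A^{\,\ii})=\texttt{M}_{{}^{\tau}\!A}^{\,\ii}$. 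The Lusztig-algebra statement for $\mathbf{U}^{\ii}_{\mm,d}$ reduces similarly to the commutative diagram displayed just before the proposition, using that $\mathbf{U}^{\ii}_{\mm,d}\subseteq\mathbf{S}^{\ii}_{\mm,d}$ is compatible with canonical bases (see \cite{FLLLWa}).

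The third step lifts the picture to $\Kcii$. First extend $\tau$ from $\MX^{\ii}_{\mm,d}$ to $\widetilde{\MX}^{\ii}_{\mm}$ by the same formula and observe that $\tau$ commutes with the shift $A\mapsto{}_pA^{\ii}=A+pI^{\ii}$, because ${}^{\tau}(I^{\ii})=I^{\ii}$ (the map swaps the two exceptional diagonal positions $0$ and $r+1\pmod n$, both of which are killed by $I^{\ii}$). Define $\tau^{\ii}\colon\Kcii\to\Kcii$ by $[A]\mapsto[{}^{\tau}\!A]$ on the spanning set. The stabilisation construction of Proposition~\ref{prop7.1} (and its $\ii$-version used in Proposition~\ref{Kii}) expresses structure constants as limits of polynomial expressions in $v$ and $v^{-p}$; since Step~2 shows $\tau_d\bigl([{}_pA_1^{\ii}]\ast[{}_pA_2^{\ii}]\bigr)=[{}_p({}^{\tau}\!A_1)^{\ii}]\ast[{}_p({}^{\tau}\!A_2)^{\ii}]$ in $\mathbf{S}^{\ii}_{\mm,d}$ for all $p\gg0$, taking the specialisation $v'=1$ in Corollary~\ref{cor7.2} shows that $\tau^{\ii}$ is a well-defined algebra involution of $\Kcii$. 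Standard- and monomial-basis preservation then carry over term by term from Step~2. For the canonical basis, use the characterisation of $\{A\}\in\Kcii$ in Proposition~\ref{caonical-basis} (bar-invariance plus the unitriangular expansion in $[A']$ with coefficients in $v^{-1}\mathbb{Z}[v^{-1}]$): since $\tau^{\ii}$ commutes with the bar involution (inherited from Proposition~\ref{tau:CBSj}(1) via the stabilisation in Lemma~\ref{lem1}) and preserves the partial order $\sqsubseteq$ on $\widetilde{\MX}^{\ii}_{\mm}$, the element $\tau^{\ii}\{A\}$ satisfies the defining properties of $\{{}^{\tau}\!A\}$, forcing equality.

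The main obstacle is the compatibility claim between $\tau$ and the stabilisation: one must ensure that $\tau$ commutes with the operation $A\mapsto{}_pA^{\ii}$ \emph{and} that the structure-constant polynomials $G_j(v,v')$ of Proposition~\ref{prop7.1} are invariant (up to relabelling $Z_j\mapsto{}^{\tau}Z_j$) under $\tau$. The first part is the simple identity ${}^{\tau}(I^{\ii})=I^{\ii}$ noted above; the second part reduces, through the explicit formulae for $h_{S,T}$, $n(S,T)$ and $\begin{bmatrix}A_{S,T}\\S\end{bmatrix}_{\mathfrak b}$ in Theorem~\ref{mult-stand-basis2}, to exactly the same three symmetry identities already established in the proof of Proposition~\ref{prop:involution}; once this is verified the rest is formal.
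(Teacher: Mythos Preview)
Your proposal is correct and follows precisely the route the paper indicates (``similar to Theorem~\ref{thm:tau} and Theorem~\ref{thm:tauK}''): restrict the involution $\tau_d$ from $\Sj$ to the intersection $\mathbf{S}^{\ii}_{\mm,d}=\mathbf{S}^{\ji}_{\nn,d}\cap\mathbf{S}^{\ij}_{\nn,d}$, inherit standard- and canonical-basis preservation from Propositions~\ref{prop:involution} and~\ref{tau:CBSj}, and then lift to $\Kcii$ via the stabilisation after checking the key identity ${}^{\tau}(I^{\ii})=I^{\ii}$. The paper gives no further details beyond this, so your write-up is exactly what is being left implicit.
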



\end{document}